\newcommand*{\Map}[1]{\mathfrak{#1}} 
\newcommand*{\map}{\Map{m}}  
\newcommand*{\tmap}{\Map{t}} 
\newcommand*{\emap}{\Map{e}} 
\newcommand*{\umap}{\Map{u}} 
\newcommand*{\bmap}{\Map{b}} 
\newcommand*{\bt}[1][]{(\tmap#1,\sigma#1)}
\newcommand*{\btsq}[1][]{[\tmap#1,\sigma#1]}
\newcommand*{\bts}{\mathcal{BT}}
\newcommand*{\law}{\mathcal{L}}
\renewcommand*{\cv}[2][d\1{loc}]{\xrightarrow[\raisebox{0.1em}{$\scriptstyle #2\to\infty$}]{#1}}
\newcommand*{\faces}{\mathcal{F}}
\newcommand*{\medges}{\mathcal{E}}
\newcommand*{\steps}{\mathcal{S}}
\newcommand*{\step}{\mathtt{s}}
\newcommand*{\Step}{\mathtt{S}}
\newcommand*{\CC}{\mathtt{C}}
\newcommand*{\LL}{\mathtt{L}}		\newcommand*{\RR}{\mathtt{R}}
\newcommand*{\cp}{\CC^\+}			\newcommand*{\cm}{\CC^\<}
\newcommand*{\lp}[1][k]{\LL^\+_{#1}}	\newcommand*{\lm}[1][k]{\LL^\<_{#1}}
\newcommand*{\rp}[1][k]{\RR^\+_{#1}}	\newcommand*{\rn}[1][k]{\RR^\<_{#1}}
\newcommand*{\iroot}{\mathcal I}
\newcommandtwoopt*{\weight}[4][][]{\nu^{\abs{\medges(#3#4)}#1}\, t^{\abs{\faces(#3)}#2}}
\newcommandtwoopt*{\weightc}[4][][]{\nu_c^{\abs{\medges(#3#4)}#1}\, t_c^{\abs{\faces(#3)}#2}}
\newcommand*{\zinv}[1][p,q+1]{\frac1{z_{#1}}}
\newcommand*{\zz}[2][p,q+1]{\,\frac{z_{#2}}{z_{#1}}}
\newcommand*{\zzz}[3][p,q+1]{\,\frac{z_{#2}\,z_{#3}}{z_{#1}}}
\newcommand{\pqq}{{p,(q_1,q_2)}}
\newcommand{\py}[1][p]{_{#1}}
\newcommand{\yy}{_{\infty}}
\newcommand*{\+}{\ensuremath{\text{\rm\texttt{+}}}}
\newcommand*{\<}{\ensuremath{\text{\rm\texttt{-}}}}
\newcommand*{\jj}{{\text{\normalsize\texttt{\textpm}}}}
\newcommand{\emapo}{\emap^\circ}
\newcommand{\frontier}{\partial\emap}
\newcommand*{\algo}{\mathcal{A}}
\newcommand{\nseq}[2][0]{(#2_n)_{n\ge #1}}
\setlist[enumerate,1]{label=(\roman*),topsep=6pt,itemsep=0pt}
\setlist[itemize,1]{topsep=4pt,itemsep=0pt}
\newcommand{\refp}[2]{\hyperref[#2]{\ref*{#2}(#1)}}
\newcommand{\limsupp}{\limsup_{p\to\infty}}
\title{Critical Ising model on random triangulations of the disk: enumeration and local limits}
\author{Linxiao Chen\footnote{University of Helsinki, Department of Mathematics and Statistics, linxiao.chen@helsinki.fi}\ ,\
Joonas Turunen\footnote{University of Helsinki, Department of Mathematics and Statistics, joonas.am.turunen@helsinki.fi}
}
\date{}
\begin{document}
\maketitle

\begin{abstract}
We consider Boltzmann random triangulations coupled to the Ising model on their faces, under Dobrushin boundary conditions and at the critical point of the model. 
The first part of this paper computes explicitly the partition function of this model by solving its Tutte's equation, extending a previous result by Bernardi and Bousquet-M\'elou \cite{BBM11} to the model with Dobrushin boundary conditions. 
We show that the perimeter exponent of the model is $7/3$ in contrast to the exponent $5/2$ for uniform triangulations.
In the second part, we show that the model has a local limit in distribution when the two components of the Dobrushin boundary tend to infinity one after the other. 
The local limit is constructed explicitly using the peeling process along an Ising interface. Moreover, we show that the main interface in the local limit touches the (infinite) boundary almost surely only finitely many times, a behavior opposite to that of the Bernoulli percolation on uniform maps. Some scaling limits closely related to the perimeters of finite clusters are also obtained.
\end{abstract}

\tableofcontents

\section{Introduction}\label{sec:intro}

Recent years have seen an increasing number of works devoted to random planar maps decorated by additional combinatorial structures such as trees, orientations and spin models. We refer to \cite{BM11uncoloured} for a survey from an enumerative combinatorics point of view.
From a probabilistic point of view, one important motivation for studying decorated random maps is to understand models of two-dimensional random geometry that escape from the now well-understood universality class of the Brownian map \cite{LG11,Mie11}. This is in turn motivated by an effort to give a solid mathematical foundation to the physical theory of Liouville quantum gravity by discretization \cite{ADJ97}.

The critical Ising model is one of the simplest combinatorial structures that, when coupled to a random planar map, have a non-trivial impact on the geometry of the latter. The systematic study of the Ising model on random lattices was pioneered by Boulatov and Kazakov back in the eighties \cite{Kaz86,BouKaz87}. Using relations to the two-matrix model, they computed the partition function of the Ising model on random triangulations and quadrangulations in the thermodynamic limit, identifying its phase transitions and computing the associated critical exponents. This approach was later refined and generalized to deal with Ising models on more general maps as well as the Potts model \cite{EynOra05,EynBon99}. A more mathematical derivation of the partition function on the discrete level was later given by Bernardi, Bousquet-M\'elou and Schaeffer in \cite{BMS02,BBM11}. In these works, the partition function is shown to be algebraic and having a rational parametrization. 
Our work complements the ones in \cite{BouKaz87,BBM11} by dealing with Ising-decorated triangulations with a large boundary and a Dobrushin boundary condition. In addition, we exploit these combinatorial results using the so-called \emph{peeling process} to derive some scaling limits of quantities describing the geometry of the Ising-interface, and ultimately construct the local limit of the Ising-decorated random maps themselves.

\begin{figure}
\centering
\includegraphics[scale=1,page=1]{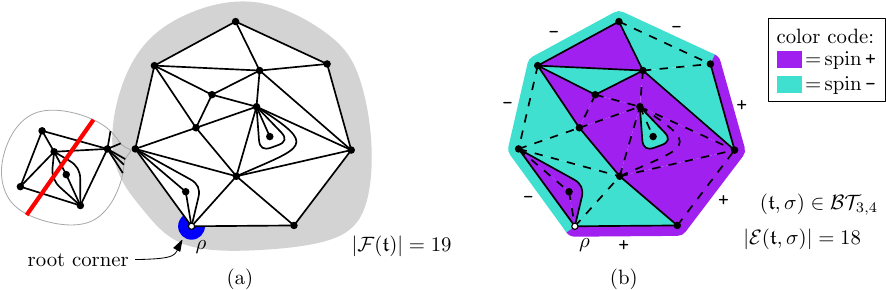}
\caption{(a) A triangulation $\tmap$ of the 7-gon with 19 internal faces. The boundary will no longer be simple if one attaches to $\tmap$ the map inside the bubble to its left.
(b) a bicolored triangulation of the $(3,4)$-gon with 18 monochromatic edges (dashed lines).
}
\label{fig:def-map}
\end{figure}

Let us define our conventions and terminology before stating the main results.

\paragraph{Planar maps.}

We refer to \cite{StFlour14,CurPeccot} for self-contained introductions to random planar maps. Here we consider planar maps in which loops and multiple edges are allowed. A map is \emph{rooted} when it has a distinguished corner. This corner determines a distinguished vertex $\rho$, called the \emph{origin}, and a distinguished face, called the \emph{external face}. The other faces are called \emph{internal faces}. We denote by $\faces(\map)$ the set of internal faces of a map $\map$.
\begin{center} \vspace{-1.ex}
	\emph{In the following, all maps are assumed to be planar and rooted.}
\end{center}   \vspace{-1.ex}
A map is a \emph{triangulation of the $\ell$-gon} ($\ell\ge 1$) if the internal faces all have degree three, and the contour of its external face is a simple closed path (i.e.\ it visits each vertex at most once) of length $\ell$. The number $\ell$ is called the \emph{perimeter} of the triangulation, and an edge (resp.\ vertex) adjacent to the external face is called a \emph{boundary edge} (resp.\ \emph{boundary vertex}). Figure~\refp{a}{fig:def-map} gives an example of a triangulation of the 7-gon. By convention, the edge map --- the map containing only one edge and no internal face --- is a triangulation of the 2-gon.

\paragraph{Bicolored triangulations of the $(p,q)$-gon.}

We consider the Ising model with spins on the internal faces of a triangulation of a polygon. The triangulation together with an Ising spin configuration on it is represented by a pair $\bt$ where $\sigma \in \{\+,\<\}^{\faces(\tmap)}$.
An edge $e$ of $\tmap$ is said to be \emph{monochromatic} if the spins on both sides of $e$ are the same. When $e$ is a boundary edge, this definition requires a boundary condition which specifies a spin outside each boundary edge. By an abuse of notation, we consider the information about the boundary condition to be contained in the coloring $\sigma$, and denote by $\medges\bt$ the set of monochromatic edges in $\bt$.
\begin{center} \vspace{-.3ex}
\pbox{0.8}{\emph{In this work, we concentrate on the \emph{Dobrushin boundary conditions} which assign a sequence of spins of the form $\+\cdots\+\,\<\cdots\<$ to the boundary edges in the counter-clockwise order starting from the origin.}}
\end{center}   \vspace{-.3ex}
Let $p$ and $q$ be respectively the numbers of \+ and of \< in this sequence. Then we call $\bt$ a \emph{bicolored triangulation of the $(p,q)$-gon}. Figure~\refp{b}{fig:def-map} gives an example in the case $p=3$ and $q=4$. We denote by $\bts_{p,q}$ the set of all bicolored triangulations of the $(p,q)$-gon.

We enumerate the elements of $\bts_{p,q}$ by the generating function
\begin{equation*}
z_{p,q}(\nu,t)
\ = \!\!\sum_{\bt \in \bts_{p,q}}\!\!
		\nu^{\abs{\medges\bt}}\, t^{\abs{\faces(\tmap)}}
\end{equation*}
where $\nu>0$ is related to the coupling constant of the Ising model, and $t$ is a parameter that controls the volume of the triangulation. Actually, $\nu$ equals the exponential of two times the inverse temperature. When $q=0$ and $p$ is small, the above generating function has already been computed by Bernardi and Bousquet-M\'elou in \cite{BBM11}. (More precisely, they computed the generating function of a model that is dual to ours. See Section~\ref{sec:z solution} for more details.) A part of their result can be translated in our setting as follows.

\begin{citeproposition}[{\cite[Section 12.2]{BBM11}}]\label{prop:BBM}
For $\nu\ge 1$, the coefficient of $t^n$ in $z_{1,0}$ satisfies
\begin{equation*}
	[t^n] z_{1,0}(\nu,t) \eqv{n} \begin{cases}
		\kappa\ \tau^{-n}\, n^{-5/2}		&\text{if }\nu\ne \nu_c
	\\	\kappa_c\, t_c^{-n}\, n^{-7/3}	&\text{if }\nu  = \nu_c
	\end{cases}
\end{equation*}
where $\nu_c=1+2\sqrt7$ and $t_c = \frac{\sqrt{10}}{8(7+\sqrt7)^{3/2}}
$, and $\kappa, \tau$ are continuous functions of $\nu$ such that 
$\tau(\nu_c)=t_c$. In particular, $z_{1,0}(\nu,\tau(\nu))<\infty$ for all $\nu\ge 1$.
\end{citeproposition}

\noindent
This result suggests that $\nu_c=1+2\sqrt7$ is the unique value of $\nu$ at which the asymptotic behavior of the Ising-decorated random triangulation escapes from the pure gravity universality class (corresponding to $\nu=1$). This is in agreement with the prediction of the celebrated KPZ relation \cite{KPZ88} between the string susceptibility exponent $\gamma$ and the central charge $\mathfrak c$ of the conformal field theory (CFT) on a surface of genus zero. 
According to CFT, the critical Ising model has a central charge $\mathfrak c = \frac12$, whereas pure gravity corresponds to $\mathfrak c=0$. The string susceptibility exponent $\gamma$ is related to the asymptotics of $[t^n] z_{1,0}(\nu,t)$ by 
\begin{equation*}
[t^n] z_{1,0}(\nu,t) \eqv{n} \kappa(\nu)\tau(\nu)^{-n}n^{\gamma(\nu)-2} \,.
\end{equation*}
Thus Proposition~\ref{prop:BBM} gives that $\gamma(\nu_c)=-\frac13$ and $\gamma(\nu)=-\frac12$ for $\nu \ne \nu_c$. This is in agreement with the KPZ prediction of $\gamma = \frac1{12}\left(\mathfrak c-1-\sqrt{ (1-\mathfrak c)(25-\mathfrak c) }\right)$, see \cite[(4.223)]{ADJ97}.

In this work, we will concentrate on the critical value of the parameters, and leave the general case, as well as the phase transitions, to an upcoming work. In all that follows, we fix $(\nu,t)=(\nu_c,t_c)$ and write $z_{p,q}=z_{p,q}(\nu_c,t_c)$.

\begin{theorem}[Asymptotics of $z_{p,q}$]\label{thm:z_p,q}
The generating function $Z(u,v) = \sum_{p,q\ge 0} z_{p,q} u^pv^q$ is algebraic and can be expressed in terms of a rational parametrization which is described in Section~\ref{sec:sing anal} and given explicitly in \cite{CAS1}. 
The asymptotics of the coefficients $z_{p,q}$ are given by
\begin{align*}
z_{p,q}	& \eqv{q} \frac{a_p}{\Gamma(-4/3)} u_c^{-q} q^{-7/3}	\\
a_p		& \underset{p\to\infty}= \frac{b}{\Gamma(-1/3)} u_c^{-p} p^{-4/3} + O(p^{-5/3})
\end{align*}
where $u_c = \frac65(7+\sqrt{7}) t_c$ and $b=-\frac{27}{20}(\frac32)^{2/3}$, and the sequence $(a_p)_{p\ge 0}$ is determined by its generating function $A(u)=\sum_{p\ge 0}a_p u^p$ given by the following rational parametrization:
\begin{equation*}
\left\{\
\begin{aligned}
  u= \hat u(H) &\, :=\, u_c \mB({ 1-\frac23 (1-H)^3 -\frac13 (1-H)^4 }\,
\\A= \hat A(H) &\, :=\, \frac1{10}\m({\frac32}^{7/3}\frac{3H^2-8H+9}{(H^2-3H+3)^2}\,
\end{aligned}
\right. 
\end{equation*}
where $u=0$ and $u=u_c$ correspond to $H=0$ and $H=1$, respectively.
Moreover, for all $u \in \complex$ such that $|u|\le u_c$, we have the asymptotics
\begin{equation*}
Z_q(u) :=  \sum_{p\ge 0} z_{p,q} u^p 
\ \eqv{q}\ \frac{A(u)}{\Gamma(-4/3)} u_c^{-q} q^{-7/3} \,.
\end{equation*}
\end{theorem}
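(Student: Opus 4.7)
The plan is to follow the classical pipeline for enumeration of Ising-decorated maps: derive a Tutte-type functional equation for $Z(u,v)$ from a combinatorial decomposition, solve it by means of a rational parametrization, and then perform singularity analysis to extract coefficient asymptotics.

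First, I would write a functional equation for $Z(u,v)$ by peeling at the root edge, which under the Dobrushin boundary condition has a \+ spin on both sides. The third vertex of the root triangle is either an internal vertex (giving two terms, one for each possible spin assigned to the root triangle), or one of the boundary vertices, which splits the triangulation into two bicolored submaps whose boundary conditions are either Dobrushin again, or fully \+, or fully \<. The monochromatic-boundary pieces are enumerated by the series already computed in \cite{BBM11}, so the decomposition produces a polynomial equation for $Z(u,v)$ with $u,v$ as catalytic variables.

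Second, I would solve this equation by extending the rational parametrization of \cite{BBM11}. The idea is to look for rational functions $\hat Z, \hat u, \hat v$ of two auxiliary parameters satisfying the Tutte equation identically, and specializing to the BBM parametrization on the locus $v=0$ (and symmetrically on $u=0$, via the spin-flip symmetry $Z(u,v)=Z(v,u)$). Both algebraicity of $Z(u,v)$ and the explicit formulas announced in the theorem then follow, with the computer-algebra verification recorded in \cite{CAS1}.

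Third, I would perform singularity analysis. For $u$ in a real neighborhood of $0$, the section $v\mapsto Z(u,v)$ has a unique dominant singularity coming from the confluence of two branches of the parametrization; by direct examination of $\hat v$ and by comparison with the case $u=0$, this singularity is located at $v=u_c$ and is of Puiseux type $(1-v/u_c)^{4/3}$. The coefficient of the singular term, read as a series in $u$, coincides with $A(u)$, and substitution in the parametrization identifies it with $\hat A(H)$ via $u=\hat u(H)$. The Flajolet--Odlyzko transfer theorem then yields $z_{p,q}\sim A(u)\,u_c^{-q}\,q^{-7/3}/\Gamma(-4/3)$ as $q\to\infty$. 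Iterating the analysis on $A$: near $u=u_c$, the parametrization gives $u_c-u\sim\frac{5u_c}{3}(1-H)^3$ and $\hat A(1)-\hat A(H)=O(1-H)$, so $A(u)$ inherits a singularity of type $(1-u/u_c)^{1/3}$ at $u=u_c$, whence $a_p\sim b\,u_c^{-p}\,p^{-4/3}/\Gamma(-1/3)$ with $b=-\frac{27}{20}(3/2)^{2/3}$ after collecting the leading constants.

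The main obstacle is the \emph{uniform} version of the asymptotic for $Z_q(u)$, valid for every $u\in\complex$ with $|u|\le u_c$, and in particular on the critical circle where $A(u)$ itself becomes singular. This requires that $v=u_c$ remain the unique dominant singularity of $v\mapsto Z(u,v)$ for \emph{all} such $u$, and that a $\Delta$-domain for $v$ can be chosen independently of $u$ on which the Puiseux expansion holds uniformly. I would establish this from the rational parametrization by checking that the critical confluence is the only source of singularity on the circle $|v|=u_c$ uniformly in $u$, using in particular that $v=u_c$ corresponds to a single point of the parameter space, and then invoking a uniform version of the Flajolet--Sedgewick transfer theorem.
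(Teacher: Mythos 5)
Your blueprint---Tutte equation with two catalytic variables, a rational parametrization at the critical point extending Bernardi--Bousquet-M\'elou, then singularity analysis via the Flajolet--Odlyzko transfer theorem---is the same as the paper's. The paper's route through the functional equation is slightly more economical (it first reduces to a one-catalytic-variable equation for $Z_0(u)$ by coefficient extraction and then reconstructs $Z(u,v)$ as a rational function of $Z_0$ and $Z_1$, rather than parametrizing the bivariate object directly), but this is a matter of bookkeeping and your route would land on the same parametrization $u=\hat u(H)$, $v=\hat u(K)$, $Z=\hat Z(H,K)$. Your identification of $A(u)$ as the coefficient of $(1-v/u_c)^{4/3}$ and your iteration to extract $a_p$ from the $(1-u/u_c)^{1/3}$ singularity of $A$ are both correct.

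The genuine gap is the last step: passing from the per-$u$ asymptotic $Z_q(u)\sim \frac{A(u)}{\Gamma(-4/3)}u_c^{-q}q^{-7/3}$ to the coefficient asymptotic $z_{p,q}\sim \frac{a_p}{\Gamma(-4/3)}u_c^{-q}q^{-7/3}$ for fixed $p$. You write ``$z_{p,q}\sim A(u)\,u_c^{-q}\,q^{-7/3}/\Gamma(-4/3)$'', which already signals the confusion: even a uniform-in-$u$ transfer theorem would give you asymptotics of the sums $Z_q(u)$ on the closed disc, not of the individual Taylor coefficients at $u=0$---coefficient extraction does not commute with $q\to\infty$ asymptotic equivalence in general, so no ``uniform transfer theorem'' closes this step. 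The paper's actual mechanism is positivity: for each $q$, the normalized sequence $\bigl(z_{p,q}u_c^p/Z_q(u_c)\bigr)_{p\ge 0}$ is a probability distribution whose generating function $Z_q(u_c u)/Z_q(u_c)$ converges pointwise to $A(u_c u)/A(u_c)$; a continuity theorem for probability generating functions (\cite[Thm.~IX.1]{FS09}, with the paper's observation that the a priori assumption that the limit is a probability distribution can be dropped) then yields $z_{p,q}/Z_q(u_c)\to a_p/A(u_c)$. Related to this, your ``main obstacle'' paragraph misidentifies the difficulty: establishing that $v=u_c$ is the dominant singularity of $v\mapsto Z(u,v)$ for each $u\in\overline D_{u_c}$ (Lemma~\ref{lem:dom sing Z}(iii)) is done pointwise in $u$ with a $u$-dependent slit disk---no uniformity of the $\Delta$-domain is required, since each $u$ gets its own application of the transfer theorem---and the real work is the coefficient-extraction step just described, which your proposal does not address.
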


\begin{remark*}
(i) The coefficients $z_{p,q}$ decay with a \emph{perimeter exponent} $7/3$, which is different from the perimeter exponent $5/2$ in the Brownian map universality class. This is in agreement with the behavior of the \emph{volume exponent} in Proposition~\ref{prop:BBM}.
\smallskip


\noindent
(ii) The exponents of the two asymptotics in Theorem~\ref{thm:z_p,q} differ by $7/3-4/3=1$. This difference dictates how the length of the main Ising interface in the Ising-decorated random triangulation scales when the perimeters $p$ and $q$ are large. The value $1$ implies that the length of the interface scales linearly with the perimeter (see in Theorem~\refp{2}{thm:scaling limit} and Proposition~\ref{prop:real perimeter}). If the difference was 0, then the main interface would not grow with the perimeter, and this interface would become a bottleneck in a large Ising-decorated triangulation. In an upcoming work, we will show that such a bottleneck actually appears at low temperatures (i.e.\ when $\nu>\nu_c$).
\end{remark*}

\paragraph{Boltzmann Ising-triangulation and peeling along its interface.} 
Thanks to the finiteness of $z_{p,q}$, we can define a probability measure on $\bts_{p,q}$ by
\begin{equation*}
\prob_{p,q}\bt\ =\ \frac1{z_{p,q}}\, \nu_c^{|\medges \bt|} \, t_c^{|\faces(\tmap)|} \,.
\end{equation*}
Under $\prob_{p,q}$, the law of the spin configuration $\sigma$ conditionally on $\tmap$ is given by the classical Ising model on $\tmap$. And when $\nu=1$, the triangulation $\tmap$ follows the distribution of a Boltzmann triangulation of the $(p+q)$-gon as introduced in \cite{AS03}, with a weight $2t_c$ per internal face. For these reasons we call $\prob_{p,q}$ the law of a (critical) \emph{Boltzmann Ising-triangulation of the $(p,q)$-gon}. The expectation associated to $\prob_{p,q}$ is denoted $\E_{p,q}$.

In order to extract information on the geometry of Boltzmann Ising-triangulations from Theorem~\ref{thm:z_p,q}, we use a peeling process that explores the triangulation along the Ising-interface.\footnote{In order to be tractable, the peeling process has to follow the Ising interface so that the boundary condition remain Dobrushin after any number of peeling steps. See Section~\ref{sec:interface} for details. This is reminiscent to the peeling exploration of a Bernoulli percolation on the UIHPT, see \cite{AC13}.}
More precisely, an \emph{interface} refers to a non-self-intersecting (but not necessarily simple) path formed by non-monochromatic edges. Assuming that the boundary of $\bt$ is not monochromatic, there must be exactly two boundary vertices where the \+ and \< boundary components meet. One of them is the origin $\rho$. We call $\rho^\dagger$ the other one. We denote by $\iroot$ the \emph{leftmost interface from $\rho$ to $\rho^\dagger$} as given in Figure~\refp{a}{fig:rmi}.\,\footnote{The non-monochromatic edges in $\bt$ form a subgraph of $\tmap$ which has even degree at every vertex except for $\rho$ and $\rho^\dagger$. Therefore $\rho$ and $\rho^\dagger$ must belong to the same connected component of this subgraph, that is, there is at least one interface from $\rho$ to $\rho^\dagger$.}

\begin{figure}
\centering
\includegraphics[scale=1]{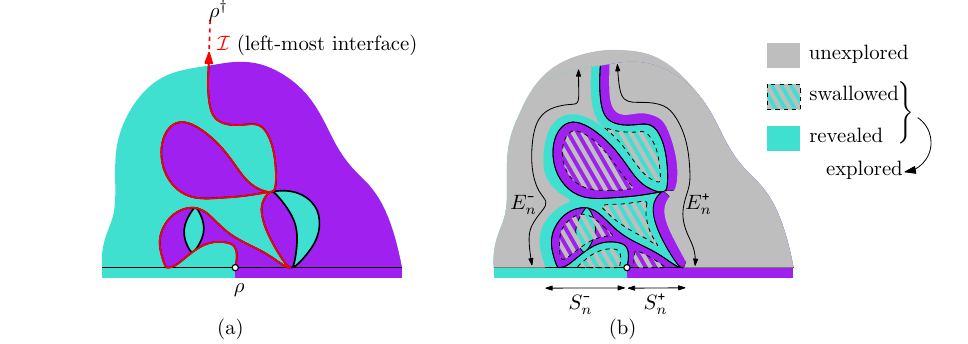}
\caption{(a) The leftmost interface $\iroot$ from $\rho$ to $\rho^\dagger$ in a bicolored triangulation. For clarity, only the non-monochromatic edges connected to $\rho$ are drawn.
(b) An illustration of the map $\emap_n$ explored by the peeling process by the time $n$. Notice that the perimeter variations $(X_n,Y_n)$ can be read from the map $\emap_n$ as $X_n=E^\+_n-S^\+_n$ and $Y_n=E^\<_n-S^\<_n$. }
\label{fig:rmi}
\end{figure}


We will consider a peeling process that explores $\iroot$ by revealing one triangle adjacent to $\iroot$ at each step, and possibly swallowing a finite number of other triangles. Formally, we define the \emph{peeling process} as an increasing sequence of \emph{explored maps} $\nseq \emap$. The precise definition of $\emap_n$ will be left to Section~\ref{sec:peeling}. See Figure~\refp{b}{fig:rmi} for an illustration. 

The peeling process can also be encoded by a sequence of \emph{peeling events} $\nseq[1] \Step$ taking values in some countable set of symbols, where $\Step_n$ indicates the position of the triangle revealed at time $n$ relative to the explored map $\emap_{n-1}$. The detailed definition is again left to Section~\ref{sec:peeling}. 
The sequence $\nseq[1] \Step$ contains slightly less information than $\nseq \emap$, but it has the advantage that its law can be written down fairly easily and one can perform explicit computations with it. We denote by $\Prob_{p,q}$ the law of the sequence $\nseq[1] \Step$ under $\prob_{p,q}$.

In order to understand the geometry of large Boltzmann Ising-triangulations, we want to study the peeling process in the limit $p,q\to\infty$. The regime where $p$ and $q$ go to infinity at comparable speeds is probably the most natural and interesting one. 
However, extracting the asymptotics of $z_{p,q}$ from its generating function $Z(u,v)$ in this limit poses a significant technical challenge. We leave the study of this regime to an upcoming work.
Instead, we will look into the regime where $q$ goes to infinity before $p$. The first step consists of showing that the law of the sequence $\Prob_{p,q}$ converges weakly as follows: 
\begin{proposition}\label{prop:comb cv}
$\Prob_{p,q} \cv[]q \Prob\py \cv[]p \Prob\yy$, where $\Prob\py$ and $\Prob\yy$ are probability distributions.
\end{proposition}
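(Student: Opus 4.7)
The plan is to prove weak convergence on the space of infinite peeling-event sequences (with the product topology) by showing convergence of all cylindrical probabilities. The key input is the spatial Markov property of $\prob_{p,q}$: conditionally on the explored map $\emap_n$, the unexplored region is itself a Boltzmann Ising-triangulation of its own frontier. Iterating, the probability of any admissible cylinder $(s_1, \ldots, s_n)$ can be written
\begin{equation*}
\Prob_{p,q}\bigl(\Step_1 = s_1, \ldots, \Step_n = s_n\bigr) \;=\; \frac{W(s_1, \ldots, s_n)\; z_{p_n, q_n}}{z_{p, q}}\,,
\end{equation*}
where $W$ is a finite product of the Boltzmann factor $\nu_c^{a(s_i)} t_c$ of each revealed triangle and of the partition functions of the finite regions swallowed along the way; crucially $W$ depends only on $(s_1, \ldots, s_n)$ and not on $(p,q)$, and $(p_n, q_n) = (p, q) + \Delta(s_1, \ldots, s_n)$ for a deterministic shift $\Delta$ depending only on the cylinder.

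For the first limit $q \to \infty$ at fixed $p$, I substitute the asymptotic of Theorem~\ref{thm:z_p,q}. Since $q_n - q$ is a fixed integer (depending only on the cylinder), the sub-exponential factors $q_n^{-7/3}/q^{-7/3}$ tend to $1$, giving
\begin{equation*}
\frac{z_{p_n, q_n}}{z_{p, q}} \;\xrightarrow[q\to\infty]{}\; \frac{a_{p_n}}{a_p}\, u_c^{q - q_n}\,,
\end{equation*}
and hence a pointwise limit of each cylindrical probability which defines a candidate $\Prob\py$. For the second step $p\to\infty$, the same scheme applies with the asymptotic $a_p \sim \frac{b}{\Gamma(-1/3)} u_c^{-p} p^{-4/3}$: the ratio $a_{p_n}/a_p$ converges to $u_c^{p - p_n}$, producing the pointwise limit
\begin{equation*}
\Prob\yy\bigl(\Step_1 = s_1, \ldots, \Step_n = s_n\bigr) \;=\; W(s_1, \ldots, s_n)\, u_c^{(p - p_n) + (q - q_n)}\,.
\end{equation*}

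It remains to check that these pointwise limits define genuine probability measures, i.e., that they sum to $1$ over each generation. This is the main obstacle, because at a single peeling step there are countably many events (a revealed triangle may swallow an arbitrarily large finite region), so one must justify exchanging sum and limit. I would proceed via the one-step peeling (Tutte) decomposition
\begin{equation*}
\sum_s\, W(s)\, z_{p' + \Delta_1(s),\, q' + \Delta_2(s)} \;=\; z_{p', q'}\,,
\end{equation*}
and pass it to the limit $q' \to \infty$ (then $p' \to \infty$) by dominated convergence, using the uniform asymptotic control afforded by Theorem~\ref{thm:z_p,q} (in particular the explicit rational parametrization of $A(u)$ and the error term $O(p^{-5/3})$ for $a_p$) to construct integrable envelopes on the summands. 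Once normalization holds at each generation, the Kolmogorov extension theorem identifies $\Prob\py$ and $\Prob\yy$ as probability measures on the sequence space, and the convergence of cylindrical probabilities yields $\Prob_{p,q} \cv[]q \Prob\py \cv[]p \Prob\yy$ as claimed.
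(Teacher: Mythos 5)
Your overall scheme is the same as the paper's: factorize cylinder probabilities via the spatial Markov property into products of one-step peeling probabilities (equivalently, as $W(s_1,\ldots,s_n)\,z_{p_n,q_n}/z_{p,q}$), pass each cylinder to the limit with the $z_{p,q}$ asymptotics, and then verify normalization. The difference is entirely in how you close the normalization gap. The paper sidesteps any exchange of sum and limit: it rewrites the identity $\sum_s \Prob\py(\Step_1=s)=1$ as a power-series identity in $u$ involving $A$, $Z_0$, $Z_1$, and then observes that this identity is exactly the coefficient of the dominant singular term $(1-v/u_c)^{4/3}$ in the expansion of Tutte's equation \eqref{eq:2cat} at $v=u_c$; the $\Prob\yy$ case is likewise the coefficient of $(1-u/u_c)^{1/3}$ at $u=u_c$. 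This is short and exact, reusing an equation already established. Your route is purely analytic and more hands-on: you trade a functional-equation computation for a tail estimate on the step distribution.

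That analytic route does work, but the phrase ``dominated convergence via integrable envelopes'' is not quite the right tool for both limits, and you should be aware of where it strains. For $q\to\infty$ at fixed $p$ a genuine envelope exists: since each event $\LL_k^\jj$ or $\RR_{p+k}^\jj$ is only admissible when $q\ge 2k$, the ratio $(q-k)^{-7/3}/(q+1)^{-7/3}$ is bounded by $2^{7/3}$ on its domain, yielding a summable dominating sequence $\ea k^{-7/3}$. For $p\to\infty$, however, the event $\RR^\+_k$ exists for all $p\ge k$, and $\sup_{p\ge k}\Prob\py(\Step_1=\rp)\ea k^{-1}$ (the sup being attained for $p$ of order $k$), which is not summable, so there is no single dominating envelope valid for all $p$. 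What does work is a tightness argument (or, equivalently, a split of the sum at a fixed cut-off $K$ plus uniform control of the tail and of the $O(p^{-1})$ mass sitting near $k\approx p$), from which $\sum_s \Prob\yy(\Step_1=s)=1$ follows since Fatou already gives $\le 1$. If you adopt that adjustment the argument goes through; as written, the dominated-convergence step would fail at the $p\to\infty$ stage.
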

\noindent
The distributions $\Prob\py$ and $\Prob\yy$ will be constructed explicitly in Section~\ref{sec:limit S}, thus no tightness argument is needed in the proof of the above convergence. Geometrically, Proposition~\ref{prop:comb cv} should be understood as the convergence in distribution of the explored map $\emap_n$ for \mbox{any fixed $n$}.

\paragraph{The perimeter processes and their scaling limits.}
One crucial point in the definition of the peeling process is that the \emph{unexplored map}, i.e.\ the complement of the explored \mbox{map $\emap_n$}, remains an Ising-triangulation with Dobrushin boundary condition for all $n$.
We denote by $(P_n,Q_n)$ the boundary condition of the unexplored map at time $n$, and by $(X_n,Y_n)$ its variations, that is, $X_n=P_n-P_0$ and $Y_n=Q_n-Q_0$. Geometrically, $X_n$ (resp.\ $Y_n$) is the number of newly discovered \+ boundary edges (resp.\ \< boundary edges), minus the number of \+ boundary edges (resp.\ \< boundary edges) \emph{swallowed} by the peeling process up to \mbox{time $n$}. See Figure~\refp{b}{fig:rmi}. 

It will be clear from the definition of the peeling process that $(X_n,Y_n)$ is a deterministic function of the peeling events $(\Step_k)_{1\le k\le n}$ with a well-defined limit when $p,q\to\infty$. This allows us to define the law of the process $\nseq{X_n,Y}$ under $\Prob\yy$ despite the fact that $P_n=Q_n=\infty$ almost surely in this case. Similarly, $\nseq{X_n,Y}$ is also well-defined under $\Prob\py$. However, it is easier to study the process $\nseq P$ in this case because it is Markovian under $\Prob\py$. These processes have the following scaling limits.

\begin{theorem}[Scaling limit of the perimeter processes]~\label{thm:scaling limit}\\
(1) Under $\Prob\yy$, the process $\nseq{X_n,Y}$ is a random walk (i.e.\ with i.i.d.\ increments) on $\integer^2$ starting from $(0,0)$. Its two components have the same positive drift: $\EE\yy[X_1] = \EE\yy[Y_1]=\mu:=\frac1{4\sqrt7}$. Moreover, the fluctuation of $\nseq{X_n,Y}$ around its mean has the scaling limit:
\begin{equation*}
\frac1{n^{3/4}}\mb({	 X_{\floor{nt}} -\mu nt,
					 Y_{\floor{nt}} -\mu nt }_{t\ge 0}
\cv[]n \mb({\mathcal X_t,\mathcal Y_t}_{t\ge 0} \,,
\end{equation*}
where $\mathcal X$ and $\mathcal Y$ are two independent spectrally-negative $\frac43$-stable L\'evy processes of L\'evy measure $\frac{c_x}{\abs x^{7/3}}\idd{x<0}\dd x$ and $\frac{c_y}{\abs y^{7/3}}\idd{y<0}\dd y$, for some explicit constants $c_x>c_y>0$.
\smallskip

\noindent
(2) Under $\Prob\py$, the process $\nseq P$ is a Markov chain on $\integer_{\ge 0}$ which starts from $p$ and hits zero almost surely in finite time. It has the following scaling limit:
\begin{equation*}
p^{-1} (P_{\floor{pt}})_{t\ge 0} \cv[]p (\mathcal D_t)_{t\ge 0} \,,
\end{equation*}
where $(\mathcal D_t)_{t\ge 0}$ is the deterministic drift process $(1+\mu t)_{t\ge 0}$ that jumps to zero and stays there after a random time $\zeta$ whose law is given by
\begin{equation*}
\prob(\zeta>t)=(1+\mu t)^{-4/3} \,.
\end{equation*}
Both convergences take place in distribution with respect to the Skorokhod topology.
\end{theorem}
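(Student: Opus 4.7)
The plan is to exploit the explicit constructions of $\Prob\yy$ and $\Prob\py$ from Section~\ref{sec:limit S} together with a spatial Markov property of the peeling exploration. Under $\Prob\yy$ the unexplored map after any peeling step is again an infinite Ising-triangulation with doubly-infinite Dobrushin boundary, so its law is preserved by peeling; consequently the peeling events $(\Step_n)_{n\ge 1}$ are i.i.d.\ under $\Prob\yy$, and since $(X_n,Y_n)$ is an additive functional whose increments depend only on $\Step_n$, it is a random walk on $\integer^2$. Under $\Prob\py$ the same argument applied to the + boundary alone yields the Markov property of $\nseq P$, and its transition kernel is given explicitly in terms of ratios of the coefficients $a_{p'}$ of Theorem~\ref{thm:z_p,q}.

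For part~(1) the key geometric observation is that each peeling step modifies at most one of $X,Y$: the revealed triangle sits on a definite side of the interface, and any bubble it swallows lies on that same side. The marginal law of $X_1$ under $\Prob\yy$ therefore takes the value $+1$ (``free'' peeling moves that extend the + boundary) together with values $-k$ for $k\ge 0$ weighted by explicit partition-function ratios involving the $a_{p'}$. From the singular expansion $a_p\sim (b/\Gamma(-1/3))u_c^{-p}p^{-4/3}$ we read off tails of the form $\Prob\yy(X_1=-k)\sim c_x k^{-7/3}$ and $\Prob\yy(Y_1=-k)\sim c_y k^{-7/3}$. The drift is a convergent alternating sum that evaluates to $\mu = 1/(4\sqrt 7)$. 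The functional convergence towards a pair of spectrally-negative $\tfrac{4}{3}$-stable L\'evy processes now follows from the classical invariance principle for i.i.d.\ sums in the domain of attraction of an $\alpha$-stable law with $\alpha=4/3$, the scaling $n^{-1/\alpha}=n^{-3/4}$ being the correct one. Independence of $\mathcal X$ and $\mathcal Y$ is automatic: the disjointness of jumps forces the joint L\'evy measure to be supported on the two coordinate axes.

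For part~(2) I split the trajectory of $\nseq P$ into a drift phase punctuated by a single catastrophic jump. On time scales short of a catastrophic jump the step distribution of $(P_n)$ under $\Prob\py$ coincides with that of $\Prob\yy$ up to an $o(1)$ correction (since $a_{p'-k}/a_{p'}\to 1$ whenever $k=o(p')$); the $n^{3/4}$ fluctuation bound from part~(1) then gives $P_{\floor{pt}}/p\to 1+\mu t$ in probability away from catastrophic jumps. Using the transition kernel, the probability that a single step from state $p'$ swallows at least a fraction $1-x$ of the + boundary satisfies, as $p'\to\infty$, $\Prob\py\bigl(P_{n+1}\le xp'\mid P_n=p'\bigr)\sim h(x)/p'$ for some explicit profile $h$, the factor $1/p'$ coming from the $p^{-4/3}$ decay of $a_p$ combined with an extra factor $p$ from the number of admissible landing sites, and crucially $h(0)=4\mu/3$. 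Approximating $P_n\approx p(1+\mu n/p)$ during the drift phase and using a Poissonization argument, the integrated hazard of a catastrophic jump up to rescaled time $t$ is
\begin{equation*}
\sum_{n=0}^{\floor{pt}}\frac{h(0)}{P_n}\ \longrightarrow\ \int_0^t\frac{4\mu/3}{1+\mu s}\,\dd s\ =\ \tfrac{4}{3}\log(1+\mu t),
\end{equation*}
whence $\prob(\zeta>t)=(1+\mu t)^{-4/3}$; combining with the deterministic drift before $\zeta$ and absorption at $0$ after $\zeta$ yields the Skorokhod convergence to $(\mathcal D_t)_{t\ge 0}$.

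The main obstacle is the extraction of the precise constants $c_x,c_y$ and the profile $h$ from the rational parametrization of Theorem~\ref{thm:z_p,q}: each requires a careful singular expansion at the critical point $H=1$ of the generating function encoding the one-step peeling weights, and in the case of $h$ one must control uniformly the ``large swallowing'' contributions, which combine the asymptotics of both $a_{p'}$ and $Z_q(u)$. Once these singular data are in hand, the rest is a routine application of stable-law invariance and Poissonization of the catastrophic-jump hazard.
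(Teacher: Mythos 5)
Your outline for part (1) tracks the paper's route closely: i.i.d.\ increments under $\Prob\yy$ from the spatial Markov property, tails $\Prob\yy(X_1=-k)\sim c_x k^{-7/3}$, $\Prob\yy(Y_1=-k)\sim c_y k^{-7/3}$ read off from the asymptotics of $a_p$, drift computed from the rational parametrization, and the one-dimensional stable invariance principle for each component. One small inaccuracy: it is not true that \emph{each peeling step modifies at most one of $X,Y$} (for $\cp,\lp,\rp$ both coordinates change); the correct structural fact is that $-2\le\max(X_1,Y_1)\le2$, so the two coordinates never make a \emph{large negative} jump at the same step. Your appeal to ``joint L\'evy measure supported on the coordinate axes $\Rightarrow$ independence'' is correct in spirit for pure-jump bivariate L\'evy processes, but it presupposes a bivariate functional limit theorem with identification of the joint L\'evy measure. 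The paper instead proves joint convergence and independence by hand, decomposing $(X,Y)$ into two walks that jump along different axes and then Poissonizing so that the splitting property of compound Poisson processes yields independence, followed by depoissonization (Lemma~\ref{lem:poisson}); your sketch would need a reference to a multivariate triangular-array theorem to reach the same level of rigour.

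For part (2) there is a genuine gap. The statement that during the drift phase \emph{``the step distribution of $(P_n)$ under $\Prob\py$ coincides with that of $\Prob\yy$ up to an $o(1)$ correction, so the $n^{3/4}$ fluctuation bound from part (1) gives $P_{\floor{pt}}/p\to 1+\mu t$''} does not follow. The total-variation closeness of one-step laws holds for fixed jump sizes but \emph{not} in the tail: the probability that a single step drops $P$ by more than $\epsilon p$ is of order $\Theta(1/p)$ under $\Prob\py$ but only $\Theta(p^{-4/3})$ under $\Prob\yy$ --- a polynomially large discrepancy. Over a time window of order $p$ the former accumulates to $\Theta(1)$, so one cannot deduce a uniform $n^{3/4}$ deviation bound for $\law\py \nseq{P}$ from the invariance principle for $\law\yy \nseq{X}$. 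In fact the paper explicitly observes $\lim_{p\to\infty}\EE\py{}[X_1]=-\mu/3<0$, so naively matching means would predict the wrong drift direction; the positive drift is only recovered once one conditions away the single catastrophic jump. Establishing that the perimeter process stays near the line $p+\mu n$ \emph{until} the catastrophic jump --- which is exactly your ``drift phase'' hypothesis --- is the content of the one-jump lemma (Lemma~\ref{lem:one jump}), whose proof occupies Appendix~\ref{sec:lemma proof} and requires both a union bound over macroscopic jumps and a Chernoff-type bound on accumulated fluctuations with a carefully chosen barrier $\barrier$. Your ``Poissonization of the integrated hazard'' correctly identifies $h(0)=c_\infty=\tfrac43\mu$ and the resulting exponent $c_\infty/\mu=\tfrac43$ (this matches the paper's Proposition~\ref{prop:scaling}), but the hazard calculation itself is only valid once the drift-phase control has been proved; without the one-jump lemma the argument is circular.
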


An important point in Theorem~\refp{1}{thm:scaling limit} is that $\mu$, the common drift of $\nseq X$ and $\nseq Y$, is strictly positive so that both $X_n$ and $Y_n$ tend to $+\infty$ when $n\to\infty$. Geometrically, it means that under $\Prob\yy$, the peeling process discovers more and more edges on both sides of the interface $\iroot$ and comes back to the boundary only finitely many times. This is in contrast with the behavior of the percolation interface on uniform random maps of the half plane (e.g.\ the UIHPT) with the same boundary condition, which comes back to the boundary infinitely often (see \cite{Ang05,AC13}). This difference of the interface behaviour is reminiscent to the difference of SLE(3) and SLE(6), which arise respectively as scaling limits of critical Ising and percolation interfaces on regular lattices \cite{CDHKS14,Smi01}. In the case of critical face percolation on the UIHPQ, Gwynne and Miller recently proved that the percolation interface converges towards SLE(6) in a LQG (or Brownian) half plane  \cite{GM18}.

Theorem~\refp{2}{thm:scaling limit} says that on time scales $n\ll p$, the process $\nseq X$ under $\Prob\py$ increases with a drift $\mu$ like under $\Prob\yy$. However on the time scale $n=O(p)$, the effect of the finiteness of the \+ boundary appears and makes $P_n$ hit zero in finite time. Geometrically, the large negative jump of $\nseq P$ corresponds to the first time that the peeling process hits a boundary vertex close to $\rho^\dagger$, swallowing most of the \+ edges on the boundary. 
The random time $\zeta$ should be interpreted as a length: for large $p$, the total length of the interface $\iroot$ under $\Prob\py$ is almost surely finite and roughly $\zeta p$. There is a conjectural interpretation of $\zeta$ as the length of the interface in a gluing of a $\sqrt{3}$-Liouville quantum disk with a thick quantum wedge, in which the perimeter of the quantum disk is sampled from the Lévy measure of a stable process. See \cite{DMS14}, \cite{AG19} for the definitions and basic properties of the aforementioned objects. Whether there is a relationship between the two parts of Theorem~\ref{thm:scaling limit} involved in this interpretation is also an open problem. More discussion on this is given in Section~\ref{sec:interface}.

Notice that in Theorem~\refp{1}{thm:scaling limit}, although the drifts are equal, there is an asymmetry between the fluctuations of the processes $\nseq X$ and $\nseq Y$. This is not surprising because they are defined by the peeling process that explores the \emph{leftmost} interface. Nevertheless, this asymmetry is \emph{not} related to the fact that we have taken first the limit $q\to\infty$ and then the limit $p\to\infty$. In fact, one can check that taking the limit $p\to\infty$ and then $q\to\infty$ yields the same distribution $\Prob\yy$. See the discussion on the peeling process along the rightmost interface in Section~\ref{sec:interface}. We conjecture that the distribution $\Prob\yy$ actually arises when $p,q\to\infty$ at any relative speed.

\begin{conjecture*}
$\Prob_{p,q}\longrightarrow \Prob\yy$ weakly whenever $p,q\to\infty$.
\end{conjecture*}

\paragraph{Local limits and geometry.} Another way to improve Proposition~\ref{prop:comb cv} is to strengthen it to the local convergence of the underlying map. The local distance between bicolored maps is a straightforward generalization of local distance between uncolored maps:
\begin{equation*}
d\1{loc}(\bt,\bt[']) = 2^{-R}\qtq{where}
	R = \sup\Set{r\geq 0}{ \btsq_r=\btsq[']_r }
\end{equation*}
and $\btsq_r$ denotes the ball of radius $r$ around the origin in $\bt$ which takes into account the colors of the faces. See Section~\ref{sec:def P(p)} for a more precise definition of $\btsq_r$. Similarly to the uncolored maps, the set $\bts$ of (finite) bicolored triangulations of polygon is a metric space under $d\1{loc}$. Let $\overline{\bts}$ be its Cauchy completion. 

Recall that an (infinite) graph is \emph{one-ended} if the complement of any finite subgraph has exactly one infinite connected component. It is well known that a one-ended map has either zero or one face of infinite degree \cite{CurPeccot}. We call an element of $\overline{\bts}\setminus \bts$ a \emph{bicolored triangulation of the half plane} if it is one-ended and its external face has infinite degree. Such a triangulation has a proper embedding in the upper half plane without accumulation points and such that the boundary coincides with the real axis, hence the name. We denote by $\bts_\infty$ the set of all bicolored triangulations of the half plane.

\begin{theorem}[Local limits of Ising-triangulation]~\label{thm:cv}\\
(1) There exist probability distributions $\prob\py$ and $\prob\yy$ supported on $\bts_\infty$, such that
\begin{equation*}
\prob_{p,q}\ \cv q\ \prob_p\ \cv p\ \prob_\infty
\end{equation*}
weakly. In addition, if $\prob_\pqq$ denotes the pushforward of $\prob_{p,q_1+q_2}$ by the mapping that translates the origin $q_1$ edges to the left along the boundary, then for all fixed $p\ge 0$, we have $\prob_\pqq \xrightarrow{d\1{loc}} \prob_0$ weakly as $q_1,q_2\to\infty$.
\smallskip

\noindent
(2)	$\prob_p$-almost surely, $\bt$ contains only one infinite spin cluster, which is of spin \<.
\smallskip

\noindent
(3) $\prob_\infty$-almost surely, 
$\bt$ contains exactly two infinite spin clusters. One of them is of spin \+ on the right of the root, and the other is of spin \< on the left of the root. They are separated by a strip of finite clusters, which only touches the boundary of $\bt$ in a finite interval.
\end{theorem}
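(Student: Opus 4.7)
The plan is to upgrade the convergence of peeling events from Proposition~\ref{prop:comb cv} to local convergence of the Ising-triangulations, using the fact that the explored map $\emap_n$ is a measurable function of the peeling events $(\Step_k)_{k\le n}$ and that $\emap_n$ recovers larger and larger neighborhoods of the origin as $n\to\infty$. Explicit candidates for $\prob\py$ and $\prob\yy$ can be built from the limiting peeling laws as follows. Under $\Prob\py$, Theorem~\refp{2}{thm:scaling limit} guarantees an almost surely finite hitting time $\zeta_* := \inf\{n : P_n = 0\}$; at this moment the interface exploration has consumed the entire \+-side of the boundary, and one declares the candidate triangulation to be $\bigcup_n \emap_n$ glued, along the remaining unexplored boundary, to an (independent, by the peeling Markov property) Boltzmann Ising-triangulation of the \<-bordered half-plane. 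Under $\Prob\yy$, the positive common drift $\mu > 0$ of $(X_n, Y_n)$ in Theorem~\refp{1}{thm:scaling limit} forces $P_n, Q_n \to +\infty$, so the interface runs forever and $\bigcup_n \emap_n$ together with the finite bubbles pinched off at each peeling step directly produces a one-ended half-plane triangulation in $\bts_\infty$ with both \+ and \< boundaries infinite.

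The convergences $\prob_{p,q} \to \prob\py$ and $\prob\py \to \prob\yy$ then reduce to showing that for every fixed $r$, the ball $\btsq_r$ around $\rho$ coincides with the corresponding ball of $\emap_N$ for some almost surely finite random $N = N(r)$, both in the limiting distributions and, with high probability uniformly in $p,q$ large, under the prelimits $\prob_{p,q}$. Given this, the convergence of peeling events at fixed time $N$ from Proposition~\ref{prop:comb cv} transports directly into convergence of $\btsq_r$ in distribution, which is exactly local convergence. The two-sided statement $\prob_\pqq \to \prob_0$ follows by re-rooting: after translating the origin $q_1$ edges to the left along the \<-boundary, the \+-segment sits at boundary distance $q_1$ from the new origin and escapes to infinity, so a peeling exploration launched from the new origin never encounters the \+-part in finite time and converges to the same limit as the $p = 0$ peeling, namely $\prob_0$.

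Parts (2) and (3) then follow from the concrete form of the limits. Under $\prob\py$, the \+-colored region is entirely contained in the finite explored part $\emap_{\zeta_*}$, so every \+-cluster is finite; the complementary \<-bordered half-plane almost surely contains a unique infinite \<-cluster (via a boundary-peeling argument on that half-plane), which yields (2). Under $\prob\yy$, the interface $\iroot$ is infinite and separates an infinite \+-region on the right of $\rho$ from an infinite \<-region on the left; each is (conditionally on $\iroot$) an Ising-triangulation of a monochromatic half-plane, from which one extracts a unique infinite cluster of the dominant spin; the "strip of finite clusters" is then exactly the union of bubbles pinched off by the peeling. The property that this strip touches the boundary only in a finite interval is the geometric translation of the fact, coming from $\mu > 0$ in Theorem~\refp{1}{thm:scaling limit}, that the peeling returns to the original boundary only finitely many times on each side. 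The main obstacle is the measurability step in the middle paragraph: because the peeling grows along $\iroot$ rather than radially from $\rho$, one must argue that every vertex close to $\rho$ in graph distance -- possibly far from $\iroot$ -- is captured inside a bubble pinched off at a bounded peeling time, which requires careful estimates on the diameters of pinched bubbles combined with the Markov property of the unexplored region.
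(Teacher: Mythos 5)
Your plan works in spirit for the first limit $\prob_{p,q}\to\prob\py$, but it breaks down in two places.

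First, under $\Prob\yy$ the set $\bigcup_n\emap_n$ (bubbles included) is \emph{not} a full bicolored triangulation of the half plane, and the ``measurability step'' you flag as an obstacle at the end is not merely hard but actually false. Precisely because $X_n$ and $Y_n$ drift to $+\infty$, the frontier recedes from the boundary on both sides and the unexplored regions $\umap_n$ remain unbounded forever; almost surely there are boundary vertices at bounded distance from $\rho$ that the exploration never reaches, so no choice of peeling algorithm and no bubble can capture them. The paper accepts this: $\bigcup_n\emapo_n$ under $\Prob\yy$ is only a \emph{ribbon}, and $\prob\yy$ is defined by gluing to the two sides of this ribbon two independent half-plane Ising triangulations of laws $\prob\py[0]$ and $\overline\prob\py[0]$ (spin-reversed). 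The proof of $\prob\py\to\prob\yy$ must then compare the triple $(\rmap[],\uleft[],\uright[])$ on both sides (Lemma~\ref{lem:loc cv on big jump}) and pass through a separate gluing lemma (Lemma~\ref{lem:gluing of loc cv}); there is no way to avoid this second ingredient, because the exploration does not exhaust a neighborhood of $\rho$. You do later write that the two sides of $\iroot$ are ``conditionally on $\iroot$, Ising-triangulations of a monochromatic half-plane,'' which silently concedes the point but contradicts the earlier claim and doesn't repair the argument.

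Second, several intermediate claims are unsupported or wrong. Under $\prob\py$ it is not true that every $\+$-face is inside $\emap_{T_0}$: after $T_0$ the exploration continues into the $\<$-bordered unexplored map and will reveal more $\+$-clusters; the correct argument for (2) is that $P_n$ hits $0$ infinitely often (Markov property plus Lemma~\ref{lem:hit 0}), which shows each $\+$ cluster is eventually surrounded. For $\prob_\pqq\to\prob\py[0]$, the step ``a peeling exploration launched from the new origin never encounters the $\+$-part'' is not meaningful: once you translate the origin into the $\<$ segment, the boundary is no longer Dobrushin with respect to the new origin and no interface exploration is defined there. The paper instead observes that the old exploration up to $T_0$ is unaffected by re-rooting, that the unexplored map $\umap_{T_0}$ has law $\prob_{0,Q_{T_0}}$ with $Q_{T_0}\to\infty$, and that the frontier edges are far from the new origin. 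Finally, the finiteness of $\theta_r$ under $\Prob\py$ depends essentially on a specific choice of the peeling algorithm $\algo$ (peel at the frontier vertex nearest the origin); without specifying it, the claim that $\bigcup_n\emap_n$ covers the whole map has no proof.
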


\begin{figure}[t]
\centering
\includegraphics[scale=1,page=7]{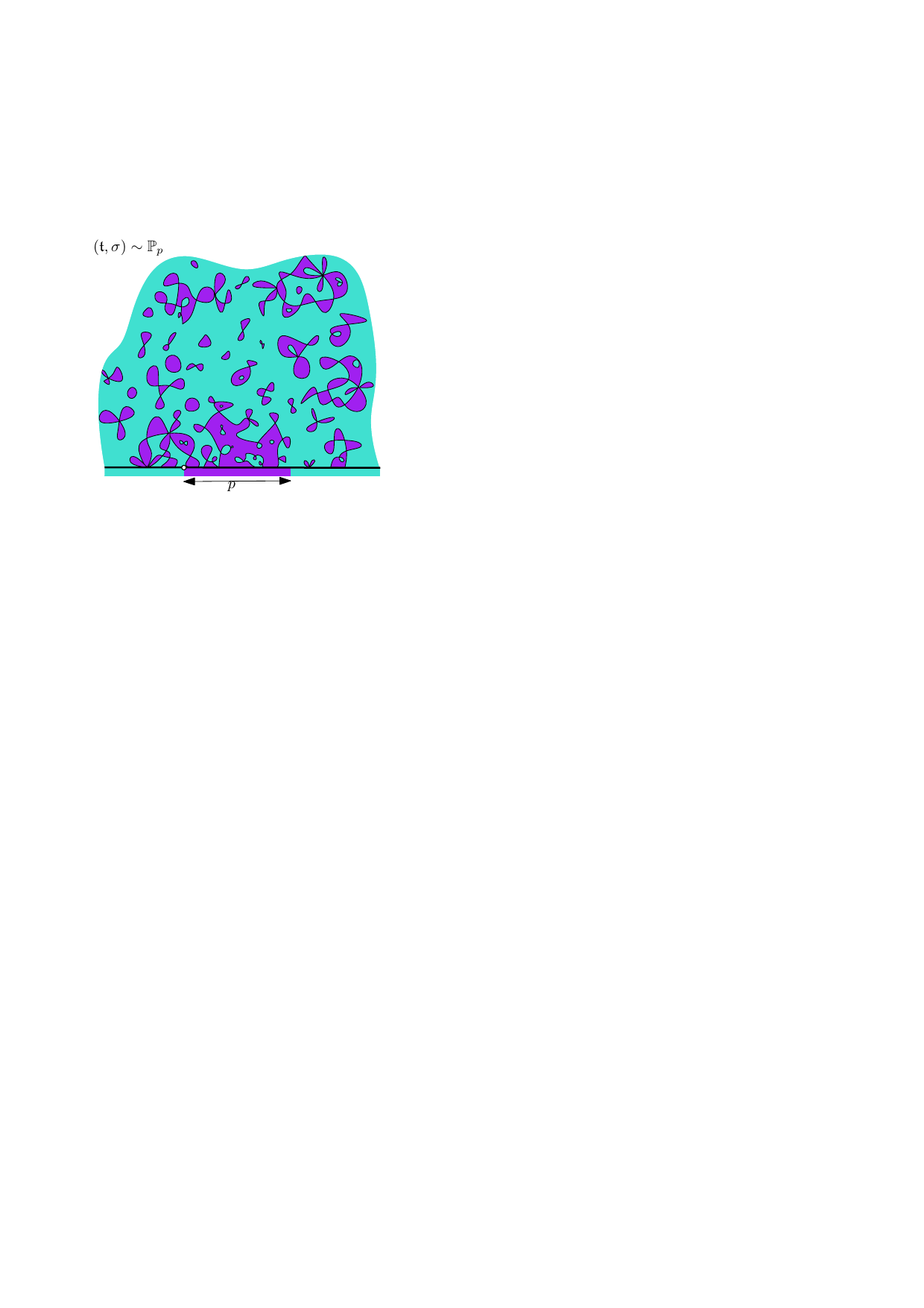}
\caption{An artistic representation of the cluster structure of an Ising-triangulation of distribution $\prob\py$ and $\prob\yy$. The dashed lines in (b) highlight the strip of finite clusters separating the two infinite clusters.}
\label{fig:cluster-topology}
\end{figure}

See Figure~\ref{fig:cluster-topology} for an illustration of the cluster structure in the Ising-triangulations of laws $\prob\py$ and $\prob\yy$.

The construction of the limits $\prob\py$ and $\prob\yy$ is based on the laws $\Prob\py$ and $\Prob\yy$ of the peeling process in Proposition~\ref{prop:comb cv}.
Under $\Prob\py$, one can extend the peeling process after it finishes exploring the leftmost interface $\iroot$, in such a way that the explored map $\emap_n$ eventually covers all the internal faces of the Ising-triangulation. Consequently $\prob\py$ can be constructed directly as the law of the union $\cup_{n\ge 0} \emap_n$ under $\Prob\py$. However, almost surely under $\Prob\yy$, the interface $\iroot$ is infinite and visits the boundary of $\bt$ only finitely many times (see the discussion after Theorem~\ref{thm:scaling limit}). Thus the peeling process only explores the faces of $\bt$ along a strip around the interface $\iroot$. 
For this reason, the Ising-triangulation of law $\prob\yy$ is constructed by gluing two infinite bicolored triangulations to both sides of the strip given by $\cup_{n\ge 0} \emap_n$ under $\Prob\yy$.
The proof of the convergences in Theorem~\refp{1}{thm:cv} follows closely the above construction of the distributions $\prob\py$ and $\prob\yy$. The structure of the proof is summarized in Figure~\ref{fig:proof-scheme} at the beginning of Section~\ref{sec:metric peeling}.
The statements \hyperref[thm:cv]{(2)} and \hyperref[thm:cv]{(3)} of Theorem~\ref{thm:cv} are direct consequences of our construction of the distributions $\prob\py$ and $\prob\yy$. More discussions about them, as well as about other properties of the spin clusters under $\prob\py$ and $\prob\yy$, will be given in Section~\ref{sec:interface}.

\paragraph{Related works}
This paper has been greatly inspired by the work \cite{BBM11} of Bernardi and Bousquet-M\'elou, which computed (among other things) the partition function of Ising-decorated triangulations with spins on the vertices and a fixed boundary length. As mentioned in Proposition~\ref{prop:BBM}, the same work also gives the critical temperature that this article focuses on. Results in \cite{BBM11} will also be used in Section~\ref{sec:z solution} to derive the partition functions $z_1(\nu,t)$ and $z_3(\nu,t)$, bypassing a tricky computation which will be included as Appendix~\ref{sec:2nd cat}.

Another important source of enumerative results on Ising-decorated triangulations is Chapter~8 of the book \cite{EynardBook} by Eynard. The chapter describes a method for enumerating extremely general Ising-decorated maps, including features like external magnetic field for the Ising model, mixed boundary conditions, and maps with several boundaries in higher genera. Our method for eliminating the first catalytic variable described in Section~\ref{sec:1st cat} can actually be viewed as a special case of the method used in \cite{EynardBook}, although the difference in presentation makes this link hard to see. 
It is also possible to derive the rational parametrizations \eqref{eq:RP H} and \eqref{eq:RP Z} of $Z(u,v)$ using the method of \cite{EynardBook}, provided that one properly relates the generating functions of triangulations with non-simple boundary (the setting in \cite{EynardBook}) and those of triangulations with simple boundary.

A similar model of Ising-decorated triangulations is studied in the recent independent work \cite{AMS18} by Albenque, M\'enard and Schaeffer. To be precise, for each $n$, they consider the set of triangulations of the \emph{sphere} with $n$ edges and decorated by spins on the \emph{vertices}, in which each monochromatic edge is given a weight $\nu$. They show that for \emph{any} fixed $\nu>0$, the law $\prob_n$ of the random triangulation thus obtained converges weakly for the local topology when $n\to\infty$. They follow an approach akin to the one used by Angel and Schramm to construct the UIPT \cite{AS03}, namely, showing that all finite dimensional marginals of $\prob_n$ converge, and that the family $(\prob_n)_{n\ge 1}$ is tight. This is very different from our approach: here we use the peeling process to construct explicitly the limit distribution (which is a probability), so a tightness argument is unnecessary. From a combinatorics viewpoint, they first establish asymptotics as $n\to\infty$ of the partition function of their model with a Dobrushin boundary. For this purpose, they use Tutte's invariants to solve an equation with two catalytic variables, similarly to \cite{BBM11}. Then they use some recursion relation (which can be understood as the peeling of an Ising-triangulation with an arbitrary boundary condition) to show that the partition function defined by \emph{any} fixed boundary condition also has the same asymptotic behavior when $n\to\infty$. This last step is crucial for their proof of the finite-dimensional-marginal convergence of $(\prob_n)_{n\ge 1}$. Moreover, they show that the simple random walk on the local limit is almost surely recurrent.

\paragraph{Outline.}
The rest of the paper is organized as follows.
We derive the so-called Tutte's equation (or loop equation) satisfied by $Z(u,v)$ in Section~\ref{sec:Tutte derivation} and define the peeling process of a bicolored triangulation of the $(p,q)$-gon in Section~\ref{sec:peeling}. The derivation is formulated in probabilistic language to highlight its relation with the first step of the peeling process. For our model, Tutte's equation is a functional equation with two catalytic variables. In Section~\ref{sec:1st cat} we eliminate one of the catalytic variables by coefficient extractions, leading to a functional equation with one catalytic variable for $Z(u,0)$. Section~\ref{sec:z solution} details the connection between our model and a model studied in \cite{BBM11}, which is then used to translate some of their results (in particular Proposition~\ref{prop:BBM}) in our setting. These results can also be obtained independently via a trick due to Tutte, which is presented in the Appendix~\ref{sec:2nd cat}. Section~\ref{sec:sing anal} solves the functional equation on $Z(u,v)$ at the critical point $(\nu,t)=(\nu_c,t_c)$ by a rational parametrization, and completes the proof of Theorem~\ref{thm:z_p,q} with standard methods of singularity analysis. Some specific techniques for conducting singularity analysis using rational parametrizations are summarized in Appendix~\ref{sec:RP}.

Section~\ref{sec:limit peeling} is devoted to the study of the limits of the peeling process and the associated perimeter processes, and the proof of Theorem~\ref{thm:scaling limit}. It also includes an important one-jump lemma of the perimeter processes, which is proven in Appendix~\ref{sec:lemma proof}.  In Section~\ref{sec:metric peeling} we construct the distributions $\prob\py$ and $\prob\yy$ and prove the local convergences in Theorem~\refp{1}{thm:cv}. Finally, we discuss in Section~\ref{sec:interface} some properties of the spins clusters and the interfaces that follows from our construction of the infinite Ising-triangulation of law $\prob\py$ and $\prob\yy$. It contains the proof of Theorem~\refp{2-3}{thm:cv} and a  scaling limit result for the perimeter of a spin cluster.

\section{Tutte's equation and peeling along the interface} \label{sec:Tutte equation}

Recall that we have fixed the critical parameters $(\nu_c,t_c)$ and defined $Z(u,v)=\sum_{p,q\ge 0} z_{p,q} u^p v^q$ with $z_{0,0}=1$ and $z_{p,q}=z_{p,q}(\nu_c,t_c)$ for $p+q\ge 1$. However, many of the discussions below will be valid for any $\nu,t>0$ such that $z_{p,q}(\nu,t)<\infty$. In this case we will write $(\nu,t)$ instead of $(\nu_c,t_c)$.

\begin{figure}[b!]
\centering
\includegraphics[scale=1,page=2]{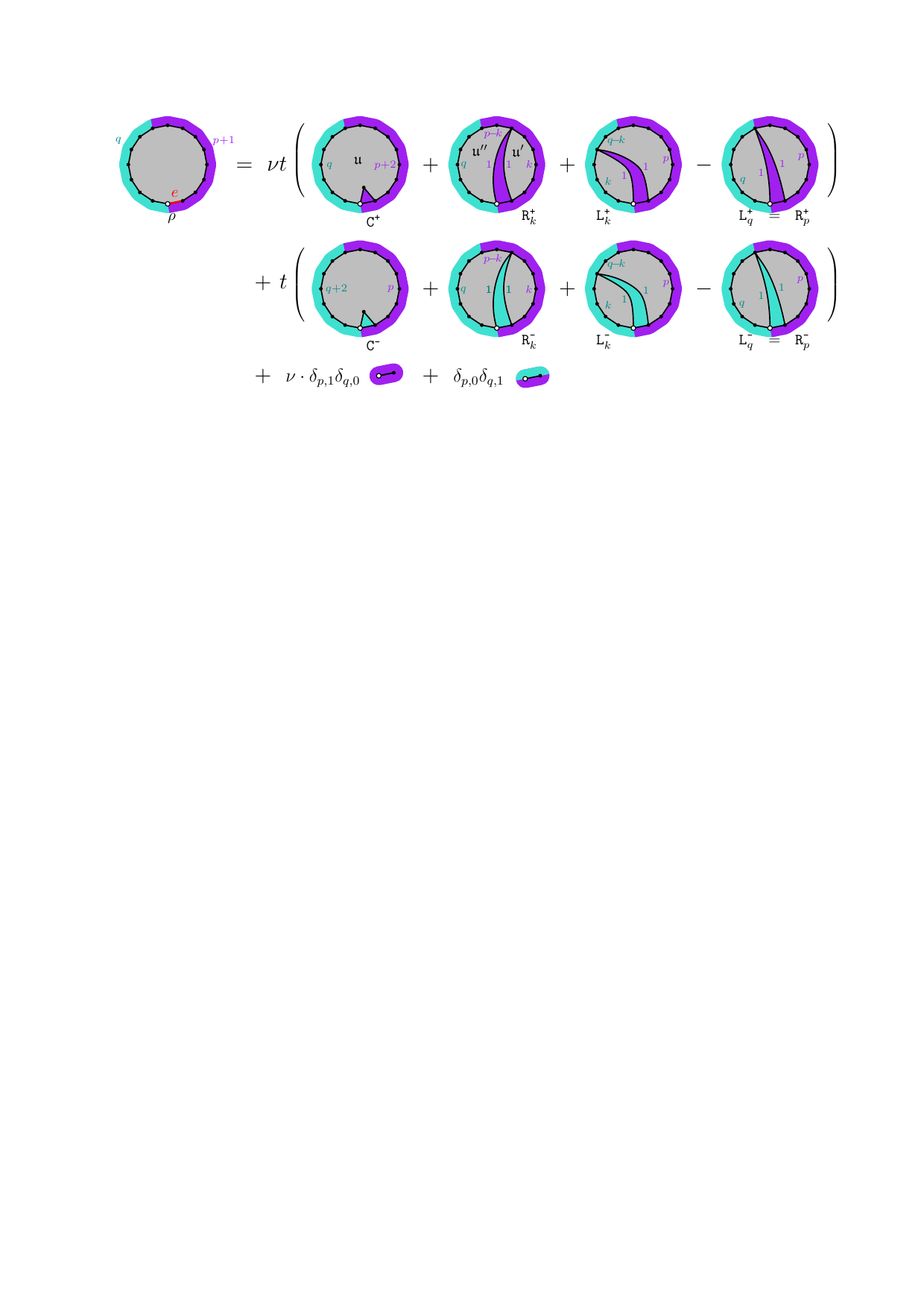}
\caption{A graphical representation of the derivation of Tutte's equation.}\label{fig:Tutte-eq}
\end{figure}

The primary goal of this section is to derive a recurrence relation for the double sequence $(z_{p,q})_{p,q\ge 0}$, and then a functional equation --- the so-called Tutte's equation (a.k.a.\ loop equation, or Schwinger-Dyson equation) --- for its generating function. The basic idea, which goes back to Tutte \cite{Tuttecp3}, is to consider the removal of one face on the boundary, which relates one bicolored triangulation of polygon to other ones with fewer faces.
We will present a probabilistic derivation of Tutte's equation. This is a bit more cumbersome than a direct combinatorial derivation, but will shed light on the relation between Tutte's equation and the peeling process, which we define in the second half of this section.

\subsection{Derivation of Tutte's equation}\label{sec:Tutte derivation}

Let $p,q\ge 0$ so that the bicolored triangulation $\bt\in \bts_{p,q+1}$ has at least one boundary edge with spin \<. We remove the boundary edge $e$ immediately on the left of the origin (which has spin \<) and reveal the internal face $f$ adjacent to it. It is possible that $f$ does not exist if $(p,q+1)=(1,1)$ or $(0,2)$. In this case $\tmap$ is the edge map and $\bt$ has a weight 1 or $\nu$. When $f$ does exist, let $*\in\{\+,\<\}$ be the spin on $f$ and $v$ be the vertex at the corner of $f$ not adjacent to $e$. There are three possibilities for the position of $v$.
\begin{description}[noitemsep]
\item[Event $\CC^*$:]	$v$ is not on the boundary of $\tmap$;
\item[Event $\RR^*_k$:] $v$ is at a distance $k$ to the right of $e$ on the boundary of $\tmap$; ($0\le k\le p$);
\item[Event $\LL^*_k$:] $v$ is at a distance $k$ to the left of $e$ on the boundary of $\tmap$. ($0\le k\le q$).
\end{description}
These events, as well as the discussion below, are illustrated in Figure~\ref{fig:Tutte-eq}.

When the event $\CC^*$ occurs, the unexplored part of $\bt$, denoted $\umap$, is again a bicolored triangulation of polygon. If $*=\+$, then $\umap$ has the boundary condition $\+^{p+2}\<^q$ and the numbers of monochromatic edges and internal faces in $\umap$ are respectively $\abs{\medges\bt}$ and $ \abs{\faces(\tmap)}-1$. It follows that for all $\bt[_0]\in\bts_{p+2,q}$,
\begin{equation*}
\prob_{p,q+1}(\cp\text{ and } \umap=\bt[_0])
\ =\ \zinv \weight[][+1]{\tmap_0}{,\sigma_0}
\ =\ t\zz{p+2,q} \cdot \frac{\weight{\tmap_0}{,\sigma_0}}{z_{p+2,q}} \,.
\end{equation*}
In other words, $\prob_{p,q+1}(\cp) = t\zz{p+2,q}$ and conditionally on $\cp$, the law of $\umap$ is $\prob_{p+2,q}$. Similarly when $*=\<$, we have $\prob_{p,q+1}(\cm) = \nu t\zz{p,q+2}$ and conditionally on $\cm$, the law of $\umap$ is $\prob_{p,q+2}$.

When the event $\rp$ occurs for some $0\le k\le p$, the vertex $v$ is on the \+ boundary of $\bt$, and the unexplored part is made of two bicolored triangulations of polygons joint together at the vertex $v$. We denote by $\umap'$  the right one and by $\umap''$ the left one. Then $\umap'$ has the boundary condition $\+^{k+1}$ and $\umap''$ the boundary condition $\+^{p+1-k}\<^q$. Again one can relate the numbers of monochromatic edges and of internal faces in $\umap'\cup \umap''$ to $\medges\bt$ and $\faces(\tmap)$. It then follows that for all $\bt[']\in\bts_{k+1,0}$ and $\bt['']\in\bts_{p+1-k,q}$,
\begin{align*}
\prob_{p,q+1}\big( \rp,\ \umap'=\bt[']
			\text{ and }\umap''=\ &\bt[''] \big)
\ =\ \zinv\, \weight[ +\abs{\medges\bt['']} ][ +\abs{\faces(\tmap'')}+1 ]{\tmap'}{,\sigma'}
\\&=\ t\zzz{k+1,0}{p+1-k,q} \cdot \frac{\weight{\tmap'}{,\sigma'}}{z_{k+1,0}} \cdot \frac{\weight{\tmap''}{,\sigma''}}{z_{p+1-k,q}}\,.
\end{align*}
In other words, $\prob_{p,q+1}(\rp) = t\zzz{k+1,0}{p+1-k,q}$ and conditionally on $\rp$, the maps $\umap'$ and $\umap''$ are independent and follow respectively the laws $\prob_{k+1,0}$ and $\prob_{p+1-k,q}$.

Similarly, one can work out the probabilities that the events $\rn$ ($0\le k\le p$) or $\LL^\jj_k$ ($0\le k\le q$) occur:
\begin{equation*}
		\prob_{p,q+1}(\rn) = \nu t\zzz{k,1}{p-k,q+1}
\qquad	\prob_{p,q+1}(\lp) = t\zzz{p+1,q-k}{1,k}
\qquad	\prob_{p,q+1}(\lm) = \nu t\zzz{p,q-k+1}{0,k+1}\,.
\end{equation*}
In each case, the unexplored part consists of two bicolored triangulations of some polygons which are conditionally independent and follow the law of Boltzmann Ising-triangulations of appropriate Dobrushin boundary conditions (See Figure~\ref{fig:Tutte-eq}). Tutte's equation simply expresses the fact that the probabilities of the events $\cp,\cm,\lp,\lm,\rp,\rn$ under $\prob_{p,q+1}$ sum to 1:
\begin{align*}
1\ &=\
\prob_{p,q+1}(\cp) + \sum_{k=0}^p \prob_{p,q+1}(\rp) + \sum_{k=0}^q \prob_{p,q+1}(\lp) - \prob_{p,q+1}(\LL^\+_q) + \frac1{z_{1,1}}\delta_{p,1}\,\delta_{q,0}
\\&\ \,+ \prob_{p,q+1}(\cm) + \sum_{k=0}^p \prob_{p,q+1}(\rn) + \sum_{k=0}^q \prob_{p,q+1}(\lm) - \prob_{p,q+1}(\LL^\<_q) + \frac\nu{z_{0,2}}\delta_{p,0}\,\delta_{q,1} \,.
\end{align*}
In each line on the right hand side of this equation, the last term corresponds to the case where $\tmap$ is the edge map, which is a special case that does not belong to any of the events above. The negative term is needed to compensate for the fact that $\RR^*_p$ and $\LL^*_q$ actually represent the same event. Multiplying both sides by $z_{p,q+1}$ yields the following recurrence relation, valid for all $p,q\ge 0$:%
\newcommand*{\sumon}[1]{\!\!\sum_{#1_1+#1_2=#1}\!\!\!}%
\begin{align*}
z_{p,q+1} =&\
	\, t	\mB({ z_{p+2,q}
		+ \sumon{p} z_{p_1+1,0}\,z_{p_2+1,q} + \sumon{q} z_{1,q_1}\,z_{p+1,q_2}
		- z_{p+1,0}\,z_{1,q}	} + \delta_{p,1}\,\delta_{q,0}
\\+&\, 	\nu	 t \mB({ z_{p,q+2}
		+ \sumon{q} z_{0,q_1+1}\,z_{p,q_2+1} + \sumon{p} z_{p_1,1}\,z_{p_2,q+1}
		- z_{p,1}\,z_{0,q+1}	} + \nu\, \delta_{p,0}\,\delta_{q,1} \,,
\end{align*}
where $p_1,p_2,q_1,q_2$ are summed over non-negative values. Summing the last display over $p,q\ge 0$, we get Tutte's equation satisfied by $Z(u,v)$. 
By exchanging $u$ and $v$ we obtain another functional equation of $Z$. The two equations can be written compactly as the following linear system.
\begin{equation}\label{eq:2cat}
\begin{bmatrix} \Delta_u Z \\ \Delta_v Z \end{bmatrix} =
\begin{bmatrix}
	\nu	&	\!1
\\	1	&	\!\nu
\end{bmatrix}
\begin{bmatrix}
t\mB({ \Delta_u^2 Z + \mb({\Delta_u Z_0(u) + Z_1(v)} \Delta_u Z
						- \Delta_u Z_0(u)   Z_1(v) 			} + u\, \\
t\mB({ \Delta_v^2 Z + \mb({\Delta_v Z_0(v) + Z_1(u)} \Delta_v Z
						- \Delta_v Z_0(v)   Z_1(u) 			} + v\,
\end{bmatrix}
\end{equation}
where we write $Z=Z(u,v)$ and $Z_k(u) = [v^k]Z(u,v)$ for short, and $\Delta_x f(x) = \frac{f(x)-f(0)}x$ denotes the discrete derivative with respect to the variable $x\in\{u,v\}$. Geometrically, the other equation in the system describes the removal of a boundary edge with spin \+ next to the origin. When viewed as a system of algebraic equations, the list of unknowns of \eqref{eq:2cat} contains not only the generating function $Z\equiv Z(u,v)$, but also its coefficients in the variables $u$ and $v$, namely $Z_0(u)=[v^0]Z$, $Z_1(u)=[v^1]Z$ and $Z_0(v)=[u^0]Z$, $Z_1(v)=[u^1]Z$. For this reason, $u$ and $v$ are called \emph{catalytic variables}. 

The system \eqref{eq:2cat} will be the starting point of the asymptotic analysis of the double sequence $(z_{p,q})_{p,q\ge0}$ in Section~\ref{sec:Tutte solution}. But let us first turn our attention to the geometric implications of the above derivation of Tutte's equation and define the peeling process mentioned in the introduction.

\subsection{Peeling exploration of the leftmost interface}\label{sec:peeling}

\begin{figure}[t!]
\centering
\includegraphics[scale=1]{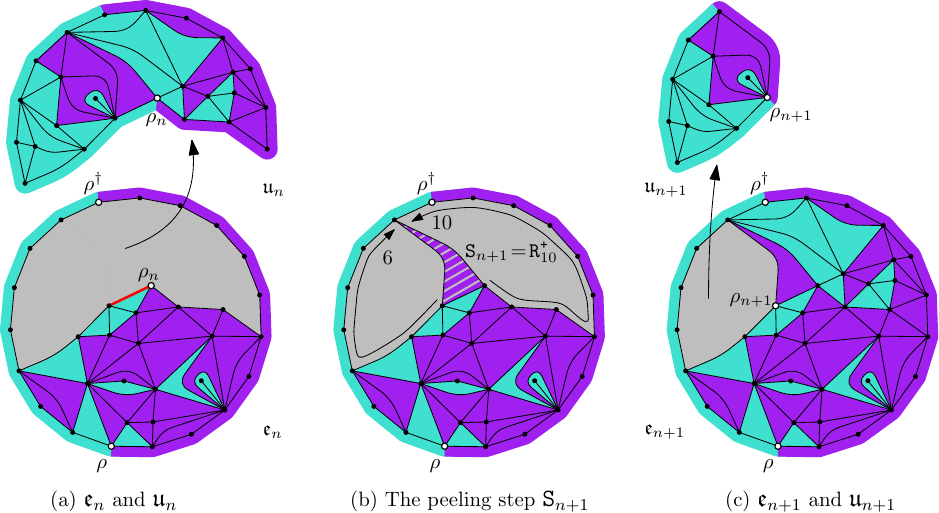}
\caption{An example of the $n$-th and the $(n+1)$-th steps of a peeling process.
The unexplored map $\umap_n$ is rooted at $\rho_n$, similarly for $\umap_{n+1}$.
The peeling step $\Step_{n+1}$ is $\rp[10]$ rather than $\lp[6]$ because we choose to fill the unexplored region on the right.
}\label{fig:def-peeling}
\end{figure}

\newcommand*{\front}{\Map f}
The peeling process along the leftmost interface $\iroot$ is constructed by iterating the face-revealing operation used in the derivation of Tutte's equation. Formally, we define the peeling process as an increasing sequence $\nseq \emap$ of \emph{explored maps}. At each time $n$, the explored map $\emap_n$ consists of a subset of faces of $\bt$ containing at least the external face and separated from its complementary set by a simple closed path. We view $\emap_n$ as a bicolored triangulation of a polygon with a special uncolored internal face (not necessarily triangular) called the \emph{hole}. It inherits its root and its boundary condition from $\bt$. The complementary of $\emap_n$ is called the \emph{unexplored map at time $n$} and denoted $\umap_n$. It is a bicolored triangulation of a polygon (without holes).\footnote{In the literature the peeling process is sometimes defined as the sequence of unexplored map $\nseq \umap$ or as the sequence of closed paths that separate $\emap_n$ and $\umap_n$. For a given $\bt$, these sequences all contain the same information. However, it is $\nseq \emap$ that generates the filtration that makes the peeling process Markovian.}
Notice that $\umap_n$ may be the edge map, in which case $\emap_n$ is simply $\bt$ in which an edge is replaced by an uncolored digon. However, this may only happen at the last step of the peeling process (see below).

We have seen in Figure~\ref{fig:Tutte-eq} that revealing an internal face on the boundary splits $\bt$ into \emph{one or two} unexplored regions delimited by closed simple paths. To iterate this face-revealing operation, one needs a rule that chooses one of the two unexplored regions, when there are two, as the next unexplored map. At first glance, the natural choice would be to keep the unexplored region containing $\rho^\dagger$, the end point of the interface $\iroot$. However, this choice does not fit well with the limit $q\to\infty,\, p\to\infty$ that we would like to take. Instead, we choose the unexplored region with greater number of \< boundary edges (in case of a tie, choose the region on the right). This guarantees that when $q=\infty$ and $p<\infty$, we will automatically choose the unbounded region as the next unexplored map.

We apply this rule inductively to build the peeling process starting from $\umap_0=\bt$. At each step, the construction proceeds differently depending on the boundary condition of $\umap_n$:
\begin{enumerate}
\item	If $\umap_n$ has a non-monochromatic Dobrushin boundary condition, let $\rho_n$ be the boundary vertex of $\umap_n$ with a \< on its left and a \+ on its right ($\rho_0=\rho$). Then $\umap_{n+1}$ is obtained by revealing the internal face of $\umap_n$ adjacent to the boundary edge on the left of $\rho_n$ and, if necessary, choose one of the two unexplored regions according to the \mbox{previous rule}.
Figure~\ref{fig:def-peeling} gives a possible realization of the peeling process in this case.
\item	If $\umap_n$ has a monochromatic boundary condition of spin \<, then we choose the boundary vertex $\rho_n$ according to some deterministic function $\algo$ of the explored map $\emap_n$, called the \emph{peeling algorithm}, which we specify later in Section~\ref{sec:metric peeling}. We then construct $\umap_{n+1}$ from $\umap_n$ and $\rho_n$ in the same way as in the previous case.
\item	If $\umap_n$ has a monochromatic boundary condition of spin \+ or has no internal face (i.e.\ it is the edge map), then we set $\emap_{n+1}=\bt$ and terminate the peeling process at time $n+1$.
\end{enumerate}
We will explain why the above construction defines the peeling exploration of the \emph{leftmost} interface in Section~\ref{sec:interface}.

By induction, $\umap_n$ always has a Dobrushin boundary condition. As mentioned in the introduction, $(P_n,Q_n)$ denotes the boundary condition of $\umap_n$, and $(X_n,Y_n)=(P_n-P_0,Q_n-Q_0)$. Also, $\Step_n$ denotes the peeling event that occurred when constructing $\umap_n$ from $\umap_{n-1}$, which takes values in the set of symbols $\steps = \{\cp,\cm\}\cup\{\lp,\lm,\rp,\rn: k\ge 0\}$. The above quantities are all deterministic functions of the bicolored triangulation $\bt$. We view them as random variables defined on the sample space $\Omega=\bts=\bigcup_{p,q} \bts_{p,q}$.

\newcommand*{\hl}{\\\cline{1-6}}
\tabulinesep=1.2mm
\newcolumntype{L}{>{\!$\displaystyle}l<{$\!}}
\newcolumntype{S}{>{\!$\displaystyle}l<{$\!\!}}
\newcolumntype{R}{>{\!$\displaystyle}r<{$\!}}
\newcolumntype{C}{>{\!$\displaystyle}c<{$\!}}

\newcommand*{\zp}[1]{t\zz{#1}}
\newcommand*{\zzp}[2]{t\zzz{#1}{#2}}

\newcommand*{\zq}[1]{t\zz[p,q+1]{#1}}
\newcommand*{\zqo}[1]{t\zz[0,q+1]{#1}}

\newcommand*{\zzq}[2]{t\zzz[p,q+1]{#1}{#2}}
\newcommand*{\zzqo}[2]{t\zzz[0,q+1]{#1}{#2}}

\newcommand*{\ap}[1]{\,\frac{a_{#1}}{a_p}\,}
\newcommand*{\apo}[1]{\,\frac{a_{#1}}{a_0}}

\newcommand{\pq}[1][P_n,Q_n]{\raisebox{-2pt}{$\!_{#1}\!$}}

\begin{table}[t!]
\centering
\begin{tabu}{|L|S|C|  |L|S|C| L}
\cline{1-6}
\step &\prob_{p,q+1}(\Step_1 = \step) &(X_1,Y_1) &
\step &\prob_{p,q+1}(\Step_1 = \step) &(X_1,Y_1)
\hl \cp	& 		\zp{p+2,q}				&(2,-1)
&	\cm	& \nu \zp{p,q+2} 				&(0,1)
\hl \lp	& 		\zzp{p+1,q-k}{1,k}		&(1,-k-1)
&	\lm	& \nu 	\zzp{p,q-k+1}{0,k+1}		&(0,-k)	&	(0\le k \le \frac{q}{2})
\hl	\rp	& 		\zzp{k+1,0}{p-k+1,q}		&(-k+1,-1)
&	\rn & \nu 	\zzp{k,1}{p-k,q+1}		&(-k,0)	&	(0\le k\le p)
\hl \rp[p+k]	& 		\zzp{p+1,k}{1,q-k}	&(-p+1,-k-1)
&	\rn[p+k]	& \nu 	\zzp{p,k+1}{0,q-k+1}	&(-p,-k)
&(0<k<\frac{q}2)
\hl
\end{tabu}
\caption{Law of the first peeling event $\Step_1$ under $\prob_{p,q+1}$ and the corresponding $(X_1,Y_1)$.
}\label{tab:prob(p,q)}
\end{table}

According to the discussion in the derivation of Tutte's equation, under the probability $\prob_{p,q}$ and conditionally on $(P_n,Q_n)$, the unexplored map $\umap_n$ is a Boltzmann Ising-triangulation of the $(P_n,Q_n)$-gon --- this is called the \emph{spatial Markov property} of $\prob_{p,q}$. In particular, the pair $(P_n,Q_n)$ determines the conditional law of $\Step_{n+1}$ in the same way as $(p,q)$ determines the law of $\Step_1$, and the peeling event $\Step_{n+1}$ determines the increment $(P_{n+1}-P_n,Q_{n+1}-Q_n)$ in the same way as $\Step_1$ determines $(X_1,Y_1)$. It follows that:
\begin{enumerate}
\item	Both $\nseq{P_n,Q}$ and $\nseq{X_n,Y}$ are adapted to the filtration generated by $\nseq[1]\Step$.
\item	$\nseq{P_n,Q}$ is a Markov chain under $\Prob_{p,q}$, which we recall is the law of $\nseq[1]\Step$ under $\prob_{p,q}$. Its transition probabilities can be deduced from Table~\ref{tab:prob(p,q)}.
\item	The mapping $\nseq[1] \Step \mapsto \nseq{X_n,Y}$ has a well-defined limit when $p,q\to\infty$.
\end{enumerate}

Notice that the law $\Prob_{p,q}$ is completely determined by the data in Table~\ref{tab:prob(p,q)}, and in particular is independent of the peeling algorithm $\algo$. In particular all our results on the limit of $\Prob_{p,q}$ and of the perimeter processes are independent of the peeling algorithm.
The choice of $\algo$ will only become important in the construction of the local limits $\prob\py$ and $\prob\yy$, and will be specified in  Section~\ref{sec:def P(p)}. This independence reflects the invariance of the law of a Boltzmann Ising-triangulation with monochromatic boundary condition under the change of origin. A similar observation was made for the peeling of non-decorated maps in \cite{CLGpeeling}.

In order to study the limits of $\Prob_{p,q}$, let us first solve Tutte's equation and derive the asymptotics of $(z_{p,q})_{p,q\ge 0}$ stated in Theorem~\ref{thm:z_p,q}.

\section{Solution of Tutte's equation}\label{sec:Tutte solution}

Inverting the matrix on the right hand side of \eqref{eq:2cat}, we obtain the following equations:
\begin{align}\label{eq:Tutte1}
(\nu^2-1)^{-1} (\nu \Delta_u Z - \Delta_v Z)	&=
  u + t\mb({ \Delta_u^2 Z + \mb({\Delta_u Z_0(u) + Z_1(v) } \Delta_u Z
							 -  \Delta_u Z_0(u)   Z_1(v) 			}\!\!\!\!\ \\\label{eq:Tutte2}
(\nu^2-1)^{-1} (\nu \Delta_v Z - \Delta_u Z)	&=
  v + t\mb({ \Delta_v^2 Z + \mb({\Delta_v Z_0(v) + Z_1(u) } \Delta_v Z
							 -  \Delta_v Z_0(v)   Z_1(u) 			}\!\!\!\!\
\end{align}
Remark that both equations are affine in $Z$. Solving the first one gives the following expression of $Z$ as a rational function of the univariate series $Z_0$ and $Z_1$:
\begin{equation}\label{eq:Zexplic}
Z(u,v) = Z_0(v) +
\frac{ u(Z_0(v)-Z_0(u)) + (\nu^2-1) v (u^2-tZ_0(u)Z_1(v))
	}{ \nu v - u - (\nu^2-1) t v \mb({ \frac{Z_0(u)}{u} + Z_1(v) }	}\,.
\end{equation}

\subsection{Elimination of the first catalytic variable}\label{sec:1st cat}

It turns out one can obtain a closed functional equation for $Z_0(u)$ by coefficient extraction. More precisely, by extracting the coefficients of $v^0$ and $v^1$ in \eqref{eq:Tutte1} and \eqref{eq:Tutte2}, seen as formal power series in $v$, we get four algebraic equations between $Z_i(u)$ ($i=0,1,2,3$) and $u$, with coefficients in $\complex[\nu][[t]]$:%
\begin{align}\label{eq:Z0}
	(\nu^2-1)^{-1}(\nu\Delta Z_0 - Z_1) &= t\m({ \Delta^2 Z_0 + (\Delta Z_0)^2 } + u
\\	(\nu^2-1)^{-1}(\nu\Delta Z_1 - Z_2) &=
	t\m({ \Delta^2 Z_1 + \Delta Z_0 \Delta Z_1 + z_1 \Delta Z_1	} \label{eq:Z1}
\\	(\nu^2-1)^{-1}(\nu Z_1 -\Delta Z_0) &= t\m({ Z_2 + Z_1^2 }    \label{eq:Z2}
\\	(\nu^2-1)^{-1}(\nu Z_2 -\Delta Z_1) &= t\mb({ Z_3 + Z_1 Z_2 + z_1 Z_2 } + 1
\label{eq:Z3}
\end{align}
where we write $\Delta Z_i = \Delta_u Z_i(u)$ and $z_i=z_{i,0}=z_{0,i}$ for short. Notice that only \eqref{eq:Z3} contains the unknown $Z_3$, so it can be discarded without loss. On the other hand, the equations \eqref{eq:Z0} and \eqref{eq:Z1} are linear in $(Z_1,Z_2)$. So we can easily solve them and plug the results into \eqref{eq:Z2} to obtain a polynomial equation on $Z_0(u)$ of the form: (see \cite{CAS1} for details of the computation)
\begin{equation*}
	\hat{\mathcal P}(Z_0(u), u, z_1, z_{1,1}; \nu,t ) = 0\,.
\end{equation*}
This is not yet a closed functional equation for $Z_0(u)$ because it involves the series $z_{1,1}$ which is \emph{a priori} not related to $Z_0(u)$. (It comes from the term $\Delta^2_u Z_1(u) = \frac{Z_1(u) - z_1 - u z_{1,1} }{u^2}$ in \eqref{eq:Z1}.)
To relate them, we can view the above equation as a formal power series in $u$, and extract its coefficients. The first two non-zero coefficients yield two equations relating $z_{1,1}$ to $z_i$ ($i=1,2,3$) and which are linear in $(z_{1,1},z_2)$. Solving them gives
\begin{equation*}
z_{1,1}	= (\nu^2-1)(2t\, z_1^3-t\,z_3-1) -(3\nu-2) z_1^2 + \frac{\nu  z_1}{(\nu+1)t}\,.
\end{equation*}
Plugging this into $\hat{\mathcal P}=0$ yields a closed functional equation (with one catalytic variable) satisfied by $Z_0(u)$.
This equation can be written as
\begin{equation}\label{eq:1cat}
Z_0(u) = 1+ \nu u^2 + t\, \mathcal{R}(Z_0(u),u,z_1,z_3;\nu,t)
\end{equation}
where the rational function $\mathcal R=\mathcal R(y,u,z_1,z_3;\nu,t)$ is given by (See \cite{CAS1})
\begin{align}\label{eq:R as rational}
\mathcal R=\ &(\nu^2-1)^2t^2 \mB({ (y-1)\m({\frac{y}u}^3-z_1 \m({\frac{y}u}^2 -\frac{1+y-2y^2+z_1 u}t +2z_1^3-z_3 }
\\&	-(\nu^2-1)t \mB({ 2\nu(y-1)\m({\frac{y}u}^2 -(\nu+1)z_1 \frac{y}u + 3(\nu-1)z_1^2 }	\notag
\\&	+\nu(\nu+1)(y-1)\frac{y}u -\nu(\nu^2-1)u(2y-1) +\nu(\nu-3)z_1 +(\nu^2-1)^2u^3. \notag
\end{align}
Notice that $\mathcal R(Z_0(u),u,z_1,z_3;\nu,t)$ is a formal power series of $t$ with coefficients in $\complex(\nu,u)$. Therefore \eqref{eq:1cat} determines $Z_0(u)$ order by order as a formal power series in $t$. According to the general theory on polynomial equations with one catalytic variable \cite[Theorem 3]{BMJ06}, the generating function $Z_0(u)$ is algebraic.%
\footnote{To apply literally \cite[Theorem 3]{BMJ06} to \eqref{eq:1cat}, we must be able to write $\mathcal R$ as a polynomial function of the discrete derivatives $\Delta^i Z_0(u)$ ($i\ge 0$) and the parameters $u,t$, which is not obvious here. However, 
we can multiply both sides of \eqref{eq:1cat} by $u^3$, and view it as a functional equation for the unknown $\mathcal Z_0(u):=u^3 Z_0(u)$. Then, since $z_i = \Delta^i Z_0(u) - u\cdot \Delta^{i+1} Z_0(u)$ and $\Delta^i Z_0(u) = \Delta^{i+3} \mathcal Z_0(u)$ for all $i\ge 0$, the term $t u^3 \mathcal R$ on the new \rhs\ \emph{is} a polynomial function of $u$, $t$ and $\Delta^i \mathcal Z_0(u)$ ($i\ge 0$). Therefore \cite[Theorem 3]{BMJ06} applies.}
The same holds for $Z_1(u)$ and $Z(u,v)$, since according to \eqref{eq:Z0} and \eqref{eq:Zexplic}, they are rational functions of $Z_0(u)$ and of its coefficients.

\subsection{Connection with previous work and solution for $z_i$\!} \label{sec:z solution}

In principle, we could apply the general strategy developed in \cite{BMJ06} to eliminate the catalytic variable $u$ from \eqref{eq:1cat} and obtain an explicit algebraic equation relating $z_1$ (resp.~$z_3$) and $t$. However, in practice this gives an equation of exceedingly high degree. Instead, we need to exploit specific features of \eqref{eq:1cat} to eliminate $u$ while keeping the degree low. We will explain how this can be done in Appendix \ref{sec:2nd cat}. 
Here we forego the procedure of eliminating the catalytic variable $u$ and jump directly to the solution of $z_i(\nu,t)$ ($i=1,2,3$) by importing the corresponding results from \cite{BBM11}.

\newcommand{\Z}{\check{Z}}
\newcommand{\z}{\check{z}}

\begin{figure}[t]
\centering
\includegraphics[scale=1]{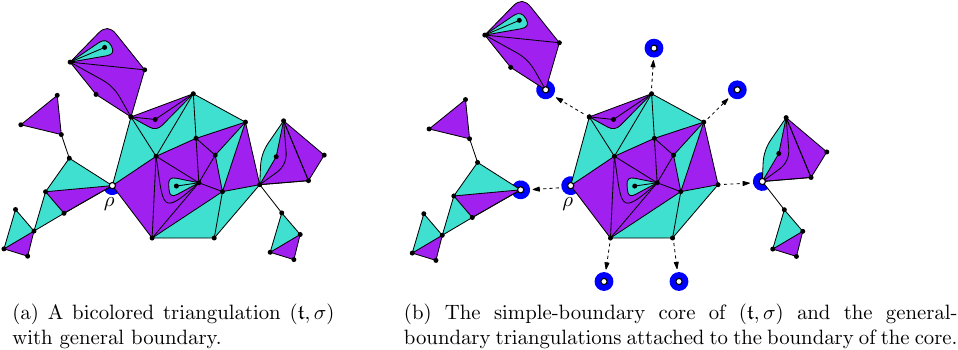}
\caption{By convention, the simple-boundary core of $\bt$ is the component following the root corner of $\bt$ (marked in blue) in the counter-clockwise direction. \emph{Pruning} consists of decomposing $\bt$ into this simple-boundary core, and one general-boundary component attached to each boundary vertex of the core. 
As shown in the example, these general-boundary components may be reduced to a single vertex, and this is taken into account by the constant term 1 in the generating series $\Z_0(u)$.
For visual clarity, the monochromatic boundary condition is omitted in the drawings.
}
\label{fig:pruning}
\end{figure}

In \cite{BBM11}, the quantity $2Q_i(2,\nu,t)$ is the generating series of vertex-bicolored triangulations with a general (i.e.~not necessarily simple) boundary of length $i$ and free boundary conditions (i.e.~the spins on the boundary vertices are not fixed). The parameter
$t$ counts the number of edges and $\nu$ the number of monochromatic edges (and the parameter $2$ represents the fact that the Ising model is equivalent to the 2-Potts model). To avoid confusion, we replace the symbols $\nu$ and $t$ of \cite{BBM11} by $\nu^*$ and $t^*$ in the following.

Let $\z_i(\nu,t)$ be the generating series of face-bicolored triangulations with a general boundary of length $i$ and monochromatic boundary condition.
By using the Kramers-Wannier duality between the low-temperature expansion and high-temperature expansion of the Ising partition function (see e.g.~\cite[Section 1.2]{BDC-duality}), one can show that if $(\nu^*,t^*)$ and $(\nu,t)$ satisfy
\begin{equation*}
	(\nu^*-1)(\nu-1)=2	\qtq{and}	2(t^*)^3 = (\nu-1)^3 t^2
\end{equation*}
then for all $i\ge 1$,
\begin{equation}	\label{eq:duality}
	(t^*)^iQ_i(2,\nu^*,t^*) = (\nu-1)^i t^i \z_i(\nu,t).
\end{equation} See page 41 in \cite{BBM11} for the details of the computation.
On the other hand, $\z_i(\nu,t)$ is nothing but the version of $z_i(\nu,t)$ where we remove the constraint of simple boundary. Let $\Z_0(u) \equiv \Z_0(\nu,t;u)= 1+ \sum_{i\ge 1} \z_i(\nu,t) u^i$. By decomposing a general boundary triangulation into its simple boundary core and general boundary triangulations attached to each boundary vertex of the core, one can show that $\Z_0(u) = Z_0(u \Z_0(u))$. This decomposition is known as \emph{pruning}. It is explained in Figure~\ref{fig:pruning}.

Extracting the first coefficients of $u$, we get
\begin{equation}\label{eq:simple-general}
	z_1 = \z_1			\qquad
	z_2 = \z_2 - z_1^2	\qtq{and}
	z_3 = \z_3 - 3z_1 \z_2 + 2 z_1^3\,.
\end{equation} Using \eqref{eq:duality} and \eqref{eq:simple-general}, we can easily translate the results in \cite[Thm.~23]{BBM11} to get the following rational parametrizations of $z_i(\nu,t)$ ($i=1,3$):
\begin{equation}\label{eq:RP z1-3}
\left\{\
\begin{aligned}
t^2\ &=\ \frac{ (\nu-S)\, (S+\nu-2) }{ 32 (\nu^2-1)^3 S^2 }\
				(4S^3 -S^2 -2S +\nu^2 -2\nu),
\\t^3z_1\ &=\ \frac{ (\nu-S)^2 (S+\nu-2) }{ 64 (\nu^2-1)^4 S^2 }
					(3S^3 -\nu S^2 -\nu S +\nu^2 -2\nu),
\\t^9z_3\ &=\ \frac{ (\nu-S)^5 (S+\nu-2)^5}{2^{22} (\nu^2-1)^{12} S^8}
		\cdot \big(\, 160S^{10} -128S^9 -16(2\nu^2-4\nu+3)S^8
\\& +\ 32(2\nu^2-4\nu+3)S^7 - 7(16\nu^2-32\nu+27)S^6 - 2(32\nu^2-64\nu+57)S^5
\\& +\ (32\nu^4-128\nu^3+183\nu^2-110\nu+20)S^4 - 4(7\nu^2-14\nu-2)S^3
\\& +\ \nu(\nu-2)(9\nu^2-18\nu-20)S^2 + 14\nu^2(\nu-2)^2 S - 3 \nu^3 (\nu-2)^3
		\,\big).
\end{aligned}
\right.
\end{equation}
These rational parametrizations will be checked in the appendix.
The singularity analysis of these series can also be imported from \cite[Claim 24]{BBM11}, which gives Proposition~\ref{prop:BBM}.
One can also give a proof to this theorem using the tools provided in Appendix~\ref{sec:RP}.

\subsection{Singularity analysis at the critical point}\label{sec:sing anal}

To get $Z_0(\nu,t;u)$, the generating function for Ising triangulations with a monochromatic boundary of arbitrary length, we plug the rational parametrization \eqref{eq:RP z1-3} into Equation \eqref{eq:1cat}. This gives us an equation of the form $\mathcal E(Z_0,u;\nu,S)=0$ where $\mathcal{E}$ is a polynomial of four variables. Under the change of variables $\tilde u=tu$ and $\tilde y=\frac tu Z_0(u)$, we obtain an equation of degree 5 in its main variables $\tilde u$ and $\tilde y$ (but of degree 21 overall, see \cite{CAS1}).

It is well known that a complex algebraic curve has a rational parametrization if and only if it has genus zero \cite{SWP08}. Both the genus of the curve and its rational parametrization, when exists, can be computed algorithmically, and these functions are implemented in the \textsc{algcurves} package of \textsc{Maple}. It turns out that the genus of the curve $\mathcal{E}(Z_0,u)=0$ is zero, thus a rational parametrization exists. However, the equation is too complicated for \textsc{Maple} to compute a rational parametrization in its full generality in reasonable time. The computation simplifies considerably in the critical case $(\nu,t)=(\nu_c,t_c)$, where $t_c$ corresponds to $S_c=3$ in \eqref{eq:RP z1-3}.
In this case, we found the following parametrization of $Z_0(u)$ and the corresponding parametrization of $Z_1(u)$ deduced from \eqref{eq:Z1}:
\begin{equation}\label{eq:RP H}
\left\{\
\begin{aligned}
u  =\hat u(H)	&\, :=\, \frac{u_c}3 H \mB({ 10 -12H +6H^2 -H^3 }	\\
Z_0=\hat Z_0(H)	&\, :=\, \frac1{10} \mB({ 1 -(1-\sqrt7)H +3H^2 -H^3 }
							 \mB({ 10 -12H +6H^2 -H^3 }			\\
Z_1=\hat Z_1(H)	&\, :=\, \frac3{10\,u_c} \mB({ \sqrt7-1+H-\frac{3(4-3H+H^2)}{10-12H+6H^2-H^3} }
\end{aligned}
\right.
\end{equation}
where $u=0$ is parametrized by $H=0$ and $u_c=\frac65(7+\sqrt{7})t_c$, as mentioned in Theorem~\ref{thm:z_p,q}.
By making the substitution $(u,Z_0(u),Z_1(u))\gets (\hat u(H),\hat Z_0(H),\hat Z_1(H))$ and $(v,Z_0(v),Z_1(v))\gets (\hat u(K),\hat Z_0(K),\hat Z_1(K))$ in \eqref{eq:Zexplic}, one obtains a rational parametrization of $Z(u,v)$ of the form%
\begin{equation}\label{eq:RP Z}
	u=\hat u(H),\quad v=\hat u(K) \qtq{and} Z=\hat Z(H,K)
\end{equation}
where $\hat Z(H,K)$ is a ratio of two symmetric polynomials of degree 10 and 4, respectively. Its expression is given in \cite{CAS1}.

Next, we would like to apply the standard transfer theorem of analytic combinatorics \cite[Corollary VI.1]{FS09} to extract asymptotics of the coefficients of $Z(u,v)$.
The idea is to use the rational parametrization to write that $Z(u,v)=\hat Z(\hat u^{-1}(u),\hat u^{-1}(v))$ in some neighborhood of the origin, and to extend this relation to the dominant singularity for one of the variables.
The main difficulty here is, given a rational parametrization of $v\mapsto Z(u,v)$, to localize rigorously its dominant singularity (or singularities), and to show that it has an analytic continuation on a $\Delta$-domain at this singularity. We will present a method that solves this problem in a generic setting in Appendix \ref{sec:RP}.
For the sake of continuity of exposition, we first summarize the properties of $Z(u,v)$ and $A(u)$ obtained with this method in the following lemma, and leave its proof to Appendix \ref{sec:RP}.

\newcommand*{\slit}[2][\epsilon]{D_{#2}^{|#1}}
\newcommand*{\Dc}{\overline D_{u_c}}
For $x>0$, let $D_x$ (resp.\ $\overline{D}_x$) be the open (resp.\ closed) disk of radius $x$ centered at 0. For $x,\epsilon>0$, the \emph{slit disk at $x$ of margin $\epsilon$} is defined as $\slit{x} = D_{x+\epsilon} \backslash [x,x+\epsilon]$. Notice that a slit disk at $x$ contains a $\Delta$-domain at $x$.

\begin{lem}\label{lem:dom sing Z}
\begin{enumerate}
\item $\sum_{p,q\ge 0} z_{p,q}u^pv^q$ is absolutely convergent if and only if $(u,v) \in (\overline{D}_{u_c})^2$.
\item There is a neighborhood $V$\! of $H=0$ such that $\hat u|_V$ is a conformal bijection onto a slit disk at $u_c$ and $\hat u(H)\to u_c$ as $H$ tends to 1 in $V$.
\item For each $u\in \overline{D}_{u_c}$, the function $v\mapsto Z(u,v)$ has its dominant singularity at $u_c$ and has an analytic continuation on a slit disk at $u_c$ (whose margin depends on $u$).
\item Similarly, the function $A(u)$ defined by the rational parametrization in Theorem~\ref{thm:z_p,q} has its dominant singularity at $u_c$ and has an analytic continuation on a slit disk at $u_c$.
\end{enumerate}
\end{lem}

Now let us carry out the singularity analysis of $Z(u,v)$ and finish the proof of Theorem~\ref{thm:z_p,q}. By Lemma~\refp{ii}{lem:dom sing Z}, the asymptotic expansion of $v\mapsto Z(u,v)$ at its dominant singularity $u_c$ is determined by the behavior of its parametrization in a neighborhood of $K=1$. One can check that the first and second derivatives of $K\mapsto\hat{Z}(H,K)$ \emph{both vanish} at $K=1$. Therefore the function has the Taylor expansion
\begin{equation*}
\hat Z(H,K)\ =\ \hat Z(H,1)	-\frac{\partial_K^3\hat Z(H,1)}6    (1-K)^3
							+\frac{\partial_K^4\hat Z(H,1)}{24} (1-K)^4 +O((1-K)^5)\,.
\end{equation*}
On the other hand, we can rewrite the equation $v=\hat u(K)$ as $(1-K)^3 = \frac32 \mb({1-\frac{v}{u_c}} - \frac12(1-K)^4$. In particular, we have $1-K \sim (\frac32)^{1/3} (1-\frac v{u_c})^{1/3}$ as $K\to 1$. Plugging this into the Taylor expansion of $K\mapsto\hat Z(H,K)$, we obtain the following asymptotic expansion of $v\mapsto Z(u,v)$ at $v=u_c$:
\begin{equation*}
Z(u,v) = Z(u,u_c) - \partial_v Z(u,u_c) (u_c-v) + A(u)\mB({ 1-\frac v{u_c} }^{4/3}
											+ O\mb({ \mB({ 1-\frac v{u_c} }^{5/3} }\,,
\end{equation*}
where $A(u)$ is given by the rational parametrization $u=\hat u(H)$ and
\begin{equation*}
A=\hat A(H)\ :=\	\mB({\frac32}^{4/3} \m({ \frac{\partial_K^4\hat Z(H,1)}{24}
									  + \frac{\partial_K^3\hat Z(H,1)}{12} }
	= \frac1{10}	\mB({\frac32}^{7/3} \frac{9-8H+3H^2}{(3-3H+H^2)^2}\,.
\end{equation*}

Thanks to Lemma~\refp{iii}{lem:dom sing Z}, the transfer theorem \cite[Corollary VI.1]{FS09} applies to $v\mapsto Z(u,v)$, which implies that for all $u\in\overline{D}_{u_c }$,
\begin{equation}\label{eq:asymp p}
	  Z_q(u) = [v^q]Z(u,v) \eqv{q}
		\frac{A(u)}{\Gamma(-4/3)} u_c^{-q} q^{-7/3}\,.
\end{equation}
This is the last asymptotic stated in Theorem~\ref{thm:z_p,q}.
It follows that
\begin{equation*}
	\frac{Z_q(u)}{Z_q(u_c)} \cv[]q \frac{A(u)}{A(u_c)}\,.
\end{equation*}
This can be interpreted as the pointwise convergence of the generating functions of \mbox{the discrete} probability distribution $\mb({ \frac{z_{p,q} u_c^p}{Z_q(u_c)} }_{p\ge 0}$ to the generating function of the sequence $\mb({ \frac{a_p u_c^p}{A(u_c)} }_{p\ge 0}$. According to a general continuity theorem \cite[Theorem IX.1]{FS09}, this implies the convergence of the sequences term by term:
\begin{equation*}
	\frac{z_{p,q}}{Z_q(u_c)} \cv[]q \frac{a_p}{A(u_c)}\,.
\end{equation*}
for all $p\ge 0$. (In fact \cite[Theorem IX.1]{FS09} also assumes the limit sequence to be a probability distribution \emph{a priori}, but a careful reading of the proof shows that this assumption is not necessary.) Comparing the last display with \eqref{eq:asymp p}, we obtain the asymptotics of $(z_{p,q})_{q\ge 0}$ stated in Theorem~\ref{thm:z_p,q}.

This asymptotics implies in particular that $a_p\ge 0$ for all $p\ge 0$. This positivity property is in fact used in the proof of Lemma~\ref{lem:dom sing Z}(iv) in Appendix~\ref{sec:RP}. But there is no vicious circle in the proof since we have used only the assertions (i)-(iii) of Lemma~\ref{lem:dom sing Z} to deduce the asymptotics of $(z_{p,q})_{q\ge 0}$. Now we repeat the same steps to find the asymptotics of $(a_p)_{p\ge 0}$. Contrary to $K\mapsto\hat Z(H,K)$, the first derivative of $H\mapsto \hat A(H)$ does not vanish at $H=1$. This leads to an exponent $1/3$ instead of $4/3$ for the leading order singularity of $A(u)$ at $u_c$:
\begin{equation*}
A(u) = A(u_c) + b\mB({ 1-\frac{u}{u_c} }^{1/3}
		+ O\mb({ \mB({ 1-\frac{u}{u_c} }^{2/3} }
\end{equation*}
where $b=-(\frac32)^{1/3}\hat A'(1) = -\frac{27}{20} (\frac32)^{2/3}$. We apply the transfer theorem again to obtain the asymptotics of $(a_p)_{p\ge 0}$. This completes the proof of Theorem~\ref{thm:z_p,q}.

\section{Limits of the perimeter processes}\label{sec:limit peeling}

Let us recall that the peeling process of a bicolored triangulation $\bt$ is an increasing sequence of explored maps $\nseq \emap$. It is determined by the sequence of peeling events $\nseq[1]\Step$ taking values in the countable set $\steps$, plus the initial condition $(p,q)$. We denote by $\Prob_{p,q}$ the law of $\nseq[1]\Step$ when $\bt$ is a Boltzmann Ising-triangulation of the $(p,q)$-gon.

As stated in Proposition~\ref{prop:comb cv}, the measure $\Prob_{p,q}$ converges weakly when $q\to\infty$ and then $p\to\infty$. In this section we first prove Proposition~\ref{prop:comb cv} and establish the basic properties of the limit distributions $\Prob\py$ and $\Prob\yy$. Then we move on to prove the scaling limits of the perimeter processes stated in Theorem~\ref{thm:scaling limit}. For convenience, we will denote by $\law_{p,q}X$ (resp.\ $\law\py X$ and $\law\yy X$) a random variable which has the same law as $X$ under $\Prob_{p,q}$ (resp.\ under $\Prob\py$ and $\Prob\yy$), where $X$ is a measurable function of the sequence $\nseq[1] \Step$ and of the initial condition $(p,q)$.

\subsection{Construction of $\Prob\py$ and $\Prob\yy$} \label{sec:limit S}

Since the terms of the sequence $\nseq[1]\Step$ live in a countable space, the weak convergence of $\Prob_{p,q}$ simply means the convergence of the probabilities of the form $\Prob_{p,q}(\Step_1=\step_1,\cdots,\Step_n=\step_n)$. In the proof below we will compute explicitly the limits of these probabilities, and verify that the resulting distribution is normalized.

\begin{lemma}[Convergence of the first peeling event]\label{lem:comb cv}
Assume $p\ge 0$. The limits
\begin{equation*}
\Prob\py(\Step_1=\step) := \lim_{q\to\infty} \Prob_{p,q}(\Step_1=\step) \qtq{and}
\Prob\yy(\Step_1=\step) := \lim_{p\to\infty} \Prob\py(\Step_1=\step)
\end{equation*}
exist for all $\step\in\steps$, and we have $\sum\limits_{\step\in \steps} \Prob\py(\Step_1=\step) = \sum\limits_{\step\in \steps} \Prob\yy(\Step_1=\step) = 1$.
\end{lemma}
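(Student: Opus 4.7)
The plan is to prove the lemma in two steps: first, compute each $\Prob\py(\Step_1=\step)$ and each $\Prob\yy(\Step_1=\step)$ explicitly by passing to the limit termwise in Table~\ref{tab:prob(p,q)}; second, verify that these formulas sum to~$1$. For the pointwise limit $q\to\infty$ with $p$ fixed, I would plug the asymptotic $z_{p,q}\sim\frac{a_p}{\Gamma(-4/3)}u_c^{-q}q^{-7/3}$ from Theorem~\ref{thm:z_p,q} directly into each entry of Table~\ref{tab:prob(p,q)}. Every entry contains exactly one $z$-factor whose second index tends to $\infty$ with~$q$, so the ratio with $z_{p,q+1}$ collapses to a ratio of $a$-sequences times a power of $u_c$, producing exactly the formulas of Table~\ref{tab:prob(p)}(a). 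The subsequent limit $p\to\infty$ uses $a_p\sim\frac{b}{\Gamma(-1/3)}u_c^{-p}p^{-4/3}$ in the same manner to give Table~\ref{tab:prob(p)}(b). The one new point here is that for each fixed $k>0$ the probability $\Prob\py(\rp[p+k])$ (and likewise $\Prob\py(\rn[p+k])$) contains a factor $z_{p+1,k}/a_p=z_{k,p+1}/a_p$ which, by the spin-flip symmetry $z_{p,q}=z_{q,p}$ and Theorem~\ref{thm:z_p,q}, behaves like $(p+1)^{-7/3}/p^{-4/3}=O(p^{-1})$; this explains the absence of these two families from Table~\ref{tab:prob(p)}(b).

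For normalization, the identity $\sum_\step\Prob_{p,q+1}(\Step_1=\step)=1$ is tautological, so Fatou's lemma immediately gives $\sum_\step\Prob\py(\Step_1=\step)\le 1$; to obtain equality one must exchange the limit $q\to\infty$ with the infinite sums over $k$ appearing in the four families $\lp,\lm,\rp[p+k],\rn[p+k]$ (each with $k\le q/2$). I would establish a uniform-in-$q$ tail estimate of the form
\begin{equation*}
\sum_{k\ge K}\Prob_{p,q+1}(\lp)\ \le\ t_c\, C\, u_c\sum_{k\ge K}z_{1,k}\,u_c^k\,,
\end{equation*}
valid for all sufficiently large $q$, where the inequality comes from the bound $z_{p+1,q-k}/z_{p,q+1}\le C\, u_c^{k+1}$. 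This in turn is the upshot of Theorem~\ref{thm:z_p,q}, which can be applied uniformly in $k$ precisely because the cut-off $k\le q/2$ ensures $q-k\ge q/2\to\infty$. The right-hand side tends to $0$ as $K\to\infty$ since $\sum_kz_{1,k}u_c^k=Z_1(u_c)<\infty$ by Lemma~\ref{lem:dom sing Z}(i). The families $\lm$, $\rp[p+k]$, $\rn[p+k]$ are handled by analogous bounds (the latter two using $\sum_k z_{p+1,k}u_c^k=Z_{p+1}(u_c)<\infty$ and the corresponding series for $\rn[p+k]$); the finite-range families $\rp,\rn$ need no tail control since each $k\in\{0,\dots,p\}$ contributes a single term converging pointwise. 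Combined with the pointwise convergence on the finite truncation $\{\step:k<K\}$, these uniform tails justify the exchange of limit and sum and give $\sum_\step\Prob\py=1$. The iterated limit $p\to\infty$ is handled identically.

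The main technical obstacle is precisely the uniform tail estimate: the key ingredient is the cut-off $k\le q/2$ built into the peeling rule, which guarantees that the second-index asymptotic of Theorem~\ref{thm:z_p,q} applies uniformly and produces the geometric factor $u_c^k$ needed to control the sum. This is also the a~posteriori justification for the "left-biased" peeling rule chosen in Section~\ref{sec:peeling}: without it one could not rule out a loss of mass in the infinite sums $\sum_k\Prob_{p,q+1}(\lp)$ as $q\to\infty$.
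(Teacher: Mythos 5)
Your proof of the \emph{existence} of the limits is exactly what the paper does: plug the asymptotics of $z_{p,q}$ from Theorem~\ref{thm:z_p,q} term by term into Table~\ref{tab:prob(p,q)}. For the \emph{normalization}, however, you take a genuinely different route. The paper does not attempt any limit-interchange argument at all: it first observes that $\sum_{\step}\Prob\py(\Step_1=\step)=1$ for all $p$ is equivalent to an identity between formal power series in $u$, which it rewrites (after substituting Table~\ref{tab:prob(p)}(a)) as a closed functional relation \eqref{eq:normalize(p)} for $A(u)$, $Z_0$, $Z_1$, $Z(\cdot,u_c)$; and it then identifies this relation as the coefficient of the singular term $(1-v/u_c)^{4/3}$ in the expansion of the second Tutte equation in \eqref{eq:2cat} at $v=u_c$. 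The equality $\sum_{\step}\Prob\yy(\Step_1=\step)=1$ is then extracted as the coefficient of $(1-u/u_c)^{1/3}$ in \eqref{eq:normalize(p)}. What that approach buys is that no tail estimate is needed: Tutte's equation already ``knows'' which events partition the sample space, so the normalization is an algebraic consequence of it. What your approach buys is conceptual simplicity and independence from Section~\ref{sec:Tutte equation}, but at the price of having to do the tail bookkeeping by hand.

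Your tail bookkeeping is essentially right for the limit $q\to\infty$ with $p$ fixed: the cut-off $k\le q/2$ exactly guarantees that $q-k\to\infty$ uniformly, so Theorem~\ref{thm:z_p,q} gives $z_{p+1,q-k}/z_{p,q+1}\le C\,u_c^{k+1}$ uniformly for $k\le q/2$ and $q$ large, and the five infinite families are dominated by the convergent series $Z_1(u_c)$, $Z_0(u_c)$, etc.\ from Lemma~\ref{lem:dom sing Z}(i). The claim that ``the iterated limit $p\to\infty$ is handled identically'' does, however, gloss over a real difference. Under the measure $\Prob\py$, the event families $\rp$, $\rn$ are indexed by $0\le k\le p$, a range that \emph{grows} with $p$; they are no longer ``finite-range'' in a $p$-uniform sense, contrary to what you claim. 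Moreover, the uniform bound $a_{p-k+1}/a_p\le C\,u_c^{k-1}$ that one would need to dominate these sums in the style of the $q\to\infty$ argument \emph{fails} for $k$ close to $p$, since then $a_{p-k+1}$ is order one while $a_p u_c^{-k+1}\asymp u_c^{-p}u_c^{-k}p^{-4/3}\to 0$. One can repair this by splitting the range at $\theta p$ and showing the high-$k$ portion is $O(p^{-1})$ (using $z_{j,0}\asymp u_c^{-j}j^{-7/3}$ and $\sum a_j u_c^j=A(u_c)<\infty$), but this is an additional estimate, not a repetition of the $q\to\infty$ argument. Similarly, for the $\rp[p+k]$, $\rn[p+k]$ families you correctly point out that each term is $O(p^{-1})$ via the symmetry $z_{p,q}=z_{q,p}$, but you should also verify that the \emph{sum} over $k\ge1$ vanishes; this does hold because $\sum_{k\ge1}z_{p+1,k}u_c^k\le Z_{p+1}(u_c)$ and $Z_{p+1}(u_c)/a_p=O(p^{-1})$, but this too needs $Z_q(u_c)\asymp q^{-7/3}u_c^{-q}$ from Theorem~\ref{thm:z_p,q}. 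With those two estimates added, your argument closes.
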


\begin{table}[t]
\centering
\begin{tabular}{c r}
\begin{tabu}[t]{|L|L|L| |L|L|L| L}
\cline{1-6}
\step	& \Prob\py(\Step_1=\step)				& (X_1,Y_1)	&
\step	& \Prob\py(\Step_1=\step)				& (X_1,Y_1)	&
\hl	\cp & 		t_c \ap{p+2} u_c					& (2,-1)	&
	\cm & \frac{\nu_c t_c}{u_c}					& (0,1)	&
\hl	\lp & 		t_c \ap{p+1} z_{1,k} u_c^{k+1}	& (1,-k-1)	&
	\lm & \nu_c 	t_c z_{0,k+1} u_c^k				& (0,-k)	& (k\ge 0)
\hl	\rp & 		t_c z_{k+1,0}\ap{p-k+1} u_c		& (-k+1,-1)	&
	\rn & \nu_c 	t_c z_{k,1} \ap{p-k} 			& (-k,0)	& (0\le k\le p)
\hl	\rp[p+k] & 		t_c z_{p+1,k} \ap1 u_c^{k+1}	& (-p+1,-k-1)	&
	\rn[p+k] & \nu_c	t_c z_{p,k+1} \ap0 u_c^k		& (-p,-k)	& (k>0)
\hl
\end{tabu}
&\hspace{-1.2cm}\raisebox{-0.4cm}{(a)}\\
\begin{tabu}[t]{|L|L|L||L|L|L| L}
\cline{1-6}
\step  & \Prob\yy(\Step_1=\step)& (X_1,Y_1)	&
\step  & \Prob\yy(\Step_1=\step)& (X_1,Y_1)	&
\hl	\cp	& \frac{      t_c}{u_c}		& (2,-1)		&
	\cm	& \frac{\nu_c t_c}{u_c}		& (0,1) 	&
\hl	\lp	&       t_c u_c^k z_{1,k}	& (1,-k-1)		&
	\lm	& \nu_c t_c u_c^k z_{0,k+1}	& (0,-k)	&	(k\ge 0)
\hl	\rp	&       t_c u_c^k z_{k+1,0}	& (-k+1,-1)		&
	\rn	& \nu_c t_c u_c^k z_{k,1}	& (-k,0)	&	(k\ge 0)
\hl
\end{tabu}
\hspace{-0.3cm}
&\hspace{-1.2cm}\raisebox{-0.4cm}{(b)}
\end{tabular}
\caption{Law of the first peeling event $\Step_1$ under $\Prob\py$, $\Prob\yy$ and the corresponding $(X_1,Y_1)$.}\label{tab:prob(p)}
\end{table}

\begin{proof}
The existence of the limits can be easily checked using the expression of $\Prob_{p,q}(\Step_1=\step)$ in Table~\ref{tab:prob(p,q)} and the asymptotics of $z_{p,q}$ in Theorem~\ref{thm:z_p,q}. The explicit expressions of these limits are given in Table~\ref{tab:prob(p)}.

It is clear that $\sum_{\step\in \steps} \Prob\py(\Step_1=\step) =1$ for all $p\ge 0$ if and only if
\begin{equation*}
\sum_{p\ge0} a_p u^p \ =\
\sum_{p\ge0} \sum\limits_{\step\in \steps} a_p \Prob\py(\Step_1=\step) u^p
\end{equation*}
as formal power series in $u$. With a straightforward (but tedious) calculation using the data in Table~\refp{a}{tab:prob(p)}, one can show that the above condition is equivalent to 
\begin{equation}\label{eq:normalize(p)} 
\begin{aligned}
A(u) \ &=\
t_c u_c \mB({ \Delta^2_u A(u) + \mb({Z_1(u_c) + \Delta_u Z_0(u)}\Delta_u A(u) + a_1 \mb({\Delta_u Z(u,u_c) - \Delta_u Z_0(u)} }
\\&\ + \nu_c t_c \mB({ \frac{A(u)}{u_c} + \mB({\frac{Z_0(u_c)-1}{u_c} +Z_1(u)} A(u) + a_0 \mB({ \frac{Z(u,u_c) - Z_0(u)}{u_c} - Z_1(u) } }
\end{aligned}
\end{equation}
where $\Delta_u$ is the discrete derivative operator defined below \eqref{eq:2cat}. Recall that when $v\to u_c$, we have $Z(u,v) = Z(u,u_c) + \partial_v Z(u,u_c) (v-u_c) + A(u) (1-\frac{v}{u_c})^{4/3} + O\m({ (1-\frac{v}{u_c})^{5/3} }$. Then one can write down the expansion at $v=u_c$ of the second equation in \eqref{eq:2cat}, and verify that the coefficient of the dominant singular term $(1-\frac{v}{u_c})^{4/3}$ gives exactly \eqref{eq:normalize(p)}. This proves that $\sum_{\step\in \steps} \Prob\py(\Step_1=\step) =1$ for all $p\ge 0$.

Similarly, using the data in Table~\refp{b}{tab:prob(p)} one can show that $\sum_{\step\in \steps} \Prob\yy(\Step_1=\step) =1$ if and only if $(\nu_c+1)t_c \m({ \frac{Z_0(u_c)}{u_c} + Z_1(u_c) } = 1$. This equation can also be obtained as the coefficient of $(1-\frac{u}{u_c})^{1/3}$ in the expansion of \eqref{eq:normalize(p)} at $u=u_c$. This completes the proof of the lemma.
\end{proof}

\begin{proof}[Proof of Proposition~\ref{prop:comb cv}]
To have the convergence $\Prob_{p,q}\to\Prob\py$, we need to define
\begin{equation}\label{eq:lim S_n}
\Prob\py(\Step_1=\step_1,\cdots, \Step_n=\step_n)\ :=\ \lim_{q\to\infty}
\Prob_{p,q}(\Step_1=\step_1,\cdots, \Step_n=\step_n)
\end{equation}
for all $n\ge 1$ and all $\step_1,\cdots,\step_n\in \steps$.

As we have seen at the end of Section~\ref{sec:peeling}, the peeling events $(\step_k)_{1\le k\le n}$ completely determine the perimeter variations $(x_k,y_k)_{1\le k\le n}$. So according to the spatial Markov property, \eqref{eq:lim S_n} is equivalent to
\begin{align*}
\Prob\py(\Step_1=\step_1,\cdots, \Step_n=\step_n) :=&\
\lim_{q\to\infty} \Prob_{p,q}(\Step_1=\step_1) \Prob_{p+x_1,q+y_1}(\Step_1=\step_2) \cdots \Prob_{p+x_{n-1},q+y_{n-1}}(\Step_1=\step_n)	\\
=&\ \Prob\py(\Step_1=\step_1) \cdot \Prob\py[p+x_1](\Step_1=\step_2) \cdots \Prob\py[p+x_{n-1}](\Step_1=\step_n)\,.
\end{align*}
Then Lemma~\ref{lem:comb cv} implies that $\Prob\py$ is a probability distribution
on $\steps^{\integer_{\ge 0}}$. In particular, for any finite $n$, the probability under $\Prob\py$ of the peeling process stopping within time $n$ is zero. So the peeling process $\Prob\py$-almost surely never stops.

Similarly, we take the limit $p\to\infty$ in the above equations, and define $\Prob\yy$ by
\begin{equation*}
\Prob\yy(\Step_1=\step_1,\cdots, \Step_n=\step_n)\ :=\ \Prob\yy(\Step_1=\step_1) \cdots \Prob\yy(\Step_1=\step_n)\,.	\qedhere
\end{equation*}
\end{proof}

The above construction of $\Prob\py$ and $\Prob\yy$ implies immediately the following corollary.

\begin{corollary}[Markov property of $\Prob\py$ and $\Prob\yy$]\label{spatialmarkovp}
Under $\Prob\py$ and conditionally on $(\Step_k)_{1\le k\le n}$, the shifted sequence $(\Step_{n+k})_{k\ge 0}$ has the law $\Prob\py[P_n]$. In particular, $\nseq P$ is a Markov chain.\\
Under $\Prob\yy$, the sequence $\nseq \Step$ is i.i.d. In particular, $\nseq{X_n,Y}$ is a random walk.
\end{corollary}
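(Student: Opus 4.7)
The plan is to read both statements directly off the explicit product formulas already obtained in the proof of Proposition~\ref{prop:comb cv}. Recall that this proof establishes
\begin{equation*}
\Prob\py(\Step_1=\step_1,\ldots,\Step_n=\step_n) \ =\ \prod_{k=1}^{n} \Prob\py[p+x_1+\cdots+x_{k-1}](\Step_1=\step_k),
\end{equation*}
where $x_j := X_j - X_{j-1}$ is a deterministic function of $\step_j$ alone (given in Table~\ref{tab:prob(p)}), and an analogous product formula for $\Prob\yy$ in which no $p$-index appears.

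To obtain the spatial Markov property under $\Prob\py$, I would apply the above factorization to events of the form $\{\Step_k=\step_k\}_{1\le k\le n+m}$ and divide by the factorization of $\{\Step_k=\step_k\}_{1\le k\le n}$. The first $n$ factors cancel, leaving
\begin{equation*}
\Prob\py\bigl(\Step_{n+1}=\step_{n+1},\ldots,\Step_{n+m}=\step_{n+m} \,\bigm|\, \Step_1=\step_1,\ldots,\Step_n=\step_n\bigr) \ =\ \prod_{k=1}^{m} \Prob\py[p+x_1+\cdots+x_{n+k-1}](\Step_1=\step_{n+k}).
\end{equation*}
Since $P_n = p + x_1 + \cdots + x_n$ is measurable with respect to $\sigma(\Step_1,\ldots,\Step_n)$, the right-hand side rewrites as $\prod_{k=1}^{m} \Prob\py[P_n + x_{n+1}+\cdots+x_{n+k-1}](\Step_1=\step_{n+k})$, which by the same product formula applied in the initial state $P_n$ equals $\Prob\py[P_n](\Step_1=\step_{n+1},\ldots,\Step_m=\step_{n+m})$. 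This yields the claimed law for the shifted sequence, and the Markovianity of $\nseq P$ follows because $P_{n+1} - P_n$ is determined by $\Step_{n+1}$, whose conditional law depends on the past only through $P_n$.

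For $\Prob\yy$ the argument is even shorter: the corresponding product formula carries no initial-condition index, so
\begin{equation*}
\Prob\yy(\Step_1=\step_1,\ldots,\Step_n=\step_n) \ =\ \prod_{k=1}^{n} \Prob\yy(\Step_1=\step_k),
\end{equation*}
which is the definition of i.i.d.\ coordinates. Since $(X_n,Y_n)$ has increments $(x_n,y_n)$ that are deterministic functions of $\Step_n$ (again from Table~\ref{tab:prob(p)}), the sequence $\nseq{X_n,Y}$ is a random walk on $\integer^2$ starting from $(0,0)$. The only point requiring care is the reindexing of perimeter increments when the telescoping argument for $\Prob\py$ is carried out, but this is pure bookkeeping rather than a genuine obstacle.
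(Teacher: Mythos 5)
Your proof is correct and is exactly the argument the paper has in mind: the corollary is stated as an immediate consequence of the product formulas defining $\Prob\py$ and $\Prob\yy$, and you are simply writing out the telescoping cancellation and reindexing that make "immediate" precise.
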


\subsection{The random walk $\law\yy \nseq{X_n,Y}$}\label{sec:XY}
\newcommand{\tser}[2][t]{(#2_{#1})_{#1\ge 0}}
\newcommand{\nt}[1][t]{_{\floor{n#1}}}

The distribution of the first step $\law\yy (X_1,Y_1)$ of this random walk can be readily read from Table~\refp{b}{tab:prob(p)}.
From there it is not hard to compute explicitly its drift and tails, and deduce Theorem~\refp{1}{thm:scaling limit} by standard invariance principles.

\begin{proof}[Proof of Theorem~\refp{1}{thm:scaling limit}]
First, notice that the law of $\law\yy(X_1+Y_1)$ has a particularly simple expression given by
\begin{align*}
\Prob\yy(X_1+Y_1=1)	&\,=\,	\Prob\yy(\Step_1\in\{\cp,\cm\})
			\!\!\!\!		&&=\,	(\nu_c+1)t_c u_c^{-1}
\\\forall k\ge 0,\,
\Prob\yy(X_1+Y_1=-k)&\,=\,	\Prob\yy(\Step_1\in\{\lp,\lm,\rp,\rn\})
			\!\!\!\!		&&=\,	(\nu_c+1)t_c u_c^k (z_{k+1,0}+z_{k,1})	\,.
\end{align*}
It follows that
\begin{align*}
\EE\yy[X_1+Y_1]\ &=\ (\nu_c+1)t_c
\mB({ -\sum_{k=0}^\infty (k-1)u_c^{k-1}z_{k,0} - \sum_{k=0}^\infty k u_c^k z_{k,1} }
				\\\ &=\ (\nu_c+1)t_c
\mB({ \frac{Z_0(u_c)}{u_c} -Z_0'(u_c) - u_c Z_1'(u_c) }\ =\ \frac1{2\sqrt7}
\end{align*}
where the derivatives are computed using the chain rule $Z_0'(u_c)= \frac{\hat Z_0'(1)}{\hat u'(1)}$ and \eqref{eq:RP H}. Similarly, we deduce from Table~\refp{b}{tab:prob(p)} and \eqref{eq:RP H} the following expression and value of $\EE\yy[Y_1]$.
\begin{equation*}
\EE\yy[Y_1] = t_c\m({ (\nu_c-1) \frac{Z_0(u_c)}{u_c} -Z_1(u_c) -\nu_c Z_0'(u_c) - u_c Z_1'(u_c) } = \frac1{4\sqrt7}		\,.
\end{equation*}
We refer to the accompanying Mathematica notebook (\cite{CAS1}) for the computation of the numerical values above. It follows that $\EE\yy [X_1]=\EE\yy [Y_1]$. This is not obvious \emph{a priori}, since under $\Prob\yy$ the peeling process always chooses to reveal a triangle adjacent to a \+ boundary edge, breaking the symmetry between \+ and \<. 

Again from Table~\refp{b}{tab:prob(p)} we read that $\Prob\yy(X_1=-k) = \m({\nu_c z_{k+1,0}+\frac{z_{k-1,1}}{u_c} } t_c u_c^k$ and $\Prob\yy(Y_1=-k) = \m({\nu_c z_{k,1}+ u_c z_{k+2,0} } t_c u_c^k$ for all $k\ge 2$. By Theorem~\ref{thm:z_p,q}, their asymptotics is
\begin{align*}
&\Prob\yy(X_1= -k)\ \eqv{k}\ \frac{c_x}{k^{7/3}} &&\text{and}
&&\Prob\yy(Y_1= -k)\ \eqv{k}\ \frac{c_y}{k^{7/3}}
\\\text{where}\qquad
&c_x = \m({\nu_c \frac{a_0}{u_c} + a_1} \frac{t_c}{\Gamma(-4/3)}	&&\text{and}
&&c_y = \m({\frac{a_0}{u_c} + \nu_c a_1} \frac{t_c}{\Gamma(-4/3)}\,,
\end{align*}
or explicitly, $c_x = \frac1{8\,\Gamma(-4/3)} \frac{2+3\sqrt7}{7+\sqrt7} (\frac32)^{1/3}$ and $c_y = \frac1{8\,\Gamma(-4/3)} \frac{2+\sqrt7}{7+\sqrt7} (\frac32)^{1/3}$. Observe that $c_x>c_y$. It follows from a standard invariance principle (see e.g.\ \cite[Theorem VIII.3.57]{JS03}) that the two components of the random walk $\nseq{X_n,Y}$, after renormalization, converge respectively to the L\'evy processes $\mathcal{X}$ and $\mathcal{Y}$ in Theorem~\ref{thm:scaling limit}.

Now let us show that these two convergences hold jointly, and that the limits $\mathcal{X}$ and $\mathcal{Y}$ are independent. We adapt the proof of a similar result for the peeling of a UIPT \cite[Proposition 2]{CurKPZ}. Observe that the steps of the random walk satisfy $-2\le \max(X_1,Y_1)\le 2$, so that $\nseq X$ and $\nseq Y$ never jump simultaneously. Let us decompose $(X,Y)$ into the sum of two random walks $(X\00,Y\00)$ and $(X\01,Y\01)$ of respective step distributions
\begin{equation*}
(X\00_1,Y\00_1) = \idd{X_1 <  -2} (X_1, Y_1)    \qtq{and}
(X\01_1,Y\01_1) = \idd{X_1\ge -2} (X_1, Y_1)    \,.
\end{equation*}
According to the above observation, $(X\00,Y\00)$ only jumps along the $x$-axis, and $(X\01,Y\01)$ only jumps along the $y$-axis. (More precisely, $\abs{Y\00_k-Y\00_{k-1}} \le 2$ and $\abs{X\01_k-X\01_{k-1}} \le 2$ for all $k\ge 1$.)
Thus according to the same invariance principle as before, we have
\begin{equation*}
\m({\frac{X\00\nt - \EE\yy[X\00\nt]}{ n^{3/4}} ,
    \frac{Y\00\nt - \EE\yy[Y\00\nt]}{ n^{3/4}} }_{t\ge 0}
 \cv[]n (\mathcal{X}_t,0)_{t\ge0}
\end{equation*}
in distribution with respect to the Skorokhod topology. Here we have the joint convergence of the two components because the limit of the second component is a constant. Similarly, the random walk $(X\01,Y\01)$ converges to $\tser{0,\mathcal{Y}}$ after renormalization.

The random walks $(X\00,Y\00)$ and $(X\01,Y\01)$ are correlated. To recover independence, consider their poissonizations defined by $(\tilde{X}\0i_t,\tilde{Y}\0i_t) = (X\0i_{N_t}, Y\0i_{N_t})$, where $i\in\{0,1\}$ and $\tser N$ is an integer-valued Poisson point process of intensity 1. According to Lemma~\ref{lem:poisson} (stated and proved below) applied to $W_n = (X\00_n-\EE\yy [X\00_n],Y\00_n-\EE\yy [Y\00_n])$ and $a_n=n^{3/4}$, the poissonized random walks converge to the same limit after renormalization, namely
\begin{equation*}
\m({\frac{\tilde{X}\00_{nt} - \EE\yy[\tilde{X}\00_{nt}]}{ n^{3/4}} ,
    \frac{\tilde{Y}\00_{nt} - \EE\yy[\tilde{Y}\00_{nt}]}{ n^{3/4}} }_{t\ge 0}
 \cv[]n (\mathcal{X}_t,0)_{t\ge0} \,,
\end{equation*}
and similarly for $(\tilde{X}\01,\tilde{Y}\01)$. By the splitting property of compound Poisson processes, $(\tilde{X}\00,\tilde{Y}\00)$ and $(\tilde{X}\01,\tilde{Y}\01)$ are independent (See e.g.\ \cite[Proposition 6.7]{Nelson95}). Hence their sum $(\tilde X,\tilde Y)$, after renormalization, converges in distribution to the pair of independent L\'evy processes $(\mathcal{X,Y})$. Finally, we apply again Lemma~\ref{lem:poisson} to $W_n=(X_n -\EE\yy[X_n], Y_n -\EE\yy[Y_n])$ and $a_n=n^{3/4}$ to recover the convergence in Theorem~\refp{1}{thm:scaling limit}.
\end{proof}

\begin{lemma}[poissonization and depoissonization]\label{lem:poisson}
Let $\nseq W$ be a discrete-time random process in $\real^d$ ($d\ge 1$) and $\nseq a$ be a sequence of positive real numbers such that
\begin{equation*}
\frac1{n^{1/2}a_n} \sup_{t\in[0,T]} \Vert W\nt \Vert \cv[]n 0
\end{equation*}
in probability for all fixed $T>0$.
If $\tser N$ is a Poisson counting process of intensity 1 and independent of $\nseq W$, then we have
\begin{equation*}
d_{D_\infty}( a_n^{-1}W\nt, a_n^{-1}W_{N_{nt}} ) \cv[]n 0
\end{equation*}
in probability, where $d_{D_\infty}\!\!$ is the Skorokhod distance on the space of c\`adl\`ag functions on $[0,\infty)$.

In particular, if one of $(a_n^{-1}W\nt)_{n\ge 0}$ and $(a_n^{-1}W_{N_{nt}})_{n\ge 0}$ converge in distribution \wrt\ the Skorokhod topology, then the other also converges and has the same limit.
\end{lemma}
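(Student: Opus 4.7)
The plan is to exploit the fact that the two processes $f_n(t):=a_n^{-1}W_{\lfloor nt\rfloor}$ and $g_n(t):=a_n^{-1}W_{N_{nt}}$ are step functions taking exactly the same sequence of values $(a_n^{-1}W_k)_{k\ge 0}$, but at different jump locations: $f_n$ jumps at the deterministic times $k/n$, while $g_n$ jumps at the Poisson arrival times $\tau_k/n$, where $0=\tau_0<\tau_1<\tau_2<\cdots$ are the successive arrivals of $N$. Since $N$ is independent of $W$, one expects the Skorokhod distance between $f_n$ and $g_n$ to be governed essentially by the maximal displacement $\sup_k|\tau_k-k|/n$ of corresponding jump times, which by the functional CLT for Poisson arrivals is of order $O_p(n^{-1/2})$ on compact intervals.

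I would first reduce the claim to showing $d_{D[0,T]}(f_n,g_n)\to 0$ in probability for every $T>0$, since $d_{D_\infty}$ is majorized by a weighted sum of the $d_{D[0,T]}$. Fixing $T>0$, I then construct the time change $\lambda_n:[0,T]\to[0,T]$ as the continuous strictly increasing piecewise linear function whose graph interpolates the points $(k/n,\tau_k/n)$ for all $k$ such that both $k/n$ and $\tau_k/n$ lie in $[0,T]$, adjusted by a short linear piece near the right endpoint so that $\lambda_n(T)=T$. By direct inspection, when $s$ lies in an interior interval $[k/n,(k+1)/n)$ with both endpoints among the interpolation nodes, one has $\lambda_n(s)\in[\tau_k/n,\tau_{k+1}/n)$, and hence $g_n(\lambda_n(s))=a_n^{-1}W_k=f_n(s)$. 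Thus $g_n\circ\lambda_n\equiv f_n$ outside a boundary interval near $T$ whose length is $O_p(n^{-1/2})$.

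The next step is to establish the estimate $\sup_{s\in[0,T]}|\lambda_n(s)-s|=O_p(n^{-1/2})$, which follows from Donsker's theorem applied to the centered unit-exponential inter-arrival times (a random walk of unit variance), giving $\sup_{k\le nT}|\tau_k-k|=O_p(\sqrt n)$. The remaining discrepancy between $f_n$ and $g_n\circ\lambda_n$ on the short boundary interval is bounded by $2a_n^{-1}\sup\{\|W_k\|:|k-nT|\le C\sqrt n\}$ for some $C=O_p(1)$, and the hypothesis $(n^{1/2}a_n)^{-1}\sup_{t\in[0,T']}\|W_{\lfloor nt\rfloor}\|\to 0$ (applied at a slightly enlarged $T'>T$) provides the control needed to make this boundary contribution negligible in probability. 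I expect this last step to be the main obstacle: the factor $n^{1/2}$ appearing in the hypothesis is tailored precisely to match the $\sqrt n$-scale fluctuation of $N_{nt}$ around $nt$, and a careful choice of $\lambda_n$ is needed so that the surplus is absorbed into the time-change error rather than inflating the value mismatch.

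The ``in particular'' clause of the lemma is then immediate from the first part: if one of $(a_n^{-1}W_{\lfloor n\cdot\rfloor})$ and $(a_n^{-1}W_{N_{n\cdot}})$ converges in distribution in the Skorokhod topology and the two sequences are close in probability for $d_{D_\infty}$, then by Slutsky's theorem the other converges to the same limit.
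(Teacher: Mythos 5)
Your overall strategy coincides with the paper's: both construct a piecewise-linear time change interpolating the Poisson arrival times, invoke the CLT to get $\sup|\lambda_n-\mathrm{id}|=O_p(n^{-1/2})$, and use the hypothesis to control what remains. But the step you yourself flag as the obstacle contains a genuine gap, and your proposed resolution does not close it.

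The boundary estimate you propose is $2a_n^{-1}\sup\{\|W_k\|:|k-nT|\le C\sqrt n\}$, and you claim the hypothesis makes this $o_p(1)$. It does not. The hypothesis gives $a_n^{-1}\sup_{k\le nT'}\|W_k\|=o_p(n^{1/2})$ and nothing better; restricting the supremum to a window of width $C\sqrt n$ near $k=nT$ gains nothing because the value of $\|W_k\|$ inside that window can itself be as large as the global supremum. For instance if $W_k=\sqrt k$ and $a_n=\sqrt n/\log n$, the hypothesis holds (the full normalized sup is $O(\log n/\sqrt n)\to 0$), yet $a_n^{-1}\sup\{\|W_k\|:|k-nT|\le\sqrt n\}\sim\log n\,\sqrt T\to\infty$. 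The conclusion of the lemma still holds for this $W$ (indeed uniformly), so you have a correct statement with an argument that fails to prove it.

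What repairs this, and what the paper actually does, is to work with the Billingsley form of the $D[0,\infty)$ metric: $d_{D_\infty}$ is built from the Skorokhod distances $d_{D_m}(g_m\,\cdot\,,g_m\,\cdot\,)$ of the \emph{regularized} functions $t\mapsto g_m(t)x(t)$, with $g_m(t)=1\wedge(m-t)$. This regularization is $1$-Lipschitz and vanishes at $t=m$, so near the right endpoint both $g_m(t)$ and $g_m(\lambda_n(t))$ are bounded by roughly $\frac1n+\sup_{t\le m}|\lambda_n(t)-t|$. The endpoint value mismatch is therefore multiplied by an $O_p(n^{-1/2})$ prefactor, and only then does the hypothesis — which yields $\sup_{t\le m}\|W^{(n)}(t)\|=o_p(n^{1/2})$ — produce a product of order $o_p(1)$. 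Your unadorned reduction to $d_{D[0,T]}(f_n,g_n)\to 0$ discards exactly this dampening factor, and the raw $D[0,T]$ metric is genuinely sensitive to endpoint behavior (a time change must fix $T$), so the reduction as stated is not correct. In short: you correctly identify where the $\sqrt n$ from the hypothesis must match the $\sqrt n$-scale Poisson fluctuations, but without the $g_m$ weight there is no mechanism that converts the $o_p(n^{1/2})$ supremum into something negligible.
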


\begin{proof}
For $t\ge 0$, let $W\0n(t) = a_n^{-1}W\nt$ and $\tilde{W}\0n(t) = a_n^{-1} W_{N_{nt}}$. Recall from \cite{Billingsley} the definitions of $d_{D_\infty}$ and of $d_{D_m}$, the Skorokhod distance on the space of c\`adl\`ag functions on $[0,m]$.
By the definition of $d_{D_\infty}$, the conclusion of the lemma is equivalent to
\begin{equation*}
d_{D_m}(g_m W\0n,g_m \tilde{W}\0n) \cv[]n 0
\end{equation*}
in probability for all integers $m\ge 1$, where $g_m:[0,m]\to [0,1]$ is the regularization function defined by $g_m(t) = 1\wedge (m-t)$. From the definition of $d_{D_m}$, we see that the left hand side is bounded by
\begin{equation}\label{eq:bound D_m}
\sup_{t\le m} \abs{\lambda(t)-t} \,\vee\, \sup_{t\le m} \norm{(g_m W\0n)(\lambda(t)) - (g_m\tilde{W}\0n)(t)}
\end{equation}
where $\lambda$ is any increasing homeomorphism from $[0,m]$ onto itself.

Let $\lambda\0n$ be the increasing homeomorphism from $[0,\infty)$ onto itself defined by linearly interpolating the function $t\mapsto n^{-1} N_{nt}$. Then we have $W\0n(\lambda\0n(t)) = \tilde{W}\0n(t)$ for all $t\ge 0$.
For each $m$, we modify $\lambda\0n$ to produce a homeomorphism $\lambda\0n_m$ from $[0,m]$ onto itself as follows: let $t_m$ be the $x$-coordinate of the point where the graph of the function $\lambda\0n$ exits the square $[0,m-1/n]^2$. Define $\lambda\0n_m$ by $\lambda\0n_m(t) = \lambda\0n(t)$ for $t\in[0,t_m]$, and by linear interpolation for $t\in [t_m,m]$ so that $\lambda\0n_m(m)=m$.
Now consider \eqref{eq:bound D_m} when $\lambda=\lambda\0n_m$. Using the property of $\lambda\0n$ and the fact that $g_m$ is 1-Lipschitz, we can simplify the bound to get
\begin{equation*}
d_{D_m}(g_mW\0n, g_m\tilde{W}\0n)     \ \le\
\m({\frac1n + \sup_{t\le m} | \lambda\0n_m(t) -t |} \cdot
\m({ 1\vee \sup_{t\le m} \Vert W\0n(t) \Vert }
\end{equation*}
By central limit theorem, $\sqrt{n} \sup_{t\le m} | \lambda\0n_m(t) -t |$ converges in distribution to a finite random variable as $n\to\infty$. Thus the assumption of the lemma implies that the right hand side of the above inequality converges to zero in probability. This completes the proof.
\end{proof}

\subsection{The Markov chain $\law\py\nseq P$}\label{sec:MarkovP}

When $p$ is large, $\law\py \nseq P$ approximates the random walk $p+\law\yy \nseq X$, which has a strictly positive drift $\mu$. This seems to suggest that $\law\py \nseq P$ escapes to $+\infty$ with positive probability (indeed, as $P_n$ increases, the transition probabilities of $\law\py \nseq P$ gets closer to those of $p+\law\yy \nseq X$). However, as stated in Theorem~\refp{2}{thm:scaling limit}, $\law\py \nseq P$ hits zero with probability one. There is no contradiction because, despite the weak convergence $\Prob\py \to \Prob\yy$, the expectation $\EE\py{}[X_1]$ \emph{does not} converge to $\EE\yy[X_1]$ as $p\to\infty$. Actually, we will compute the limit of $\EE\py{}[X_1]$ in the remark after Proposition~\ref{prop:scaling} and see that it is negative.

What happens is that with high probability, the process $\law\py\nseq P$ stays close to the straight line $p_n=p+\mu n$ up to a time of order $\Theta(p)$, and then jumps to a neighborhood of zero in a single step.
The jump occurs because the peeling events of type $\RR^\jj_{p+k}$, for any fixed $k\in\integer$, occur with a probability of order $\Theta(p^{-1})$. (See Table~\refp{a}{tab:prob(p)}.)
To formalize this one-jump phenomenon, let us consider the stopping time
\begin{equation*}
T_m\ =\ \inf\Set{n\ge 0}{P_n\le m},
\end{equation*}
where $m\ge 0$ is some cut-off which will eventually be sent to $\infty$. In particular, $T_0$ is the first time that the boundary of the unexplored map becomes monochromatic.

The following lemma gives an upper bound for the tail distribution of $T_0$, which implies in particular that the process $\law\py \nseq P$ hits zero almost surely. It will also be used as an ingredient in the proof of Lemma~\ref{lem:one jump}.

\begin{lemma}[Tail of the law of $T_0$ under $\Prob\py$]\label{lem:hit 0}
There exists $\gamma_0>0$ such that $\Prob\py(T_0> \Lambda p)\le \Lambda^{-\gamma_0}$ for all $p\ge 1$ and $\Lambda>0$. In particular, $T_0$ is finite $\Prob\py$-almost surely.
\end{lemma}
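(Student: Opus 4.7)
\medskip
\noindent\textbf{Proof plan.}
The idea is to exhibit at each step a sizable probability of a peeling event that instantly sends the \+ boundary to zero, and then to combine these single-step bounds via the Markov property of $\law\py\nseq P$.

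\smallskip
\noindent\emph{Step 1 (key single-step estimate).}
Call an event \emph{fatal} at time $n$ if $\Step_{n+1}\in\{\rn[P_n+k]:k\ge 0\}$; by the table entries for $\rn$ and $\rn[p+k]$, every fatal event makes $P_{n+1}=0$. I plan to show the lower bound
\begin{equation*}
\Prob\py[p']\mb({\text{fatal at }0}
\ =\ \nu_c\, t_c\,\frac{a_0}{a_{p'}}\,\sum_{k\ge 0} z_{p',k+1}\, u_c^{k}
\ \ge\ \frac{c_0}{p'}
\qquad\text{for all }p'\ge 1,
\end{equation*}
for some constant $c_0>0$. The sum equals $u_c^{-1}(Z_{p'}(u_c)-z_{p',0})$ (using the symmetry $z_{p,q}=z_{q,p}$ to identify it with a coefficient of $Z_{p'}$). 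By Theorem~\ref{thm:z_p,q}, $Z_{p'}(u_c)\sim \frac{A(u_c)}{\Gamma(-4/3)}u_c^{-p'}(p')^{-7/3}$ and $a_{p'}\sim \frac{b}{\Gamma(-1/3)} u_c^{-p'}(p')^{-4/3}$, so the ratio is $\sim C/p'$ as $p'\to\infty$; for the finitely many small values of $p'$ the left-hand side is positive (all weights are strictly positive and $a_0>0$), so the bound follows with a uniform $c_0>0$ after adjusting the constant.

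\smallskip
\noindent\emph{Step 2 (deterministic a priori bound on $P_n$).}
From Table~\ref{tab:prob(p)}(a), every peeling event satisfies $X_1\le 2$. Hence almost surely $P_n\le P_0+2n=p+2n$ for all $n\ge 0$.

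\smallskip
\noindent\emph{Step 3 (iteration via the Markov property).}
By Corollary~\ref{spatialmarkovp}, $\nseq P$ is Markov under $\Prob\py$, with conditional probability of a fatal step given $\mathcal F_k=\sigma(\Step_1,\dots,\Step_k)$ equal to $\Prob\py[P_k](\text{fatal at }0)\ge c_0/P_k$. Therefore, using $1-x\le e^{-x}$,
\begin{equation*}
\Prob\py(T_0>n)
\ \le\ \EE\py\mB({\prod_{k=0}^{n-1}\mb({1-\tfrac{c_0}{P_k}}}
\ \le\ \EE\py\mB({\exp\mb({-c_0\sum_{k=0}^{n-1}\tfrac{1}{P_k}}}.
\end{equation*}
Substituting $P_k\le p+2k$ from Step~2 gives
\begin{equation*}
\Prob\py(T_0>n)
\ \le\ \exp\mB({-c_0\sum_{k=0}^{n-1}\frac{1}{p+2k}}
\ \le\ \mB({1+\frac{2n}{p}}^{-c_0/2},
\end{equation*}
where in the last step I bound the harmonic sum from below by $\tfrac12\log(1+2n/p)$.

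\smallskip
\noindent\emph{Step 4 (conclusion).}
Taking $n=\lfloor\Lambda p\rfloor$ yields $\Prob\py(T_0>\Lambda p)\le (1+2\Lambda)^{-c_0/2}$, which is bounded by $\Lambda^{-\gamma_0}$ (with $\gamma_0=c_0/2$) for $\Lambda\ge 1$, and is trivially $\le 1\le \Lambda^{-\gamma_0}$ for $\Lambda\in(0,1]$. Since $\Prob\py(T_0>\Lambda p)\to 0$ as $\Lambda\to\infty$, $T_0$ is finite $\Prob\py$-almost surely.

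\smallskip
\noindent\emph{Main difficulty.}
The only nontrivial ingredient is the single-step lower bound of Step~1, specifically its uniformity in $p'$. For large $p'$ it follows from the joint use of the two asymptotics in Theorem~\ref{thm:z_p,q} (one to evaluate $Z_{p'}(u_c)$, the other to evaluate $a_{p'}$), and the resulting ratio is of the right order $1/p'$ because the exponent difference $7/3-4/3=1$ matches exactly the decay rate we need; for small $p'$ one checks positivity term by term. Everything else is a standard Markov-chain tail argument.
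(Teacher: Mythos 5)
Your proof is correct and follows essentially the same route as the paper: obtain a single-step lower bound $\Prob\py[p'](P_1=0)\ge c_0/p'$, combine it with the deterministic bound $P_n\le p+2n$, and iterate via the Markov property to get $\Prob\py(T_0>\Lambda p)\le(1+2\Lambda)^{-c_0/2}$. The only cosmetic difference is in Step~1: the paper bounds $\Prob\py(T_0=1)$ from below using just the single event $\rn[p]$, whose probability $\nu_c t_c\,a_0 z_{p,1}/a_p\asymp p^{-1}$ is read off directly, whereas you sum over all fatal events $\rn[p+k]$, $k\ge0$, which works but requires the extra identity $\sum_{k\ge 0}z_{p,k+1}u_c^k=u_c^{-1}(Z_p(u_c)-z_{p,0})$, the symmetry $z_{p,q}=z_{q,p}$, and the $Z_q(u_c)$ asymptotics — a slightly heavier (but equally valid) way to get the same $\asymp p^{-1}$ bound.
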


\begin{proof}
From Table~\refp{a}{tab:prob(p)}, we read $\Prob\py(\Step_1= \rn[p]) = \nu_c t_c a_0\frac{z_{p,1}}{a_p}$ for all $p\ge1$. By Theorem~\ref{thm:z_p,q}, the right hand side decays like $p^{-1}$ when $p\to\infty$. Hence there exists $\delta>0$ such that
\begin{equation*}
\Prob\py(T_0= 1)  \ \ge\  \Prob\py(\Step_1=\rn[p])  \ \ge\  \delta /p
\end{equation*}
for all $p\ge 1$. On the other hand, $P_n$ increases at most by 2 at each step, therefore $P_n\le p+2n$ for all $n\ge 0$ almost surely under $\Prob\py$. It follows that for all $n\ge 0$,
\begin{equation*}
\Prob\py(T_0>n+1)	\ =  \ \EE\py \mb[{ \Prob\py[P_n](T_0\ne 1) \idd{T_0>n}  }
				\ \le\ \mb({ 1-\frac{\delta}{p+2n} } \Prob\py(T_0>n)\,.
\end{equation*}
By induction, we have $\Prob\py(T_0>n)\,\le\,\prod_{k=0}^{n-1} \m({ 1-\frac{\delta}{p+2k} }$ for all $n\ge 0$. Then, we use the inequality $\log(1-x)\le -x$ for $0<x<1$ and bound the Riemann sum by its integral:
\begin{equation*}
\Prob\py(T_0>\Lambda p)
\ \le\ \exp\mB({ -\sum_{k=0}^{\Lambda p-1} \frac{\delta}{p+2k} }
\ \le\ \exp\mB({ -\int_0^\Lambda \frac{\delta\ \dd x}{1+2x}}
\ =\ (1+2\Lambda)^{-\delta/2}.
\qedhere
\end{equation*}
\end{proof}

\newcommand{\tauxy}{\tau^\epsilon_x}
\newcommand{\barrier}[1][x]{#1 f_\epsilon}

Now let us quantify the statement that \emph{$\law\py\nseq P$ stays close to the line $p_n=p+\mu n$}. In fact, we will formulate the stronger result that \whp, both $\law\py \nseq X$ and $\law\py \nseq Y$ stay close to $x_n=\mu n$ up to time $T_m$.
Fix some arbitrary $\epsilon>0$. For $n\ge 0$, let
\begin{equation*}
\barrier[](n) = \mb({ (n+2)(\log(n+2))^{1+\epsilon} }^{3/4}.
\end{equation*}
Then, define the stopping time 
\begin{equation*}
\tauxy = \inf\Set{n\ge 0}{\abs{X_n-\mu n} \vee \abs{Y_n-\mu n} > \barrier(n) }\,.
\end{equation*}
where $x>0$.
If $\law\py \nseq{X_n,Y}$ were replaced by $\law\yy \nseq{X_n,Y}$, then we would have $T_m=\infty$ almost surely, and $\tauxy<\infty$ \whp\ in the limit $x\to\infty$ thanks to the law of iterated logarithm for heavy-tailed random walks \cite{Sch00}. 
The following lemma affirms that we can still use the function $\barrier(n)$ to bound the deviation of $\law\py \nseq{X_n,Y}$ up to time $T_m$ in the limit $p\to\infty$ and when both $x$ and $m$ are large.

\begin{lemma}[One jump to zero]\label{lem:one jump}
For all $\epsilon>0$,
\begin{equation*}
\lim_{x,m \to\infty} \limsupp \Prob\py (\tauxy<T_m) = 0
\end{equation*}
\end{lemma}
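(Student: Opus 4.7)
The plan rests on two ingredients. First, a structural reduction: the ``wrap-around'' peeling events $\rp[p+k], \rn[p+k]$ with $k\ge 1$ under $\Prob\py$ reduce $P$ to at most $1$ in a single step (by inspection of Table~\ref{tab:prob(p)}(a)), so letting $\tilde\tau$ denote the first wrap-around time we have $T_m\le\tilde\tau$ for every $m\ge 1$, and therefore $\Prob\py(\tauxy<T_m)\le\Prob\py(\tauxy<\tilde\tau)$. Second, a law-of-iterated-logarithm-type bound under $\Prob\yy$: exploiting the $4/3$-stable tails from Theorem~\ref{thm:scaling limit}(1) together with the fact that $\sum_n f_\epsilon(n)^{-4/3}\asymp\sum_n 1/(n(\log n)^{1+\epsilon})$ converges, a single-big-jump union bound gives
\[
\Prob\yy\bigl(\exists n\ge 1:\ |X_n-X_{n-1}|\vee|Y_n-Y_{n-1}|>xf_\epsilon(n)\bigr) \;\le\; C\,x^{-4/3}\sum_{n\ge 1}f_\epsilon(n)^{-4/3} \;=\; O(x^{-4/3}).
\]
Combined with a Bernstein-type concentration applied to the walk truncated at level $xf_\epsilon(n)$ (whose variance is tame because the truncated steps have finite variance scaling polynomially with the truncation level), this yields $\Prob\yy(\tauxy<\infty)\to 0$ as $x\to\infty$.

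To transfer to $\Prob\py$, I would couple the two walks step-by-step via the Radon-Nikodym derivative $M_n=\prod_{k=1}^n \Prob\py[P_{k-1}](\Step_k)/\Prob\yy(\Step_k)$, restricted to the event of no wrap-around so far. The asymptotic $a_{p+k}/a_p=u_c^{-k}(1+O(k/p))$ from Theorem~\ref{thm:z_p,q} (and an analogous expansion of $z_{p,k}$ in the fixed-$k$ direction) gives $\Prob\py[P_n](\Step_1)/\Prob\yy(\Step_1)=1+O(|\Delta|/P_n)$ on non-wrap-around events with moderate jump $|\Delta|$; on events with $|\Delta|$ close to $P_n-m$ (just barely missing $T_m$), the ratio can be as large as $\Theta((P_n/m)^{4/3})$. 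Restricting to $\{T_m\le \Lambda p\}$ via Lemma~\ref{lem:hit 0} (which controls the complement at cost $\Lambda^{-\gamma_0}$) and performing the change of measure should give an estimate of the form
\[
\Prob\py(\tauxy<T_m)\;\le\; C\,\Prob\yy(\tauxy<\infty) + \Lambda^{-\gamma_0} + o_{m\to\infty}(1),
\]
from which the claim follows upon sending $\Lambda,x,m\to\infty$ in an appropriate order.

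The principal technical obstacle is the control of the Radon-Nikodym derivative $M_n$ on the event $\{\tauxy<T_m\}$: naive per-step total-variation bounds summed over the typical $T_m=\Theta(p)$ steps yield an $O(1)$-sized logarithmic correction that is useless in the $\limsup_p$ with $m$ fixed. My proposed resolution is to exploit the fact that per-step ratios significantly exceeding $1$ are concentrated on events that themselves essentially achieve $T_m$ (those near-wrap-around $\rp, \rn$ events that drop $P$ close to $m$); after excluding these via a stopping argument, the cumulative effect of the remaining $O(|\Delta|/P_n)$ corrections is $O(1)$ in expectation and stays bounded with high probability by a martingale (Doob-type) estimate on $M_n$, producing the required $o_{m\to\infty}(1)$ term.
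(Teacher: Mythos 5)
Your plan takes a genuinely different route from the paper's: instead of transferring the bound from $\Prob\yy$ to $\Prob\py$ by a change of measure, the paper establishes the big-jump and Chernoff estimates directly for the Markov chain $\law\py\nseq{P_n,Y}$ (Lemmas~\ref{lem:estimates} and~\ref{lem:cst barrier}) and concatenates them through a staircase decomposition. Your route has the attraction that the random-walk LIL under $\Prob\yy$ can be drawn largely from the literature, leaving only the transfer to carry out; the preliminary reduction $T_m\le\tilde\tau$ via the wrap-around events is also correct (they drop $P$ to at most $1$). But the transfer step, exactly at what you call the ``principal technical obstacle,'' has a genuine gap that your proposed fix does not close.

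You treat $M_n$ as a product of per-step factors $1+O(|\Delta|/P_{n})$ and hope to control the cumulative correction by a Doob-type martingale estimate. This cannot work as stated, for two reasons. First, the factorization is lossy: the expansion $a_p u_c^p = \mathrm{cst}\cdot p^{-4/3}\bigl(1+O(p^{-1/3})\bigr)$ puts a $1+O(p^{-1/3})$ uncertainty into \emph{every} single factor, and over $T_m=\Theta(p)$ steps these multiply to $\exp\bigl(\Theta(p^{2/3})\bigr)$, which is useless. Second, even ignoring that, the increments $|\Delta_k|$ have only a finite first moment under $\Prob\yy$ (tails $\sim k^{-7/3}$), so any Markov or Doob bound on $\sum_k|\Delta_k|/P_{k-1}$ yields a failure probability of constant order in the truncation level, independent of $m$ and not decaying as $x\to\infty$; it survives the $\limsupp$ but ruins the $\lim_{x,m\to\infty}$. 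What your analysis is missing is that the product telescopes \emph{exactly}: on the no-wrap-around event, \eqref{eq:Doob} gives $M_n = a_{P_n}u_c^{P_n}/(a_p u_c^p)$, a function of the terminal perimeter alone, with no path-dependent accumulation whatsoever. Since on $\{n<\tauxy\}$ one has $P_n > p+\mu n - x f_\epsilon(n) > p/2$ for $p$ large and $x$ fixed, the derivative is bounded by $2^{4/3}$ (up to a $1+o(1)$) strictly before $\tauxy$, and only the single jump at time $\tauxy$ can make $M_{\tauxy}$ large; that forces a one-step drop of order $p$ to a level just above $m$, whose per-step probability under $\Prob\py$ is of order $p^{-1}m^{-1/3}$ by the first estimate in Lemma~\ref{lem:estimates}(v). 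After restricting to $\{T_m\le\Lambda p\}$ via Lemma~\ref{lem:hit 0}, this gives $\limsupp\Prob\py(\tauxy<T_m)\le C\bigl(\Lambda^{-\gamma_0}+\Lambda m^{-1/3}+\Prob\yy(\tauxy<\infty)\bigr)$, which does vanish upon sending $m,x\to\infty$ and then $\Lambda\to\infty$. Without this telescoping observation, I do not see how the martingale estimate you describe can be made to produce the required $o_{m\to\infty}(1)$ term.
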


The proof of Lemma~\ref{lem:one jump} is based on technical estimates on the transition probabilities of the Markov chain $\law\py \nseq{P_n,Y}$ and is left to Appendix \ref{sec:lemma proof}.

Now let us complete the proof of Theorem~\refp{2}{thm:scaling limit}. We have seen that $\law\py \nseq P$ hits zero almost surely in finite time. It remains to show that its scaling limit is the process $(\mathcal D_t)_{t\ge 0}$ where
\begin{equation*}
\mathcal D_t = \begin{cases}
1+\mu t    &\text{if }t  < \zeta    \\
0          &\text{if }t\ge \zeta
\end{cases}
\qtq{and}
\prob(\zeta>t) = (1+\mu t)^{-4/3} \ .
\end{equation*}
Proposition~\ref{prop:scaling} below ensures that the time $T_m$ of the big jump has $\zeta$ as scaling limit when $p\to\infty$, regardless of the value of $m$. Therefore to prove Theorem~\refp{2}{thm:scaling limit} it suffices to show that the process $\law\py (p^{-1}P_{\floor{pt}})_{t\ge 0}$ converges to $1+\mu t$ before time $p^{-1}T_m$, and to zero after time $p^{-1}T_m$.

According to the definition of $\tauxy$, for all $n<\tauxy$, the distance between $p^{-1} P_n$ and $1+\mu n/p$ is bounded uniformly by $x f_\epsilon (\tauxy)/p$. 
Lemma~\ref{lem:one jump} and Proposition~\ref{prop:scaling} below together ensure that with high probability we have $\tauxy=T_m$ and $T_m$ is of order $p$. This implies that the distance between $p^{-1} P_n$ and $1+\mu n/p$ converges uniformly to zero on $n< T_m$ with probability arbitrarily close to 1 when $p\to\infty$ and for $x,m$ large enough.
On the other hand, since the Markov chain $\law\py \nseq P$ is recurrent (it hits zero almost surely), the rescaled process $\law\py[p_0] (p^{-1} P_{\floor{pt}})_{t\ge 0}$ converges to zero for any fixed initial condition $p_0$.
By the spatial Markov property, the shifted process $\law\py (P_{T_m+n})_{n\ge 0}$ has the same distribution as $\nseq P$ with \emph{some} random initial condition supported on $\{0,\dots,m\}$. Although this random initial condition depends on $p$, since it is always supported on the finite set $\{0,\dots,m\}$, the fact that $\law\py[p_0] (p^{-1} P_{\floor{pt}})_{t\ge 0} \to 0$ for every fixed $p_0 \in \{0,\dots,m\}$ implies that the rescaled process $\law\py (p^{-1} P_{T_m+ \floor{pt}})_{t\ge 0}$ converges identically to zero when $p\to\infty$.
This proves Theorem~\refp{2}{thm:scaling limit} provided that Proposition~\ref{prop:scaling} is true.

\begin{prop}\label{prop:scaling}
For all $m\in\natural$, the jump time $T_m$ has the same scaling limit as follows:
\begin{equation}\label{eq:Tm scaling}
\forall t>0\,,\qquad	\lim_{p\to\infty} \Prob\py\m({T_m>tp} = (1+\mu t)^{-4/3}.
\end{equation}
\end{prop}

\newcommand*{\anom}{\mathcal{E}}
\newcommand*{\nom}{\mathcal{N}}
\begin{proof}
First observe that $T_0\ge T_m$, so by strong Markov property,
\begin{equation*}
\Prob\py(T_0-T_m >n)	\ =	 \	\EE\py\m[{ \Prob\py[P_{T_m}](T_0 >n) }
					\ \le\	\max_{p'\le m}\Prob\py[p'](T_0 >n) \cv[]n 0 \,.
\end{equation*}
In particular, $\Prob\py(T_0-T_m> \epsilon p)\cv[]p 0$ for all $m\in\natural$ and $\epsilon>0$. This explains why the scaling limit of $p^{-1}T_m$ does not depend on $m$.

The rest of the proof is basically a refinement of the estimate of $\Prob\py(T_0>tp)$ given in Lemma~\ref{lem:hit 0}. The idea is that, before time $T_m$, the Markov chain $\nseq P$ stays close to the line $P_n=p+\mu n$. Therefore at time $n$ there is a probability roughly $\Prob_{p+\mu n}(P_1\le m)$ to jump below level $m$ at the next step. On the other hand, from Table~\refp{a}{tab:prob(p)} we can read the exact expression of $\Prob\py(P_1\le m)$ and show that for all $m\ge0$, there is a constant $c_m$ such that
\begin{equation}\label{eq:def c_m}
	\Prob\py(P_1\le m)  \ \eqv{p}\  c_m\, p^{-1}.
\end{equation}
(We leave the reader to check the computation leading to the above asymptotics, since a similar computation will be carried out in detail below \eqref{eq:c_infty as limit} for the value of $c_\infty := \lim_{m \to\infty} c_m$.)
With the above heuristics, the asymptotics \eqref{eq:def c_m} indicates that if the process $\nseq P$ has not jumped below level $m$ by the time $n$, then the probability that it jumps at time $n+1$ is roughly $\Prob_p(T_m=n+1|T_m>n) \approx \frac{c_m}{p+\mu n}$. Then \eqref{eq:Tm scaling} can be obtained by iterating this estimate over $n=0,\ldots,tp$ and taking the limit $m \to\infty$.

To make the above arguments rigorous, let us fix $x>0$, $m\in\natural$ and $\epsilon\in(0,\mu)$. Take $p$ large enough so that $\Prob\py$-almost surely, $\tauxy\le T_m$. Let $\anom = \{ \tauxy<T_m \}$ be the event of small probability in Lemma~\ref{lem:one jump}, on which the process $\nseq P$ deviates significantly from the line $p_n=p+\mu n$ before jumping close to zero ($\anom$ for ``exceptional''). Also let $\nom_n = \{ \tauxy>n  \}$ be the event that the trajectory of $\nseq P$ stays close to $p+\mu n$ up to time $n$ ($\nom$ for ``normal''). Obviously $\nseq \nom$ is a decreasing sequence. Moreover, one can check that
\begin{equation}\label{eq:anomaly inclusion}
\nom_{n+1}	\ \subset\ \nom_n\setminus\{T_m=n+1\}
				\ \subset\ \nom_{n+1} \cup \anom\,.
\end{equation}

On the event $\nom_n$, we have $P_0+\mu n -\barrier(n) \le P_n\le P_0+\mu n + \barrier(n)$. Combining this with the asymptotics \eqref{eq:def c_m}, we obtain that for $P_0=p$ large enough,
\newcommand*{\cmore}[1][p]{ \frac{c_m +\epsilon}{#1+\mu n - \barrier(n)} }
\newcommand*{\cless}[1][p]{ \frac{c_m -\epsilon}{#1+\mu n + \barrier(n)} }
\begin{equation*}
\cless[P_0] \id_{\nom_n} \ \le\ \id_{\nom_n} \Prob\py[P_n](P_1\le m) \ \le\ \cmore[P_0]	\id_{\nom_n} \,.
\end{equation*}
By Markov property, $\Prob\py(\nom_n\setminus\{T_m=n+1\})
= \Prob\py(\nom_n) - \EE\py\m[{ \id_{\nom_n} \Prob\py[P_n](P_1\le m) }$. Therefore
\begin{align*}
			\m({1-\cmore} \Prob\py(\nom_n) &\ \le\ \Prob\py(\nom_n\setminus\{T_m=n+1\})
\\&\ \le\	\m({1-\cless} \Prob\py(\nom_n)\,.
\end{align*}
Combining these estimates with the two inclusions in \eqref{eq:anomaly inclusion}, we obtain that on the one hand,
\begin{equation*}
	\Prob\py(\nom_{n+1})\ \le\ \m({1-\cless} \Prob\py(\nom_n) \,.
\end{equation*}
And on the other hand,
\begin{align*}
\Prob\py(\nom_{n+1}\cup\anom)
  &\ \ge\ \Prob\py\m({ (\nom_n\setminus\{T_m=n+1\}) \cup\anom }
\\&\ \ge\ \Prob\py(\nom_n\setminus\{T_m=n+1\}) + \Prob\py(\anom\setminus\nom_n)
\\&\ \ge\ \m({1-\cmore} \Prob\py(\nom_n) + \Prob\py(\anom\setminus \nom_n)
\\&\ \ge\ \m({1-\cmore} \Prob\py(\nom_n\cup \anom) \,.
\end{align*}
By induction on $n$, we get
\begin{equation*}
\Prob\py(\nom_N) \le \prod_{n=0}^{N-1} \m({1-\cless} \quad\text{and}\quad
\Prob\py(\nom_N\cup \anom) \ge \prod_{n=0}^{N-1} \m({1-\cmore} 
\end{equation*} for any $N\ge 1$.
Since $\nom_n\subset \{T_m>n\}\subset \nom_n \cup\anom$ up to a $\Prob\py$-negligible set, the above estimates imply that
\begin{equation*}
		\prod_{n=0}^{N-1} \m({1-\cmore} - \Prob\py(\anom) \ \le\ \Prob\py(T_m>N)
\ \le\	\prod_{k=0}^{N-1} \m({1-\cless} + \Prob\py(\anom)\,.
\end{equation*}
From the Taylor series of the logarithm we see that for all $x\ge 0$,  $-x-x^2\le \log(1-x)\le -x$. Therefore for any positive sequence $\nseq x$, we have
\begin{equation*}
\exp\mB({ -\sum_{n=0}^{N-1} x_n -\sum_{n=0}^{N-1} x_n^2}
\ \le\ \prod_{n=0}^{N-1}(1-x_n)\ \le\ \exp\mB({ -\sum_{n=0}^{N-1} x_n }\,.
\end{equation*}
On the other hand, in the limit $p\to\infty$ we have
$\frac{c_m\pm \epsilon}{p+\mu n\mp \barrier(n)} = \frac{c_m\pm \epsilon}{p+\mu n} (1+o(1))$ where $o(1)$ is uniform over all $n\in[0,tp]$, for any fixed $t>0$. It follows that
\begin{equation*}
\sum_{n=0}^{tp} \frac{c_m\pm \epsilon}{p+\mu n\mp \barrier(n)}
\ =\ (c_m\pm\epsilon) \int_0^{tp} \frac{\dd s}{p+\mu s} (1+o(1))
\ \cv[]p\ \frac{c_m\pm\epsilon}\mu \log(1+\mu t)\,.
\end{equation*}
We also have $\sum_{n=0}^{tp} (\frac{c_m+\epsilon}{p+\mu n- \barrier(n)})^2 \cv[]p 0$ for all $t>0$. Combining this with the last three displays, we conclude that
\begin{align*}
(1+\mu t)^{- \frac{c_m+\epsilon}\mu} - \limsupp \Prob\py(\anom)
&	\ \le\	\liminf_{p\to\infty} \Prob\py(T_m>tp)
\\&	\ \le\	\limsupp \Prob\py(T_m>tp)
	\ \le\	(1+\mu t)^{- \frac{c_m-\epsilon}\mu}
			+ \limsupp \Prob\py(\anom) \,.
\end{align*}
Now take the limit $m,x\to\infty$. In this limit, the error term $\limsup\Prob\py (\anom)$ tends to zero thanks to Lemma~\ref{lem:one jump}. The middle terms $\liminf\Prob\py(T_m>tp)$ and $\limsup\Prob\py(T_m >tp)$ do not depend on $m$ due to the convergence $\Prob\py(T_0-T_m>\epsilon p) \cv[]p 0$ seen at the beginning of the proof. Moreover, the increasing sequence $(c_m)_{m\ge0}$ has a limit $c_\infty$. Thus by sending $\epsilon\to 0$, we obtain
\begin{equation*}
\lim_{p\to\infty} \Prob\py(T_m>tp) = (1+\mu t)^{- \frac{c_\infty}\mu}\,.
\end{equation*}

Now it remains to show that in fact we have $c_\infty = \frac43\mu$. Using $c_m=\lim\limits_{p\to\infty}p\,\Prob\py(P_1\le m)$ and the data in Table~\refp{a}{tab:prob(p)}, $c_\infty$ can be written as
\newcommand{\sumk}[1][0]{\sum_{k=#1}^\infty}
\newcommand*{\limp}[1][p]{\lim_{#1\to\infty}}
\begin{align}
 c_\infty = \limp[m] c_m
&=	\limp[m] \limp p\m({ \Prob\py[p](P_1=0) + \sum_{k=1}^m \Prob\py[p](P_1=k) } \notag
\\&= \limp p \sumk \Prob\py[p]\m({ \Step_1 \in\{ \RR_{p+k}^\+, \RR_{p+k}^\< \} }
	+\sumk[1] \limp p\, \Prob\py[p]\m({ \Step_1 \in\{ \RR_{p-k}^\+, \RR_{p-k}^\< \} } \,.
\label{eq:c_infty as limit}
\end{align}
The probabilities can be read from Table~\refp{a}{tab:prob(p)}, which gives
\begin{align*}
		\Prob\py[p]\m({ \Step_1 \in\{ \RR_{p+k}^\+, \RR_{p+k}^\< \} }
\ &=\	 t_c z_{p+1,k} \ap1 u_c^{k+1} + \nu_c t_c z_{p,k+1} \ap0 u_c^{k}
\\\text{and}\qquad
		\Prob\py[p]\m({ \Step_1 \in\{ \RR_{p-k}^\+, \RR_{p-k}^\< \} }
\ &=\	 t_c z_{p-k+1,0}\ap{k+1}u_c	+\nu_c t_c z_{p-k,1} \ap{k} \,
\end{align*}
for all $k\ge 0$. 
Plug these expressions into \eqref{eq:c_infty as limit}, and we obtain
\begin{align*}
c_\infty = &
\limp \m({ t_c u_c a_1         \cdot p \frac{Z_{p+1}(u_c)   }{a_p} 
   + \nu_c t_c \frac{a_0}{u_c} \cdot p \frac{Z_p(u_c)-z_{p,0}}{a_p} }  \\
 & + \sumk[1] \limp \m({ t_c u_c a_{k+1} \cdot p \frac{z_{p-k+1,0}}{a_p} 
              + \nu_c t_c     a_k     \cdot p \frac{z_{p-k  ,1}}{a_p} } \,.
\end{align*}
The above limits can be evaluated using the asymptotics in Theorem~\ref{thm:z_p,q} and Equation \eqref{eq:asymp p}:
\begin{align*}
c_\infty = & \m({ 
      t_c u_c a_1 \cdot \frac{\Gamma(-1/3)}{\Gamma(-4/3)} \frac{A(u_c)}{b\, u_c}  +
\nu_c t_c \frac{a_0}{u_c} \cdot \frac{\Gamma(-1/3)}{\Gamma(-4/3)} \frac{A(u_c)-a_0}b }   \\ &+ \sumk[1] \m({ 
t_c u_c a_{k+1} \cdot \frac{\Gamma(-1/3)}{\Gamma(-4/3)} \frac{a_0 u_c^{k-1}}{b} +
\nu_c t_c a_k \cdot \frac{\Gamma(-1/3)}{\Gamma(-4/3)} \frac{a_1 u_c^k}{b}            }   
\\ =& -\frac43\frac{t_c}{b} \m({ \m({
    a_1 A(u_c) + \nu_c \frac{a_0}{u_c}(A(u_c)-a_0) 
} + \m({ 
    a_0 \frac{A(u_c)-a_0-a_1 u_c}{u_c} + \nu_c a_1 (A(u_c)-a_0)
} }.
\end{align*}
After simplification, we obtain
\begin{equation}
c_\infty = - \frac43 \frac{t_c}{b} (\nu_c+1) \m({ \frac{a_0}{u_c} +a_1 } (A(u_c)-a_0).
\end{equation}
The right hand side can be evaluated using the rational parametrization of $A(u)$ (see \cite{CAS1}), and we find indeed $c_\infty = \frac1{3\sqrt7}=\frac43\mu$.
\end{proof}

\begin{remark*}
(i) We remarked at the beginning of the section that the limit of $\EE\py{}[X_1]$ when $p\to\infty$ should be negative. One can actually compute this limit using the value of $c_\infty$ in the above proof, as follows: first, write $\EE\py{}[X_1]$ as the sum
\begin{equation*}
\EE\py{}[X_1\idd{X_1\ge -m}] + \EE\py{}[X_1 \idd{X_1\le -p+m}] + \EE\py{}[X_1\idd{X_1\in (-p+m,-m)}] \,.
\end{equation*}
The random variable in the first term is compactly supported, so the convergence in distribution $\Prob\py \to \Prob\yy$ implies that $\EE\py{}[X_1\idd{X_1\ge -m}] \cv[]p \EE\yy[X_1\idd{X_1\ge -m}] \cv[]m \EE\yy[X_1]$. In the second term, the value of $X_1$ is contained in $[-p,-p+m]$, while we have $\Prob\py \{X_1\le -p+m\} = \Prob\py \{P_1\le m\} \sim c_m p^{-1}$. It follows that $\EE\py{}[X_1 \idd{X_1\le -p+m}] \cv[]p -c_m \cv[]m -c_\infty$.
Using the exact distribution of $X_1$ in Table~\ref{tab:prob(p)}, it is not hard to bound the third term and show that it converges to zero as $p\to\infty$ and $m\to\infty$. Therefore $\lim \limits_{p\to\infty}\EE\py{}[X_1] = \EE\yy[X_1] - c_\infty = -\frac13\mu$.
\\(ii)
With our approach, it is quite amazing to find such a simple exponent $4/3$ for the scaling limit of the jump time $T_m$. Currently we do not have any rigorous explanation of this exponent apart from the computation above. 
Going one step back, one can see that the value $4/3$ relies on the algebraic identity
\begin{equation*}
\mu\ =\ \frac{(\nu_c+1)t_c}2 \m({ \frac{Z_0(u_c)}{u_c} - Z'_0(u_c)-u_c Z'_1(u_c) }
\ =\ -\frac{(\nu_c+1)t_c}b \m({ \frac{a_0}{u_c} +a_1 } (A(u_c)-a_0)\,,
\end{equation*}
together with the fact that $\EE\yy[X_1]=\EE\yy[Y_1]$. More importantly, we expect the same phenomenon to appear in any reasonable model of critical Ising-decorated maps, because the exponent $4/3$, which describes the believed scaling limit of an Ising-decorated map, ought to be universal (see also Section \ref{sec:interface} for a heuristic explanation via Liouville Quantum Gravity). In a work in progress, we have checked that this is indeed the case when we consider Boltzmann Ising-triangulations with spins on the vertices. It would be very interesting to have an algebraic or probabilistic explanation of this universality.
\end{remark*}

\section{Local convergence of Boltzmann Ising-triangulations}\label{sec:metric peeling}

\begin{figure}[t]
\centering
\includegraphics[scale=1]{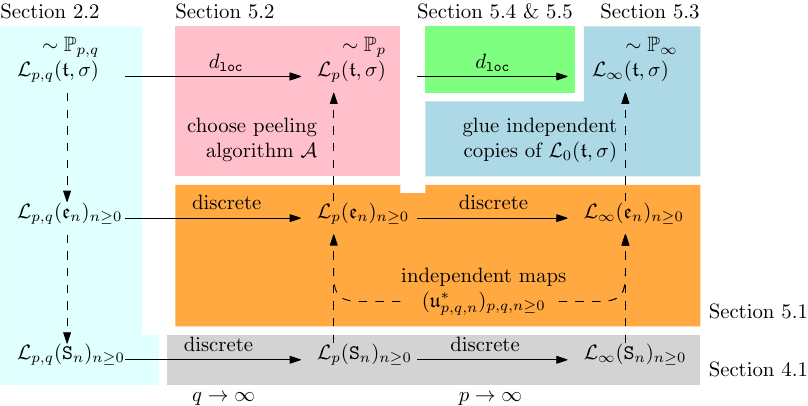}
\caption{Scheme of proof of the local weak convergence $\prob_{p,q} \cv[]q \prob\py \cv[]p \prob\yy$. A dashed arrow $A\dashrightarrow B$ indicates that the object $B$ is constructed from $A$. The label ``discrete'' over the solid arrows indicates that the convergences take place with respect to the discrete topology on the first $n$ terms of the sequences.}
\label{fig:proof-scheme}
\end{figure}

In this section we construct the local limit of the finite Boltzmann Ising-triangulations when $q\to\infty$ and $p\to\infty$. Both the construction and the proof of the convergence rely on the peeling process. 
More precisely, a finite Ising-triangulation can be encoded by its peeling process $\nseq \emap$, which in turn is encoded by its sequence of peeling events $\nseq \Step$ as described in Section~\ref{sec:peeling}. We have seen in Section~\ref{sec:limit S} that the distribution $\Prob_{p,q}$ of the peeling events $\nseq \Step$ converges towards the limits $\Prob\py$ and $\Prob\yy$. 

To recover the local limit of the original Ising-triangulations, we will try to invert the above encoding. Namely, we will try to recover the sequence of explored maps $\nseq \emap$ from the peeling events $\nseq \Step$, and then to recover the infinite Ising-triangulation $\bt$ from the sequence of finite maps $\nseq \emap$. The first step is straightforward and will be carried out in the next paragraph under both $\Prob\py$ and $\Prob\yy$. The second step is significantly more technical and requires different treatments under $\Prob\py$ and under $\Prob\yy$. This will be the subject of the rest of this section. We summarize the relations between the above objects in Figure~\ref{fig:proof-scheme}. Recall that we denote by $\law_{p,q} X$ (respectively by $\law\py X$ and $\law\yy X$) a random variable having the same distribution as $X$ under $\Prob_{p,q}$ (respectively under $\Prob\py$ and $\Prob\yy$). With a slight abuse, we extend this notation to random variables defined under $\prob_{p,q}$ and under the to-be-constructed measures $\prob\py$ and $\prob\yy$.

\subsection{Convergence of the peeling process}

\paragraph{Definition of $\law_p \nseq\emap$ and $\law_\infty \nseq\emap$.}
We will treat the two cases in a unified way by fixing some $p\in\natural \cup \{\infty\}$. To recover the sequence of explored maps $\nseq \emap$ from the peeling events $\nseq \Step$, one only needs to know the initial condition $\emap_0$ and the finite Ising-triangulations which are possibly swallowed at each step.

For $\emap_0$, consider $\integer$ with its usual nearest-neighbor graph structure and canonical embedding in the complex plane. We view it as an infinite planar map rooted at the corner at the vertex $0$ in the lower half plane. The upper-half plane is its unique internal face and is a hole. Then $\law\py \emap_0$ is defined as the deterministic map $\integer$ in which a boundary edge has spin \+ if it lies in the interval $[0,p]$ and spin \< otherwise.

\newcommand{\pqn}[1][n]{\tilde p,\tilde q,#1}
\newcommand{\upqn}[1][n]{\umap_{\pqn[#1]}^*}

Let $(\upqn)_{\pqn \ge 0}$ be a family of independent random variables which are also independent of $\nseq \Step$, such that $\upqn$ is a Boltzmann Ising-triangulation of the $(\tilde p,\tilde q)$-gon. Under $\prob_{p,q}$, one can recover the distribution of $\emap_n$ as a deterministic function of $\emap_{n-1}$, $\Step_n$ and $(\upqn)_{\tilde p,\tilde q \ge 0}$. For example, when $\Step_n = \rn$ with some $k \le P_{n-1}$, then one reveals a triangle in the configuration $\rn$ in the unexplored region of $\emap_{n-1}$, and uses $\umap_{k,1,n}^*$ to fill in the region swallowed by this new face. The result has the same law as $\emap_n$ under $\prob_{p,q}$.
We define $\law\py \nseq \emap$ by iterating the same deterministic function on $\law\py \emap_0$, $\law\py \nseq \Step$ and $(\upqn)_{\pqn \ge 0}$. 

Let $\filtr_n$ be the $\sigma$-algebra generated by $\emap_n$. Then the above construction defines a probability measure on $\filtr_\infty = \sigma(\cup_n \filtr_n)$, which we denote by $\Prob\py$ by a slight abuse of notation.

\paragraph{Convergence towards $\law_p \nseq\emap$ and $\law_\infty \nseq\emap$.}
Since $(\upqn)_{\pqn \ge 0}$ has a fixed distribution and is independent of $\nseq \Step$, Proposition~\ref{prop:comb cv} implies that $\law_{p,q} \nseq \Step$ and $(\upqn)_{\pqn \ge 0}$ converge jointly in distribution when $q\to\infty$ and $p\to\infty$. Here we are considering the convergence in distribution with respect to the discrete topology, namely, for any element $\omega$ in the (countable) state space of the sequences $\nseq \Step$ and $(\upqn)_{\pqn \ge 0}$ up to time $n_0<\infty$, we have $\Prob_{p,q}(\omega) \cv[]q \Prob\py(\omega) \cv[]p \Prob\yy(\omega)$.

\begin{figure}
\centering
\includegraphics[scale=1,page=2]{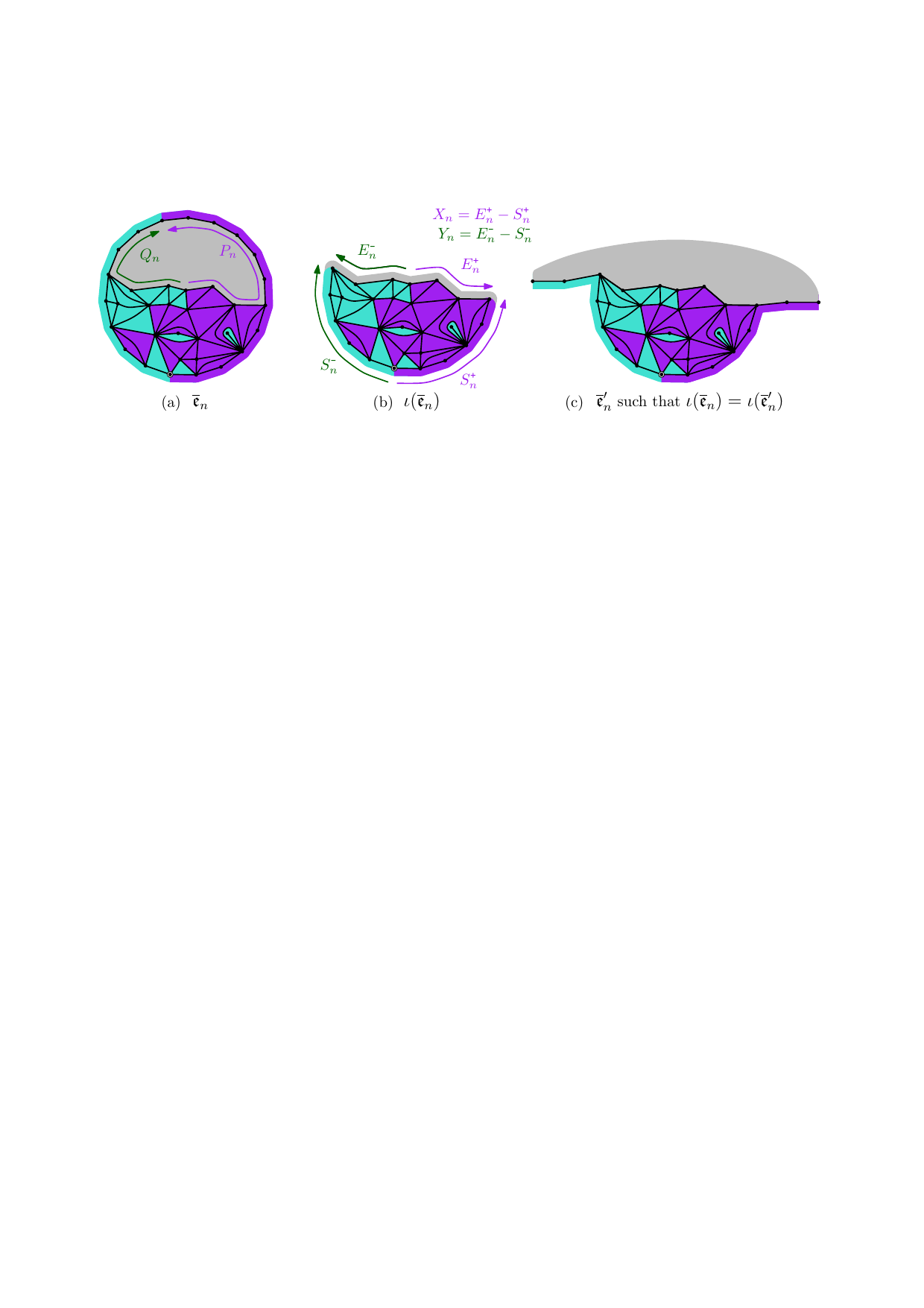}
\caption{Definition of the truncated map $\emapo_n$.}\label{fig:def-emapo}
\end{figure}

A caveat here is that the initial condition $\law_{p,q} \emap_0$ does not converge in the above sense, simply because $\law_{p,q} \emap_0$ is deterministic and takes a different value for each $(p,q)$. However, for any positive integer $K$, the restriction of $\law_{p,q} \emap_0$ (respectively, $\law\py \emap_0$) on the interval $[-K,K]$ does stabilize at the value that is equal to the restriction of $\law\py \emap_0$ (respectively, $\law\yy \emap_0$) on $[-K,K]$.
With this observation in mind, let us consider the truncated map $\emapo_n$, obtained by removing from $\emap_n$ all boundary edges adjacent to the hole, as in Figure~\ref{fig:def-emapo}. It is easily seen that the number of remaining boundary edges is finite and only depends on $(\Step_k)_{k\le n}$. It follows that for each $n$ fixed, $\emapo_n$ is a deterministic function of $(\Step_k)_{k\le n}$, $(\upqn[k])_{\tilde p,\tilde q\ge 0; k\le n}$ and $\emap_0$ restricted to some finite interval $[-K,K]$ where $K$ is determined by $(\Step_1,\dots, \Step_n)$. As the arguments of this function converge jointly in distribution with respect to the discrete topology (under which every function is continuous), the continuous mapping theorem implies that
\begin{equation}\label{eq:peeling cvg}
\Prob_{p,q}(\emapo_n=\bmap) \cv[]q \Prob\py(\emapo_n=\bmap) 
                            \cv[]p \Prob\yy(\emapo_n=\bmap)
\end{equation}
for all bicolored map $\bmap$ and for all integer $n\ge 0$. The following lemma says that one can replace $n$ in the above convergence by any finite stopping time.

\begin{lemma}[Convergence of the peeling process]\label{lem:stopped peeling}
Let $\filtr^\circ_n$ be the $\sigma$-algebra generated by $\emapo_n$.
If $\theta$ is an $\nseq{\filtr^\circ}$-stopping time that is finite $\Prob\py$-almost surely, then for all bicolored map $\bmap$,
\begin{equation}\label{eq:stopped peeling cvg}
\Prob_{p,q}(\emapo_\theta = \bmap) \cv[]q \Prob\py(\emapo_\theta = \bmap) \,.
\end{equation}
The same statement holds when $\Prob_{p,q}$ and $\Prob\py$ are replaced by $\Prob\py$ and $\Prob\yy$, respectively.
\end{lemma}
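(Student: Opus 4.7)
The plan is to reduce the stopping-time statement to the fixed-time convergence \eqref{eq:peeling cvg} via a standard truncation argument. First, I would strengthen \eqref{eq:peeling cvg} to a uniform (total-variation) convergence at each fixed level $n$. Since $\emapo_n$ takes values in a countable set of finite bicolored maps, the laws $\Prob_{p,q}(\emapo_n=\cdot)$ and $\Prob\py(\emapo_n=\cdot)$ are probability mass functions on a countable space, so their pointwise convergence entails total-variation convergence by Scheff\'e's lemma. This gives $\Prob_{p,q}(A)\to\Prob\py(A)$ uniformly over $A\in\filtr^\circ_n$ as $q\to\infty$, for each fixed $n$.

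Next, I would fix a cutoff $N\ge 0$ and split
\begin{equation*}
\Prob_{p,q}(\emapo_\theta=\bmap)\ =\ \Prob_{p,q}(\theta\le N,\,\emapo_\theta=\bmap)\ +\ \Prob_{p,q}(\theta>N,\,\emapo_\theta=\bmap),
\end{equation*}
and likewise under $\Prob\py$. Since $\theta$ is an $\nseq{\filtr^\circ}$-stopping time, the event
$\{\theta\le N,\,\emapo_\theta=\bmap\} \,=\, \bigsqcup_{n=0}^{N}\{\theta=n,\,\emapo_n=\bmap\}$
lies in $\filtr^\circ_N$, so its probability under $\Prob_{p,q}$ converges to its probability under $\Prob\py$ by the uniform convergence just established. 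The tail term is bounded by $\Prob_{p,q}(\theta>N)$, and $\{\theta>N\}\in\filtr^\circ_N$ as well, so again $\Prob_{p,q}(\theta>N)\to\Prob\py(\theta>N)$ as $q\to\infty$.

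Combining these two facts yields
\begin{equation*}
\limsup_{q\to\infty}\bigl|\Prob_{p,q}(\emapo_\theta=\bmap)-\Prob\py(\emapo_\theta=\bmap)\bigr|\ \le\ 2\,\Prob\py(\theta>N),
\end{equation*}
and the right-hand side tends to $0$ as $N\to\infty$ by the hypothesis that $\theta$ is $\Prob\py$-a.s.\ finite, which proves \eqref{eq:stopped peeling cvg}. The same argument, verbatim, handles the second assertion with $(\Prob_{p,q},\Prob\py,q\to\infty)$ replaced by $(\Prob\py,\Prob\yy,p\to\infty)$, invoking the second half of \eqref{eq:peeling cvg} as the input. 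I do not foresee a genuine obstacle: the only mildly delicate point is the promotion from pointwise to total-variation convergence at fixed $n$, which is automatic from the countability of the state space of $\emapo_n$, and the fact that $\{\theta > N\} \in \filtr^\circ_N$, which is the defining property of a stopping time.
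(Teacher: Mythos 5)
Your proof is correct, but it takes a genuinely different route from the paper's. The paper exploits a structural idiosyncrasy of the peeling process: since the explored map $\emapo_n$ uniquely determines the whole peeling history, for any \emph{finite} $\bmap$ there is a deterministic time $n(\bmap)$ such that $\{\emapo_\theta=\bmap\}$ is either empty or equal to the fixed-time event $\{\emapo_{n(\bmap)}=\bmap\}$, and the stopping-time property forces one of these two cases. This reduces \eqref{eq:stopped peeling cvg} to \eqref{eq:peeling cvg} in one stroke with no truncation; the infinite-$\bmap$ case is then dispatched separately by a Fatou argument over finite maps. You instead run the standard and fully generic truncation argument: upgrade the pointwise convergence of \eqref{eq:peeling cvg} to total-variation convergence on each $\filtr^\circ_N$ via Scheff\'e (valid because $\emapo_n$ is a finite map under both $\Prob_{p,q}$ and $\Prob\py$, so the state space is countable and both laws are genuine probability measures), split at level $N$ using that both $\{\theta\le N,\emapo_\theta=\bmap\}$ and $\{\theta>N\}$ lie in $\filtr^\circ_N$, and send $N\to\infty$ using $\Prob\py(\theta<\infty)=1$. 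Your argument is longer but more robust: it does not rely on the "explored map encodes its own time'' fact and would go through for any discrete filtration with this kind of fixed-time convergence; it also handles finite and infinite $\bmap$ uniformly (for $\bmap$ infinite the truncated part vanishes and the tail bound does the job), whereas the paper treats the infinite case as a corollary. The paper's argument is shorter and avoids Scheff\'e, at the cost of being specific to the peeling encoding.
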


\begin{proof}
First assume that the map $\bmap$ is finite. Since the state of the explored region uniquely determines the past of the peeling process, for every fixed $\bmap$, there exists some finite $n = n(\bmap)$ such that $\{\emapo_\theta = \bmap\} = \{\emapo_n = \bmap\} \cap \{\theta = n\}$. Since $\theta$ is an $\nseq{\filtr^\circ}$-stopping time, the event $\{\theta=n\}$ is a measurable function of $\emapo_n$. Therefore $\{\emapo_n = \bmap\} \cap \{\theta = n\}$ is either empty or equal to $\{\emapo_n = \bmap\}$. Hence \eqref{eq:stopped peeling cvg} follows from \eqref{eq:peeling cvg}.

Obviously $\emapo_\theta$ is finite if and only if $\theta$ is. By Fatou's lemma, summing \eqref{eq:stopped peeling cvg} over the finite maps $\bmap$ gives $\liminf_{q\to\infty} \Prob_{p,q}(\theta<\infty) \ge \Prob\py(\theta<\infty) = 1$. It follows that 
\begin{equation*}
    \lim_{q\to\infty} \Prob_{p,q}(\theta=\infty) = 0
\end{equation*}
In particular, \eqref{eq:stopped peeling cvg} also holds when $\bmap$ is infinite (the right hand side is zero).

The same proof goes through when $\Prob_{p,q}$ and $\Prob\py$ are replaced by $\Prob\py$ and $\Prob\yy$ respectively.
\end{proof}

\begin{remark*}
Notice that we have not yet specified the peeling algorithm $\algo$, which chooses the initial vertex of the peeling in the case of a monochromatic $\<$ boundary. This means that the results up to this point are valid for any choice of $\algo$.
\end{remark*}

\subsection{Convergences towards $\prob\py$}\label{sec:def P(p)}

Although the convergences of peeling processes $\law_{p,q} \emapo_n \to \law\py \emapo_n$ and $\law\py \emapo_n \to \law\yy \emapo_n$ are proved exactly in the same way, the local convergence of the underlying random triangulation is much simpler in the first case, namely $\prob_{p,q} \to \prob\py$. 
As mentioned after Theorem~\ref{thm:cv}, this is thanks to the fact that, the peeling process $\nseq \emap$ eventually explores the entire triangulation almost surely under $\Prob\py$, provided one chooses an appropriate peeling algorithm. In this section we will specify one such algorithm $\algo$, use it to construct $\prob\py$, and then prove the local convergences $\prob_{p,q} \cv[]q \prob\py$ and $\prob_\pqq \cv[]{q_1,q_2} \prob\py[0]$ in Theorem~\ref{thm:cv}.

In the introduction we sketched the definition of the local distance on the set $\bts$ of bicolored triangulations of polygon. Now let us expand it in more details and in the general context of colored maps, so that the definition also applies to objects like the explored maps $\emap_n$, $\emapo_n$ or the balls in them.

\paragraph{Local limit and infinite colored maps.}

For a map $\map$ and $r\ge 0$, we denote by $[\map]_r$ the \emph{ball of radius $r$} in $\map$, defined as the subgraph of $\map$ consisting of all the \emph{internal} faces which are adjacent to at least one vertex within a graph distance $r-1$ from the origin. (The ball of radius 0 is the root vertex.) The ball $[\map]_r$ inherits the planar embedding and the root corner of $\map$. Thus $[\map]_r$ is also a map. By extension, if $\sigma$ is a coloring of \emph{some faces} and \emph{some edges} of $\map$, we define the ball of radius $r$ in $(\map,\sigma)$, denoted $[\map,\sigma]_r$, as the map $[\map]_r$ together with the restriction of $\sigma$ to the faces and edges in $[\map]_r$. In particular, we have $[[\map,\sigma]_{r'}]_r = [\map,\sigma]_r$ for all $r\le r'$.
Also, if an edge $e$ is in the ball of radius $r$ in a bicolored triangulation of polygon $\bt$, then one can tell whether $e$ is a boundary edge by looking at $\btsq_r$: only boundary edges are colored. See Figure~\ref{fig:def-ball} for an example.

\begin{figure}
\centering
\includegraphics[scale=1]{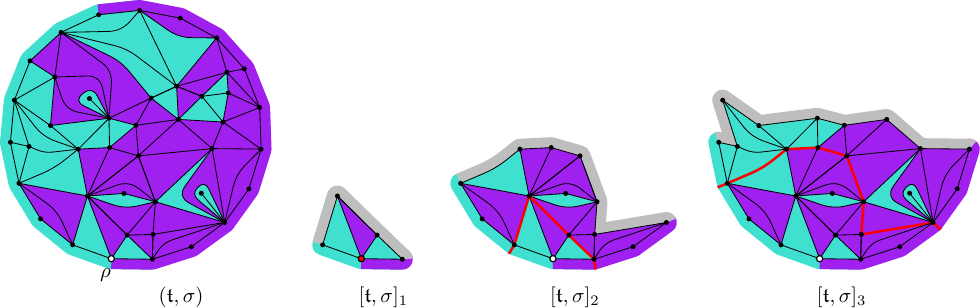}
\caption{The balls of radius 1, 2 and 3 in a bicolored triangulation of a polygon with a Dobrushin boundary condition.}
\label{fig:def-ball}
\end{figure}

\newcommand{\CM}{\mathcal{C\hspace{-1pt}M}}
The \emph{local distance} for colored maps is defined in a similar way as for uncolored maps: for colored maps $(\map,\sigma)$ and $(\map',\sigma')$, let
\begin{equation*}
d\1{loc}((\map,\sigma),(\map',\sigma')) = 2^{-R}\qtq{where}
	R = \sup\Set{r\geq 0}{ [\map,\sigma]_r=[\map',\sigma']_r }
\end{equation*}
The set $\CM$ of all (finite) colored maps is a metric space under $d\1{loc}$. Let $\overline \CM$ be its Cauchy completion. Similarly to the uncolored maps (see e.g.\ \cite{CurPeccot}), the space $(\overline \CM, d\1{loc})$ is Polish (i.e.\ complete and separable). The elements of $\overline \CM \setminus \CM$ are called \emph{infinite colored maps}. By the construction of the Cauchy completion, each element of $\CM$ can be identified as an increasing sequence of balls $(\bmap_r)_{r\ge 0}$ such that $[\bmap_{r'}]_r = \bmap_r$ for all $r\le r'$. Thus defining an infinite colored map amounts to defining such a sequence.
Moreover, if $(\prob\0n)_{n\ge 0}$ and $\prob\0\infty$ are probability measures on $\overline \CM$, then $\prob\0n$ converges weakly to $\prob\0\infty$ for $d\1{loc}$ if and only if
\begin{equation*}
\prob \0n([\map,\sigma]_r=\bmap) \ \cv[]n\ \prob \0\infty([\map,\sigma]_r=\bmap)
\end{equation*}
for all $r\ge 0$ and all balls $\bmap$ of radius $r$.

When restricted to the bicolored triangulations of the polygon $\bts$, the above definitions construct the corresponding set $\overline \bts \setminus \bts$ of infinite maps. Recall from Section~\ref{sec:intro} that $\bts_\infty$ is the set of \emph{infinite bicolored triangulation of the half plane}, that is, elements of $\overline \bts \setminus \bts$ which are one-ended and have an external face of infinite degree.

\paragraph{The covering time $\theta_r$ and the peeling algorithm $\algo$.}
Recall that the explored map $\emap_n$ contains an uncolored face with a simple boundary called its hole. The unexplored map $\umap_n$ fills the hole to give $\bt$. We denote by $\frontier_n$, called the \emph{frontier} at time $n$, the path of edges around the hole in $\emap_n$.

For all $r\ge 0$, let $\theta_r = \inf\Set{n\ge 0}{ d_{\emap_n}(\rho,\frontier_n)\ge r}$, where $d_{\emap_n}(\rho,\frontier_n)$ is the minimal graph distance in $\emap_n$ between $\rho$ and vertices on $\frontier_n$. It is clear that this minimum is always attained on the truncated map $\emapo_n$, therefore $d_{\emap_n}(\rho,\frontier_n)$ is $\filtr^\circ_n = \sigma(\emapo_n)$-measurable and $\theta_r$ is an $\nseq{\filtr^\circ}$-stopping time. Expressed in words, $\theta_r$ is the first time $n$ such that all vertices around the hole of $\emap_n$ are at a distance at least $r$ from $\rho$. Since $\bt$ is obtained from $\emap_n$ by filling in the hole, it follows that
\begin{equation*}
 \btsq_r\ =\ [\emapo_{\theta_r}]_r
\end{equation*}
for all $r\ge 0$. In particular, the peeling process $\nseq\emap$ eventually explores the entire triangulation $\bt$ if and only if $\theta_r<\infty$ for all $r\ge 0$.

Recall that in our context of peeling along the leftmost interface, the peeling algorithm is used to choose the origin $\rho_n$ of the unexplored map $\umap_n$ \emph{when its boundary $\frontier_n$ is monochromatic of spin \<}. (See Section~\ref{sec:peeling}.) Under $\Prob\py$, we can ensure $\theta_r<\infty$ almost surely for all $r\ge 0$ with the following choice of the peeling algorithm $\algo$: let $\rho_n = \algo(\emap_n)$ be the leftmost vertex on $\frontier_n$ that realizes the minimal distance $d_{\emap_n}(\rho,\frontier_n)$ from the origin.
The idea is that whenever $\frontier_n$ is monochromatic of spin $\<$, the peeling process tries to peel off the faces closest to the origin. But by Lemma~\ref{lem:hit 0}, the number of \+ edges on $\frontier_n$ drops to zero infinitely often $\Prob\py$-almost surely, so that every face will eventually be covered. More precisely:

\begin{lemma}\label{lem:cover r-ball}
$\theta_r$ is finite $\Prob\py$-almost surely for all $r\ge 0$ and $p\ge 0$.
\end{lemma}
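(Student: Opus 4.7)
The plan is to proceed by induction on $r$, combining the spatial Markov property (Corollary~\ref{spatialmarkovp}) with Lemma~\ref{lem:hit 0} and the explicit transition probabilities in Table~\ref{tab:prob(p)}. The base case $r=0$ is immediate since $\theta_0 = 0$ (the root vertex is always in $\emap_0$).

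For the inductive step, assume $\theta_r < \infty$ $\Prob\py[p']$-a.s.\ for every $p'\ge 0$, and fix $p$. By applying the strong Markov property at $\theta_r$, it suffices to prove the following claim: \emph{from any explored map whose frontier lies entirely at graph distance $\ge r$ from $\rho$, the peeling process almost surely reaches a state where this minimum distance is $\ge r+1$}. Let $V$ be the (necessarily finite) set of vertices on $\frontier_{\theta_r}$ at distance exactly $r$ from $\rho$. The claim reduces to showing that each $v\in V$ is almost surely removed from the frontier in finitely many steps (either absorbed into the interior of $\emap_n$ or swallowed by an $\LL/\RR$-type event), since a union bound over the finite set $V$ then yields $\theta_{r+1}<\infty$.

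The key observation is that monochromatic $\<$ epochs occur infinitely often $\Prob\py$-almost surely. Indeed, starting from any Dobrushin boundary $(P,Q)$ with $P\ge 1$, Lemma~\ref{lem:hit 0} ensures that the hitting time of a monochromatic $\<$ boundary is a.s.\ finite; iterating this via the spatial Markov property at every subsequent Dobrushin phase produced by a $\+$-spin peeling event yields an infinite sequence of monochromatic $\<$ epochs. At the start of each such epoch, the algorithm $\algo$ targets the leftmost vertex on the frontier realizing the minimum distance from $\rho$, hence — as long as any vertex of $V$ still lies on the frontier — it targets some $v\in V$. From Table~\refp{a}{tab:prob(p)} one can read a uniform lower bound on the probability of peeling events of type $\LL^*_k$ or $\RR^*_k$ with bounded $k$ at the targeted vertex, which have the effect of swallowing $v$ or of decreasing the number of its frontier-incident edges. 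Combined with the strong Markov property and the a.s.\ finiteness of each intermediate Dobrushin phase, this gives that $v$ leaves the frontier after almost surely finitely many monochromatic epochs targeting it.

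The main obstacle is to rule out the scenario where targeting $v$ produces an infinite run of $\CC^{\<}$ events: each such event merely replaces a frontier edge at $v$ by a new edge through the freshly revealed vertex $w$ (which sits at distance $r+1$ in $\emap_n$) without reducing the number of frontier-incident edges at $v$, leaving $v$ on the frontier indefinitely. The resolution should come from a Borel–Cantelli argument: since at each monochromatic epoch targeting $v$ there is a uniform positive probability of a non-$\CC^{\<}$ peeling event, the first time such an event occurs at $v$ is almost surely finite, and repeating this argument controls how many times $v$ is targeted before it is absorbed. Equivalently, one can show that the effective degree of $v$ in $\bigcup_n \emap_n$ is a.s.\ finite by controlling, via Lemma~\ref{lem:hit 0}, the number of new frontier edges incident to $v$ created during each intervening Dobrushin phase.
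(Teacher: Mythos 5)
Your inductive structure and the main building blocks — the base case $\theta_0=0$, infinitely many monochromatic $\<$ epochs via Lemma~\ref{lem:hit 0} and the spatial Markov property, and the algorithm $\algo$ targeting the leftmost minimal-distance vertex — all match the paper's proof. However, the ``main obstacle'' you identify is not actually there, and a couple of your supporting claims are off, which suggests the decisive step has not been fully pinned down. The paper's argument is a single observation: whenever $\frontier_n$ is monochromatic of spin $\<$ (so $P_n=0$), the algorithm sets $\rho_n = v_t$, the leftmost surviving vertex of $[\tmap]_r\setminus[\tmap]_{r-1}$ on the frontier, and the very next peeling step $\Step_{n+1}$ — distributed as $\law\py[0]\Step_1$ — swallows $v_t$ with a \emph{fixed} positive conditional probability; for instance on $\{\Step_{n+1}\in\{\rp[1],\rn[1]\}\}$, the revealed triangle has both frontier neighbors of $v_t$ as its other two vertices and so makes $v_t$ an interior vertex of $\emap_{n+1}$. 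A $\cm$ step keeps the frontier monochromatic and keeps $v_t$ as the target, so the next step again has the same positive swallowing probability; the runs of $\cm$ you worry about are therefore killed automatically by this same computation and need no separate Borel--Cantelli layer. Also, your statement that events of type $\LL^*_k$ swallow $v$ or reduce its frontier-incident degree is incorrect: an $\LL^*_k$ event carves off a region on the side of the peeled edge \emph{opposite} to $v$, leaving $v$ on the frontier with its local picture unchanged; only $\RR^*_k$ events (with $k\ge 1$) remove $v$ from the frontier in one move. These inaccuracies do not invalidate your approach, but the actual proof is considerably more direct than your layered resolution suggests.
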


\begin{proof}
The almost surely statements in this proof are with respect to $\Prob\py$. We have $\theta_0=0$. Assume that $\theta_r<\infty$ almost surely for some $r\ge 0$. Then the ball $[\tmap]_r$ is also almost surely finite. For $t\ge \theta_r$, let $v_t$ be the leftmost vertex in $[\tmap]_r \setminus [\tmap]_{r-1}$ that remains on the frontier $\frontier_t$ at time $t$. Then at every time $n\ge t$ such that $\frontier_n$ becomes monochromatic with spin \<, we have $\algo(\emap_n) = v_t$. By construction, the next peeling step peels the edge immediately on the left of $v_t$. Since $\Step_{n+1}$ has the law of $\law\py[0] \Step_1$, the vertex $v_t$ is swallowed at time $n+1$ with a fixed non-zero probability conditionally on $\filtr_n$.

By Lemma~\ref{lem:hit 0}, the frontier $\frontier_n$ becomes monochromatic of spin \< almost surely in finite time, and hence infinitely often by the spatial Markov property. Therefore the above construction implies that every vertex of $[\tmap]_r \setminus [\tmap]_{r-1}$ is swallowed by the peeling process almost surely in finite time. It follows that $\theta_{r+1}<\infty$ almost surely. 

By induction, $\theta_r$ is finite almost surely for all $r\ge 0$.
\end{proof}

\paragraph{Definition of $\prob\py$.} Lemma~\ref{lem:cover r-ball} implies that for every fixed $r$ the sequence $([\emap_n]_r)_{n\ge 0}$ stabilizes $\Prob\py$-almost surely for $n$ large enough. We define the infinite Boltzmann Ising-triangulation of law $\prob\py$ by its finite balls $\law\py \btsq_r := \lim\limits_{n \to\infty} \law\py{} [\emap_n]_r$. Since every finite subgraph of $\bt$ is covered by $\emap_n$ for $n$ large enough $\prob\py$-almost surely, its complement only has one infinite connected component, namely the one containing the unexplored map $\umap_n$. Therefore $\law\py \bt$ is almost surely one-ended. The external face of $\law\py \bt$ obviously has infinite degree. So it is indeed an infinite bicolored triangulation of the half plane.

\begin{proof}[Proof of the convergence $\prob_{p,q} \protect{\cv q} \prob\py$] 
The $(\filtr^\circ_n)$-stopping time $\theta_r$ is almost surely finite under $\Prob_{p,q}$ and $\Prob\py$, and $\btsq_r = [\emapo_{\theta_r}]_r$ is a measurable function of $\emapo_{\theta_r}$. Thus it follows from Lemma~\ref{lem:stopped peeling} that $\prob_{p,q}(\btsq_r = \bmap) \cv[]q \prob\py(\btsq_r = \bmap)$ for all $r\ge 0$ and every ball $\bmap$. This implies the local convergence $\prob_{p,q} \cv[]q \prob\py$.
\end{proof}

\begin{figure}
\centering
\includegraphics[scale=1,page=2]{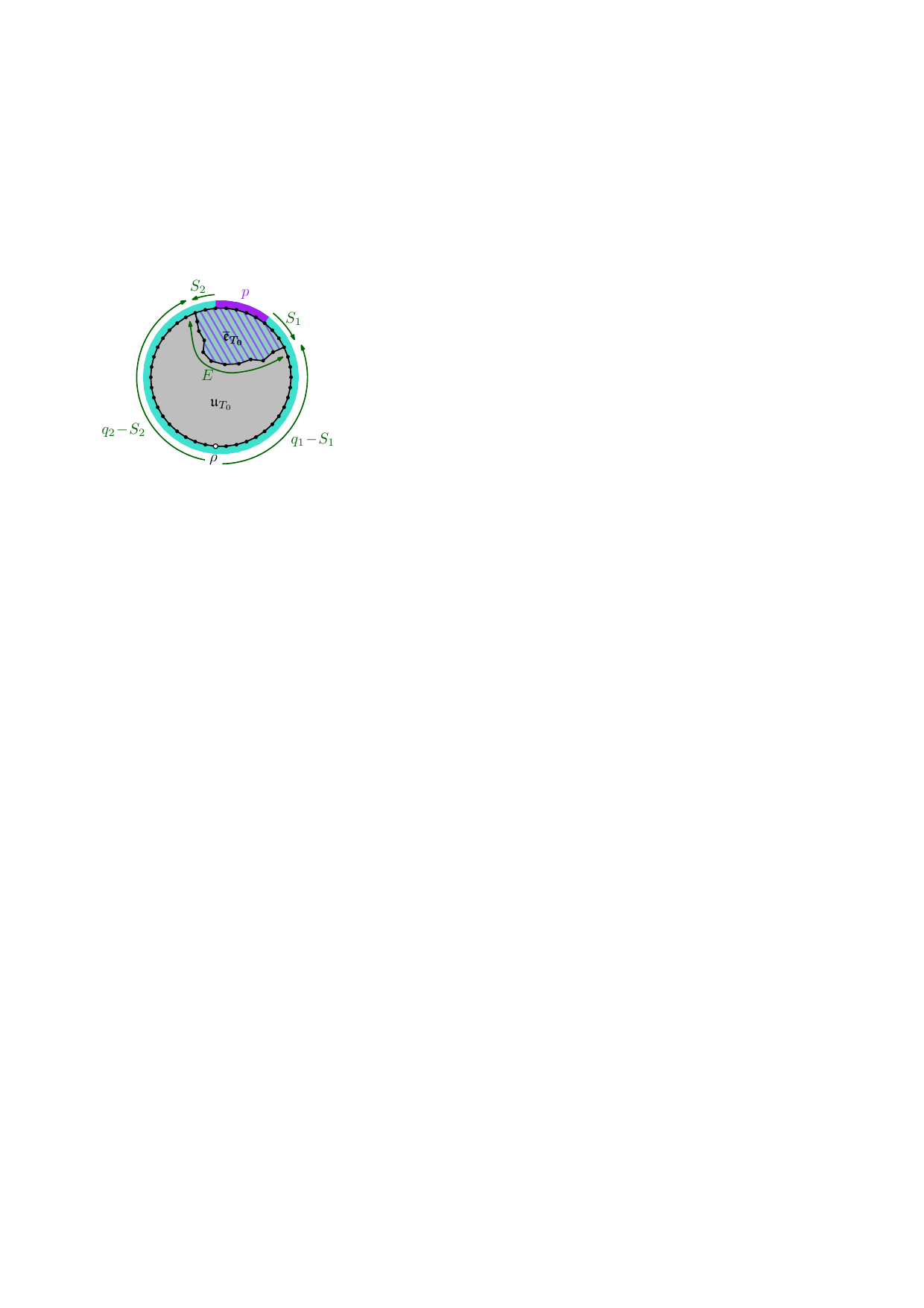}
\caption{Definition of $(E,S_1,S_2)$ in the proof of $\prob_\pqq\cv{q_1,q_2} \prob_0 $.}\label{fig:q1q2}
\end{figure}

\begin{proof}[Proof of the convergence $\prob_\pqq \protect{\cv{q_1,q_2}} \prob_0$] 
Recall that $\prob_\pqq$ is the law of $\law_{p,q_1+q_2}\bt$ after its origin is translated $q_1$ edges to the left along the boundary, see Figure~\ref{fig:q1q2}. 
Since the peeling process follows the leftmost interface, it is not affected by the translation of the origin up to $T_0$, the time when the leftmost interface is completely explored. It follows that $\law_\pqq \emap_{T_0}$ has the same law as $\law_{p,q_1+q_2}\emap_{T_0}$ up to the change of origin. So Lemma~\ref{lem:stopped peeling} implies that after removing the origin,
\begin{equation*}
\law_\pqq \emapo_{T_0} \cv[]{q_1,q_2} \law\py \emapo_{T_0}
\end{equation*}
in distribution with respect to the discrete topology. 

As in Figure~\ref{fig:q1q2}, let $E$ be the number of edges of $\frontier_{T_0}$ which are not on the boundary of $\bt$. Also, let $S_1$ (resp.\ $S_2$) be the number of \< boundary edges swallowed by $\emap_{T_0}$ on the right (resp.\ left) of the origin. It is clear that $(E,S_1,S_2)$ is a measurable function of $\emapo_{T_0}$ which does not depend on the position of the origin. Thus the above convergence in law of $\emapo_{T_0}$ implies that 
\begin{equation*}
\law_\pqq(E,S_1,S_2) \cv[]{q_1,q_2} \law_p(E,S_1,S_2)
\end{equation*}
in law. As shown in Figure~\ref{fig:q1q2}, the perimeter of $\umap_{T_0}$ satisfies $Q_{T_0}=E+ (q_1-S_1)+(q_2-S_2)$. So we have $\law_\pqq Q_{T_0} \to \infty$ in probability and thus $\prob_{0,Q_{T_0}} \to \prob\py[0]$ weakly as $q_1,q_2 \to \infty$. By the spatial Markov property, $\prob_{0,Q_{T_0}}$ is the law of $\umap_{T_0}$ conditionally on $\filtr_{T_0}$. It follows that
\begin{equation}\label{eq:cv shifted unexplored}
\law_\pqq\umap_{T_0}\cv{q_1,q_2} \law_0 \bt
\end{equation}
in distribution.

For a fixed $r\ge 0$, the ball $\btsq_r$ differs from $[\umap_{T_0}]_r$ only if the latter contains one of the edges counted by $E$. These edges are at a distance at least $\min(q_1-S_1,q_2-S_2)$ from the origin along the boundary of $\umap_{T_0}$. As $q_1,q_2 \to\infty$, this distance goes to $\infty$ in probability whereas $\law_\pqq [\umap_{T_0}]_r$ converges to $\law_0 \btsq_r$ in distribution. Thus the probability that $[\umap_{T_0}]_r$ differs from $\btsq_r$ converges to zero when $q_1,q_2 \to\infty$. Then it follows from \eqref{eq:cv shifted unexplored} that for all $r\ge 0$ and ball $\bmap$, 
\begin{equation*}
\prob_\pqq( \btsq_r=\bmap )\ \cv[]{q_1,q_2}\ 
\prob\py[0]( \btsq_r=\bmap )\,,
\end{equation*}
that is, $\law_\pqq\bt \cv{q_1,q_2} \law_0( \tmap,\sigma)$ in distribution.
\end{proof}

\subsection{Definition of $\prob\yy$}\label{sec:def P(infty)}

\newcommand{\rib}{\emapo_\infty}

Recall that $\theta_r$ is the first time $n$ that the explored map $\emap_n$ covers the ball of radius $r$ in $\bt$, so that $[\emapo_n]_r = \btsq_r$ for all $n\ge \theta_r$. By definition, it is a stopping time \wrt\ the filtration $\filtr^\circ_n=\sigma(\emapo_n)$ defined above Lemma~\ref{lem:stopped peeling}.
We have seen that, with an appropriate choice of the peeling algorithm, $\theta_r$ is finite $\Prob\py$-almost surely. This implied that
\begin{enumerate}
\item $\law\py{} [\rib]_r = \lim\limits_{n\to\infty} \law\py{} [\emapo_n]_r$ for all $r$ defines a bicolored triangulation $\rib$ of the half plane.
\item If $\prob\py$ is the law of the bicolored triangulation in (i), then $\prob_{p,q} \cv q \prob\py$ in distribution.
\end{enumerate}

In Section~\ref{sec:XY} we have seen that the perimeter processes $\nseq X$ and $\nseq Y$ drift to $+\infty$ almost surely under $\Prob\yy$. In particular they are bounded from below, that is, some vertices on the boundary of $\bt$ are never reached by the peeling process. Therefore, the analog of (ii) cannot be true for the limit $\prob\py \cv p \prob\yy$. However, we will show that the analog of (i) still holds. 
The resulting Ising-triangulation $\law\yy \rib$, called the \emph{ribbon} for reasons that shall be clear later, corresponds to the region in $\law\yy \bt$ that is eventually explored by the peeling process. It will be glued to other pieces of maps to construct $\law\yy \bt$.

\paragraph{Construction of the ribbon $\law\yy \rib$.}
To prove the analog of (i), one needs to check that the sequence $(\law\yy [\emapo_n]_r,\,n\ge 0)$ stabilizes in finite time for all $r\ge 0$, and that the resulting infinite bicolored triangulation $\law\yy \rib$ is one-ended, almost surely.

Let $r_0 = \sup\Set{r\ge 0}{ (\law\yy [\emapo_n]_r,\,n\ge 0)\text{ stabilizes in finite time}}$. If $r_0<\infty$, then there exists a vertex $v$ on the boundary of the ball $\lim_{n\to\infty} \law\yy [\emapo_n]_{r_0}$ such that the peeling process reveals infinitely many edges incident to $v$. By inspection of the possible peeling steps, one can see that when a new edge incident to $v$ is revealed, the distance between $\rho_n$ and $v$ along the frontier $\frontier_n$ is at most 2. (Recall that $\rho_n$ is the vertex where the \+ and \< parts of $\frontier_n$ meet.) This implies that, if the peeling process revealed infinitely many edges incident to $v$, then either $\nseq X$ or $\nseq Y$ would visit the same level infinitely many times. We know that this is not the case $\Prob\yy$-almost surely. Therefore $r_0=\infty$ almost surely, that is, $(\law\yy [\emapo_n]_r,\,n\ge 0)$ stabilizes in finite time for all $r\ge 0$, and $\law\yy \rib$ is well defined.

For each $n$, consider the graph $\rib \setminus \emapo_n$ consisting of all the edges and vertices incident to the triangles revealed after time $n$. Almost surely under $\Prob\yy$, the frontier $\frontier_n$ has both \+ and \< spins for all $n$. One can check that in this case the triangles revealed by two consecutive peeling steps always share an vertex, therefore $\rib \setminus \emapo_n$ is connected.
On the other hand, the argument in the previous paragraph shows that for every vertex $v$, no face incident to $v$ is revealed after some finite time. Hence if $V$ is the complement of some finite subset of vertices of $\rib$, then $V$ contains the vertices of $\rib \setminus \emapo_n$ for $n$ large enough. It follows that $V$ can have only one infinite connected component, namely the one containing $\rib \setminus \emapo_n$. This proves that $\law\yy \rib$ is almost surely one-ended.

\begin{figure}
\centering
\includegraphics[scale=1,page=1]{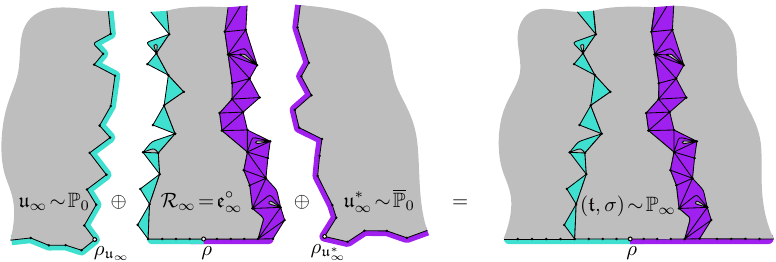}
\caption[caption]{The construction of $\prob\yy$.}\label{fig:ribbon-gluing}
\end{figure}

\newcommand*{\rmap}[1][m]{\mathcal{R}_{#1}}
\newcommand*{\uleft}[1][T_m]{\umap_{#1}}
\newcommand*{\uright}[1][T_m]{\umap^*_{#1}}
\newcommand{\kk}{\mathcal{K}_m}
\newcommand{\pjump}{\mathcal{P}}
\newcommand{\qjump}{\mathcal{Q}}
\newcommand{\pleft}{\mathcal{P}}
\newcommand{\qleft}{\mathcal{Q}}
\newcommand{\pright}{\mathcal{P}^*}
\newcommand{\qright}{\mathcal{Q}^*}

\paragraph{Definition of $\prob\yy$.}
The reasons for choosing the following notations will be clear in the next subsection.
Let $\rmap[\infty] = \rib$ be the ribbon under $\prob\yy$, and denote by $\overline \prob\py[0]$ the image of $\prob\py[0]$ by the inversion of spins. Let $\law\yy \uleft[\infty]$ and $\law\yy \uright[\infty]$ be two random variables of law $\prob\py[0]$ and $\overline \prob\py[0]$, respectively, such that $\law\yy \uleft[\infty]$, $\law\yy \uright[\infty]$ and $\law\yy \rmap[\infty]$ are mutually independent.

The boundary of $\law\yy \rmap[\infty]$ is partitioned into three intervals: one finite interval consisting of edges of $\emap_0$, and the two infinite intervals on its left and on its right. We glue $\law\yy \uleft[\infty]$ (resp.\ $\law\yy \uright[\infty]$) to the left (resp.\ right) interval as in Figure~\ref{fig:ribbon-gluing}. Since each piece is one-ended and the gluing between any two pieces occurs at infinitely many edges, the resulting bicolored triangulation is also one-ended. We call $\prob\yy$ its law. It is clear that $\law\yy \nseq \emap$ is indeed the peeling process of a random bicolored triangulation of law $\prob\yy$.

\subsection{Convergence of the ribbon}\label{sec:ribbonconvergence}
We have defined the Ising triangulation $\law\yy \bt$ as the disjoint union of the ribbon $\law\yy \rmap[\infty]$ and the two unexplored maps $\law\yy \uleft[\infty]$ and $\law\yy \uright[\infty]$. To prove the local convergence $\prob\py \to \prob\yy$, we would like to partition the Ising triangulation $\law\py \bt$ into three disjoint parts which converge locally to $\law\yy \rmap[\infty]$, $\law\yy \uleft[\infty]$ and $\law\yy \uright[\infty]$, respectively. After that, we can use the fact that gluing two locally converging maps of the half plane along their boundary results in a locally converging map (see Lemma~\ref{lem:gluing of loc cv}).

\begin{figure}[t]
\centering
\includegraphics[scale=1,page=2]{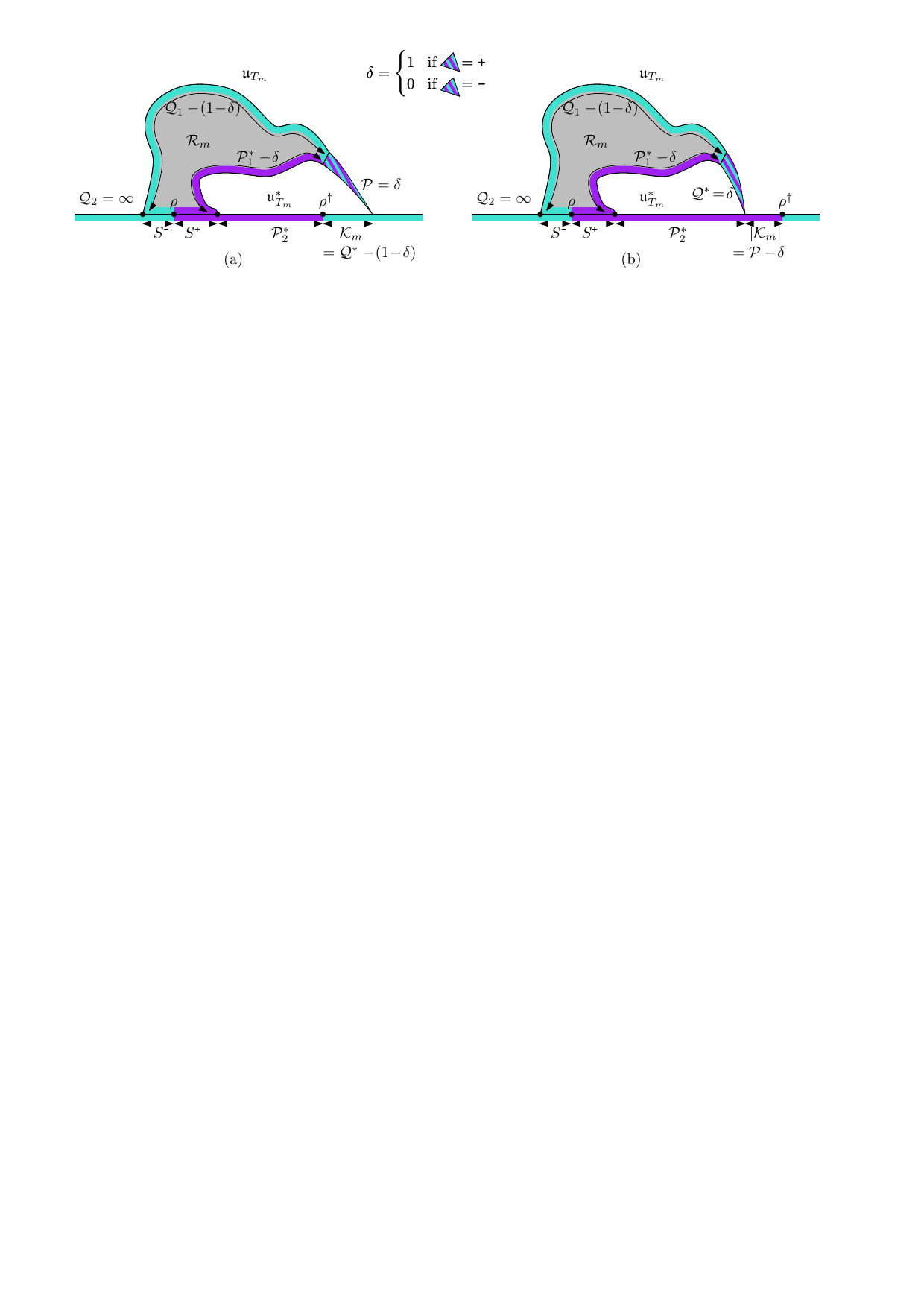}
\caption[]{The ribbon $\rmap$ in the case (a) $\kk\ge 0$, or (b) $\kk \le 0$. 
The unexplored map $\uleft$ on the left of the ribbon (i.e.\ outside) is rooted at $\rho_\umap$ and has boundary condition $(\pleft,(\qleft_1,\qleft_2))$, with $\qleft_2=\infty$.  The unexplored map $\uright$ on the right of the ribbon (i.e.\ inside) is rooted at $\rho_{\umap^*}$ and has boundary condition $((\pright_1,\pright_2),\qright)$. We encode the spin of the triangle \includegraphics[scale=1]{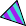} revealed at time $T_m$ by $\delta\in\{0,1\}$. The sign $\approx$ means equal up to a difference of 1 depending on $\delta$.
}
\label{fig:ribbon-at-large-jump}
\end{figure}

However, since the peeling process eventually explores the map $\law\py \bt$ entirely, there is no canonical way to define the ribbon $\rmap[\infty]$ under $\prob\py$. Instead, let us fix some arbitrary $m>0$ and define $\rmap$ to be the explored map $\emapo_{T_m-1}$ plus the triangle revealed at $T_m$. (Recall that $T_m$ is the first time $n\geq 0$ such that $P_n\le m$.) With this definition, $\rmap$, $\uleft$ and $\uright$ form a partition of the Ising-triangulation $\bt$ under $\prob\py$, where $\uright$ is the triangulation swallowed by the peeling step at $T_m$. 
Since we are interested in local limits with respect to the vertex $\rho$, we will reroot the unexplored map $\uleft$ close to $\rho$, more precisely at the vertex $\rho_\umap$ as shown in Figure~\ref{fig:ribbon-at-large-jump}. With the notation of Theorem~\ref{thm:cv}, the boundary condition of $\uleft$ is of the form $(\pleft,(\qleft_1,\qleft_2))$, with $\qleft_2=\infty$. Similarly, we root $\uright$ at the vertex $\rho_{\umap^*}$ as in Figure~\ref{fig:ribbon-at-large-jump} and denote its boundary condition by $((\pright_1,\pright_2),\qright)$. By inspection of the possible peeling events, one can confirm that $P_n$ may decrease only when $\Step_n$ is of type $\rp$ or $\rn$. Thus the condition 
\begin{equation*}
\Step_{T_m} \in \{ \rp[P_{(T_m-1)}+\kk], \rn[P_{(T_m-1)} +\kk] \}
\end{equation*}
uniquely defines an integer $\kk$. As shown in Figure~\ref{fig:ribbon-at-large-jump}, $\kk$ represents the position relative to $\rho^\dagger$ of the vertex where the triangle revealed at time $T_m$ touches the boundary. 

We want the triple $\law\py (\rmap, \uleft,\uright)$ to converge in distribution to $\law\yy(\rmap[\infty], \uleft[\infty], \uright[\infty])$ with respect to the local topology. However this cannot be true without a further amendment, because for any fixed $m$, there is always a non-vanishing probability that the large jump of the process $\nseq X$ occurs before $T_m$. (For example, we have $\tauxy = T_{m+1}<T_m$, i.e.\ the large jump arrive at $X = m+1$ instead of $X=m$, with some positive probability.)
Instead, we can only say that the convergence in distribution takes place on some event of large probability. This is formulated as follows.

\newcommand{\PrE}[4]{\prob#1 \mb({ ([#2]_r,[#3]_r,[#4]_r) \in \mathcal{E} }}

\begin{lemma}[Convergence of the ribbon]\label{lem:loc cv on big jump}
For fixed $\epsilon,x,m>0$, the triple $\law\py (\rmap, \uleft, \uright)$ converges locally in law to $\law\yy (\rmap[\infty], \uleft[\infty], \uright[\infty])$ on the event $\mathcal{J} \equiv \mathcal{J}^\epsilon_{x,m} := \{\tauxy = T_m \ge \epsilon p\} \cap \{ \kk \le m\}$, in the sense that for any $r\ge 0$,
\begin{equation}\label{eq:loc cv on big jump}
\begin{aligned}
& \limsupp \abs{ \PrE\py{\rmap}{\uleft}{\uright}
               - \PrE\yy{\rmap[\infty]}{\uleft[\infty]}{\uright[\infty]} }
\\   \le \ &
\limsupp \prob\py (\mathcal{J}^c) + \prob\yy(\tauxy<\infty)
\end{aligned}
\end{equation}
where $\mathcal{E}$ is any set of triples of balls.
\end{lemma}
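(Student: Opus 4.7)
The plan is to factor the joint law of $([\rmap]_r, [\uleft]_r, [\uright]_r)$ via the spatial Markov property and handle the ribbon and the two unexplored pieces separately. Under $\prob\py$, the spatial Markov property inherited from $\prob_{p,q}$ says that conditionally on $\filtr_{T_m}$, the pair $(\uleft, \uright)$ consists of two independent Boltzmann Ising-triangulations with boundary conditions $(\pleft, (\qleft_1, \infty))$ and $((\pright_1, \pright_2), \qright)$, themselves $\filtr_{T_m}$-measurable. The construction of $\prob\yy$ in Section~\ref{sec:def P(infty)} makes $\uleft[\infty]$ and $\uright[\infty]$ independent of $\rmap[\infty]$, with respective laws $\prob\py[0]$ and its spin-reversed counterpart $\overline{\prob\py[0]}$. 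So it suffices to prove the convergence of the ribbon ball $[\rmap]_r$ and, separately, the convergence of the conditional laws of $[\uleft]_r$ and $[\uright]_r$ to those of monochromatic half-plane Ising-triangulations, in the asymptotic regime pinned down by $\mathcal{J}$.

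For the ribbon, I would invoke the stopping time $\theta_r$ from Lemma~\ref{lem:cover r-ball}: it is $\filtr^\circ_n$-measurable and a.s.\ finite under both $\prob\py$ and $\prob\yy$ (the latter by the construction of $\rmap[\infty]$ in Section~\ref{sec:def P(infty)}). By definition, at time $\theta_r$ the ball of radius $r$ around $\rho$ in the explored region has stabilized, so $[\rmap]_r = [\emapo_{\theta_r}]_r$ on the event $\{\theta_r \le T_m\}$ under $\prob\py$, and $[\rmap[\infty]]_r = [\emapo_{\theta_r}]_r$ under $\prob\yy$. The lower bound $T_m \ge \epsilon p$ on $\mathcal{J}$ combined with the a.s.\ finiteness of $\theta_r$ under $\prob\py$ forces $\{\theta_r \le T_m\}$ to occur with probability tending to one as $p \to \infty$. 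Then Lemma~\ref{lem:stopped peeling} applied to $\theta_r$ yields the convergence in distribution of $[\emapo_{\theta_r}]_r$ from $\prob\py$ to $\prob\yy$.

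For the unexplored pieces, on $\mathcal{J}$ the constraint $\tauxy = T_m$ keeps $X_{T_m-1}$ and $Y_{T_m-1}$ within $x f_\epsilon(T_m)$ of their means $\mu(T_m-1)$, which forces $\qleft_1, \pright_1, \qright$ to be of order $p$ (and hence to diverge as $p \to \infty$), while $\pleft$ and $\pright_2$ remain bounded by $m$. Conditionally on $\filtr_{T_m}$, the law of $\uleft$ is that of a half-plane Ising-triangulation of boundary $(\pleft, \qleft_1, \infty)$, which by the same peeling argument used to prove $\prob_\pqq \to \prob_0$ in Section~\ref{sec:def P(p)} (adapted to the half-plane, where the infinite $\<$ arc is present from the outset rather than as a limit) converges locally to $\prob\py[0]$ as $\qleft_1 \to \infty$, uniformly over the finitely many values of $\pleft \in \{0, \dots, m\}$. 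An analogous statement holds for $\uright$.

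Putting these together, for a product event $\mathcal{E} = \mathcal{E}_1 \times \mathcal{E}_2 \times \mathcal{E}_3$ and using the conditional independence of $(\uleft, \uright)$ given $\filtr_{T_m}$ under $\prob\py$ and the mutual independence of $(\uleft[\infty], \uright[\infty], \rmap[\infty])$ under $\prob\yy$, the probability $\prob\py(\{([\rmap]_r, [\uleft]_r, [\uright]_r) \in \mathcal{E}\} \cap \mathcal{J})$ converges to $\prob\yy(\{([\rmap[\infty]]_r, [\uleft[\infty]]_r, [\uright[\infty]]_r) \in \mathcal{E}\} \cap \{\tauxy = \infty\})$ as $p \to \infty$; the general case of $\mathcal{E}$ reduces to products by the $\pi$--$\lambda$ theorem. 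The inequality \eqref{eq:loc cv on big jump} then follows by splitting $\prob\py(E) = \prob\py(E \cap \mathcal{J}) + \prob\py(E \cap \mathcal{J}^c)$ and $\prob\yy(E) = \prob\yy(E \cap \{\tauxy = \infty\}) + \prob\yy(E \cap \{\tauxy < \infty\})$, with the residuals bounded respectively by $\limsupp \prob\py(\mathcal{J}^c)$ and $\prob\yy(\tauxy < \infty)$. The main technical obstacle I anticipate is making the uniformity in $\pleft$ and $\pright_2$ rigorous when integrating the conditional probabilities against the joint law of the ribbon; this is resolved by conditioning on the finitely many possible values of $(\pleft, \pright_2)$ and summing separately over each case.
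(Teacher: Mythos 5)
Your overall architecture mirrors the paper's: factor the triple into ribbon plus two unexplored pieces, use the spatial Markov property and the independence structure of $\prob\yy$, bound the boundary conditions on $\mathcal{J}$, and conclude by a triangle inequality controlling the residual events. The step that controls the unexplored pieces (bounding $\pleft, \qright$ by $m+1$ and showing $\qleft_1, \pright_1, \pright_2 \to \infty$ on $\mathcal{J}$, then invoking the $\prob_\pqq \to \prob_0$ convergence) is essentially the paper's proxy construction. The gap is in how you treat the ribbon.

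You claim that $\theta_r$ is a.s.\ finite under $\prob\yy$ and then feed it into Lemma~\ref{lem:stopped peeling} to get $\law\py\, \emapo_{\theta_r} \to \law\yy\, \emapo_{\theta_r}$. This is not true and it cannot be repaired: under $\Prob\yy$ the perimeter processes $\nseq X$ and $\nseq Y$ have strictly positive drift, so $\min_n X_n$ and $\min_n Y_n$ are a.s.\ finite (random) numbers, and the boundary edges of $\bt$ beyond those positions are never swallowed. They stay on $\frontier_n$ for all $n$, sitting at bounded graph distance from $\rho$ along the boundary of $\bt$, so $d_{\emap_n}(\rho,\frontier_n)$ stays bounded and $\theta_r = \infty$ with positive probability for every fixed $r\ge 2$. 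Lemma~\ref{lem:cover r-ball} only asserts a.s.\ finiteness under $\Prob\py$ (where $T_0<\infty$ a.s.), not under $\Prob\yy$; this is precisely why the paper flags in Section~\ref{sec:def P(infty)} that ``the analog of (ii) cannot be true for the limit $\prob\py \cv p \prob\yy$.'' Since Lemma~\ref{lem:stopped peeling} requires the stopping time to be finite a.s.\ under the \emph{limit} law, it simply does not apply to $\theta_r$ in the $\Prob\py\to\Prob\yy$ direction. Relatedly, your claim that $\Prob\py(\theta_r\le T_m)\to 1$ on $\mathcal{J}$ also fails: $\theta_r$ is of order $T_0\asymp p$ on the event where it would be infinite under $\prob\yy$, so the probability that $\theta_r > \epsilon p$ converges to something positive.

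The missing idea, which is the paper's contribution here, is to replace the random, possibly-infinite covering time $\theta_r$ by a \emph{deterministic} time $N_r$ built from the barriers $\mu n \pm x f_\epsilon(n)$, and to prove by a layer argument that on the event $\mathcal{J}$ (hence before $\tauxy$) the peeling must cover $\emapo_{N_r}$ by a fresh layer of ribbon triangles between $N_r$ and $N_{r+1}$, giving $[\rmap]_r=[\emapo_{N_r}]_r$. Convergence at the fixed time $N_r$ is then just \eqref{eq:peeling cvg}, with no stopping-time argument needed, and the analogous identity $[\rmap[\infty]]_r=[\emapo_{N_r}]_r$ holds under $\prob\yy$ on $\{\tauxy=\infty\}$ because $\emapo_{N_r}\subseteq\rmap[\infty]$ trivially. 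You would need to insert this ingredient to make your ribbon step work.
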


\begin{remark*}
\begin{enumerate}
\item
Under $\prob\yy$, the integer $\kk$ is not well-defined, while $T_m=\infty$ almost surely. So the event $\{\tauxy < \infty\}$ on the right hand side of \eqref{eq:loc cv on big jump} is essentially $\mathcal{J}^c$ under $\prob\yy$.
\item
If $\mathcal{J}$ had probability one under both $\prob\py$ and $\prob\yy$, then the right hand side of \eqref{eq:loc cv on big jump} would vanish, and \eqref{eq:loc cv on big jump} would express exactly the local convergence in distribution $\law\py (\rmap, \uleft, \uright) \cv[]p \law\yy (\rmap[\infty], \uleft[\infty], \uright[\infty])$.
\item
If there exists a triple $(\tilde{\rmap[]}_m, \tilde{\umap}_{T_m},\tilde{\umap}_{T_m}^*)$ such that $\law\py (\tilde{\rmap[]}_m, \tilde{\umap}_{T_m},\tilde{\umap}_{T_m}^*) \cv[]p \law\yy (\rmap[\infty], \uleft[\infty], \uright[\infty])$ locally in distribution and that $(\tilde{\rmap[]}_m, \tilde{\umap}_{T_m},\tilde{\umap}_{T_m}^*) = (\rmap,\uleft,\uright)$ on the event $\mathcal{J}$, then 
\eqref{eq:loc cv on big jump} will follow. This is roughly how we will show \eqref{eq:loc cv on big jump} in the proof below.
\end{enumerate}
\end{remark*}

\begin{figure}[b!]
\centering
\includegraphics[scale=1]{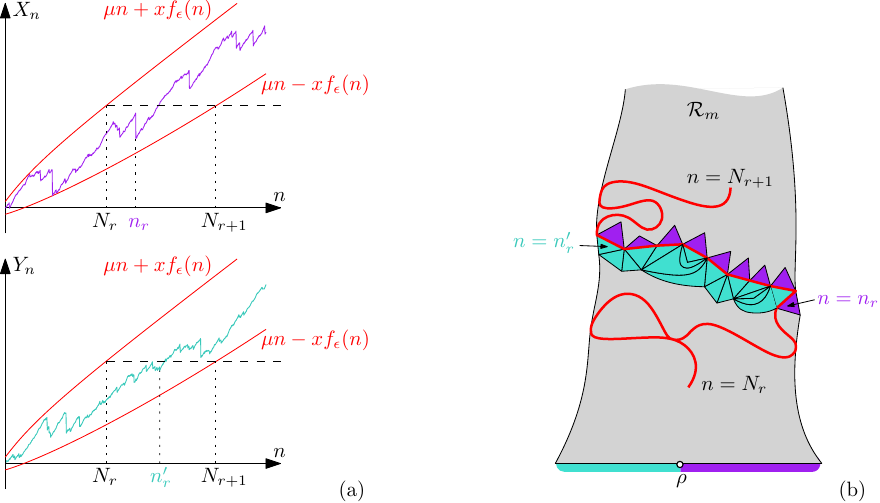}
\caption{%
(a) Before time $\tauxy$, the perimeter processes $\nseq X$ and $\nseq Y$ stay between the increasing barriers $\mu n\pm x f_\epsilon(n)$. Thus one can define a deterministic sequence of times $N_r$ up to $\tauxy$, such that $X_{N_r}$ is smaller than the minimum of $X_n$ on $[N_{r+1},\tauxy)$.
It follows that between $N_r$ and $N_{r+1}$, both $X_n$ and $Y_n$ must visit some level for the last time before $\tauxy$.
(b) Since the peeling process explores consecutively the triangles along the Ising interface, between the last visit times $n_r$ and $n_r'$, it must have explored a continuous layer of triangles spreading between the left and the right boundaries of $\emapo_{\tauxy-1}$. On the event $\{\tauxy=T_m\}$, one can replace $\emapo_{\tauxy-1}$ by $\rmap$.}
\label{fig:layer-times}
\end{figure}

\begin{proof}
As in the statement of Lemma~\ref{lem:loc cv on big jump}, we fix the numbers $\epsilon,x,m>0$ and drop them from the notation of quantities depending on them.
Recall that $\tauxy$ is the first time that either $X_n$ or $Y_n$ violates the bounds
\begin{equation*}
\mu n -x f_\epsilon(n) \ \le\  X_n,Y_n \ \le\  \mu n +x f_\epsilon(n) \,.
\end{equation*}

For some $N_0$ large enough, the left and the right hand side of the above inequality are strictly increasing in $n$ for $n\ge N_0$.
Let us define $(N_r)_{r\ge 0}$ inductively by
\begin{equation*}
N_{r+1} = \min \Set{n\ge 0}{ \mu n - xf_\epsilon(n) \ge \mu N_r + xf_\epsilon(N_r) \,}
\end{equation*}
for all $r\ge 0$. In other words, $N_{r+1}$ is the first time that the lower bound at time $N_{r+1}$ exceeds the upper bound at time $N_r$. Assume $N_{r+1} < \tauxy$. Then there exists a time $n_r \in (N_r,N_{r+1}]$ such that $X_{n_r} < \min_{n\in (n_r,\tauxy)} X_n$, that is, at time $n_r$ the process $\nseq X$ visits $(-\infty,X_{n_r}]$ for the last time before $\tauxy$. See Figure~\refp{a}{fig:layer-times}. Geometrically, this means that the triangle revealed at time $n_r$ stays on the \+ boundary of $\emapo_n$ up to time $\tauxy-1$. For the same reason, there is an $n_r'\in (N_r,N_{r+1}]$ such that the triangle revealed at time $n_r'$ stays on the \< boundary of $\emapo_n$ up to time $\tauxy-1$.

As shown in Figure~\refp{b}{fig:layer-times}, the above discussion implies that if $N_{r+1} < \tauxy$, then by the time $N_{r+1}$, the peeling process must have covered $\emapo_{N_r}$ by at least one layer of explored triangles spreading continuously from the \+ boundary to the \< boundary of $\emapo_{\tauxy-1}$. On the event $\mathcal{J}$, we have $\tauxy=T_m$, thus $\rmap$ is by definition equal to $\emapo_{\tauxy-1}$ plus one triangle. It follows that $\emapo_{N_{r+1}}$ contains all the vertices at distance 1 from $\emapo_{N_r}$ \emph{with respect to the graph distance inside $\rmap$}.
By induction, we have
\begin{equation*}
    [\rmap]_r \subseteq \emapo_{N_r}    \qtq{and thus}
    [\rmap]_r = [\emapo_{N_r}]_r
\end{equation*}
provided that $N_r < \tauxy$. Since $N_r$ is solely determined by $x$ and $\epsilon$, the previous condition is always satisfied on the event $\mathcal{J} \subseteq \{\tauxy \ge \epsilon p\}$, for any fixed $r$ and for $p$ large enough.

Next let us find a simple bound for the boundary conditions of $\uleft$ and $\uright$ on the event $\mathcal{J}$. The boundary condition $((\pright_1,\pright_2),\qright)$ of $\uright$ can be related to the perimeter processes by considering the following quantities (see Figure~\ref{fig:ribbon-at-large-jump}):
\begin{align*}
\pright_1 + \pright_2 + \qright \,=\ & P_{T_m-1} + \kk + 1  \,,   
    &&\qt{(total perimeter of $\uright$)}    \\
S^\+ + \pright_2 \,-\, &\min(0,\kk) \,=\, p                 \,,
    &&\qt{(number of edges between $\rho$ and $\rho^\dagger$)}    \\
\qright \,=\ &\max(0,\kk) + (1-\delta)                      \,.
    &&\qt{(number of \< edges on the boundary of $\uright$)}
\end{align*}
After rearranging the terms, we get
\begin{equation*}
\pright_1 = X_{T_m-1} + S^\+ + \delta  \ ,\ \ \ 
\pright_2 = p-S^\+ + \min(0,\kk)          \ \ \ \text{and}\ \ \
\qright = \max(0,\kk) + (1-\delta) \,.
\end{equation*}
On the event $\mathcal{J} = \{\tauxy = T_m \ge \epsilon p\} \cap \{ \kk \le m\}$ and for $p$ large enough, we have 
\begin{align*}
X_{T_m-1} &\ \ge\ \mu(T_m-1) -xf_\epsilon(T_m-1) 
           \ \ge\ \mu(\epsilon p-1) -xf_\epsilon(\epsilon p-1)    \\
S^\+      &\  = \ \delta' - \min_{n< T_m} X_n 
           \ \in\ \mb[{ 0\,,\, 1 - \min_{n\ge 0} (\mu n -xf_\epsilon(n))}  
\quad\qtq{and} \abs{\kk} \ \le\ m    \,.
\end{align*}
where $\delta'$ is either 0 or 1, depending on the peeling step that reveals the vertex $\rho_{\umap^*}$ (see Figure~\ref{fig:ribbon-at-large-jump}). It follows that
\begin{align*}
\pright_1 &\ \ge\ \mu (\epsilon p -1) - x f_\epsilon(\epsilon p -1) 
           \  =:\ \underline{\pright_1}         \\
\pright_2 &\ \ge\ p + \min_{n \ge 0} (\mu n -x f_\epsilon(n)) -1-m
           \  =:\ \underline{\pright_2}         \qtq{and}
\qright   \ \le\ m+1
\end{align*}
on $\mathcal{J}$. Notice that $\underline{\pright_1} \to\infty$ and $\underline{\pright_2} \to\infty$ when $p\to\infty$.
Similarly, one can show that 
\begin{equation*}
\qleft_1 \ \ge\ \underline{\qleft_1}    \qtq{and}
\pleft   \ \le\ m+1
\end{equation*}
on $\mathcal{J}$ for some deterministic number $\underline{\qleft_1} = \underline{ \qleft_1}(\epsilon,x,m,p)$ such that $\underline{\qleft_1} \cv[]p \infty$.

\newcommand{\tuleft}{\tilde{\umap}_{T_m}}
\newcommand{\turight}{\tilde{\umap}_{T_m}^*}

Consider two random bicolored triangulations $\tuleft$ and $\turight$ such that conditionally on $\rmap$, they are independent Ising-triangulations of respective boundary conditions $(\pleft \wedge (m+1)$, $(\qleft_1 \vee \underline{\qleft_1}, \infty))$ and $((\pright_1 \vee \underline{\pright_1}, \pright_2 \vee \underline{\pright_2}), \qright \wedge (m+1))$.
Thanks to the estimates in the previous paragraph, we have 
\begin{equation}\label{eq:proxy}
    ([\rmap]_r, \uleft,\uright) \ =\ ([\emapo_{N_r}]_r, \tuleft,\turight)
\end{equation}
on the event $\mathcal{J}$. (More precisely, there is a suitable coupling between the two sides such that the equality holds.)

According to \eqref{eq:peeling cvg}, we have $\law\py \emapo_{N_r} \cv[]p \law\yy \emapo_{N_r}$ with respect to the discrete topology. On the other hand, since $\qleft_1 \vee \underline{\qleft_1} \to \infty$ uniformly when $p\to\infty$, and $\pleft \wedge (m+1)$ takes values in the finite set $\{0,\dots,m+1\}$, the convergence $\prob_\pqq \cv{q_1,q_2} \prob_0$ implies that $\law\py \tuleft \cv[]p \law\py[0] \bt = \law\yy \uleft[\infty]$ locally in distribution. (Remark that the proof of $\prob_\pqq \to \prob_0$ in Section~\ref{sec:def P(p)} also works when $q_2=\infty$.) Similarly, $\law\py \turight \cv[]p \law\yy \uright[\infty]$ locally in distribution. These two convergences takes place conditionally on $\rmap$, and the limits do not depend on $\rmap$. It follows that we have the joint convergence
\begin{equation}\label{eq:proxy cv}
\law\py ([\emapo_{N_r}]_r, \tuleft,\turight) \cv[]p \law\yy ([\emapo_{N_r}]_r, \uleft[\infty], \uright[\infty])
\end{equation}
where three components on the right hand side are mutually independent, as prescribed by the definition of $\prob\yy$.

Equations \eqref{eq:proxy} and \eqref{eq:proxy cv} imply respectively
\begin{align}
&\ \ \prob\py \mb({ ([\emapo_{N_r}]_r ,\tuleft, \turight) 
              \ne ([\rmap]_r, \uleft,\uright)}   
\ \le\  \prob\py (\mathcal{J}^c)     \label{eq:proxy'}\\
\text{and}\ \ \  \lim_{p\to\infty} & \abs{ \,
      \prob\py \mb({ ([\emapo_{N_r}]_r, [\tuleft]_r, [\turight]_r) \in \mathcal{E} }
    - \prob\yy \mb({ ([\emapo_{N_r}]_r, [\uleft[\infty]]_r, [\uright[\infty]]_r)
                    \in \mathcal{E} }
\, }  \ =\ 0    \,.                  \label{eq:proxy cv'}
\end{align}
On the event $\{\tauxy=\infty\}$ we have $[\emapo_{N_r}]_r \subseteq \rmap[\infty]$ almost surely with respect to $\prob\yy$, so
\begin{equation}\label{eq:proxy infty}
       \prob\yy \mb({ [ \emapo_{N_r} ]_r \ne [ \rmap[\infty] ]_r } 
\ \le\ \prob\yy (\tauxy < \infty) \,.
\end{equation}
Then \eqref{eq:loc cv on big jump} follows from \eqref{eq:proxy'}, \eqref{eq:proxy cv'} and \eqref{eq:proxy infty} by the triangle inequality.
\end{proof}


\subsection{Convergence towards $\prob_\infty$}\label{sec:fullconvergence}

\newcommand{\pglued}[1]{\prob#1 \mb({ [\mop]_r \in \mathcal{E} }}
\newcommand{\mop}{\map \oplus \map'}

The triangulation $\law\py \bt$ (respectively, $\law\yy \bt$) can be seen as the result of gluing the triple $\law\py (\rmap,\uleft,\uright)$ (respectively, $\law\yy (\rmap[\infty],\uleft[\infty],\uright[\infty])$) along their boundaries. To deduce $\prob\py \to \prob\yy$ from Lemma~\ref{lem:loc cv on big jump}, one wants to show that local convergence is preserved by this gluing operation. First, let us look into the simpler setting of gluing two maps at their roots.
 
To keep familiar notations, let us consider probability measures $\prob\py$ $(p\ge 0)$ and $\prob\yy$ on some probability space $\Omega$. Let $\map$ and $\map'$ be two colored, possibly infinite random maps defined on $\Omega$. Assume that $\map$ and $\map'$ always have simple boundaries, and that $\law\yy \map$ and $\law\yy \map'$ are almost surely maps of the half plane. (As before, $\law\yy \map$ is a random variable having the law of $\map$ under $\prob\yy$.) Denote by $\rho$ and $\rho'$ the root vertices of $\map$ and $\map'$. Let $L$ be a random variable on $\Omega$ taking positive integer or infinite values, such that
\begin{equation}\label{eq:gluing length}
\law\py L \cv[]p \infty \text{ in distribution and }\law\yy L = \infty \text{ almost surely.}
\end{equation}

Finally, let $\mop$ be the map obtained by gluing the $L$ boundary edges of $\map$ on the right of $\rho$ to the $L$ boundary edges of $\map'$ on the left of $\rho'$. The dependence on $L$ is omitted from this notation because the local limit of $\mop$ is not affected by the precise value of $L$, provided that \eqref{eq:gluing length} is true. The following lemma affirms this claim, and relates the local convergence of $\mop$ to the local convergence of $\map$ and $\map'$.

\begin{lemma}[Gluing of locally convergent maps]\label{lem:gluing of loc cv}
Let $\varepsilon \ge 0$. If for all $r\ge 0$ and all sets $\mathcal{E}$,
\begin{equation}\label{eq:cv not glued}
\limsupp \abs{ \,
  \prob\py \mb({ ([\map]_r,[\map']_r) \in \mathcal{E} }
- \prob\yy \mb({ ([\map]_r,[\map']_r) \in \mathcal{E} } \,} 
\ \le\ \varepsilon     \,,
\end{equation}
then $\mop$ satisfies the same inequality, that is, for all $r\ge 0$ and all sets $\mathcal{E}$,
\begin{equation}\label{eq:cv glued}
\limsupp \abs{\, \pglued\py - \pglued\yy \,}  \ \le\ \varepsilon  \,.
\end{equation}
\end{lemma}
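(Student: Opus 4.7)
The strategy is to approximate the ball $[\mop]_r$ by a deterministic function of $([\map]_R, [\map']_R)$ for a suitably large $R = R(r)$, on an event of asymptotically full probability; \eqref{eq:cv glued} will then follow from the hypothesis \eqref{eq:cv not glued} applied at radius $R$. Fix $r \ge 0$. For each integer $R \ge r$, let $\Phi_R$ denote the deterministic measurable mapping that takes two rooted colored balls $\bmap, \bmap'$ of radius $R$ and returns the ball of radius $r$ around the root in the map obtained by identifying, one-to-one, the first $R$ boundary edges on the right of the root of $\bmap$ with the first $R$ boundary edges on the left of the root of $\bmap'$. Introduce the good event
\begin{equation*}
G_R\ :=\ \{L \ge R\} \cap \bigl\{ [\mop]_r = \Phi_R([\map]_R, [\map']_R) \bigr\}.
\end{equation*}
On $G_R$, the gluing interface is long enough that the relevant identified edges are entirely visible in both individual balls, and $[\mop]_r$ is contained in $[\map]_R \cup [\map']_R$ as a subgraph; so $[\mop]_r$ is indeed determined by $[\map]_R$ and $[\map']_R$.

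With the good event in place, the triangle inequality yields
\begin{equation*}
\abs{\pglued\py - \pglued\yy}\ \le\ \prob\py(G_R^c) + \prob\yy(G_R^c) + \abs{\, \prob\py\bigl(\Phi_R([\map]_R,[\map']_R) \in \mathcal{E}\bigr) - \prob\yy\bigl(\Phi_R([\map]_R,[\map']_R) \in \mathcal{E}\bigr) \,}.
\end{equation*}
By the hypothesis \eqref{eq:cv not glued} applied to the pullback set $\Phi_R^{-1}(\mathcal{E})$, the $\limsupp$ of the last term is at most $\varepsilon$. Hence \eqref{eq:cv glued} will follow once we establish
\begin{equation*}
\lim_{R\to\infty}\Bigl( \limsupp \prob\py(G_R^c) + \prob\yy(G_R^c) \Bigr)\ =\ 0.
\end{equation*}

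Under $\prob\yy$, $L = \infty$ almost surely by \eqref{eq:gluing length}, and $\mop$ is almost surely a one-ended locally finite map of the half-plane (gluing two such maps along their infinite boundaries produces another one), so $[\mop]_r$ is a.s.\ a finite subgraph contained in $[\map]_R \cup [\map']_R$ for $R$ large enough; monotone convergence then gives $\prob\yy(G_R^c) \to 0$. Under $\prob\py$, the part $\prob\py(L<R)$ vanishes as $p\to\infty$ for each fixed $R$ by \eqref{eq:gluing length}. The remaining event $\{L\ge R,\, [\mop]_r \ne \Phi_R\}$ is measurable with respect to the enlarged balls $([\map]_{R'}, [\map']_{R'})$ for some $R' = R'(R,r)$, because given these larger balls together with the information $L\ge R$, one can decide combinatorially whether the gluing along the first $R$ boundary edges produces a ball of radius $r$ entirely contained in $[\map]_R \cup [\map']_R$. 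Applying \eqref{eq:cv not glued} at radius $R'$ then reduces the control of this part under $\prob\py$ to the corresponding control under $\prob\yy$, already handled above.

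The main obstacle is the reduction of the bad event $\{[\mop]_r \ne \Phi_R\}$ to an event depending only on balls of bounded deterministic radius in $\map$ and $\map'$. The naive choice $R' = R$ fails because shortcuts through $\map'$ can bring faces of $\map$ close to the root in $\mop$ even when those faces are very far from the root in $\map$ alone; one must therefore enlarge $R$ by an amount sufficient to capture such shortcuts. This combinatorial analysis is routine but essential, and it produces a deterministic $R'$ depending only on $R$ and $r$, which is exactly what makes the hypothesis \eqref{eq:cv not glued} applicable.
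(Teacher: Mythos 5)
Your overall strategy is the same as the paper's: approximate the target event $\{[\mop]_r \in \mathcal{E}\}$ by a $\sigma([\map]_R,[\map']_R)$-measurable event, apply the hypothesis at radius $R$, and then let $R\to\infty$. But two points need attention, one of which is a real gap.

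The genuine gap is the double-$\varepsilon$. You split
\begin{equation*}
\abs{\pglued\py - \pglued\yy}\ \le\ \prob\py(G_R^c) + \prob\yy(G_R^c) + \text{(main term)},
\end{equation*}
and then bound $\limsupp\prob\py(G_R^c)$ by a second application of~\eqref{eq:cv not glued}, which pays an additional $\varepsilon$. Taking $R\to\infty$ kills $\prob\yy(G_R^c)$, but the best bound you can get is then $\varepsilon + \varepsilon = 2\varepsilon$, not $\varepsilon$. The paper avoids the second $\varepsilon$ by sandwiching rather than cutting: with $\Ei := \{[\mop]_r\in\mathcal{E},\, R_r\le R\}$ and $\Es := \Ei\cup\{R_r>R\}$ (both $\sigma([\map]_R,[\map']_R)$-measurable), one has $\Ei\subseteq\{[\mop]_r\in\mathcal{E}\}\subseteq\Es$, hence
\begin{equation*}
\pglued\py - \pglued\yy \le \big(\prob\py(\Es)-\prob\yy(\Es)\big) + \prob\yy(R_r>R),\quad
\pglued\yy - \pglued\py \le \big(\prob\yy(\Ei)-\prob\py(\Ei)\big) + \prob\yy(R_r>R),
\end{equation*}
so only the $\prob\yy$-probability of the bad event appears as a residual, and the hypothesis is invoked once. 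This gives the stated $\varepsilon$ rather than $2\varepsilon$.

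Your concern about needing a strictly larger radius $R'(R,r)$ is, I believe, misplaced and stems from defining $\Phi_R$ to always output a candidate ball. There is no deterministic $R'$ capturing all shortcuts through $\map'$ (and none is needed). The right object is a random stopping radius $R_r$, the minimal $R$ for which $[\mop]_r$ is reconstructible from $([\map]_R,[\map']_R)$. The crucial point is that the event $\{R_r\le R\}$ --- equivalently, ``the $R$-balls suffice to determine $[\mop]_r$'' --- is $\sigma([\map]_R,[\map']_R)$-measurable at that same radius $R$: from the $R$-balls you can tell whether the gluing interface in $[\mop]_r$ stays inside $I(R)\cap I'(R)$ and whether the resulting ball of radius $r$ is complete. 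If you redefine $\Phi_R$ so that it returns a failure symbol when this check fails, then $\{[\mop]_r\ne\Phi_R\}$ coincides with $\{R_r>R\}$ up to nothing, and no enlargement is required. The argument you label ``routine but essential'' is not a separate combinatorial lemma; it is precisely the stopping-time measurability of $R_r$ at its own level.
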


\begin{remark*}
When $\varepsilon = 0$, the lemma says that if the pair $\law\py (\map,\map')$ converges jointly in law to $\law\yy (\map,\map')$ with respect to the local topology, then so does their gluing $\mop$.

For $\epsilon>0$, one should interpret \eqref{eq:cv glued} as saying that $\law\py (\mop)$ converges locally in distribution to $\law\yy (\mop)$ on some event of probability at least $1-\epsilon$. Similarly for \eqref{eq:cv not glued}.
\end{remark*}

\begin{figure}[t!]
\centering
\includegraphics[scale=0.71]{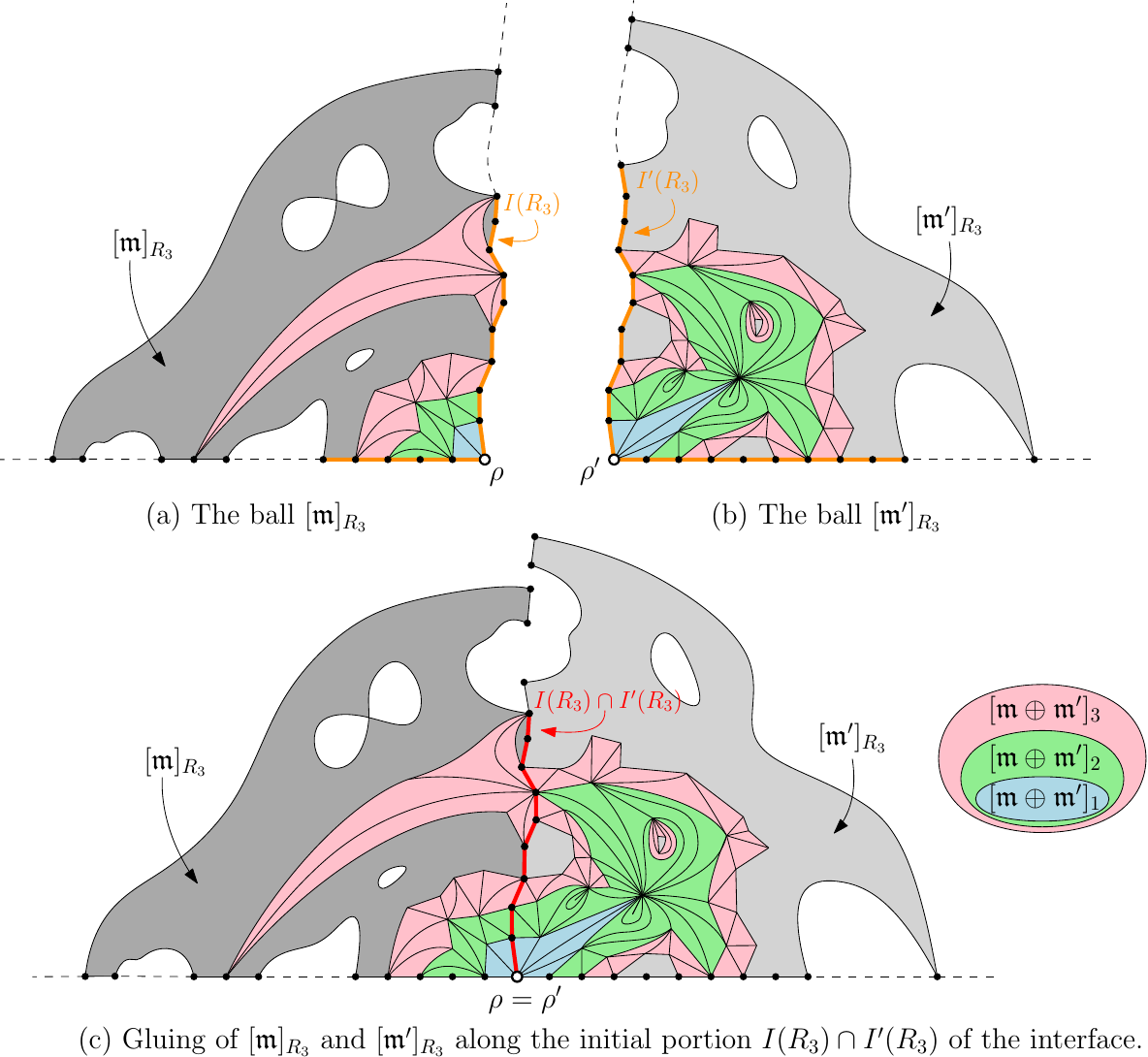}
\caption{(a--b) The grey regions represent the balls of radius $R_3$ in $\map$ and $\map'$, respectively. The triangles belonging to the ball of radius 3 in the glued map $\map\oplus\map'$ are highlighted. By definition, $I(R_3)$ is the maximal interval of boundary vertices of $\map$ containing the \mbox{root $\rho$}.
(c) The knowledge of $[\map]_{R_3}$ and $[\map']_{R_3}$ suffices to determine how they are glued together along the initial portion $I(R_3) \cap I'(R_3)$ of the gluing interface between $\map$ and $\map'$. Therefore, if the ball $[\map \oplus \map']_3$ intersects the gluing interface only inside $I(R_3) \cap I'(R_3)$, then $[\map]_{R_3}$ and $[\map']_{R_3}$ determine the ball $[\map \oplus \map']_3$, that is, $[\map \oplus \map']_3$ is $\filtr[G]_{R_3}$-measurable.}
\label{fig:gluing-of-2}
\end{figure}

\begin{proof}
For all $r\ge 0$, let $\filtr[G]_r$ be the $\sigma$-algebra generated by $([\map]_r, [\map']_r)$. The assumption \eqref{eq:cv not glued} says that 
$\limsupp \abs{\prob\py(E)-\prob\yy(E)} \le \varepsilon $
for every event $E\in \filtr[G]_r$. 

For $R\ge 0$, the ball $[\mop]_r$ is $\filtr[G]_R$-measurable (that is, for any set of balls $\mathcal{E}$, the event $\{[\mop]_r \in \mathcal{E}\}$ is $\filtr[G]_R$-measurable) if one can reconstruct $[\mop]_r$ from $[\map]_R$ and $[\map']_R$.
This certainly implies that $[\mop]_r$ is contained in the union $[\map]_R \cup [\map']_R$. However this is not enough, because to determine how a boundary vertex $v$ of $[\map]_R$ is glued to $[\map']_R$, one has to know how many vertices there are between $v$ and $\rho$ along the boundary of $\map$. Even if $v$ is in $[\map]_R$, all the vertices between $v$ and $\rho$ are not necessarily in $[\map]_R$. Let $I(R)$ be the maximal interval of \emph{consecutive} boundary vertices of $\map$ that are in $[\map]_R$, and define $I'(R)$ similarly for $\map'$. 
Then a sufficient condition for $[\mop]_r$ to be $\filtr[G]_R$-measurable is that the ball $[\mop]_r$ is contained in $[\map]_R \cup [\map']_R$ \emph{and} its intersection with the gluing interface is contained in $I(R)\cap I'(R)$, since in this case one can reconstruct $[\mop]_r$ from $[\map]_R$ and $[\map']_R$ by gluing them along a subinterval of $I(R)\cap I'(R)$. See Figure~\ref{fig:gluing-of-2}. For a given $r\ge 0$, let $R_r$ be the minimal radius $R\ge 0$ such that the above sufficient condition is satisfied. By observing the balls $[\map]_R$ and $[\map']_R$, one can determine whether $R_r \le R$, therefore the event $\{R_r \le R\}$ is in $\filtr[G]_R$. Similarly, for any set of balls $\mathcal{E}$, the intersection $\{[\mop]_r \in \mathcal{E}\} \cap \{R_r \le R\}$ is also in $\filtr[G]_R$. In other words, $R_r$ is a $\filtr[G]$-stopping time, and the event $\{[\mop]_r \in \mathcal{E}\}$ is in $\filtr[G]_{R_r}$.
Also, $R_r<\infty$ almost surely because the union of the balls $[\map]_R$ and $[\map']_R$ eventually covers the whole map $\mop$ when $R\to\infty$.

\newcommand{\Ei}{\underline E_R}
\newcommand{\Es}{\overline E_R}

For any $R \ge 0$, let $\Ei := \{ [\mop]_r \in \mathcal{E} \text{ and } R_r \le R \}$ and $\Es := \Ei \cup \{R_r>R\}$. Since $\Ei \subseteq \{ [\mop]_r \in \mathcal{E} \} \subseteq \Es$, we have
\begin{align*}
  \pglued\py - \pglued\yy 
\ \le&\ \prob\py(\Es) - \mb({\prob\yy(\Es) - \prob\yy(R_r>R)}  \\ \text{and~~~}
  \pglued\yy - \pglued\py 
\ \le&\ \mb({\prob\yy(\Ei) + \prob\yy(R_r>R)} - \prob\py(\Ei)    \,.
\end{align*}
Since $\Ei$ and $\Es$ are $\filtr[G]_R$-measurable, the assumption of the lemma yields
\begin{align*}
&\limsupp \abs{\, \pglued\py - \pglued\yy \,}   \\
\le\ & \max\mB({\limsupp \abs{\,\prob\py(\Es) - \prob\yy(\Es)\,},
                \limsupp \mb|{\,\prob\py(\Ei) - \prob\yy(\Ei)\,}}
                   + \prob\yy (R_r>R)    \\
\le\ & \varepsilon + \prob\yy (R_r>R) \,.
\end{align*}
The right hand side tends to $\epsilon$ when $R\to\infty$ since $R_r<\infty$ almost surely. This gives \eqref{eq:cv glued}.

In the above reasoning we have ignored the possibility that $L$, the total number of glued edges in $\map \oplus \map'$, may be smaller than $R$. Taking this into account adds an extra error term of $\limsup_{p\to\infty} \prob\py(L\le R) + \prob\yy(L\le R)$ to the right hand side of the last display. But this term is zero if $L$ satisfies the assumption \eqref{eq:gluing length}. This completes the proof of the lemma.
\end{proof}

The Ising-triangulation $\bt$ is obtained either by gluing $\uleft$ and $\uright$ to $\rmap$ under $\prob\py$, or by gluing $\uleft[\infty]$ and $\uright[\infty]$ to $\rmap[\infty]$ under $\prob\yy$. To be precise, one needs to move the root vertex of $\rmap$ before each gluing: Given a map $\map$ with a simple boundary, and an integer $S$, let us denote by $\overrightarrow \map^{S}$ (resp. $\overleftarrow \map^{S}$) the map obtained by translating the root vertex of $\map$ by a distance $S$ to the right (resp. to the left) along the boundary. The proof of the following lemma is left to the reader.

\begin{lemma}[Local convergence is preserved by a finite translation of the root]\label{lem:loc cv & reroot}
Assume that $S$ is almost surely finite under $\prob\py$ and $\prob\yy$.
If $\law\py \map \to \law\yy \map$ locally in distribution jointly with $\law\py S \to  \law\yy S$ as $p\to\infty$, then $\law\py \overrightarrow \map^{S}$ also converges to $\law\yy \overrightarrow \map^{S}$ locally in distribution.
\end{lemma}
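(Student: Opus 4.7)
The plan is to exploit the elementary fact that translating the root along the boundary by a finite number of edges $S$ moves it by graph distance at most $|S|$. Hence the ball of radius $r$ around the new root is entirely determined by the ball of radius $r+|S|$ around the old root together with the value of $S$, and the desired convergence follows from the joint hypothesis by a standard truncation argument over $\{|S|\le k\}$.

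More precisely, I would first record the following deterministic observation: for every $r,k\ge 0$, on the event $\{|S|\le k\}$, the ball $[\overrightarrow\map^{S}]_r$ is contained in $[\map]_{r+k}$ and coincides with a deterministic function $\Phi([\map]_{r+k},S)$ of the pair, obtained by displacing the root corner by $S$ edges along the boundary and keeping only the faces within distance $r$ of the new root.

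Next, fix $r\ge 0$, a set $\mathcal{E}$ of balls of radius $r$, and $\varepsilon>0$. Since $S$ is a.s.\ finite under $\prob\yy$, choose an integer $k$ with $\prob\yy(|S|>k)<\varepsilon$. Integer-valuedness of $S$ and the convergence $\law\py S\to\law\yy S$ give $\prob\py(|S|\le k)\to\prob\yy(|S|\le k)$, so $\prob\py(|S|>k)<2\varepsilon$ for $p$ large enough. Then, by the joint local convergence $\law\py(\map,S)\to\law\yy(\map,S)$, for every ball $\bmap$ of radius $r+k$ and every integer $s$ with $|s|\le k$,
\[
\prob\py\bigl([\map]_{r+k}=\bmap,\ S=s\bigr) \ \cv[]p\ \prob\yy\bigl([\map]_{r+k}=\bmap,\ S=s\bigr).
\]
Summing this pointwise convergence over the countable set of pairs $(\bmap,s)$ with $|s|\le k$ and $\Phi(\bmap,s)\in\mathcal{E}$, and applying dominated convergence (the summands are non-negative with total sum at most $1$), we obtain
\[
\prob\py\bigl([\overrightarrow\map^{S}]_r\in\mathcal{E},\ |S|\le k\bigr)\ \cv[]p\ \prob\yy\bigl([\overrightarrow\map^{S}]_r\in\mathcal{E},\ |S|\le k\bigr).
\]

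Finally, combining the last display with the tail bounds on $\{|S|>k\}$ via the triangle inequality yields $\limsup_{p\to\infty}\bigl|\prob\py([\overrightarrow\map^{S}]_r\in\mathcal{E})-\prob\yy([\overrightarrow\map^{S}]_r\in\mathcal{E})\bigr|\le 3\varepsilon$, and sending $\varepsilon\to 0$ completes the proof. There is really no obstacle here: the content is merely that the re-rooting map $(\map,S)\mapsto\overrightarrow\map^{S}$ is almost surely continuous for the joint local$\,\times\,$discrete topology on $(\map,S)$ whenever $S$ is almost surely finite, and the truncation step above makes this precise.
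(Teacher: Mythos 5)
The paper explicitly says ``The proof of the following lemma is left to the reader,'' so there is no paper proof to compare yours against; you are on your own here, and your argument is correct and is the natural one. The deterministic observation is exactly right: for $r\ge 1$ and on the event $\{|S|\le k\}$, the $|S|$ boundary edges between the old root $\rho$ and the new root $\rho'$ all lie at graph distance $\le k\le r+k-1$ from $\rho$, hence the triangles bearing them are in $[\map]_{r+k}$, so $\rho'$ is identifiable inside $[\map]_{r+k}$; moreover any shortest path of length $\le r-1$ from $\rho'$ stays among vertices within distance $r+k-1$ of $\rho$ and is therefore realized inside $[\map]_{r+k}$, so $[\overrightarrow\map^{S}]_r$ is both contained in and computable from $\bigl([\map]_{r+k},S\bigr)$. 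Given that, your truncation-and-sum strategy is the right way to pass from $\law\py(\map,S)\to\law\yy(\map,S)$ to $\law\py\overrightarrow\map^S\to\law\yy\overrightarrow\map^S$.

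One small technical point of hygiene: the limit interchange in the countable sum is not literally dominated convergence, since there is no obvious dominating summable function over the index set of pairs $(\bmap,s)$. What you actually have and should invoke is a Scheff\'e/Fatou argument: the summands are nonnegative, converge pointwise, and their \emph{total} sum $\prob\py(|S|\le k)$ converges to $\prob\yy(|S|\le k)$ (which you already established), and this forces $\ell^1$ convergence and hence convergence of the subsum over your index set $\{(\bmap,s):|s|\le k,\ \Phi(\bmap,s)\in\mathcal{E}\}$. With that one-line repair the proof is complete and correct.
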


Under $\prob\py$, we have 
\begin{equation}\label{eq:gluing-of-3}
\bt = \overrightarrow{(\umap\rmap)}^{S^\+ + S^\<} \oplus \uright \qtq{where} \umap\rmap = \uleft \oplus \overleftarrow{(\rmap)}^{S^\<}
\end{equation}
and $S^\+$ and $S^\<$ are the distances from $\rho$ to $\rho_{\umap^*}$ and $\rho_\umap$, respectively. See Figure~\ref{fig:ribbon-at-large-jump}. Similarly, $\law\yy \bt$ can be expressed in terms of $\uleft[\infty]$, $\rmap[\infty]$, $\uright[\infty]$ and $S^\jj$ using gluing and root translation.

On the event $\mathcal{J}$, the perimeter processes $\nseq X$ and $\nseq Y$ stay above the barrier $\mu n-xf_\epsilon(n)$ up to time $\tauxy$. Thus their minima over $[0,\tauxy)$ are reached before the deterministic time $N_{\min} = \sup\Set{n\ge 0}{\mu n-xf_\epsilon(n)\le 0}$ and $S^\+$ and $S^\<$ are measurable functions of the explored map $\emapo_{N_{\min}}$. It follows that $\law\py S^\jj$ converges in distribution to $\law\yy S^\jj$ on the event $\mathcal{J}$, in a sense similar to \eqref{eq:cv not glued} and \eqref{eq:cv glued} in Lemma~\ref{lem:gluing of loc cv}. This convergence also takes place jointly with the one in Lemma~\ref{lem:gluing of loc cv}. Using the relation \eqref{eq:gluing-of-3}, it is not hard to adapt the proof of Lemma~\ref{lem:gluing of loc cv} and deduce from Lemma~\ref{lem:loc cv on big jump} the local convergence of the Ising-triangulation $\bt$ on the event $\mathcal{J}$, in the following sense.

\begin{corollary}
Fix any $x,m,\epsilon > 0$. Then for any radius $r\ge 0$ and any set $\mathcal{E}$ of balls, we have
\begin{equation*}
\limsupp \mb|{\, \prob\py ( \btsq_r \in \mathcal{E} ) 
               - \prob\yy( \btsq_r \in \mathcal{E} ) \, }    \ \le\ 
\limsupp \prob\py (\mathcal{J}^c) + \prob\yy (\tauxy<\infty)    \,.
\end{equation*}
\end{corollary}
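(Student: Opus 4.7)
The plan is to deduce the corollary by bootstrapping the joint convergence of $(\rmap,\uleft,\uright)$ from Lemma~\ref{lem:loc cv on big jump} through the two-step gluing decomposition \eqref{eq:gluing-of-3} of $\bt$. Since Lemma~\ref{lem:gluing of loc cv} was stated for gluing two maps at their roots and for a convergence statement with an additive error $\varepsilon$, the proof is a matter of applying it twice, adding in a root shift at each step via Lemma~\ref{lem:loc cv & reroot}, and tracking that the error term $\limsupp \prob\py(\mathcal{J}^c)+\prob\yy(\tauxy<\infty)$ is preserved.

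First, I would promote the convergence of $(\rmap,\uleft,\uright)$ to a joint convergence that also includes the shifts $S^\+$ and $S^\<$. On the event $\mathcal{J}$ the processes $\nseq X$ and $\nseq Y$ stay above $\mu n - x f_\epsilon(n)$ up to time $\tauxy$, so their running minima over $[0,\tauxy)$ are attained before the deterministic time $N_{\min}=\sup\Set{n\ge 0}{\mu n - x f_\epsilon(n)\le 0}$. Thus on $\mathcal{J}$ both $S^\+$ and $S^\<$ are measurable functions of $\emapo_{N_{\min}}$. Applying the discrete convergence \eqref{eq:peeling cvg} at the fixed index $N_{\min}$ and combining with Lemma~\ref{lem:loc cv on big jump}, I obtain that for every $r\ge 0$ and every set $\mathcal{E}'$ of quintuples,
\begin{equation*}
\limsupp \mb|{\,\prob\py\mB({([\rmap]_r,[\uleft]_r,[\uright]_r,S^\+,S^\<)\in \mathcal{E}'}) - \prob\yy\mB({([\rmap[\infty]]_r,[\uleft[\infty]]_r,[\uright[\infty]]_r,S^\+,S^\<)\in \mathcal{E}'})\,} \ \le\ \limsupp\prob\py(\mathcal{J}^c) + \prob\yy(\tauxy<\infty) \,.
\end{equation*}

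Next I would apply the gluing step. Lemma~\ref{lem:loc cv & reroot} turns the joint convergence of $(\rmap,S^\<)$ into local convergence of $\overleftarrow{(\rmap)}^{S^\<}$, jointly with the convergence of $\uleft$. The number of edges glued in $\uleft \oplus \overleftarrow{(\rmap)}^{S^\<}$ equals the number of \<\ boundary edges of $\uleft$ to the right of its root, which is $\qleft_2 = \infty$ almost surely under $\prob\yy$ and tends to $+\infty$ in probability under $\prob\py$ on $\mathcal{J}$ (since $\qleft_1,\qleft_2$ are both large there); condition \eqref{eq:gluing length} is therefore satisfied, and Lemma~\ref{lem:gluing of loc cv} produces the analogous inequality for $\umap\rmap$. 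A second application of Lemma~\ref{lem:loc cv & reroot} (with shift $S^\+ + S^\<$) and Lemma~\ref{lem:gluing of loc cv} (with gluing length equal to the \+\ boundary length shared between $\umap\rmap$ and $\uright$, which again goes to infinity on $\mathcal{J}$) then yields the desired inequality for $\overrightarrow{(\umap\rmap)}^{S^\++S^\<}\oplus\uright = \bt$.

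At each of the two applications of Lemma~\ref{lem:gluing of loc cv}, its proof shows that the error $\varepsilon$ in the hypothesis \eqref{eq:cv not glued} is inherited additively by the conclusion \eqref{eq:cv glued} (the only extra contribution being $\prob\yy(R_r>R)$, which can be made arbitrarily small by letting $R\to\infty$ after taking the limsup in $p$). Hence the error $\limsupp\prob\py(\mathcal{J}^c) + \prob\yy(\tauxy<\infty)$ propagates unchanged from Lemma~\ref{lem:loc cv on big jump} through both gluings, giving the bound stated in the corollary.

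The main obstacle is a bookkeeping one rather than a conceptual one: Lemma~\ref{lem:gluing of loc cv} is phrased for the gluing of two fixed maps along their roots, while \eqref{eq:gluing-of-3} involves three pieces together with root shifts that are themselves random. To handle this cleanly I would either (i) combine Lemmas~\ref{lem:gluing of loc cv} and \ref{lem:loc cv & reroot} into a single statement about a glue-and-shift operation, or (ii) verify directly that the $\sigma$-algebra generated by $([\map]_R,[\map']_R,S)$ contains $[\overrightarrow{(\map)}^S \oplus \map']_r$ for $R$ large enough depending on $(\map,\map',S)$, and repeat the stopping-time argument in the proof of Lemma~\ref{lem:gluing of loc cv} verbatim. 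Either way the adaptation is routine, and the essential probabilistic input remains Lemma~\ref{lem:loc cv on big jump}.
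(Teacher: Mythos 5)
The proposal is correct and follows essentially the same route as the paper: upgrade Lemma~\ref{lem:loc cv on big jump} to a joint convergence including $S^\+$ and $S^\<$ (using that on $\mathcal{J}$ these are measurable functions of $\emapo_{N_{\min}}$, where $N_{\min}$ is deterministic), then assemble $\bt$ according to the decomposition \eqref{eq:gluing-of-3} by alternately applying the rerooting lemma (Lemma~\ref{lem:loc cv & reroot}) and the gluing lemma (Lemma~\ref{lem:gluing of loc cv}), checking at each step that the additive error term is preserved and that the gluing length satisfies \eqref{eq:gluing length}. This is precisely the paper's own sketch (``it is not hard to adapt the proof of Lemma~\ref{lem:gluing of loc cv}''), and the student correctly flags where care is needed---the error-tolerant variant of Lemma~\ref{lem:loc cv & reroot} and the three-piece gluing with random shifts---proposing reasonable ways to handle the bookkeeping.

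One small inaccuracy is worth noting: when gluing $\uleft \oplus \overleftarrow{(\rmap)}^{S^\<}$, the number of glued edges is not $\qleft_2$. By construction $\qleft_2 = \infty$ under $\prob\py$ as well as $\prob\yy$ (it counts the untouched external boundary of $\bt$ adjacent to $\uleft$), whereas the arc actually shared between $\uleft$ and the ribbon is finite under $\prob\py$ (since $\rmap$ is finite), of order $\qleft_1 + \pleft$. Since $\qleft_1 \to\infty$ on $\mathcal{J}$ while $\pleft \le m+1$, this gluing length still tends to $+\infty$ in probability under $\prob\py$ and is $\infty$ a.s.\ under $\prob\yy$, so condition \eqref{eq:gluing length} holds and the argument goes through; the slip does not affect the conclusion.
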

The left hand side does not depend on the parameters $x,m$ and $\epsilon$ used to define the ribbon $\rmap$ and the event $\mathcal{J}$. Therefore to conclude that $\prob\py$ converges locally to $\prob\yy$, it suffices to prove that:

\begin{lemma}\label{lem:bad gluing proba upper bound}
$\displaystyle \limsupp \prob\py (\mathcal{J}^c) + \prob\yy (\tauxy<\infty)$\, converges to zero when $x,m\to\infty$ and $\epsilon\to 0$.
\end{lemma}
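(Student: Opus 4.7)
For $p$ large enough, $\tauxy \le T_m$ holds almost surely under $\Prob\py$ (as noted at the start of the proof of Proposition~\ref{prop:scaling}), so the event decomposes as
\begin{equation*}
\mathcal{J}^c \ \subseteq\  \{\tauxy < T_m\}\ \cup\ \{T_m < \epsilon p\}\ \cup\ \{\kk > m\}\,.
\end{equation*}
Lemma~\ref{lem:one jump} gives $\lim_{x,m\to\infty}\limsupp \prob\py(\tauxy<T_m)=0$ and Proposition~\ref{prop:scaling} gives $\limsupp \prob\py(T_m<\epsilon p) \le 1-(1+\mu\epsilon)^{-4/3}\to 0$ as $\epsilon \to 0$. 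For $\prob\yy(\tauxy<\infty)$, under $\Prob\yy$ both $\nseq X$ and $\nseq Y$ are random walks with positive drift $\mu$ and fluctuations in the domain of attraction of a $4/3$-stable law (Theorem~\refp{1}{thm:scaling limit}); a standard law of iterated logarithm for random walks in such a domain of attraction produces an almost surely finite $x_0(\omega)$ with $|X_n-\mu n|\vee |Y_n-\mu n|\le x_0 f_\epsilon(n)$ for every $n\ge 1$, so $\prob\yy(\tauxy<\infty)\le \prob\yy(x_0>x)\to 0$ as $x\to\infty$.

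The genuine obstacle is to show $\limsupp \prob\py(\kk>m)\to 0$ as $m\to\infty$. The key observation is that for $m\ge 1$, any peeling step of type $\rp[p'+k]$ or $\rn[p'+k]$ with $k\ge 1$ sends $P_n$ into $\{0,1\}$ and hence must coincide with $T_m$. Setting
\begin{equation*}
g(p',m)\ :=\ \Prob\py[p']\bigl(\Step_1 \in \{\rp[p'+k],\rn[p'+k]\}\text{ for some }k>m\bigr),
\end{equation*}
the spatial Markov property yields $\prob\py(\kk>m)=\sum_{n\ge 1}\EE\py\bigl[\idd{T_m>n-1}\,g(P_{n-1},m)\bigr]$ and therefore
\begin{equation*}
\prob\py(\kk>m)\ \le\ \sup_{p'>m}\frac{g(p',m)}{g(p',0)}\cdot \sum_{n\ge 1}\EE\py\bigl[\idd{T_m>n-1}\,g(P_{n-1},0)\bigr]\,,
\end{equation*}
where the right-hand sum equals $\Prob\py(\Step_{T_m}\text{ is of past-}\rho^\dagger\text{ type})\le 1$ by the same key observation.

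The main technical step is thus to show $\sup_{p'>m} g(p',m)/g(p',0)\to 0$ as $m\to\infty$. Using Table~\refp{a}{tab:prob(p)} and the symmetry $z_{p,q}=z_{q,p}$, this reduces to uniform tail decay of the family of probability distributions $\bigl(z_{p'+1,k}u_c^k/Z_{p'+1}(u_c)\bigr)_{k\ge 0}$ on $\integer_{\ge 0}$ (together with its $\rn$-analogue). The two asymptotics of Theorem~\ref{thm:z_p,q}---namely $z_{k,p'+1}\sim \tfrac{a_k}{\Gamma(-4/3)}u_c^{-(p'+1)}(p'+1)^{-7/3}$ for fixed $k$ as $p'\to\infty$, and $Z_{p'+1}(u_c)\sim \tfrac{A(u_c)}{\Gamma(-4/3)}u_c^{-(p'+1)}(p'+1)^{-7/3}$---show that these discrete measures converge pointwise, as $p'\to\infty$, to the proper probability distribution $(a_k u_c^k/A(u_c))_{k\ge 0}$. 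By Scheff\'e's lemma the convergence holds in total variation, in particular the family is tight; combined with the obvious tail decay at each individual $p'$, this yields the required uniform bound and concludes the proof.
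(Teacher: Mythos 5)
Your proof is correct and arrives at the same conclusion, but it handles the key term $\prob\py(\kk>m)$ by a genuinely different argument than the paper does. Both proofs use the identical decomposition $\mathcal{J}^c\subseteq\{\tauxy<T_m\}\cup\{T_m<\epsilon p\}\cup\{\kk>m\}$ and dispatch the first two terms with Lemma~\ref{lem:one jump} and Proposition~\ref{prop:scaling}. For the third term, the paper conditions on $\{\tauxy>n-1\}$ so that the frontier $P_{n-1}$ stays above $p-g_{x,\epsilon}$, sums the disjoint events $\{\tauxy=T_m=n\}$, and reduces to bounding $\limsupp \prob\py\m({\Step_1\in\{\rp[p+k],\rn[p+k]:k>m\}\mid P_1\le m}$, which it evaluates by \emph{explicitly} computing the limits $\lim_{p\to\infty}p\cdot\prob\py(\Step_1\in\{\rp[p+k],\rn[p+k]\})$ and verifying by hand that the sum over $k$ commutes with the limit. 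You instead condition on $\{T_m>n-1\}$, exploit the clean observation that any step $\RR^\jj_{P_{n-1}+k}$ with $k\ge 1$ immediately drives $P$ into $\{0,1\}$ (so it is automatically the step at $T_m$), and thereby obtain the bound $\prob\py(\kk>m)\le\sup_{p'>m}g(p',m)/g(p',0)$, \emph{uniformly in $p$}; the decay of this supremum as $m\to\infty$ is then a soft consequence of Scheff\'e's lemma applied to the probability distributions $(z_{p',k}u_c^k/Z_{p'}(u_c))_{k\ge 0}$, whose pointwise convergence is already recorded in Theorem~\ref{thm:z_p,q}. Your route buys a uniform-in-$p$ bound and sidesteps the paper's explicit computation and the exchange-of-limit-and-sum verification; what it gives up is the explicit limiting law of $(\kk,\delta)$, which the paper derives as a by-product and uses in the remark following the lemma. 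The only fine point worth flagging is that dividing by $g(p',0)$ requires the denominator $\sum_{k>0}z_{p',k}u_c^k=Z_{p'}(u_c)-z_{p',0}$ to be comparable to $Z_{p'}(u_c)$ uniformly in large $p'$; this holds since $A(u_c)>a_0$ by positivity of the $a_p$, and you should make it explicit.
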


\begin{proof}
When $x\to\infty$, we have $\tauxy \to\infty$ almost surely under $\prob\yy$, hence $\prob\yy(\tauxy<\infty) \cv[]x 0$. For the probability of $\mathcal{J}^c$, we use the union bound
\begin{equation*}
\prob\py(\mathcal{J}^c) \ \le\ \prob\py (\tauxy< T_m) + \prob\py(T_m<\epsilon p) + \prob\py (\tauxy = T_m \text{ and } \kk>m) \,.
\end{equation*}
The first two terms on the right can be bounded using Lemma~\ref{lem:one jump} and Proposition~\ref{prop:scaling}:
\begin{align*}
\lim_{m,x \to\infty}  & \limsupp \prob\py(\tauxy<T_m)    \ =\ 0 \\ \tq{and}
\lim_{\epsilon\to 0}\ & \limsupp \prob\py(T_m<\epsilon p)\ =\ 
\lim_{\epsilon\to 0} 1-(1+\mu \epsilon)^{-4/3} \ =\ 0    \,.
\end{align*}

For the last term, let us first fix some $n\ge 1$ and consider $\prob\py(\tauxy = T_m = n \text{ and } \kk>m)$.
Since $g_{x,\epsilon} := \max_{n\ge 0} xf_\epsilon(n) -\mu n$ is finite, for large $p$ we have $p+\mu n-xf_\epsilon(n) \ge p-g_{x,\epsilon} > m$ for all $n\ge 0$. It follows that $\tauxy\le T_m$ and $\{\tauxy = T_m = n\} = \{\tauxy>n-1 \text{ and } P_n\le m\}$. Notice that the event $\{\tauxy>n-1\}$ is $\filtr^\circ_n$-measurable and $P_{n-1}\ge p-g_{x,\epsilon}$ on that event. Hence by the spatial Markov property,
\begin{align*}
   &\ \prob\py(\tauxy = T_m = n \text{ and } \kk > m)                       \\
  =&\ \E\py \mb[{ \idd{\tauxy>n-1} \prob\py[p'] \mb({ 
                  \Step_1\in \{\rp[p+k],\rn[p+k] \,:\, k>m\} \text{ and } P_1\le m} 
               \big|_{p'=P_{n-1}} }                                         \\
\le&\ \E\py \mb[{ \idd{\tauxy>n-1} \prob\py[P_{n-1}] (P_1\le m) } \cdot
            \sup_{p'\ge p - g_{x,\epsilon} } \!\! \prob\py[p'] \mmb({
                  \Step_1\in \{\rp[p+k],\rn[p+k] \,:\, k>m\} }{P_1\le m}    \\
  =&\ \prob\py (\tauxy = T_m = n) \cdot
            \sup_{p'\ge p - g_{x,\epsilon} } \!\! \prob\py[p'] \mmb({
                  \Step_1\in \{\rp[p+k],\rn[p+k] \,:\, k>m\} }{P_1\le m}    \,.
\end{align*}
Summing over $n\ge 1$ and then taking the limit $p\to\infty$ gives
\begin{equation*}
     \limsupp \prob\py(\tauxy = T_m \text{ and } \kk > m)
\ \le\ \limsupp \prob\py \mmb({\Step_1\in \{\rp[p+k],\rn[p+k] \,:\, k>m\} }{P_1\le m} \,.
\end{equation*}

The number $P_1$ is determined by $\Step_1$. More precisely, from the relation between $\Step_1$ and $X_1$ in Table~\ref{tab:prob(p)} one can see that $\{P_1\le m\} = \{\Step_1 \in \{\rp[p+k],\rn[p+k] \,:\, k>-m\} \cup \{\rp[p-m]\} \}$. Thus the right hand side of the above inequality can be rewritten as
\begin{equation}\label{eq:kk bound ratio}
\frac{ \sum \limits_{k> m} 
       \lim \limits_{p\to\infty} p\cdot \prob\py(\Step_1 \in \{\rp[p+k], \rn[p+k]\})
    }{ \lim \limits_{p\to\infty} p\cdot \prob\py(\Step_1 = \rp[p-m])
  \  + \sum \limits_{k>-m} 
       \lim \limits_{p\to\infty} p\cdot \prob\py(\Step_1 \in \{\rp[p+k], \rn[p+k]\}) }
\end{equation}
provided that the limits in the numerator and the denominator exist and commute with the summations. The existence of the limits can be verified directly using the data in Table~\ref{tab:prob(p)}:
\begin{equation*}
\lim_{p\to\infty} p\cdot \prob\py(\Step_1 \in \{\rp[p+k], \rn[p+k]\})
\ =\ -\frac{4t_c}{3b} u_c^{|k|} \begin{cases}
    \nu_c a_0 a_{k+1} + a_1 a_k     &\text{if } k\ge 0    \\
a_0 a_{|k|+1} + \nu_c a_1 a_{|k|}   &\text{if } k\le 0 \,.
\end{cases}
\end{equation*}
One can also compute their sum over $k\ge 0$ and check that it commutes with the limit:
\begin{align*}
\lim_{p\to\infty} \sum_{k\ge 0} p\cdot \prob\py(\Step_1 \in \{\rp[p+k], \rn[p+k]\})
\ =\ & 
\lim_{p\to\infty} p\cdot \mB({ \nu_c\frac{t_c}{u_c} a_0 \frac{Z_p(u_c)-z_{p,0}}{a_p}
                            + t_c u_c a_1 \frac{Z_{p+1}(u_c)}{a_p} } \\
\ =\ -\frac{4t_c}{3b} \mB({ \nu_c a_0 \frac{A(u_c) - a_0}{u_c} + a_1 A(u_c) }
\ =\ &
\sum_{k\ge 0} \lim_{p\to\infty} p\cdot \prob\py(\Step_1 \in \{\rp[p+k], \rn[p+k]\})
\,.
\end{align*}
It follows that \eqref{eq:kk bound ratio} is indeed an upper bound of $\limsupp \prob\py(\tauxy = T_m \text{ and } \kk>m)$. It is clear that the ratio in \eqref{eq:kk bound ratio} converges to zero when $m\to\infty$. Therefore
\begin{equation*}
\lim_{m\to\infty} \limsupp \prob\py(\tauxy = T_m \text{ and } \kk>m) \ =\ 0 \,.
\end{equation*}
This completes the proof.
\end{proof}

\begin{remark*}
The upper bound of $\prob\py(\tauxy=T_m\text{ and }\kk>m)$ in the proof of Lemma~\ref{lem:bad gluing proba upper bound} can be refined to the following identity in the limit $p\to\infty$: for any fixed $m\ge 0$ and $k\ge 0$, the random variables $\kk$ and $\delta$ (the latter is defined in Figure~\ref{fig:ribbon-at-large-jump}) satisfy
\begin{align*}
\lim_{p\to\infty} \prob\py \mm({\kk=k \text{ and } \delta=1}{\tauxy=T_m} \ &=\
\lim_{p\to\infty} \prob\py \mm({\Step_1 = \rp[p+k]}{P_1\le m}    \\
\lim_{p\to\infty} \prob\py \mm({\kk=k \text{ and } \delta=0}{\tauxy=T_m} \ &=\
\lim_{p\to\infty} \prob\py \mm({\Step_1 = \rn[p+k]}{P_1\le m}    \,.
\end{align*}
The limits on the left hand side of the above equalities define a probability distribution supported on $\Set{(k,\delta) \in \integer \times \{0,1\}}{ \delta-k\le m}$, where the condition $\delta-k\le m$ comes from the fact that $P_{T_m} = \delta + \max(0,-\kk) \le m$.

One can further take the limit $m\to\infty$ in the above equalities. The result defines a probability distribution given by normalizing the weights $w(k,\delta)$ on $(k,\delta) \in \integer \times \{0,1\}$, where 
$w(k,1) = \lim\limits_{p\to\infty} p\cdot \prob\py (\Step_1 = \rp[p+k])$ and 
$w(k,0) = \lim\limits_{p\to\infty} p\cdot \prob\py (\Step_1 = \rn[p+k])$, or explicitly,
\begin{equation*}
\frac{3|b|}{4t_c} \cdot \omega(k,\delta) \ =\ \nu_c^{1-\delta} u_c^{|k|}\cdot
\begin{cases}
a_\delta \, a_{k+1-\delta}     &\text{if }k\ge 0    \\
a_{1-\delta} \, a_{|k|+\delta} &\text{if }k\le 0    \ .
\end{cases}   
\end{equation*}
We interpret this distribution as the distribution of the peeling event immediately after the large jump of the perimeter process $\nseq P$ in the infinite Ising-triangulation of law $\prob\yy$. (Of course, the large jump $\prob\yy$-almost surely never occurs. So this is only an interpretation.)

Recall that the peeling process explores the triangles adjacent to the leftmost interface from $\rho$ to $\rho^\dagger$. This set of triangles is invariant in distribution when $\rho$ and $\rho^\dagger$ swap their roles. For this reason, the distribution in the last paragraph should be related to the distribution of $S^\+$ and $S^\<$ (see Figure~\ref{fig:ribbon-at-large-jump}) under $\prob\yy$. The derivation of the exact relation, though conceptually straightforward, is very tedious and will not be carried out here.
\end{remark*}

\section{Properties of interfaces and spin clusters}\label{sec:interface}

In this section, we discuss some properties of the interfaces and the spin clusters in the infinite Ising-triangulations of the laws $\prob\py$ and $\prob\yy$ which are direct consequences of our construction of the laws. These include the statements \hyperref[thm:cv]{(2)} and \hyperref[thm:cv]{(3)} of Theorem~\ref{thm:cv}.
First, let us take a closer look at the definition of the spin clusters and their relation to the interfaces.

\paragraph{Vertex-connected clusters and edge-connected clusters.}
Since in our model the spins are on the faces of the triangulation, there are two equally natural definitions of the spin clusters. Two faces can be considered adjacent as soon as they share a vertex, or they can be considered adjacent only when they share an edge. The resulting connected components of faces of the same spin will be called \emph{vertex-connected clusters} in the first case, and \emph{edge-connected clusters} in the second case.
Obviously vertex-connected clusters are larger than their edge-connected counterparts. Notice that an edge-connected cluster of spin \< is surrounded by vertex-connected clusters of spin \+, and vice versa, see Figure~\refp{a}{fig:ribbon-lrmp}.
\begin{figure}
\centering
\includegraphics[scale=1]{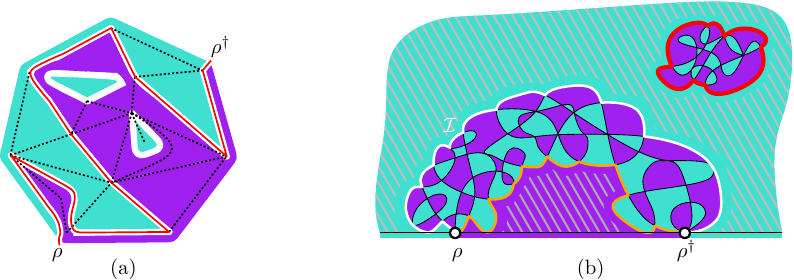}
\caption[caption]{(a) This example contains one vertex-connected cluster of spin \+ and three edge-connected clusters of spin \<. The leftmost interface from $\rho$ to $\rho^\dagger$ is highlighted in red.
(b) The leftmost interface $\mathcal I$ (white) and the rightmost interface (yellow) from $\rho$ to $\rho^\dagger$ in a bicolored triangulation with Dobrushin boundary condition. A vertex-connected cluster not touching the boundary is also shown. Its outermost interface is highlighted in red.}
\label{fig:ribbon-lrmp}
\end{figure}

Notice that we have not specified the type of the infinite clusters in Theorem~\refp{2-3}{thm:cv}. By this we mean that the two statements are valid for both edge-connected and vertex-connected clusters. The same applies to the following discussion.

\paragraph{Cluster structure of bicolored triangulations with a Dobrushin boundary condition.}
By convention, we shall consider consecutive boundary edges of the same spin to be in the same cluster, as in Figure~\refp{a}{fig:ribbon-lrmp}. This implies that, in a bicolored triangulation with a non-monochromatic Dobrushin boundary condition, there will be exactly one cluster containing the \+ boundary edges, and one cluster containing the \< boundary edges. All the other clusters are non-adjacent to the external face.

As shown in Figure~\refp{b}{fig:ribbon-lrmp}, the leftmost interface $\iroot$ from $\rho$ to $\rho^\dagger$ separates the \emph{edge-connected cluster} containing the \< boundary from the \emph{vertex-connected cluster} containing the \+ boundary. Similarly, the rightmost interface from $\rho$ to $\rho^\dagger$ separates the \emph{vertex-connected cluster} of containing the \< boundary from the \emph{edge-connected cluster} containing the \+ boundary. On the other hand, a spin cluster that does not touch the boundary has an \emph{outermost interface}, as highlighted in the example in Figure~\refp{b}{fig:ribbon-lrmp}.

\paragraph{Peeling process along the leftmost interface.}
Recall that, when the boundary of the unexplored map is \emph{not} monochromatic, we defined the peeling process to reveal the triangle adjacent to the \< boundary edge on the left of the root $\rho_n$ of the unexplored map. As shown in Figure~\refp{b}{fig:peeling-interface-ribbon}, as long as the revealed triangle has spin \< and does not swallow $\rho_n$, the peeling process turns around the vertex $\rho_n$ and does not extend the Ising interface. When the peeling process reveals a triangle of spin \+ incident to $\rho_n$, the Ising interface is extended by one edge which is the leftmost non-monochromatic edge adjacent to $\rho_n$.
Therefore the peeling process indeed explores the triangulation along the leftmost Ising interface from $\rho$.

\begin{figure}
\centering
\includegraphics[scale=0.85,page=8]{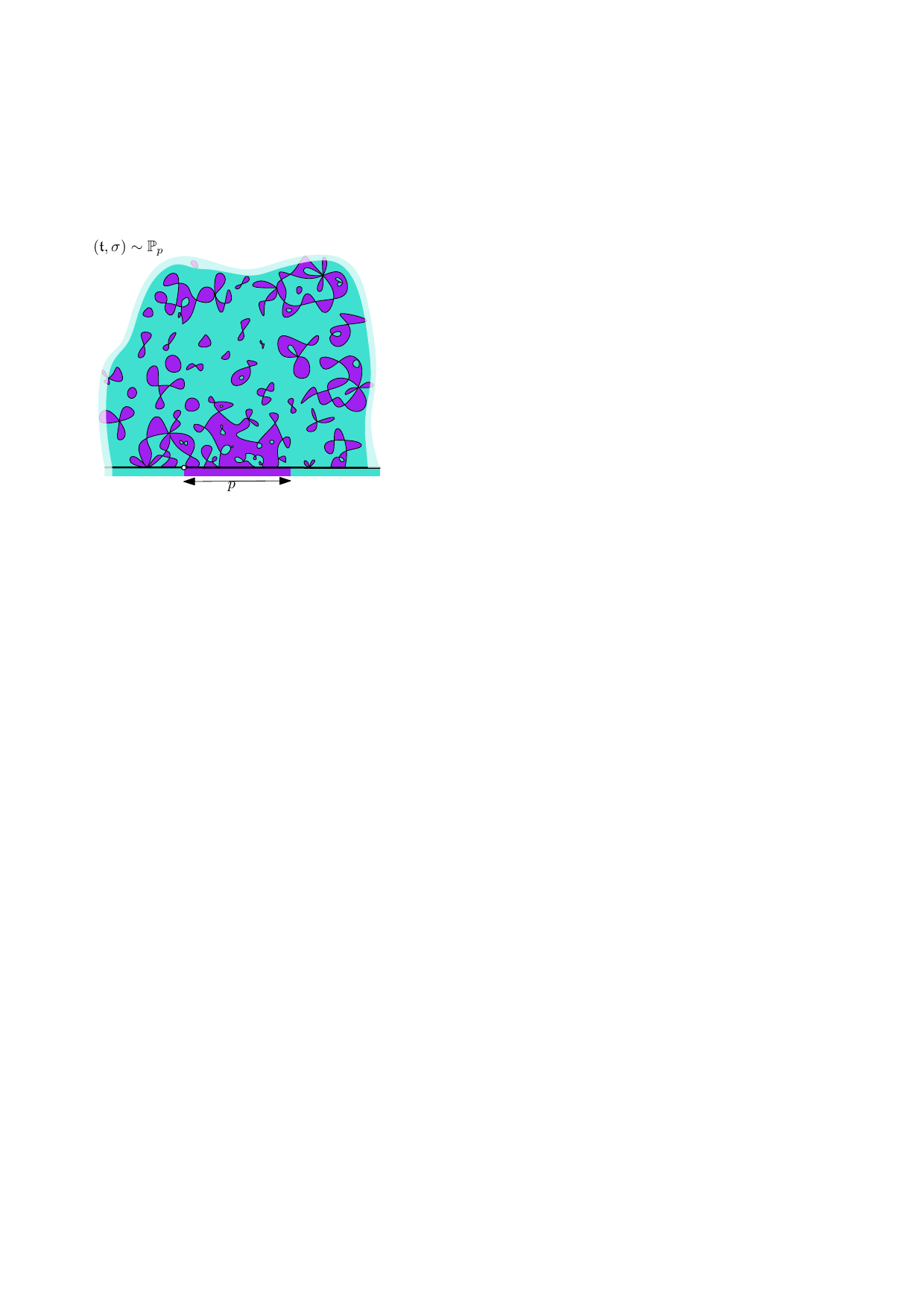}
\caption{(a) Position of the ribbon (shadowed region) relative to the spin clusters in the Ising-triangulation of law $\prob\yy$. The peeling process follows the leftmost interface $\iroot$ from $\rho$. 
(b) If the peeling process peels on the left of the root $\rho_n$ of the unexplored map, then it explores the leftmost interface.
(c),(d) the same pictures as (a),(b), but in the case when the peeling process explores the rightmost interface from $\rho$.
}
\label{fig:peeling-interface-ribbon}
\end{figure}

When the boundary of the unexplored map becomes monochromatic \< (that is, when $n=T_0$), the peeling process chooses \emph{some} triangle on its boundary to reveal (according to the peeling algorithm $\algo$) until the boundary becomes non-monochromatic again. In terms of the clusters, this means that after exploring the entire leftmost interface from $\rho$ to $\rho^\dagger$, the peeling process wanders into the bulk of the edge-connected cluster containing the \< boundary, and waits until the first time that it encounters again a triangle $\Delta$ of spin \+. 
After that, the peeling process turns around the vertex-connected cluster containing the triangle $\Delta$ in the clockwise direction. It finishes exploring it when the boundary of the unexplored map becomes monochromatic \< again.

The above observations on the relation between peeling process and the cluster structure allow us to deduce Theorem~\refp{2-3}{thm:cv} from what we know about the perimeter processes.

\begin{proof}[Proof of Theorem~\refp{2-3}{thm:cv}]
Under $\prob\py$, the stopping time $T_0$ is almost surely finite, therefore the leftmost interface $\iroot$ from $\rho$ to $\rho^\dagger$ is finite. Since the Ising-triangulation of law $\prob\py$ is one-ended, it follows that the vertex-connected cluster containing the \+ boundary is finite almost surely. By the Markov property, the perimeter process $\nseq P$ hits zero infinitely often almost surely, which shows that every vertex-connected cluster of spin \+, as shown in Figure~\refp{b}{fig:ribbon-lrmp}, is almost surely finite. This proves Theorem~\refp{2}{thm:cv}.

Under $\prob\yy$, the Ising-triangulation is composed of two copies of $\law\py[0] \bt$, the Ising-triangulation of law $\prob\py[0]$ (with a spin inversion in one of them), and a ribbon consisting of triangles adjacent to the leftmost interface $\iroot$, see Figure~\refp{a}{fig:peeling-interface-ribbon}. As shown in the previous paragraph, there is exactly one infinite cluster in each copy of $\law\py[0]\bt$. In the ribbon, there are two infinite clusters (one of each spin) along the two sides of its boundary. However, since the ribbon is glued to the copies of $\law\py[0]\bt$ along infinitely many edges, the two infinite clusters in the ribbon almost surely merge with the infinite clusters in the copies of $\law\py[0]\bt$ after gluing, thus leaving only \emph{two} infinite clusters in the Ising-triangulation of law $\prob\yy$. The fact that the ribbon touches the boundary only in a finite interval is due to the positive drift of the perimeter processes $\nseq X$ and $\nseq Y$.
\end{proof}

\paragraph{Peeling process along the rightmost interface.}
When constructing the peeling process, we could have chosen to reveal the triangle adjacent to the \+ boundary edge on the right of $\rho_n$ instead of the \< boundary edge on the left of $\rho_n$. By symmetry, this would define a peeling process along the \emph{rightmost} interface from $\rho$ (see Figure~\refp{c}{fig:peeling-interface-ribbon}).
Under $\prob\py$, this new peeling process would explore the boundary of the \emph{edge-connected} cluster containing the \+ boundary edges, as shown in Figure~\refp{b}{fig:ribbon-lrmp}.
Under $\prob\yy$, the new peeling process would explore the boundary separating the infinite 
\emph{vertex-connected} cluster of spin \< from the infinite \emph{edge-connected} cluster of spin \+ (Figure~\refp{d}{fig:peeling-interface-ribbon}).

This change from left to right will change the law of the first peeling event $\Step_1$ and the relation between $\Step_1$ and $(X_1,Y_1)$ in Table~\ref{tab:prob(p,q)}, thus changing the law of the peeling process and the perimeter processes. However, the results in Theorem~\ref{thm:z_p,q}, \ref{thm:scaling limit}, \ref{thm:cv} and Proposition~\ref{prop:comb cv} will not change except for the value of the constants $b$, $c_x$ and $c_y$. Their proofs can also be carried out in the same way. We leave the reader to check the above claim by constructing the counterpart of Table~\ref{tab:prob(p,q)} and carrying out calculations using the data in it.

Interestingly, peeling along the rightmost interface gives a different construction of the law $\prob\yy$, which splits the infinite triangulation with a different ribbon. The relation between the ribbon in the old construction and the ribbon in the new construction is illustrated in Figure~\refp{a,d}{fig:peeling-interface-ribbon}. Of course, the two constructions yield the same result because they both construct the local limit of $\prob_{p,q}$ when $q\to\infty$ and then $p\to\infty$.

Under a global spin inversion and a mirror reflection of the triangulation, an Ising-triangulation of law $\prob_{p,q}$ becomes an Ising-triangulation of law $\prob_{q,p}$ and the leftmost interface in the former is mapped to the rightmost interface in the latter. Therefore our claim that the peeling along the leftmost interface and the peeling along the rightmost interface defines the same law $\prob\yy$ implies that $\prob\yy$ is also the local limit of $\prob_{p,q}$ when $p\to\infty$ and then $q\to\infty$. This is one of the facts that support the conjecture that $\Prob_{p,q} \to \Prob\yy$ when $p,q\to\infty$ at any relative speed.

\paragraph{Perimeters of the clusters.}
More quantitative properties of the clusters in the infinite Ising-triangulations can also be derived from the construction of $\prob\py$ and $\prob\yy$. In the rest of this section we will discuss the relation between the perimeter processes $\nseq{X_n,Y}$ and the actual perimeter of the spin clusters.

We have seen that when the boundary of the unexplored map is non-monochromatic, the peeling process explores the perimeter of spin clusters: either the leftmost interface of the cluster containing the \+ boundary, or the outermost interface around a cluster of spin \+ not touching the boundary.
More precisely, each peeling step contributes additively to the length of the perimeter being explored, and conditionally on the sequence $\nseq[1] \Step$ of peeling events, the contributions of the different steps are independent random variables.

\begin{figure}
\centering
\includegraphics[scale=1.1]{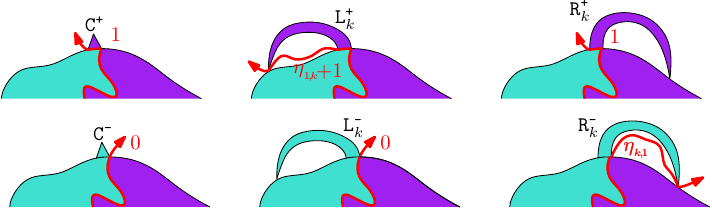}
\caption{The contribution to the length of the interface by the 6 types of peeling events.}
\label{fig:perimeter-increase}
\end{figure}

Let $\eta_{p,q}$ denote the total length of the leftmost interface $\iroot$ from $\rho$ to $\rho^\dagger$ in an Ising-triangulation of law $\prob_{p,q}$.
The contribution to the length $\eta_{p,q}$ made by each peeling event is summarized in Figure~\ref{fig:perimeter-increase}. Notice that when $\Step_n = \lp$ or $\Step_n = \rn$, the peeling step swallows a region that contains the interface being explored, and the contribution to the total length is given by $\eta_{1,k}+1$ or $\eta_{k,1}$, respectively. It follows that $\eta_{p,q}$ satisfies the following equation in distribution:
\begin{equation*}
\eta_{p,q} \ \overset{(d)}=\ \sum_{n=1}^{T_0-1} F(\Step_n) + G(\Step_{T_0})
\end{equation*}
where the random variables on right hand side are taken under $\prob_{p,q}$, and $F(\Step_n)$ is 0, 1, or an independent random variable with the law of $1+\eta_{1,k}$ or $\eta_{k,1}$, determined according to Figure~\ref{fig:perimeter-increase}. The last peeling step along $\iroot$ occurs at time $T_0$. Its contribution to the total length depends on $\Step_{T_0}$ in a different way than the previous steps. We leave the interested reader to work out its exact distribution $G(\Step_{T_0})$.

The above discussion is also valid when $q=\infty$. If we assume in addition that $p$ is large, then the contribution of the last step $\Step_{T_0}$ to the total length of the leftmost interface will be negligible, and 
$\eta_p \equiv \eta_{p,\infty}$ satisfies
\begin{equation*}
\frac1p \eta\py \ \overset{(d)}=\ \frac1p \sum_{n=1}^{T_0-1} F(\Step_n) + o(1) \,.
\end{equation*}
When $p\to\infty$, Proposition~\ref{prop:scaling} states that $p^{-1} \law\py T_0$ has a limit in distribution. Moreover, the terms in the above sum converge in distribution to an i.i.d.\ sequence of law $F(\law\yy \Step_1)$. 
Recall that $\prob\yy(\Step_1 = \lp) \sim c_1 k^{-7/3}$ and $\prob\yy(\Step_1 = \rn) \sim c_2 k^{-7/3}$ when $k\to\infty$, for some constants $c_1$ and $c_2$ (see Table~\refp{b}{tab:prob(p)}). Therefore according to Figure~\ref{fig:perimeter-increase}, the random variable $F(\law\yy \Step_1)$ has a finite expectation if and only if $\sum_{k\ge 0} (k+1)^{-7/3} \E[\eta_{1,k}+\eta_{k,1}]<\infty$. If this is indeed the case, then the perimeter $\eta_p$ will have a scaling limit similar to the one of $\law\py T_0$:

\begin{proposition}\label{prop:real perimeter}
Assume that $\sum_{k\ge 0} (k+1)^{-7/3} \E[\eta_{1,k}+\eta_{k,1}]<\infty$. Then the total length $\eta\py$ of the leftmost interface in $\law\py \bt$ has the scaling limit
\begin{equation*}
\prob(\eta\py >xp) \cv[]p (1+\mu' x)^{-4/3}
\end{equation*}
where $\mu' = \mu / \E\yy[F(\Step_1)]$.
\end{proposition}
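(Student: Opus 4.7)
The plan is to deduce the scaling limit of $\eta\py$ from the decomposition
\[
\eta\py \overset{(d)}{=}\ S\py + G(\Step_{T_0}), \qquad S\py := \sum_{n=1}^{T_0-1} F(\Step_n),
\]
already sketched in the text, by combining the scaling limit $T_0/p \to \zeta$ of Proposition~\ref{prop:scaling} with a weak law of large numbers for $S\py$. On the event $\mathcal J^\epsilon_{x,m}$ of Lemma~\ref{lem:one jump} (which has $\Prob\py$-probability arbitrarily close to $1$ for $x,m$ large), the perimeter satisfies $P_{T_m}\le m$; applying Lemma~\ref{lem:hit 0} to the Markov chain restarted from $P_{T_m}$ shows that $T_0-T_m$ and the size of the region consumed by the last peeling step $\Step_{T_0}$ are tight as $p\to\infty$, so $G(\Step_{T_0})/p \to 0$ in $\Prob\py$-probability.

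The heart of the proof is to show $S\py/p \to \E\yy[F(\Step_1)]\cdot \zeta$ in distribution. The hypothesis, combined with the tail estimates $\Prob\yy(\Step_1=\lp)$, $\Prob\yy(\Step_1=\rn) \sim \mathrm{const}\cdot k^{-7/3}$ read off from Table~\refp{b}{tab:prob(p)}, yields $\E\yy[F(\Step_1)]<\infty$. Under $\Prob\yy$ the $\Step_n$ are i.i.d.\ (Corollary~\ref{spatialmarkovp}), so the weak LLN gives $\frac{1}{N}\sum_{n=1}^N F(\Step_n) \to \E\yy[F(\Step_1)]$ in probability. To transfer this to $\Prob\py$, I would use that on $\mathcal J$ the perimeter $P_{n-1}$ is at least $p+\mu n - x f_\epsilon(n)$ and hence tends to infinity uniformly in $n\le T_m$; the pointwise convergence $\Prob\py[p']\to\Prob\yy$ established in Section~\ref{sec:limit S} then allows one to couple $(\Step_n)_{n\le T_m}$ under $\Prob\py$ with an i.i.d.\ reference sequence of law $\Prob\yy$ so that the two sequences agree with a frequency tending to $1$. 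Truncating at level $M$, applying a triangular-array LLN to the bounded summands $F(\Step_n)\mathds{1}_{F(\Step_n)\le M}$, and bounding the tail contribution in mean using $\E\yy[F(\Step_1)\,\mathds{1}_{F(\Step_1)>M}]\to 0$ as $M\to\infty$, delivers
\[
\frac{1}{T_m}\sum_{n=1}^{T_m} F(\Step_n)\ \longrightarrow\ \E\yy[F(\Step_1)]
\]
in $\Prob\py$-probability on $\mathcal J$. Multiplying by $T_m/p\to\zeta$ and absorbing $T_0-T_m=o(p)$ yields $S\py/p\to \E\yy[F(\Step_1)]\cdot\zeta$, hence $\eta\py/p \to \E\yy[F(\Step_1)]\cdot\zeta$ in distribution, from which
\[
\prob(\eta\py > xp)\ \longrightarrow\ \prob\!\left(\zeta > x/\E\yy[F(\Step_1)]\right)\ =\ (1+\mu' x)^{-4/3}
\]
follows with $\mu' = \mu/\E\yy[F(\Step_1)]$, as claimed.

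The main obstacle is the law of large numbers step. The hypothesis gives only a finite first moment of $F(\Step_1)$ under $\Prob\yy$, not a finite variance, so an $L^2$-type LLN is unavailable; moreover under $\Prob\py$ the summands $F(\Step_n)$ are neither identically distributed nor exactly independent, since the law of $\Step_n$ depends on the current perimeter $P_{n-1}$ which is itself random. One must therefore argue by truncation together with a coupling, quantifying the total-variation discrepancy between $\Prob\py[P_{n-1}]$ and $\Prob\yy$ via the explicit one-step probabilities in Table~\ref{tab:prob(p,q)} and the asymptotics of $z_{p,q}$ from Theorem~\ref{thm:z_p,q}, and keeping these error terms summable along the $O(p)$ peeling steps up to $T_m$.
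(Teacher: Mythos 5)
The paper offers no proof of this proposition; immediately after stating it, it says only that the proof ``is an adaptation of the proof of Proposition~\ref{prop:scaling}'', so there is no textual proof to compare against. Your overall strategy — combine the scaling limit $T_0/p\to\zeta$ of Proposition~\ref{prop:scaling} with a law of large numbers for the accumulated interface contributions $\sum_{n<T_0}F(\Step_n)$ — is a natural way to carry out such an adaptation, and your deduction that $\E\yy[F(\Step_1)]<\infty$ follows from the stated first-moment hypothesis together with the $k^{-7/3}$ tails of $\Prob\yy(\Step_1=\lp)$ and $\Prob\yy(\Step_1=\rn)$ is correct.

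There is, however, a genuine gap: you never control the contribution of the large negative jump of $\nseq{P}$ at time $T_m$. If $\Step_{T_m}$ is of type $\rn$ with $k$ close to $P_{T_m-1}$ (and $T_m<T_0$), the swallowed region has Dobrushin boundary $(k,1)$ with $k=\Theta(p)$, and by the accounting of Figure~\ref{fig:perimeter-increase} its contribution to $\eta\py$ is distributed as $\eta_{k,1}$; analogously, if $T_m=T_0$ the whole remaining interface lies in a region with $\Theta(p)$ boundary edges. This single summand escapes your truncation bound $\E\yy[F(\Step_1)\,\mathds{1}_{F>M}]\to 0$: the conditional law of this step under $\Prob\py$ is genuinely different from $\Prob\yy$, and the hypothesis $\sum_k(k+1)^{-7/3}\E[\eta_{1,k}+\eta_{k,1}]<\infty$ only forces $\E[\eta_{k,1}]=o(k^{4/3})$, which is compatible with $\E[\eta_{k,1}]\gg k$ and hence with $\E[F(\Step_{T_m})]\gg p$. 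To conclude $F(\Step_{T_m})/p\to 0$ one must in addition show that $\eta_{k,j}$, for each fixed $j$, stays tight (indeed converges in distribution) as $k\to\infty$. That fact is plausible as a consequence of the spin-reversed local limit $\prob_{k,j}\to\overline\prob_j$ and the almost-sure finiteness of the interface under $\prob_j$ (Theorem~\ref{thm:cv}(2)), but it does not follow from the stated first-moment hypothesis, and your proof never invokes it; your assertion that ``the size of the region consumed by the last peeling step $\Step_{T_0}$'' is tight fails on the event $T_0=T_m$ for exactly this reason. The same difficulty resurfaces in the coupling: because the total-variation distance between $\Prob\py[p']$ and $\Prob\yy$ is of order $1/p'$, the expected number of discrepancies along the first $T_m$ steps is of order $\log(1+\mu T_m/p)$, which stays bounded away from $0$ rather than vanishing; and on the one-jump event $\mathcal J$ each discrepancy can come with a swallowed region of size up to $x f_\epsilon(T_m)$, which grows polynomially in $p$. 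So ``frequency tending to $1$'' is accurate but not sufficient, and these finitely many anomalous summands must again be controlled via tightness of $\eta_{\,\cdot\,,1}$ rather than via the $\Prob\yy$-truncation.
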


We will not prove this claim in this paper. Its proof is an adaptation of the proof of Proposition~\ref{prop:scaling}.
While we do not have a proof of the first moment condition in Proposition~\ref{prop:real perimeter}, we could easily verify a similar condition for the Ising-triangulation with a general (i.e.\ not necessarily simple) boundary. Since the finiteness of the expected perimeter should be a geometric property of the scaling limit of the model, by universality, we believe that the same property also holds for the current model.

In private communications, Bertrand Duplantier, Ewain Gwynne and Scott Sheffield suggested that the interface length discussed above should converge in distribution to the quantum length of the interface resulted from the gluing of a quantum disk with a \emph{thick quantum wedge} (a ``half-plane like'' quantum surface) of $\sqrt{3}$-Liouville quantum gravity (see \cite{DMS14, AG19}). More precisely, consider a quantum disk with two distinguished boundary points $\rho$ and $\rho^\dagger$. Denote by $L$ (resp.~$R$) the quantum lengths of the boundary of the disk from $\rho$ to $\rho^\dagger$ in the clockwise (resp.~counter-clockwise) order.
If the total perimeter $L+R$ is sampled from the infinite measure $t^{-7/3}\idd{t>0}\dd t$ (i.e.~the Lévy measure of a spectrally positive $4/3$-stable Lévy process), then conditionally on $R=1$, the law of $L$ is given by \begin{equation*}\label{quantumperimeter}
\prob(L>x)=(1+x)^{-4/3}.
\end{equation*} 
This piece of boundary of length $L$ can then be glued conformally to a segment of length $L$ on the boundary of an independent thick quantum wedge, which will give rise to the picture in Figure~\refp{b}{fig:ribbon-lrmp} (with all the interfaces from $\rho$ to $\rho^\dagger$ crashed to a simple curve).

It is widely believed that the scaling limit of the Ising model is related to Liouville quantum gravity of parameter $\sqrt 3$ \cite{Ber16, DKRV16, DMS14}. On the other hand, $t^{-7/3}\idd{t>0}\dd t$ is the natural law of the perimeter of a quantum disk in $\sqrt 3$-Liouville quantum gravity, in the sense that it is the law of the perimeter (measured by the quantum length) of a typical disk cut out by the SLE$_{16/3}$ curve from the $\sqrt 3$-quantum wedge in the standard SLE--LQG coupling picture \cite{DMS14, AG19}. The boundary of such a disk should also correspond to a CLE$_3$ loop, which in turn is believed to be a scaling limit of the boundary of a finite Ising cluster.
Notice that although the process $(\mathcal X_t,\mathcal Y_t)_{t\ge 0}$ in Theorem~\refp{1}{thm:scaling limit} has the same law (up to rescaling) as the horodistance process associated with the exploration of the quantum wedge by an SLE$_{16/3}$ curve \cite{DMS14}, the two are not directly related as they describe different phenomena at different length scales: The former describes the fluctuations of the Ising interface around its mean (which should be a simple curve related to SLE$_3$) on a scale smaller than the length of the interface, while the latter describes the interface in the dual FK-Ising model.

\appendix

\section{Elimination of the second catalytic variable in Tutte's equation}\label{sec:2nd cat}
In Section~\ref{sec:1st cat} we showed how to eliminate one of the two catalytic variables $(u,v)$ in Tutte's equation by extracting appropriate coefficients of the series $Z(u,v)$. In the end we obtained an algebraic equation with one catalytic variable of the form
\begin{equation}\label{eq:1cat'}
Z_0(u) = 1+ \nu u^2 + t\, \mathcal{R}(Z_0(u),u,z_1,z_3;\nu,t) \tag{\ref{eq:1cat}'}
\end{equation}
where $\mathcal R(y,u,z_1,z_3;\nu,t)$, given explicitly by \eqref{eq:R as rational}, is a polynomial in $\frac yu, u,z_1,z_3,t$ and $\nu$.

To eliminate the second catalytic variable $u$, we use a generalization of the quadratic method used by Tutte in his study of properly colored triangulations \cite{Tut82b,Tut95}. It is later adapted in \cite[Section~12]{BBM11} to treat bicolored maps with monochromatic boundary condition. In our setting, this method consists of finding two rational functions $J(u,y)$, $L(u,y)$ and a polynomial $C(x)$ whose coefficients do not depend on $u$ or $Z_0(u)$, such that \eqref{eq:1cat'} can be written in the form
\begin{equation*}
	A\cdot L(u,Z_0(u))^2 = C(J(u,Z_0(u)))
\end{equation*}
where $A$ is some polynomial that may depend on all the variables. Then the square factor on the left hand side would suggest that $C(x)$ has a double root, in the same way as the classical quadratic method (see e.g.~\cite[Section~2.9]{GJ83}).

With some trial-and-errors, we discovered the following choice of $J$ and $L$:
\begin{align*}
	J(u,y) &= (\nu-1)\mB({ tu + \m({\frac{t y}u}^2 } - \frac{t y}u \,,
\\	L(u,y) &= 2\frac{t y}u + (\nu+1)J(u,y) \,.
\end{align*}
Notice that the mapping $(u,y)\mapsto(J,L)$ is invertible. Thus we can make the reverse change of variable and rewrite \eqref{eq:1cat'} as a polynomial equation satisfied by the variables $J$ and $L$, with coefficients in the space of formal power series $\complex(\nu)[[t]]$. As shown in \cite{CAS1}, we obtain the following equation as the result:
\begin{equation}\label{eq:doubly quadratic}
L^4 -2C_2(J)\, L^2 = C_0(J)
\end{equation}
where $L=L(u,Z_0(u))$, $J=J(u,Z_0(u))$, and $C_2$, $C_0$ are the following polynomials with coefficients in $\complex(\nu)[[t]]$:
\begin{align}
C_2(J)\ &=\
(\nu+1)^2 J^2 + \frac{2(\nu+3)}{\nu-1} J - 2(\nu^2-1)t^2 + \frac2{(\nu-1)^2} \,,\\
C_0(J)	\ &=\ -(\nu+1)^2
	\m({ \m({(\nu+1)J^2 -2(\nu-1)t^2}^2 +4J^3 +16(\nu-1)(t^3 z_1) J }
\\ & \phantom{=-(\nu+1)^2} -4 J^2\,+\,16(\nu+1)\nu t^2 J\, +\,16 w        \notag
\end{align}
where $w= -(\nu^2-1)^2 t^5 z_3 +(\nu^2-1)^2 \m({ 2t^5 z_1^3 -\frac3{\nu +1}t^4 z_1^2 -\frac34 t^4 } + (\nu-3)\nu t^3 z_1 +t^2$. Notice that $z_3\mapsto w$ is just a linear change of variable for fixed $z_1$.

Now we derive heuristically an algebraic equation satisfied by $z_1$ and $t$. We will check \emph{a posteriori} that they lead to the right solution.
We can write \eqref{eq:doubly quadratic} in two ways:
\begin{equation*}
			(L^2-2C_2(J))L^2 = C_0(J)
\qtq{and}	(L^2-C_2(J))^2 = C_0(J)+C_2^2(J)
\end{equation*}
If we view $t$ and $J$ as two independent variables, and view $L$ as a function of $(t,J)$. Then the above equations suggest that both $C_0$ and $C_0+C_2^2$, viewed as polynomials of $J$, have double roots. It is well known that this is characterized by their discriminants being zero.
\begin{equation*}
			D_1 = \mathtt{Discriminant}_J(C_0)=0
\qtq{and}	D_2 = \mathtt{Discriminant}_J(C_0+C_2^2)=0
\end{equation*}
$D_1$ and $D_2$ are polynomials in $t$, $z_1$ and the auxiliary variable $w$. Since they both vanish for the same value of $w$, their resultant with respect to $w$ must be zero. This provides a polynomial equation $R_w$ satisfied by $z_1(t)$ and $t$. We compute this equation in \cite{CAS1}. After removing irrelevant factors, we get an equation of degree 15 in $z_1$ and $t$.
Since $z_1(t)$ is an odd function of $t$, one can make the change of variable $\tilde z_1=t^3 z_1$ and $\tilde t=t^2$ in the equation satisfied by $z_1(t)$. This leads to an equation of degree 6 in $\tilde z_1$ and $\tilde t$, see \cite{CAS1}.

The discriminant $D_2=0$ provides an equation that relates $z_3(t)$ to $z_1(t)$ and $t$. Under the change of variables $\tilde z_3=t^9 z_3$, $\tilde z_1=t^3 z_1$ and $\tilde t=t^2$ and after removing irrelevant factors, it gives a quadratic equation for $\tilde z_3$. In \cite{CAS1} we check that this equation, as well as the equation of degree 6 relating $\tilde z_1$ to $\tilde t$, are both satisfied by the rational parametrizations \eqref{eq:RP z1-3}.

\section{Singularity analysis via rational parametrization}\label{sec:RP}

In this section we present a method to locate the dominant singularity of a combinatorial generating function from a proper rational parametrization of it. First let us clarify the definition of a rational parametrization.

\paragraph{Definition.} Let $\mathcal E\in\complex[x,y]$ be an irreducible polynomial. A pair of rational functions $\mathcal P=(\hat x,\hat y)$ is an \emph{(affine) rational parametrization} of the curve $\mathcal E(x,y)=0$ if $\mathcal E(\hat x(s),\hat y(s))=0$ for all but finitely many $s\in\complex$. Here a rational function is seen as a continuous mapping from $\ocomplex$ to $\ocomplex$. The rational parametrization $\mathcal P$ is
\begin{itemize}[topsep=4pt,noitemsep]
\item	\emph{real} if $\hat x$ and $\hat y$ can be written with real coefficients.
\item	\emph{proper} if $\mathcal P(s)=(x,y)$ has a unique solution $s$ for all but finitely many $(x,y)$ on $\mathcal E=0$.
\end{itemize}
We call $s\in\complex$ a \emph{critical point} of $\mathcal P$ if either $\hat x'(s)=0$ or $\hat y(s)=\infty$.

\bigskip


Proper parametrizations are minimal in the following sense. For all irreducible polynomial $\mathcal E(x,y)$, if $\mathcal E=0$ has a rational parametrization, then it also has a proper one, and if $\mathcal P$ is one proper parametrization of $\mathcal E=0$, then every rational parametrization of $\mathcal E=0$ is of the form $\mathcal P\circ h$ with some non-constant rational function $h$ \cite[Lemma 4.17]{SWP08}. It is not hard to see that $\mathcal P\circ h$ is itself proper if and only if $h(s)=\frac{a+bs}{c+ds}$. One can use this property to move the poles of $\hat x(s)$, e.g.\ to place one pole at $s=\infty$, while keeping the parametrization proper. It is also easy determine whether a given rational parametrization is proper by looking at its degrees \cite[Theorem 4.21]{SWP08}. One can check that all univariate rational parametrizations used in Section~\ref{sec:Tutte solution} are real and proper.

A rational parametrization $\mathcal P=(\hat x,\hat y)$ is defined with respect to an algebraic equation $\mathcal E=0$. But it is not immediately clear how $\mathcal P$ is related to the value of a function $\phi$ satisfying $\mathcal E(x,\phi(x))=0$, since a solution of the equation does not necessarily lie on the graph of the function.
To study properties of the function, we want the relation $\hat y=\phi\circ\hat x$.
If this relation holds in a neighborhood of $s_*\in\complex$, we say that \emph{$(\mathcal P,s_*)$ parametrizes $\phi$ locally at $x_*=\hat x(s_*)$}.

\begin{lem}\label{lem:proper para}
Assume that $\mathcal P=(\hat x,\hat y)$ is a proper parametrization of $\mathcal E(x,y)=0$.
\begin{enumerate}
\item If a function $\phi$ satisfies $\mathcal E(x,\phi(x))=0$ in a neighborhood of $x_*\in\complex\setminus \{\hat x(\infty)\}$, then there exists a unique $s_*\in\complex$ such that $(\mathcal P,s_*)$ parametrizes $\phi$ locally at $x_*$.
\item For all $s_*\in\complex$ such that $x_*:=\hat x(s_*)\ne \infty$, $(\mathcal P,s_*)$ parametrizes a finite-valued function $\phi$ locally if and only if $s_*$ is not a critical point of $\mathcal P$. In this case, $\phi$ is analytic at $x_*$.
\end{enumerate}
\end{lem}

\begin{proof}
(i) \emph{Existence.} Consider a sequence $(x_n)_{n\ge 0}$ of distinct complex numbers converging to $x_*$ such that $\mathcal E(x_n,\phi(x_n))=0$ for all $n$. According to the definition of proper parametrization, for all $n$ large enough there exists $s_n\in\complex$ such that $(x_n,\phi(x_n))=(\hat x(s_n),\hat y(s_n))$. Let $s_*$ be an accumulation point of $(s_n)_{n\ge n_0}$ in $\ocomplex$. By the continuity of $\hat x:\ocomplex\to\ocomplex$, $x_*=\hat x(s_*)\ne \hat x(\infty)$, thus $s_*\in\complex$. The analytic functions $\hat y$ and $\phi\circ\hat x$ coincide on a sequence of distinct points converging to $s_*$, so they must be equal in a neighborhood of $s_*$.

\emph{Uniqueness.} Assume that $(\mathcal P,s_*)$ parametrizes $\phi$ locally at $x_*$. Since a rational function is an open mapping, there exists a neighborhood $V$ of $x_*$ such that for all $x\in V$, $\mathcal P(s)=(x,\phi(x))$ has a solution close to $s_*$. But these solutions are unique except for finitely many values of $x$. Thus there is at most one $s_*\in\complex$ having the above property.
\medskip

\noindent(ii) If $\hat x'(s_*)\ne 0$ and $\hat y(s_*)\ne \infty$, then by the implicit function theorem, $\phi:=\hat y\circ(\hat x^{-1})$ is a well defined analytic function such that $\hat y=\phi\circ\hat x$ in a neighborhood of $s_*$.

Inversely, assume $\hat y=\phi\circ \hat x$ in a neighborhood of $s_*$ for some finite-valued function $\phi$. Then $\hat y(s_*)=\phi(x_*)\ne\infty$.
If $\hat x'(s_*)=0$, then for all $x\ne x_*$ in some neighborhood of $x_*$, $\hat x(s)=x$ has at least two distinct solutions. But this gives two distinct solutions to $\mathcal P(s) = (x,\phi(x))$ for infinitely many $x$, contradicting the properness of $\mathcal P$. Thus $\hat x'(s_*)\ne 0$.
\end{proof}

\newcommand{\disk}{\mathbb{D}}
\newcommand{\cdisk}{\overline{\mathbb{D}}}

Let $A,B$ be subsets of $\complex$.
We say that $\phi$ defines a \emph{conformal bijection} from $A$ to $B$ if $\phi|_A$ is a homeomorphism from $A$ to $B$ such that both $\phi|_A$ and $(\phi|_A)^{-1}$ are holomorphic in the interior of their domains.
The following proposition gives a general method for identifying the real dominant singularity of a generating function $\phi(x)$ with positive coefficients from one of its real proper rational parametrizations $(\hat x, \hat y)$, and verifying the uniqueness of the dominant singularity (iii). It also gives information on how domains in the parameter plane $s\in \complex$ are mapped conformally to (slit) disks in the plane $x\in \complex$ by $\hat x$.

\begin{proposition}\label{prop:RP dom sing}
Let $\phi(x)=\sum \phi_n x^n$ be a non-polynomial analytic function in a neighborhood of $0$, such that $\phi_n\ge 0$ for all $n$. Assume that $\phi$ satisfies an algebraic equation with a real proper rational parametrization $\mathcal P=(\hat x,\hat y)$ such that $\hat x(\infty)=\infty$.
\begin{enumerate}
\item
Let $s_0$ be the unique value of $s$ such that $(\mathcal P,s_0)$ parametrizes $\phi$ locally at $0$. Then $s_0$ is real.
\item
$\mathcal P$ has a unique real critical point $s_c$ characterized by the properties $\hat x(s_c)>0$, and $\mathcal P$ has no critical point strictly between $s_0$ and $s_c$. Moreover, $x_c:=\hat x(s_c)$ is the radius of convergence and a dominant singularity of $\phi$. In addition, there exists a compact neighborhood $\overline V_0$ of $s_0$ such that $s_c \in \partial \overline V_0$, and $\hat x$ defines a conformal bijection from $\overline V_0$ to $\cdisk_{x_c}$.
\item
If $\hat x$ has no critical point in $\partial \overline V_0 \setminus \{s_c\}$, then for all $\epsilon>0$ small enough, there exists a compact set $\overline V_\epsilon\supset \overline V_0$ such that $s_c\in \partial \overline V_\epsilon$ and $\hat x$ defines a conformal bijection $\overline V_\epsilon \to \cdisk^{|\epsilon}_{x_c}$.

If $\hat y$ has no pole in $\partial \overline V_0 \setminus \{s_c\}$ as well, then  $\phi$ has an analytic continuation on $\disk^{|\epsilon}_{x_c}$.

(In practice, instead of $\partial \overline V_0 \setminus \{s_c\}$, one usually checks the absence of critical points in the larger set $\Setn{s\in \complex}{|\hat x(s)|=x_c \text{ and } s\ne s_c}$.)
\end{enumerate}
\end{proposition}

\begin{proof}
(i) Lemma~\refp{i}{lem:proper para} ensures that $s_0$ exists and is unique. Since $\mathcal P$ and $\phi$ are real, the complex conjugate of $s_0$ satisfies the same condition. By uniqueness, it must be equal to $s_0$, that is, $s_0\in \real$.
\medskip

\noindent
(ii) Up to replacing $s$ by $-s$, we can assume that $\hat x'(s_0)>0$.
Then the characterization of $s_c$ reads: $s_c=\inf\{s\ge s_0\,:\, \hat x'(s)=0\text{ or }\hat y(s)=\infty\}$. Clearly, $\hat y\circ(\hat x^{-1})$ is an analytic continuation of $\phi$ on $[0,x_c)$. By Pringsheim's theorem, the radius of convergence of $\phi$ is at least $x_c$. It is well known that the only entire functions that satisfy algebraic equations are polynomials. Therefore $x_c<\infty$. Since $\hat x(\infty) = \infty$, we also have $s_c\ne \infty$, and hence $s_c$ is a critical point of $\mathcal P$.

If $\phi$ were analytic at $x_c$, then by analytic continuation, we would have $\hat y=\phi\circ\hat x$ in a neighborhood of $s_c$, i.e.\ $(\mathcal P,s_c)$ parametrizes $\phi$ locally at $x_c$. This contradicts Lemma~\refp{ii}{lem:proper para}.
Therefore, $x_c$ is a dominant singularity of $\phi$.

Let $V_0$ be the connected component of $\hat x^{-1}(\disk_{x_c})$ containing $s_0$.
Clearly, we have $s_c\in \partial V_0$. Since $\hat x(\infty)=\infty$, $V_0$ is bounded, so its closure $\overline V_0$ is a compact neighborhood of $s_0$.

By analytic continuation, $\hat y=\phi\circ\hat x$ on $V_0$. So Lemma~\refp{ii}{lem:proper para} implies that $V_0$ contains no critical point of $\mathcal P$. In particular, $\hat x'$ does not vanish on $V_0$.
On the other hand, $\hat x$ is continuous and $V_0$ is bounded, so $\hat x|_{V_0}$ is a proper map. Moreover, $\disk_{x_c}$ is simply connected.
Then by Hadamard's global inversion theorem \cite[Theorem 6.2.8]{ImplicitBook}, $\hat x|_{V_0}$ is a conformal bijection from $V_0$ to $\disk_{x_c}$.

The continuity of $\hat x$ implies that $\hat x|_{\overline V_0}$ is a surjection onto $\cdisk_{x_c}$. Now take $s_1,s_2 \in \partial V_0$ such that $\hat x(s_1)=\hat x(s_2)$. Since $\hat x$ is holomorphic and hence an open map, for any neighborhood $U_i$ of $s_i$ in $\overline V_0$, $\hat x(U_i)$ is a neighborhood  of $\hat x(s_i)$ in $\cdisk_{x_c}$. But $\hat x(U_1)\cap \hat x(U_2) \cap \disk_{x_c}$ is always non-empty and $\hat x$ is injective on $V_0$. This implies that $U_1\cap U_2$ is always non-empty. It follows that $s_1=s_2$. Therefore $\hat x|_{\overline V_0}$ is a bijection onto $\cdisk_{x_c}$. Since it is continuous and defined on a compact domain, its inverse is also continuous.

\medskip

\noindent
(iii)
Let $V_\epsilon$ be the connected component of $\hat x^{-1}(\disk^{|\epsilon}_{x_c})$ containing $s_0$.

Assume that $\hat x$ has no critical point in $\partial V_0 \setminus \{s_c\}$.
Then each $s\in \partial V_0 \setminus \{s_c\}$ has a neighborhood $N_s$ on which $\hat x$ is invertible. On the other hand, $s_c$ has a neighborhood $N_{s_c}$ in which $\hat x$ behaves like $x_c+ c\cdot (s-s_c)^n$ for some $c\ne 0$ and integer $n\ge 1$. Since $\partial V_0$ is compact, it can be covered by a finite number of these neighborhoods. With some thoughts, one can combine the local behavior of $\hat x$ in these neighborhoods to show that the complement of $\hat x^{-1}(\disk^{|\epsilon}_{x_c})$ contains a closed curve that converges to $\partial V_0$ when $\epsilon \to 0$. This implies that $\bigcap_{\epsilon>0} V_\epsilon \subseteq \overline V_0$.

It is not hard to see that $\bigcap_{\epsilon>0} V_\epsilon = V_0 \cup (\partial V_0 \setminus \{s_c\})$. Therefore $\hat x$ has no critical point in $\bigcap_{\epsilon>0} V_\epsilon$. Since $\hat x$ has only finitely many critical points in any bounded set, there exists $\epsilon>0$ such that $V_\epsilon$ contains no critical point of $\hat x$. Then the same construction as in part (ii) shows that $\hat x|_{\overline V_\epsilon}$ is a conformal bijection onto $\cdisk^{|\epsilon}_{x_c}$ (provided that one views the slit disk $\disk^{|\epsilon}_{x_c}$ as an open set in the double cover of the Riemann sphere with two simple branch points at $x_c$ and $\infty$).

If $\hat y$ has no pole in $\partial V_0 \setminus \{s_c\}$ as well, then $\phi = \hat y \circ (\hat x|_{V_\epsilon})^{-1}$ gives the analytic continuation of $\phi$ on $\disk^{|\epsilon}_{x_c}$.
\end{proof}

\begin{proof}[Proof of Lemma~\ref{lem:dom sing Z}]
Recall that we derived in Section~\ref{sec:sing anal} a rational parametrization of $Z$ of the form $(u,v)=(\hat u(H),\hat u(K))$ and $Z=\hat Z(H,K)$. We obtain a rational parametrization of $u\mapsto Z(u,u)$ by taking $K=H$:
\begin{equation}\label{eq:RP Z(u,u)}
\left\{
\begin{aligned}
	u = \hat u(H)	&\ =\ \frac{u_c}3 H (10 -12H +6H^2 -H^3)
\\	Z = \hat{Z}(H,H)	&\ =\ \frac{10 -12H +6H^2 -H^3}{10 -14H +7H^2 -H^3} Q(H)\,,
\end{aligned}
\right.
\end{equation}
where $Q$ is some polynomial of degree 6. In \cite{CAS1}, we check by explicit computation that
\begin{enumerate}[label=(\arabic*)]
\item $H_0=0$ is the only value of $H$ such that $\hat Z(H,H)=1$ and $\hat u(H)=0$.
\item $H_c=1$ is the (unique) real critical point of the rational parametrization \eqref{eq:RP Z(u,u)} such that $\hat u(H)>0$ and that there are no other critical points on $[H_0,H_c]=[0,1]$.
\item $H_c=1$ is the unique critical point of \eqref{eq:RP Z(u,u)} such that $\abs{\hat u(H)} = u_c$.
\end{enumerate}
Therefore by Proposition~\ref{prop:RP dom sing}, there is a neighborhood $V$ of $H_0=0$ such that $H_c=1\in \partial V$ and that $\hat u|_V$ is a conformal bijection from $V$ onto a slit disk $\slit{u_c}$ at $u_c$. It is clear that $\hat u(H)\uparrow u_c$ when $H\uparrow 1$ on the real axis. This proves (ii) of Lemma \ref{lem:dom sing Z}.

To prove (i), we notice that the coefficients $z_{p,q}$ are all positive. Thus the monotone convergence theorem implies
\begin{equation*}
\sum_{p,q\ge 0}z_{p,q}u_c^{p+q}\ =\ \lim_{u\uparrow u_c} Z(u,u)\ =\ \lim_{H\uparrow 1} \hat Z(H,H)\ <\ \infty\,.
\end{equation*}
It follows that $\sum_{p,q\ge 0} z_{p,q}u^pv^q$ is absolutely convergent for all $(u,v)\in \Dc^2$. On the other hand, if the series is absolutely convergent for some $(u,v)$ with $\abs{u}>u_c$, then by monotonicity the series $Z_0(u)=Z(u,0)$ will have a radius of convergence strictly larger than $u_c$. This is not the case because the rational parametrization \eqref{eq:RP H} implies that $Z_0(u)$ has a singularity of type $(u_c-u)^{4/3}$ at $u=u_c$.

Now let us fix a $u\in\Dc$ and prove (iii). Since the coefficients of the series $v\mapsto Z(u,v)$ are not necessarily non-negative, Proposition~\ref{prop:RP dom sing} does not apply. Instead, we will check (iii) directly using the formula $Z(u,v)=\hat Z(\hat u^{-1}(u),\hat u^{-1}(v))$ and the analytic properties of the function $\hat u$.
Recall that $\hat u$ induces a conformal bijection from some neighborhood $V$ of $H=0$ onto a slit disk $\slit{u_c}$ at $u_c$, which extends bi-continuously to $H=1$ by $\hat u(1) = u_c$.
Let $\overline U$ be the preimage of $\Dc$ by $\hat u|_{V\cup\{1\}}$, then
it suffices to show that
\begin{center}
(iii') for each $H\in \overline U$, $K\mapsto \hat Z(H,K)$ has no pole in $\overline U\setminus \{1\}$.
\end{center}
Indeed, since the poles of a \emph{univariate} rational function are isolated, (iii') implies that $K=1$ is the only possible pole of $K\mapsto \hat Z(H,K)$ in some neighborhood $U'$ of the compact $\overline U$. Its image $\hat u(U')$ is a neighborhood of the disk $\Dc$.
Since $\hat u$ is a conformal bijection onto $\slit{u_c}$, the composed function $v\mapsto \hat Z(\hat u^{-1}(u),\hat u^{-1}(v))$ is analytic on the intersection $\hat u(U') \cap \slit{u_c}$, which contains a slit disk at $u_c$.
On the other hand, $v\mapsto Z(u,v)$ must have a singularity at $u_c$, otherwise its radius of convergence would be strictly larger than $u_c$, contradicting (i). We conclude that $u_c$ is the unique dominant singularity of $v\mapsto Z(u,v)$ for all $u\in \Dc$.

In order to prove (iii'), we will show the following stronger statement: the denominator of $\hat Z(H,K)$ has no zero in $\overline U^2$ except at $(H,K)=(1,1)$.
We denote by $N$ and $D$ the numerator and the denominator of $\hat Z$ written in reduced form. The polynomial $D$ cannot have a zero $(H,K)\in \overline U^2$ which is not a zero of $N$, otherwise we would have $Z(u,v)\to\infty$ when $(u,v)\to(\hat u(H),\hat u(K))\in \Dc^2$, contradicting the fact that $\abs{Z(u,v)}\le Z(u_c,u_c)<\infty$ for all $(u,v)\in\Dc^2$. Now assume that $(H,K)$ is a common zero of $D$ and $N$ in $\overline U^2$. Then $H$ must be a zero of $Res(H)$, the resultant of $D(H,K)$ and $N(H,K)$ with respect to $K$. In \cite{CAS1}, we check by explicit computation that $H=0$ and $H=1$ are the only zeros of $Res(H)$ in $\overline U$. Moreover, $D(0,K)$ and $N(0,K)$ has no common zero in $\overline U$, and $K=1$ is the only common zero of $D(1,K)$ and $N(1,K)$. We conclude that on $\overline U^2$, the denominator $D(H,K)$ only vanishes at $(H,K)=(1,1)$, therefore (iii') is true.

The assertion (iv) follows from Proposition~\ref{prop:RP dom sing} thanks to the known properties of $\hat u$ and the fact that $\hat A$ has no pole on $[H_0,H_c]=[0,1]$. The application of Proposition~\ref{prop:RP dom sing} here assumes that the coefficients of the series $A(u)=\sum_{p\ge0} a_p u^p$ are non-negative. This is derived in Section~\ref{sec:sing anal} using only (i)-(iii) of Lemma~\ref{lem:dom sing Z}.
\end{proof}

\section{A one-jump lemma for the process $\law\py\nseq{X_n,Y}$}\label{sec:lemma proof}

\newcommand*{\ea}{\asymp}
\newcommand{\cst}{\mathrm{cst}}
\newcommand{\pp}[2]{\mathfrak{p}_{#1,#2}}
\newcommand{\pxx}[1][k]{\mathfrak{p}^x_{#1}}
\newcommand{\pyy}[1][k]{\mathfrak{p}^y_{#1}}
\newcommand{\Py}{\Prob\py{}}
\newcommand{\PY}{\Prob\yy{}}
\newcommand{\Ey}{\EE\py{}}
\newcommand{\EY}{\EE\yy{}}

We have seen in the discussion above Lemma~\ref{lem:one jump} that the lemma would become a standard law of iterated logarithm if the process $\law\py\nseq{X_n,Y}$ were replaced by $\law\yy \nseq{X_n,Y}$. Our proof of Lemma~\ref{lem:one jump} is based on the idea of comparing the transition probabilities of the Markov chain $\law\py \nseq{P_n,Y}$ (recall that $P_n=p+X_n$) to the step distribution of the random walk $\law\yy \nseq {X_n,Y}$, and the fact that $P_n\gtrsim p\to\infty$ for all $n<T_m$.  The mean technical difficulty is that the convergence $\law \py (X_1,Y_1) \to \law \yy (X_1,Y_1)$ of transition probabilities only implies the convergence of the process $\nseq {X_n,Y}$ up to finite time. But we want to estimate probabilities about the behavior of $\law\py \nseq{X_n,Y}$ up to time $T_m$, which is of order $\Theta(p)$. 

The proof follows the general strategy used in \cite{Borovkov2} to establish asymptotic behaviors of heavy-tailed random walks. It comes in three steps. 

First, we establish two estimates on the step distribution $\law\py(X_1,Y_1)$: one for the probability that $(X_1,Y_1)$ is far from the origin (Lemma~\ref{lem:estimates}\ref{item:estimate large jump}) and the other for the exponential moments of $(X_1,Y_1)$, restricted on the event that it remain close to the origin (Lemma~\ref{lem:estimates}\ref{item:estimate small jump}). 
Next, we bound the probability that $\law\py \nseq{X_n,Y}$ deviates to a distance $x\approx \barrier[\chi](N)$ from its mean on a time scale $N$ (Lemma~\ref{lem:cst barrier}). The process $\nseq{X_n,Y}$ may realize such a deviation either by making a jump of size $x$, or by accumulating steps of size smaller than $x$. We use the two estimates in Lemma~\ref{lem:estimates} to bound the probabilities of these two situations.
Finally, we complete the proof of Lemma~\ref{lem:one jump} by applying Lemma~\ref{lem:cst barrier} to an exponentially increasing sequence of time intervals.

To simplify notation, let us write 
\begin{equation*}
\pp k{k'} = \Prob\yy ( -(X_1,Y_1) = (k,k') )    \qtq{and}
\pxx = \Prob\yy (-X_1=k)    \ ,\quad
\pyy = \Prob\yy (-Y_1=k)    
\end{equation*}
The comparison of the distributions $\law\py(X_1,Y_1)$ and $\law\yy(X_1,Y_1)$ is based on the following observation: for all $k\le p-2$,
\begin{equation}\label{eq:Doob}
\Py ( -(X_1,Y_1) = (k,k') ) = \frac{a_{p-k} u_c^{p-k}}{a_p u_c^p} \pp k{k'}    \, .
\end{equation}
This can be seen by checking in Table~\ref{tab:prob(p)} that $\Prob\py(\Step_1= \step) = \frac{a_{p+X_1(\step)} u_c^{p+X_1(\step)} }{a_p u_c^p} \Prob\yy(\Step_1 = \step)$ for every peeling event $\step \in \steps$ such that $-X_1(\step) \le p-2$. (Recall that $X_1$ and $Y_1$ are determined by the peeling event $\Step_1$.)

\begin{figure}[b!]
\centering
\includegraphics[scale=1]{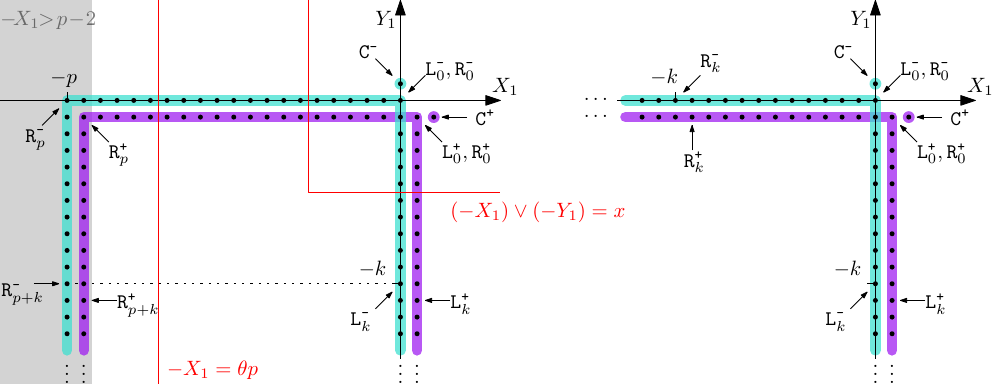}
\caption{The support of the distribution of $(X_1,Y_1)$ under $\Py$ (left) and $\PY$ (right).}
\label{fig:xy-support}
\end{figure}

If \eqref{eq:Doob} were valid for all $k$, it would mean that $\law\py \nseq{X_n,Y}$ is a Doob $h$-transform of the random walk $\law\py \nseq{X_n,Y}$. However, \eqref{eq:Doob} breaks down for $k>p-2$. More precisely, the supports of $(X_1,Y_1)$ under $\Py$ and $\PY$ differ: as illustrated in Figure~\ref{fig:xy-support}, the support of $\law\yy (X_1,Y_1)$ is contained in the L-shape defined by $-1\le X_1\vee Y_1 \le 1$ (except for one point), whereas the support of $\law\py (X_1,Y_1)$ stops at $X_1=-p$ (for the simple reason that $P_1=p+X_1\ge 0$) and continues in the negative $y$-direction. We control the probabilities in the part $\{-X_1>p-2\}$ of the support by the crude bound that for $e \in \{0,1\}$,
\begin{equation*}
        \Py(-(X_1,Y_1)=(p-e,k') ) 
\ \le \ \Py(-X_1 = p-e) 
\ \sim\ \cst_e \cdot p^{-1}
\qt{as }p \to\infty,
\end{equation*}
where the equivalence can be read from Table~\ref{tab:prob(p)}, and was seen in the proof of Proposition~\ref{prop:scaling}.

The probabilities in the rest of the support is controlled using \eqref{eq:Doob} in conjunction with the following asymptotics, seen respectively in Section~\ref{sec:XY} and in Theorem~\ref{thm:z_p,q}.
\begin{align*}
c_x^{-1} \pxx &\sim c_y^{-1} \pyy \sim k^{-7/3}    \qt{as }k \to\infty, \\
\tq{and} a_p u_c^p &= \cst \cdot p^{-4/3} (1 + O(p^{-1/3}))    \qt{as }p \to\infty.
\end{align*}

\paragraph{Notation:} If $A$ and $B$ are two \emph{positive} functions defined on some set $\Lambda$, we say that
\begin{itemize}
\item $A(\lambda) \la B(\lambda)$ for $\lambda \in \Lambda$, \emph{if}
there exists $C>0$ such that $A(\lambda)\le CB(\lambda)$ for all $\lambda \in\Lambda$;
\item $A(\lambda) \ea B(\lambda)$ for $\lambda \in \Lambda$, \emph{if}
$A(\lambda) \la B(\lambda)$	and $B(\lambda) \la A(\lambda)$.
\end{itemize}

The proof of the following properties of $\la$ and $\ea$ is left to the reader.

\begin{lemma}\label{lem:asymp}
\begin{enumerate}
\item
If $A_1\la B_1$ and $A_2\la B_2$, then $A_1A_2\la B_1B_2$ and $A_1+A_2\la B_1+B_2$.
\item
More generally,	if $A(\lambda)\la B(\lambda)$ for $\lambda \in \bigcup_{i\in I} \Lambda_i$, where $I$ is some arbitrary index set, \\then $\sum_{\lambda\in\Lambda_i} A(\lambda) \la \sum_{\lambda\in\Lambda_i} B(\lambda)$ for $i\in I$.
The same is true when $\la$ is replaced by $\ea$.
\item
When $\Lambda=\natural$, we have $A(\lambda)\la B(\lambda)$ \Iff{}
$A(\lambda)=O(B(\lambda))$ as $\lambda\to\infty$.\\
In particular, if $A(\lambda)\!\eqv\lambda\! B(\lambda)$, then $A(\lambda) \ea B(\lambda)$.
\end{enumerate}
\end{lemma}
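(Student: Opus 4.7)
My plan is to dispatch each of the three parts directly by unpacking the definition of $\la$, which provides a single constant $C$ such that $A(\lambda) \le C B(\lambda)$ uniformly on the common domain. For part (i), extracting constants $C_1, C_2$ from the hypotheses $A_i \le C_i B_i$ immediately yields $A_1 A_2 \le (C_1 C_2) B_1 B_2$ for the product bound and $A_1 + A_2 \le \max(C_1, C_2)(B_1 + B_2)$ for the sum bound. Part (ii) is essentially a tautology: the hypothesis supplies a uniform constant $C$ valid on all of $\bigcup_{i \in I} \Lambda_i$, and summing the pointwise inequality $A(\lambda) \le C B(\lambda)$ over any single $\Lambda_i$ produces the same constant $C$ between the resulting sums, independently of $i$. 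The $\ea$ version then follows by applying the $\la$ statement in both directions.

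The only assertion requiring more than pure unpacking is part (iii). The forward direction is immediate, since $A(\lambda) \le C B(\lambda)$ for every $\lambda \in \natural$ certainly implies $A(\lambda) = O(B(\lambda))$. For the converse I would pick $N$ and $C_1$ with $A(\lambda) \le C_1 B(\lambda)$ for all $\lambda \ge N$, and then exploit the positivity of $B$ to define $C_2 := \max_{\lambda < N} A(\lambda)/B(\lambda)$, which is finite because it is a maximum of finitely many finite numbers. Then $\max(C_1, C_2)$ serves as a uniform constant on all of $\natural$. The final remark about $A \sim B$ implying $A \ea B$ follows at once, since $A \sim B$ entails both $A = O(B)$ and $B = O(A)$. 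No step constitutes a real obstacle; the lemma simply packages the standard uniformity bookkeeping that will be invoked repeatedly in the estimates of the remainder of Appendix~\ref{sec:lemma proof}, where the positivity hypothesis on $B$ plays the silent but essential role of preventing the initial-segment correction from blowing up.
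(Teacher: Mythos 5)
The paper explicitly leaves the proof of this lemma to the reader, so there is no reference proof to compare against. Your argument is correct and is precisely the expected direct unpacking: extracting the uniform constant from the definition of $\la$, taking products and using $\max(C_1,C_2)$ for sums in (i), observing in (ii) that the constant survives summation over each $\Lambda_i$ uniformly in $i$, and in (iii) patching the eventual $O$-bound with the finite maximum $\max_{\lambda<N} A(\lambda)/B(\lambda)$, which is finite thanks to the standing positivity hypothesis on $B$. No gaps.
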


We fix some $\theta \in [\frac12,1)$ and let $p_\theta = \frac2{1-\theta}$ so that $\theta p \le p-2$ for all $p\ge p_\theta$ (for example, $\theta=\frac12$ and $p_\theta = 4$).

\newcommand{\Xp}{\{-X_1\le p-2\}}
\newcommand{\ykXp}[1][=k]{\{-Y_1#1\} \cap \Xp}
\newcommand{\afrac}[1][k]{\frac{a_{p-#1} u_c^{p-#1}}{a_p u_c^p}}

\newcommand{\Ap}{\mathcal{A}_x}
\newcommand{\zg}[1][h]{\{W \ge #1\} \cap \Ap}
\newcommand{\zl}[1][h]{\{W \le #1\} \cap \Ap}
\newcommand{\idzg}[1][h]{\id_{\zg[#1]}}
\newcommand{\idzl}[1][h]{\id_{\zl[#1]}}

\begin{lemma}\label{lem:estimates}
\begin{enumerate}[label=(\roman*)]
\item\label{item:estimate infty}
$\pxx \ea \pyy \ea k^{-7/3}$ for $k\ge 1$.
\item\label{item:estimate Y}
$\Py(\ykXp) \ea k^{-7/3}$ for $p\ge 2$ and $k\ge 1$.
\item\label{item:estimate X}
$\Py(-X_1=k) \ea k^{-7/3}$ and $\Py(-X_1=p-k) \ea p^{-1} k^{-4/3}$ for all $p\ge p_\theta$ and $1\le k\le \theta p$.
\item\label{item:estimate afrac}
$\abs{\afrac -1} \la p^{-1} \abs{k} + p^{-1/3}$ for any $(k,p)$ such that $-2\le k \le \theta p$.

\item\label{item:estimate large jump}
For $p\ge p_\theta$, $x\in [1,\theta p]$ and $m\ge 1$,
\begin{equation*}
\Py(-X_1 < p-m \text{ and } (-X_1)\vee (-Y_1)\ge x)
\ \la \   x^{-4/3} + p^{-1} m^{-1/3}   \,.
\end{equation*}

In the following, let $\Ap=\{ (-X_1) \vee (-Y_1) \le x\}$ and $W$ be either $\mu-X_1$ or $\mu-Y_1$. 

\item\label{item:estimate S}
$\Py(\zg) \la h^{-4/3}$, $\Ey[W \idzg] \la h^{-1/3}$ and $\Ey[W^2 \idzl] \la h^{2/3}$ for $p\ge p_\theta$, $x\in [1,\theta p]$ and $h\in [1,x]$.

\item\label{item:estimate E}
$|\Ey[W  \id_{\Ap}]| \la x^{-1/3}$ for $p \ge p_\theta$ and $x\in [1,\theta p]$.

\item\label{item:estimate small jump}
For $p\ge p_\theta$, $x\in [1,\theta p]$ and $\lambda \in [2x^{-1},1]$,
\begin{equation*}
\log \m({ \Ey[e^{\pm \lambda W} \id_{\Ap}] }  
\ \la\  x^{-4/3} e^{\lambda x}.
\end{equation*}
\end{enumerate}
\end{lemma}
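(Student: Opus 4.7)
The plan is to treat the eight items in order, viewing (i)--(iv) as direct asymptotic calculations from Table~\ref{tab:prob(p)} and the singularity analysis of $Z_0$, $Z_1$, $A$, and viewing (v)--(viii) as progressively refined consequences tailored to the truncated distribution of $(X_1,Y_1)$ under $\Py$.

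For (i) I would simply invoke the tails of $(X_1,Y_1)$ under $\PY$ already recorded in Section~\ref{sec:XY}. For (ii), the only contributions to $\{-Y_1 = k\}$ with $k \ge 2$ come from $\lp[k-1]$ and $\lm[k]$, both of which have $X_1 \in \{0,1\}$, making the restriction $-X_1 \le p-2$ automatic; the Doob-type identity \eqref{eq:Doob} then reduces the estimate to (i) up to a factor $\afrac$ whose boundedness is controlled by (iv). For (iii), I would split $\{-X_1 = k\}$ into a ``bulk'' range $k \in [1,\theta p]$, contributed by $\rp[k+1]$ and $\rn[k]$ so that \eqref{eq:Doob} applies directly, and a ``tip'' range $-X_1 = p-k'$ with $k'\in[1,\theta p]$, contributed by $\rp[p-k'+1]$ and $\rn[p-k']$ and computed from Table~\ref{tab:prob(p)}(a) using $z_{p,0}\ea u_c^{-p}p^{-7/3}$ and $z_{p,1}\ea u_c^{-p}p^{-7/3}$. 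Both of these latter asymptotics follow from the rational parametrization \eqref{eq:RP H} upon checking that $\hat Z_0$ and $\hat Z_1$ have vanishing first and second derivatives at $H=1$, so that $Z_0$ and $Z_1$ share the same $(u_c-u)^{4/3}$ dominant singularity at $u_c$. Item (iv) is a routine two-term Taylor expansion based on $a_p u_c^p = \cst\cdot p^{-4/3}(1+O(p^{-1/3}))$. Item (v) follows from (iii) by integrating $\Py(-X_1=\cdot)$ separately over the bulk and tip ranges, together with an analogous $-Y_1$ bound from (ii). Item (vi) is then Abel summation against the tail $\Py(W\ge t,\Ap)\la t^{-4/3}$ extracted from (ii)--(iii), combined with the observation that $W^-$ is uniformly bounded, so that the negative part of $W$ contributes only $O(1)$.

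The substantive difficulty lies in (vii)--(viii), because for $W=\mu-X_1$ one has $\Ey[W]\not\to 0$ (indeed $\Ey[X_1]\to -\mu/3$ by the remark after Proposition~\ref{prop:scaling}), so the key cancellation must happen \emph{inside} $\Ap$. For (vii), I would first establish the $\PY$-analogue $|\EY[W\id_{\Ap}]|\la x^{-1/3}$, which follows from $\EY[W]=0$ together with the tail bound $\PY((-X_1)\vee(-Y_1)\ge x)\la x^{-4/3}$ and the boundedness of $W^-$. Transfer to $\Py$ then uses the Doob identity \eqref{eq:Doob}: writing $\afrac = 1+(\afrac-1)$ and invoking (iv), the extra error is $\la p^{-1}\EY[X_1^2\id_{\Ap}] + p^{-1/3}\EY[|W|\id_{\Ap}] \la p^{-1} x^{2/3} + p^{-1/3}$, which is $\la x^{-1/3}$ thanks to $x\le \theta p$. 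Finally, (viii) will be obtained by splitting $\Ap = \{W\le 1/\lambda\}\sqcup\{W>1/\lambda\}$: on the first event, a second-order Taylor expansion combined with (vii) and the bound $\Ey[W^2\id_{W\le 1/\lambda,\Ap}]\la \lambda^{-2/3}$ from (vi) yields a contribution $\la \lambda x^{-1/3} + \lambda^{4/3}$; on the second event, an integration by parts against $\Py(W\ge t,\Ap)\la t^{-4/3}$ followed by the substitution $s=\lambda t$ produces $\la \lambda^{4/3}\int_1^{\lambda x}e^s s^{-4/3}\,\dd s \la x^{-4/3}e^{\lambda x}$ via a Laplace-type asymptotic at the upper endpoint. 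The hardest technical step will be verifying uniformly that each of $\lambda x^{-1/3}$, $\lambda^{4/3}$ and $\lambda^2 x^{2/3}$ is $\la x^{-4/3}e^{\lambda x}$ for $\lambda\in[2/x,1]$; this reduces to the elementary inequality $(\lambda x)^\alpha e^{-\lambda x}=O(1)$ on $\lambda x\ge 2$, and it is precisely the lower constraint $\lambda\ge 2/x$ that ensures all of these estimates combine coherently into the target bound.
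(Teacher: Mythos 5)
Your proposal is substantively correct and follows the paper's overall strategy for items (i)--(vii): invoke the $\Prob\yy$ tails from Section~\ref{sec:XY}, transfer to $\Prob\py$ via the ratio $a_{p-k}u_c^{p-k}/(a_p u_c^p)$ for $k\le p-2$, control the ratio by a short Taylor expansion of $a_p u_c^p = \mathrm{cst}\cdot p^{-4/3}(1+O(p^{-1/3}))$, handle the ``tip'' range $-X_1\approx p$ by direct inspection of Table~\refp{a}{tab:prob(p)}, and exploit $\EE\yy[W]=0$ together with the comparison to $\Prob\yy$ to get the cancellation in (vii) on $\Ap$. One place where you genuinely diverge from the paper is item~(viii). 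The paper expands $\E\py{}[e^{\lambda W}\id_{\Ap}] = \Prob\py(\Ap) + \lambda\E\py{}[W\id_{\Ap}] + \E\py{}[(e^{\lambda W}-1-\lambda W)\id_{\Ap}]$ and bounds the third term by splitting $W$ at \emph{two} cutoffs $1/\lambda$ and $x/2$. You instead split once at $1/\lambda$ and handle the upper range by an Abel/integration-by-parts argument against the tail $\Prob\py(W\ge t,\Ap)\preceq t^{-4/3}$ followed by a Laplace estimate $\int_1^{\lambda x}e^s s^{-4/3}\,\mathrm ds \asymp e^{\lambda x}(\lambda x)^{-4/3}$. Both routes land on the same dominant term $e^{\lambda x}x^{-4/3}$; yours is arguably cleaner since it avoids the ad hoc intermediate cutoff $x/2$. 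A second smaller divergence is in item~(iii): you recover the needed $z_{p,0},z_{p,1}\asymp u_c^{-p}p^{-7/3}$ from the parametrization \eqref{eq:RP H} (vanishing first and second $H$-derivatives at $H=1$), whereas the paper shortcuts by summing the Doob identity over $k'$ to get $\Prob\py(-X_1=k)=\frac{a_{p-k}u_c^{p-k}}{a_p u_c^p}\Prob\yy(-X_1=k)$ directly. Two points you should be explicit about when writing this up: (a) for $W=\mu-Y_1$ the error term in the transfer step of (vii) is not $p^{-1}\EE\yy[X_1^2\id_{\Ap}]$ but $p^{-1}\EE\yy[|X_1|\,|\mu-Y_1|\,\id_{\Ap}]$, and controlling it uses the L-shaped support (if $|X_1|\ge 2$ then $|Y_1|\le 1$, and vice versa) to reduce it to a bounded quantity, as the paper does; (b) for the $-\lambda$ branch of (viii) the integration-by-parts step is unnecessary because $e^{-\lambda W}$ is bounded on $\Ap$, and it is really only the $+\lambda$ case that produces the $e^{\lambda x}x^{-4/3}$ term.
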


\newcommand{\varsubset}{\subset}
\renewcommand{\subset}{\subseteq}

\newcommand{\absb}[1]{\mb|{#1}}
\newcommand{\absB}[1]{\mB|{#1}}
\newcommand{\absh}[1]{\mh|{#1}}
\newcommand{\absH}[1]{\mH|{#1}}

\begin{proof}
\begin{enumerate}[label=\textbf{(\roman*)},wide=0pt,listparindent=\parindent,]
\item
follows from Lemma~\refp{iii}{lem:asymp} and the asymptotics $c_x^{-1} \pxx \sim c_y^{-1} \pyy \sim k^{-7/3}$.

\item
When $k=1$, it is not hard to see the \lhs\ is bounded away from zero when $p\to \infty$, therefore $\prob_p(\{-Y_1=1\} \cap \Xp) \ea 1$.

When $k \ge 2$, Figure~\ref{fig:xy-support} shows that $\Py$-almost surely, $-Y_1=k$ and $-X_1 \le p-2$ imply that $X_1\in \{0,1\}$. Therefore by \eqref{eq:Doob} we have $\Py(\ykXp[=1]) = \pp0k + \frac{a_{p+1}u_c}{a_p} \pp{-1} k$. Moreover $\frac{a_{p+1} u_c}{a_p} \cv[]p 1$, thus by Lemma~\refp{iii}{lem:asymp}, $\frac{a_{p+1} u_c}{a_p} \ea 1$ for $p\ge 1$. It follows that $\Py(\ykXp) \ea \pp0k + \pp{-1}k = \pyy \ea k^{-7/3}$.

\item
Summing \eqref{eq:Doob} over $k'$ gives that $\Py(-X_1=k) = \afrac \pxx$ for all $k \le p-2$. Since $a_p u_c^p \sim \cst \cdot p^{-4/3}$ and $\pxx \ea k^{-7/3}$, Lemma~\ref{lem:asymp} implies that $\Py(-X_1=k) \ea \frac{ (p-k)^{-4/3} }{ p^{-4/3} } k^{-7/3}$. The same estimate also holds for $k=p-1$ because $\Py(-X_1=p-1) \eqv p \cst \cdot p^{-1}$.

For $p\ge p_\theta$ and $k\in [1,\theta p]$, we have $(p-k)^{-4/3} \ea p^{-4/3}$, hence $\Py(-X_1=k) \ea k^{-7/3}$.

For $p\ge p_\theta$ and $k\in [p-\theta p,p-1]$, we have $k^{-7/3} \ea p^{-7/3}$, so $\Py(-X_1=k) \ea \frac{(p-k)^{-4/3}}p$, or equivalently, $\Py(-X_1 = p-k) \ea p^{-1} k^{-4/3}$ for $p\ge p_\theta$ and $k\in [1, \theta p]$.

\item
The asymptotic relation $a_p u_c^p = \cst \cdot p^{-4/3} (1+O(p^{-1/3}))$ implies that there exist constants $C=C(\theta)$ and $p_0=p_0(\theta)$ such that for all $p\ge p_0$ and $-2\le k \le \theta p$,
\begin{equation*}
\frac{(p-k)^{-4/3}}{p^{-4/3}} (1-C p^{-1/3}) \le \afrac \le 
\frac{(p-k)^{-4/3}}{p^{-4/3}} (1+C p^{-1/3}) \,.
\end{equation*}
By writing down the Taylor expansion of the left and right hand side, it is not hard to see that the above inequality implies that $\abs{\afrac -1} \la p^{-1}|k| + p^{-1/3}$ for $p \ge p_0$ and $-2\le k \le \theta p$.
Finally, we can extend the uniform bound to all the values of $p$ because the set $\Set{(k,p)}{ p\le p_0 \text{ and} -2 \le k \le \theta p }$ is finite.

\item
We split the event on the \lhs\ into three parts and use respectively the three estimates provided by \ref{item:estimate Y} and \ref{item:estimate X}:
\begin{align*}
   &\ \Py(-X_1 < p-m \text{ and } (-X_1)\vee (-Y_1)\ge x)    \\    
\le&\ \Py(\ykXp[\ge x])
    + \Py(-X_1 \in [x, \theta p]) 
    + \Py(-X_1 \in [\theta p, p-m])         \\
\la&\ \sum_{k=x}^\infty k^{-7/3}
    + \sum_{k=x}^{\theta p} k^{-7/3}
    + \sum_{k=m}^{(1-\theta) p} \frac1p k^{-4/3}
\ \la\ x^{-4/3} + \frac1p m^{-1/3} .
\end{align*}

\item
When $W=\mu -X_1$, we use the inclusions $\zg \subset \{ h-\mu \le -X_1 \le x \}$ and $\zl \subset \{-2 \le -X_1 \le h-\mu \}$. Then the three inequalities are obtained by summing the first estimate of \ref{item:estimate X} over $k$:
\begin{align*}
\Py(\zg) \le \sum_{k=h-\mu}^x \Py(-X_1=k)   
       & \la \sum_{k=h}^{\theta p}  k^{-7/3} \la h^{-4/3}    \\
\Ey[W \idzg] \le \sum_{k=h-\mu}^x (\mu+k) \cdot \Py(-X_1=k)
           & \la \sum_{k=h}^{\theta p} k \cdot k^{-7/3} \la h^{-1/3}    \\
\Ey[W^2\idzl] \le  \sum_{k=-2}^{h-\mu} (\mu+k)^2 \cdot \Py(-X_1=k)
            & \la 1+\sum_{k=1}^h k^2 \cdot k^{-7/3} \la h^{2/3}.
\end{align*}

When $W=\mu -Y_1$, we write $\zg \subset \{ h-\mu \le -Y_1 \le x \} \cap \{-X_1 \le p-2\}$ and $\zl \subset \{-2 \le -Y_1 \le h-\mu \} \cap \{-X_1 \le p-2\}$.
Then the three inequalities are obtained by summing the estimate \ref{item:estimate Y} over $k$, similarly to the case $W=\mu-X_1$.

\item
We first compare the measure $\Py$ to $\PY$ using \eqref{eq:Doob}: for all $k\le p-2$ and $k'$, we have
\begin{equation*}
\absb{ \Py((-X_1,Y_1)=-(k,k')) - \pp{k}{k'} } 
\le \pp{k}{k'} \abs{\afrac -1} \ .
\end{equation*}

When $W=\mu-X_1$, we sum the above bound over $k'$ and use \ref{item:estimate infty} and \ref{item:estimate afrac} to obtain
\begin{align*}
     \absb{ \Ey[W \id_{\Ap}] - \EY[W \id_{\Ap}] }
\le & \sum_{k=-2}^x \abs{\mu+k} \pxx \cdot \absB{\afrac -1}    \\
\la & \sum_{k=-2}^x \abs{\mu+k} (k+3)^{-7/3} \m({ \frac{|k|}p + \frac1{p^{1/3}} }
\end{align*}
It is not hard to see that the first three terms in the sum above are dominated by the rest of the sum, so
\begin{equation*}
     \absb{ \Ey[W \id_{\Ap}] - \EY[W \id_{\Ap}] }
\la    \frac1p \sum_{k=1}^x k^{-1/3} + \frac1{p^{1/3}} \sum_{k=1}^x k^{-4/3}
\ea    \frac1p x^{2/3} + \frac1{p^{1/3}} \la p^{-1/3} \,.
\end{equation*}

When $W=\mu-Y_1$, we use the fact that under both $\Py$ and $\PY$, we have almost surely either $-X_1 \in \{0,1\}$ or $-Y_1 \le 1$. Dividing the event $\Ap$ according these two cases, we get
\begin{align*}
     \absb{ \Ey[W \id_{\Ap}] - \EY[W \id_{\Ap}] } 
&\le  \sum_{k'=2}^x \abs{\mu+k'} \pyy \cdot \absB{\frac{a_{p+1} u_c}{a_p} -1}
    + \sum_{k=-2}^x (\mu+1) \pxx \cdot \absB{\afrac -1}    \\
&\la  \sum_{k=1}^x k \cdot k^{-7/3} \m({ \frac1p + \frac1{p^{1/3}} }
    + \sum_{k=1}^x k^{-7/3} \m({ \frac kp + \frac1{p^{1/3}} }
\la p^{-1/3}
\end{align*}
We conclude that with both $W=\mu-X_1$ and $W=\mu-Y_1$,
\begin{equation*}
\absb{ \Ey[W \id_{\Ap}] - \EY[W \id_{\Ap}] } \la p^{-1/3}
\end{equation*}
for $p \ge p_\theta$ and $x\in[1,\theta p]$.

On the other hand, since $\EY[\mu-X_1]=\EY[\mu-Y_1] = 0$, we have $\EY[W \id_{\Ap}] = -\EY[W \id_{\Ap^c}]$. On the event $\Ap^c$, almost surely $0\le W \le \max(\mu-X_1,\mu-Y_1)$, therefore
\begin{align*}
\EY[W \id_{\Ap^c}]
\le \EY[(\mu -X_1) \idd{-X_1>x}] + \EY[(\mu -Y_1) \idd{-Y_1>x}]
\la \sum_{k=x}^\infty k\cdot k^{-7/3} \ea x^{-1/3}
\end{align*}
according to the estimate from \ref{item:estimate infty}. It follows that 
$\absb{\Ey[W \id_{\Ap}]} \la p^{-1/3} + x^{-1/3} \ea x^{-1/3}$.

\item
First consider the $+$ sign. We decompose the expectation into three terms:
\begin{equation}\label{eq:proof exp moment bound}
\Ey[e^{\lambda W} \id_{\Ap}] = 
\Py(\Ap) + \lambda \Ey[W \id_{\Ap}] + \Ey[(e^{\lambda W} -1 -\lambda W) \id_{\Ap}] \,.
\end{equation}
The first term is bounded by 1. The second term will be taken care of by \ref{item:estimate E}. For the last term, notice that $W \le \mu+x$ on the event $\Ap$. We cut the interval $(-\infty,\mu+x)$ at $\lambda^{-1}$ and $x/2$ and bound the expectation separately on each subinterval:
\begin{eqnarray*}
\Ey[(e^{\lambda W} -1 -\lambda W) \idzl[\lambda^{-1}]]
\la&\hspace{-8pt} \lambda^2 \Ey[W^2 \idzl[\lambda^{-1}]] 
\hspace{-8pt}&\la \lambda^2 \cdot \lambda^{-2/3} = \lambda^{4/3} ,  \\
\Ey[(e^{\lambda W} -1 -\lambda W) \id_{\{W\in [\lambda^{-1},x/2]\} \cap \Ap} ]
\le&\hspace{-8pt} e^{\lambda x/2} \Py( \zg[\lambda^{-1}] )
\hspace{-8pt}&\la e^{\lambda x/2} \lambda^{4/3}              ,  \\
\Ey[(e^{\lambda W} -1 -\lambda W) \id_{\{W\in [x/2,\mu+x]\} \cap \Ap} ]
\la&\hspace{-8pt} e^{\lambda x  } \Py( \zg[x/2] )
\hspace{-8pt}&\la e^{\lambda x} x^{-4/3}                    .
\end{eqnarray*}
We used the fact that $e^{\lambda W} -1 -\lambda W \la \lambda^2 W^2$ for $W \le \lambda^{-1}$ in the first line, and the assumption $\lambda\le 1$ so that $e^{\lambda \mu}\la 1$ in the last line. The second inequality in each line follows from \ref{item:estimate S}. Combining these three bounds with \eqref{eq:proof exp moment bound} and \ref{item:estimate E} gives
\begin{equation*}
\Ey[e^{\lambda W} \id_{\Ap} ] -1  \la 
\lambda x^{-1/3} + \lambda^{4/3} + e^{\lambda x/2} \lambda^{4/3} + e^{\lambda x} x^{-4/3}.
\end{equation*}
For $\lambda \ge x^{-1}$, the last term on the \rhs\ dominates the other three terms. And since $\log(x)\le x-1$ for all $x>0$, it follows that $\log( \Ey[e^{\lambda W} \id_{\Ap}] ) \la e^{\lambda x} x^{-4/3}$ for $p\ge p_\theta$, $x\in[1,\theta p]$ and $\lambda \in [2 x^{-1},1]$.

Now consider \eqref{eq:proof exp moment bound} with $\lambda$ replaced by $-\lambda$. The first two terms on the right hand side are controled in the same way as before. For the last term, we split the interval $(-\infty,\mu+x)$ at $\lambda^{-1}$, and observe that $e^{-\lambda W} -1 +\lambda W \la \lambda^2 W^2$ for $W \le \lambda ^{-1}$, while $e^{-\lambda W} -1 +\lambda W \le \lambda W$ for $W \ge \lambda^{-1}$. It follows that
\begin{align*}
\Ey[(e^{-\lambda W} -1 +\lambda W) \id_{\Ap}] 
&\la \lambda^2 \Ey[W^2 \idzl[\lambda^{-1}]] + \lambda \Ey[W \idzg[\lambda^{-1}] ]  \\    
&\la \lambda^2 \cdot \lambda^{-2/3} + \lambda \cdot \lambda^{1/3} 
 \la \lambda^{4/3} ,
\end{align*}
where the second line uses again the estimates from \ref{item:estimate S}. Similarly to the $+$ sign case, we deduce that $\log ( \Ey[e^{-\lambda W} \id_{\Ap}] )  \la \lambda x^{-1/3} + \lambda^{4/3} \la \lambda^{4/3} \la e^{\lambda x} x^{-4/3}$ for the same range of the parameters $p,x$ and $\lambda$.    \qedhere
\end{enumerate}
\end{proof}

\newcommand{\tauxx}[1][x]{\tau_{#1}}

As stated at the beginning of this appendix, we start by considering, instead of $\tauxy$, the first time $\tauxx$ that $(X_n,Y_n)$ deviates from its mean for some constant distance $x$, namely $\tauxx = \inf \Set{n\ge 0}{|X_n-\mu n| \vee |Y_n-\mu n| > x}$.

\begin{lemma}\label{lem:cst barrier}
Fix some $\epsilon>0$ and let $x =\chi \mb({ N (\log N)^{1+\epsilon} }^{3/4}$.
Then for all $p\ge p_\theta/(1-\theta)$, $m\ge 1$ and $\chi,N \ge 2$ such that $x\in [1,\theta p/(1+\theta) ]$, we have
\begin{equation*}
\Py (\tauxx \le N, \tauxx < T_m) \la \frac1{(\log \chi + \log N)^{1+\epsilon/2}} + \frac Np m^{-1/3}
\end{equation*}
\end{lemma}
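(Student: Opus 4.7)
The plan is to follow the standard ``one-big-jump'' dichotomy for heavy-tailed random walks (see \cite{Borovkov2}), decomposing the event $\{\tauxx\le N,\,\tauxx<T_m\}$ according to whether the first exit of $(X_n-\mu n,Y_n-\mu n)$ from the square $[-x,x]^2$ is caused by a single large increment (of absolute size $>x$) or by an accumulation of increments all of size $\le x$. Writing $\xi^{(k)}=X_k-X_{k-1}$, $\eta^{(k)}=Y_k-Y_{k-1}$ and $\mathcal A^{(k)}=\{|\xi^{(k)}|\vee|\eta^{(k)}|\le x\}$, I would isolate the ``large-jump'' event $\mathcal L=\bigcup_{k\le N}\bigl(\{\tauxx\ge k,\,k<T_m\}\cap(\mathcal A^{(k)})^c\bigr)$ and bound $\Py(\mathcal L)$ and $\Py\bigl(\{\tauxx\le N,\,\tauxx<T_m\}\setminus\mathcal L\bigr)$ separately.

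For the large-jump contribution, a union bound together with the strong Markov property at time $k-1$ reduces each term to $\Ey\bigl[\mathbf 1_{\{\tauxx\ge k,\,P_{k-1}>m\}}\,\Prob\py[P_{k-1}]\bigl((\Ap)^c\cap\{P_1>m\}\bigr)\bigr]$. On $\{\tauxx\ge k\}$ we have $P_{k-1}\ge p-x\asymp p$, so Lemma~\ref{lem:estimates}\ref{item:estimate large jump} applied with $p\gets P_{k-1}$ furnishes a per-step bound of order $x^{-4/3}+p^{-1}m^{-1/3}$; summing over $k\le N$ and substituting $x=\chi\bigl(N(\log N)^{1+\epsilon}\bigr)^{3/4}$ yields $\chi^{-4/3}(\log N)^{-(1+\epsilon)}+Np^{-1}m^{-1/3}$. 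A short case-split ($\log N\ge\log\chi$ versus $\log N<\log\chi$) then confirms $\chi^{-4/3}(\log N)^{-(1+\epsilon)}\la(\log\chi+\log N)^{-(1+\epsilon/2)}$, matching the first term of the target.

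For the small-jump contribution, on $\{\tauxx\le N,\,\tauxx<T_m\}\setminus\mathcal L$ every increment at time $k\le\tauxx$ lies in $\Ap$, hence $X_n-\mu n=\tilde S_n^X:=\sum_{k=1}^n(\xi^{(k)}-\mu)\mathbf 1_{\mathcal A^{(k)}}$ for all $n\le\tauxx$, and analogously for $Y$. The event is therefore contained in $\bigcup_{\#\in\{X,Y\},\pm}\bigl\{\max_{n\le N}\pm\tilde S_n^\#>x\bigr\}$, and I would control each of these four tail events by an exponential supermartingale argument. Lemma~\ref{lem:estimates}\ref{item:estimate small jump} gives, for $\lambda\in[2x^{-1},1]$ and uniformly over $P_{k-1}$ in the admissible range, $\log \EE\py[P_{k-1}]\bigl[e^{\pm\lambda(\xi-\mu)\mathbf 1_{\Ap}}\bigr]\la x^{-4/3}e^{\lambda x}$, so $M_n=\exp\bigl(\pm\lambda\tilde S_n^X-\tilde C n x^{-4/3}e^{\lambda x}\bigr)$ is a supermartingale when stopped at the first time $\tau^\star$ that $P_n$ leaves the admissible window; on the small-jump event one has $\tau^\star\ge\tauxx$. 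Doob's maximal inequality then yields $\Py\bigl(\max_{n\le\tau^\star\wedge N}\pm\tilde S_n^X>x\bigr)\le\exp\bigl(-\lambda x+\tilde C N x^{-4/3}e^{\lambda x}\bigr)$, and choosing $e^{\lambda x}=(\log\chi+\log N)^{1+\epsilon/2}$ makes the second term in the exponent uniformly bounded (by the same case-split), producing the desired factor $(\log\chi+\log N)^{-(1+\epsilon/2)}$ directly.

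The main obstacle will be making the step-level estimate~\ref{item:estimate small jump} apply uniformly along the random trajectory: since the transition law at step $k$ depends on the random value $P_{k-1}$, one must localize to $\{\tauxx\ge k\}$ so that $P_{k-1}\ge p(1-\theta)\ge p_\theta$, and one likely has to work with a slightly smaller threshold $\theta'<1$ in the step-level estimates to preserve the constraint $x\le\theta'P_{k-1}$ (the admissible hypothesis of \ref{item:estimate small jump}) throughout. Stopping the exponential supermartingale at the first violation of this admissible window, and checking that this stopping time exceeds $\tauxx$ on the small-jump event, packages this cleanly; the implicit constants in $\la$ depend on $\theta'$ but the orders of magnitude are unaffected. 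Finally, the degenerate cases in which $\chi$ or $N$ is bounded above by an absolute constant can be handled by the trivial upper bound $1$, since the target is bounded below by a positive constant in that regime.
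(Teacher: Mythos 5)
Your decomposition into a large-jump part $\mathcal L$ and a small-jump part, and your treatment of $\mathcal L$ by a union bound plus the Markov property plus Lemma~\ref{lem:estimates}\ref{item:estimate large jump}, is essentially the same as the paper's (which uses the first-big-jump time $J_x$ rather than your $\mathcal L$, a cosmetic difference). The final case-split verifying $\chi^{-4/3}(\log N)^{-(1+\epsilon)}\la(\log\chi+\log N)^{-(1+\epsilon/2)}$ is also exactly the boundedness of $\chi^{-4/3}(\log\chi+\log N)^{1+\epsilon/2}/(\log N)^{1+\epsilon}$ that the paper invokes.

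The small-jump part, however, has a genuine gap: your proposed supermartingale $M_n=\exp\bigl(\pm\lambda\tilde S_n^X-\tilde Cnx^{-4/3}e^{\lambda x}\bigr)$ puts the truncation indicator \emph{inside} the exponent, since $\tilde S_n^X-\tilde S_{n-1}^X=(\xi^{(n)}-\mu)\mathbf 1_{\mathcal A^{(n)}}$. Its one-step conditional expectation is therefore $\Ey[e^{\mp\lambda W}\id_{\Ap}]+\Prob\py(\Ap^c)$, whereas Lemma~\ref{lem:estimates}\ref{item:estimate small jump} controls only the first summand. The second summand is \emph{not} negligible at the scale you need: from Lemma~\ref{lem:estimates}\ref{item:estimate X} the transitions $-X_1=p-k$ (the ``collapse to near zero'' jumps) contribute $\Prob\py(\Ap^c)\gtrsim p^{-1}$, and since $x\le\theta p$ this gives $\Prob\py(\Ap^c)\gtrsim x^{-1}$, while your claimed compensator $x^{-4/3}e^{\lambda x}$ is of polylogarithmic size times $x^{-4/3}$, i.e.\ much smaller than $x^{-1}$ for large $x$. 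So $M_n$ is not a supermartingale, and Doob's inequality does not deliver the stated bound; carrying the extra $\Prob\py(\Ap^c)$ term into the compensator would pollute the exponent with a spurious $Np^{-1}$ summand (without the crucial $m^{-1/3}$ factor), which does not reduce to the target.

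The fix — and this is precisely what the paper does, albeit dressed up as an explicit i.i.d.\ continuation after $\tauu$ with the extremal parameter $p_*$ and iterated conditioning — is to keep $\id_{\Ap}$ as a \emph{multiplicative} factor, not in the exponent: work with the killed exponential $\tilde M_n=e^{\pm\lambda(X_n-\mu n)}\prod_{i\le n}\id_{\Ap^{(i)}}\cdot e^{-Cnx^{-4/3}e^{\lambda x}}$, whose increments have conditional expectation $\phix[P_{i-1}]\cdot e^{-Cx^{-4/3}e^{\lambda x}}\le 1$ by Lemma~\ref{lem:estimates}\ref{item:estimate small jump}, with no $\Prob\py(\Ap^c)$ term appearing at all. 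On the small-jump event the killing never fires before $\tauxx$, so the maximum of $\tilde M$ detects the exceedance of level $x$, and Doob's maximal inequality then recovers the bound. With that change your argument becomes, in effect, a supermartingale restatement of the paper's Chernoff computation and is correct.
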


\begin{proof}
For $n\ge 1$, let $\Delta X_n = X_n-X_{n-1}$ and $\Delta Y_n = Y_n-Y_{n-1}$. Consider 
\begin{equation*}
J_x = \inf\Set{n\ge 1}{ (-\Delta X_n) \vee (-\Delta Y_n) \ge x} ,
\end{equation*} 
the first time that either $\nseq X$ or $\nseq Y$ makes a large negative jump of size $x$. We bound the probability of the event $\{\tauxx \le N, \tauxx < T_m\}$ separately in the case $\{J_x\le \tauxx \}$ (large jump estimate) and in the case $\{\tauxx < J_x\}$ (small jump estimate).

\medskip
\noindent\textbf{Large jump estimate: union bound.} Write
\begin{align*}
     & \Py( \tauxx \le N, \tauxx < T_m \text{ and } J_x \le \tauxx )    \\
\le\ & \Py( J_x \le \tauxx \wedge N    \text{ and } J_x < T_m )
     = \sum_{n=1}^N \Py( n \le \tauxx \text{ and } J_x=n < T_m )    \,.
\end{align*}
On the one hand, $J_x=n<T_m$ implies that $P_n > m$ and $(-\Delta X_n) \vee (-\Delta Y_n) \ge x$. On the other hand, we have $P_{n-1}\ge p-x$ on the event $\{n\le \tauxx \}$. Therefore by the Markov property of $\law\py \nseq{P_n,Y}$, we have
\begin{align*}
\Py( n \le \tauxx \text{ and } J_x=n < T_m )
& \le \Ey\m[{ \Prob\py[P_{n-1}] \mb({ P_1>m \text{ and } (-X_1)\wedge (-Y_1)\ge x }
           \idd{P_{n-1}\ge p-x} }    \\
& \le \sup_{p'\ge p-x} \Prob\py[p'] \mb({ -X_1<p-m \text{ and } (-X_1)\wedge (-Y_1)\ge x}
\end{align*}
If $p\ge p_\theta/(1-\theta)$, $x\in [1,\theta p/(1+\theta)]$ and $p'\ge p-x$, then $p'\ge p_\theta$ and $x\in [1,\theta p']$. Thus we can use the uniform bound of Lemma~\ref{lem:estimates}\ref{item:estimate large jump} to bound the above supremum. It follows that
\begin{equation}\label{eq:large jump estimate}
\Py( \tauxx \le N, \tauxx < T_m \text{ and } J_x \le \tauxx )
\la N \m({ x^{-4/3} + \frac1p m^{-1/3} }
  = \frac{\chi^{-4/3}}{ (\log N)^{1+\epsilon} } + \frac{N}{p} m^{-1/3}.
\end{equation}

\newcommand{\unit}{\mathbf{e}}
\newcommand{\tauu}{\tau^{\unit}_x}
\newcommand{\taum}{\tau^{(1,0)}_x}

\noindent\textbf{Small jump estimate: Chernoff bound.}
For each of the four unit vectors $\unit \in \integer^2$, define
\begin{equation*}
\tauu = \inf\Set{n\ge 0}{ (\mu n-X_n, \mu n-Y_n)\cdot \unit \ge x } \,.
\end{equation*}
The stopping time $\tauu$ gives the first time that the process $\nseq{X_n,Y}$ deviates by a distance $x$ from its mean in the direction $-\unit$, thus $\tauxx = \min_{\unit} \tauu$. It follows that
\begin{align}
    \Py( \tauxx \le N, \tauxx < T_m \text{ and } \tauxx < J_x)
\ &\le\ \Py( \tauxx \le N \text{ and }  \tauxx < J_x ) \notag \\
&\le\ \sum_{\unit} \Py( \tauxx = \tauu \le N \text{ and } \tauu < J_x ) 
\,.   
\label{eq:pre Chernoff}
\end{align}
On the event $\{\tauu = n\}$, we have $(\mu n-X_n, \mu n-Y_n) \cdot \unit \ge x$. 
Thus the Chernoff bound gives:
\begin{align}
\Py(\tauxx = \tauu \le N \text{ and } \tauu <&\, J_x)  
\ = \ \sum_{n=1}^N \Py(\tauxx = \tauu = n < J_x)                            \notag \\
\le&\ e^{-\lambda x} \sum_{n=1}^N \Ey \m[{ \idd{\tauxx=n} \idd{n<J_x} 
                    e^{\lambda (\mu n - X_n, \mu n - Y_n) \cdot \unit}  }  \notag \\
 \le&\ e^{-\lambda x} \sum_{n=1}^N \Ey \m[{ \idd{\tauxx=n} \prod_{i=1}^n 
                    e^{\lambda (\mu-\Delta X_i, \mu-\Delta Y_i) \cdot \unit}   
                        \idd{(-\Delta X_i) \vee (-\Delta Y_i) \le x}    }
\label{eq:stopped Chernoff}
\end{align}
for all $\lambda \ge 0$.

\newcommand{\phix}[1][p]{\varphi^{x,\unit}_{#1}(\lambda)}

For $p\in \natural \cup \{\infty\}$, let $\phix = \Ey[ e^{\lambda (\mu-X_1, \mu-Y_1) \cdot \unit} \id_{\Ap}]$, where $\Ap = \{(-X_1) \vee (-Y_1) \le x \}$ is the same event as in Lemma~\ref{lem:estimates}. Since the pair $(X_1,Y_1)$ takes only finitely many values on the event $\Ap$ and $\law\py(X_1,Y_1) \to \law\yy(X_1,Y_1)$ in distribution, we have $\phix \to \phix[\infty]$ as $p \to\infty$. It follows that there exists $p_* = p_*(x,\unit,\lambda) \in \{p':p'\ge p-x\} \cup \{\infty\}$ such that 
\begin{equation*}
\phix[p_*] = \sup_{p'\ge p-x} \phix[p']
\end{equation*}
Let $\nseq[1]{\Delta X^*_n, \Delta Y^*}$ be a sequence of i.i.d.\ random variables independent of $\nseq{X_n,Y}$ and such that $\law_p (\Delta X_1^*,\Delta Y_1^*) = \law\py[p_*](X_1,Y_1)$ in distribution.
Define the process
\begin{equation*}
(U_i,V_i) = \begin{cases}
			-(\Delta X_i  , \Delta Y_i)   & \text{if }i \le \tauxx
		\\ 	-(\Delta X_i^*, \Delta Y_i^*) & \text{if }i  >  \tauxx.
\end{cases}
\end{equation*}
By definition, on the event $\{\tauxx=n\}$, the future $(U_i, V_i)_{i>n}$ of this process is an i.i.d.\ sequence independent of its past such that
$\Ey[e^{\lambda (\mu+U_i, \mu+V_i) \cdot \unit} \idd{U_i \vee V_i \le x }] = \phix[p_*]$. Therefore we can continue the bound \eqref{eq:stopped Chernoff} with
\begin{align}
& e^{-\lambda x} \sum_{n=1}^N \Ey \m[{ \idd{\tauxx=n} \prod_{i=1}^n 
     e^{\lambda (\mu+U_i, \mu+V_i) \cdot \unit} \idd{U_i \vee V_i \le x }  } 
\notag \\  =  \ 
& e^{-\lambda x} \sum_{n=1}^N \mb({\phix[p_*]}^{-(N-n)} \Ey \m[{ 
     \idd{\tauxx=n} \prod_{i=1}^N 
             e^{\lambda (\mu+U_i, \mu+V_i) \cdot \unit} \idd{U_i \vee V_i \le x }  } 
\notag \\ \le \ 
& e^{-\lambda x} \cdot (1\vee \phix[p_*]^{-N}) \cdot \Ey \m[{ \prod_{i=1}^N 
             e^{\lambda (\mu+U_i, \mu+V_i) \cdot \unit} \idd{U_i \vee V_i \le x }  } \,.
\label{eq:completed Chernoff}
\end{align}
It is easy to see that $\tauxx$ is a stopping time with respect to the natural filtration $\nseq \filtr$ of the process $\nseq{U_n,V}$. Therefore for all $i\ge 1$,
\begin{align*}
	&\ \Ey\Econd{ e^{\lambda (\mu+U_{i+1}, \mu+V_{i+1}) \cdot \unit} \idd{U_{i+1} \vee V_{i+1} \le x} }{\filtr_i} \\
&=\		\idd{i< \tauxx} \cdot \phix[P_i] + \idd{i\ge\tauxx} \cdot \phix[p_*]
\ \le \	\phix[p_*]    \,,
\end{align*}
where the last inequality follows from that fact that $P_i\ge p-x$ and hence $\phix[P_i] \le \phix[p^*]$ on the event $\{i< \tauxx\}$. By expanding the expectation in \eqref{eq:completed Chernoff} with $N$ successive conditionings, we see that it is bounded by $\phix[p_*]^N$. Then we conclude from \eqref{eq:stopped Chernoff} and \eqref{eq:completed Chernoff} that
\begin{equation*}
        \Py(\tauxx = \tauu \le N \text{ and } \tauu < J_x)  
\ \le\  e^{-\lambda x} (\phix[p_*]^N \vee 1) \,.
\end{equation*}

By Lemma~\ref{lem:estimates}\ref{item:estimate small jump}, there exists a constant $C$ such that $\phix \le \exp(C x^{-4/3} e^{\lambda x})$ for all $p\ge p_\theta$, $x\in [1,\theta p]$, $\lambda \in [2 x^{-1},1]$ and unit vector $\unit \in \integer^2$. As we have seen in the derivation of the large jump estimate, the same bound holds for $\phix[p_*] = \sup_{p'\ge p-x} \phix$, provided that $p\ge p_\theta/(1-\theta)$.
Therefore we have
\begin{equation*}
\Py(\tauxx = \tauu \le N, \tauu < J_x)  
\ \le\   \exp(-\lambda x + C\cdot N x^{-4/3} e^{\lambda x}) \,
\end{equation*}
Plugging this in \eqref{eq:pre Chernoff} and taking $\lambda x = c\log \log x$ with $c=1+\epsilon/2$ gives
\begin{equation*}
\Py( \tauxx \le N, \tauxx < T_m \text{ and } \tauxx < J_x )
\ \le\  4 \exp(-c\log\log x + C N x^{-4/3} (\log x)^c)\, .
\end{equation*}
Since $x= \chi \mb({ N(\log N)^{1+\epsilon} }^{3/4}$ by assumption, we have $N x^{-4/3} (\log x)^c \ea \chi^{-4/3} \frac{(\log \chi + \log N)^c}{(\log N)^{1+\epsilon}}$, which is bounded by a constant for $\chi,N\ge 2$. It follows that 
\begin{equation}\label{eq:small jump estimate}
\Py( \tauxx \le N, \tauxx < T_m \text{ and } \tauxx < J_x )
\ \la\  \exp(-c\log\log x) \ea (\log \chi + \log N)^{-c}\, .
\end{equation}
The boundedness of $\chi^{-4/3} \frac{(\log \chi + \log N)^c}{(\log N)^{1+\epsilon}}$ also implies that the large jump estimate \eqref{eq:large jump estimate} can be written as 
$\Py( \tauxx \le N, \tauxx < T_m \text{ and } J_x \le \tauxx) \la (\log \chi + \log N)^{-c} + N p^{-1} m^{-1/3}$.
Adding it to \eqref{eq:small jump estimate}, we conclude that $\Py( \tauxx \le N, \tauxx < T_m ) \la (\log \chi + \log N)^{-c} + N p^{-1} m^{-1/3}$.
\end{proof}

\begin{figure}
\centering
\includegraphics[scale=1]{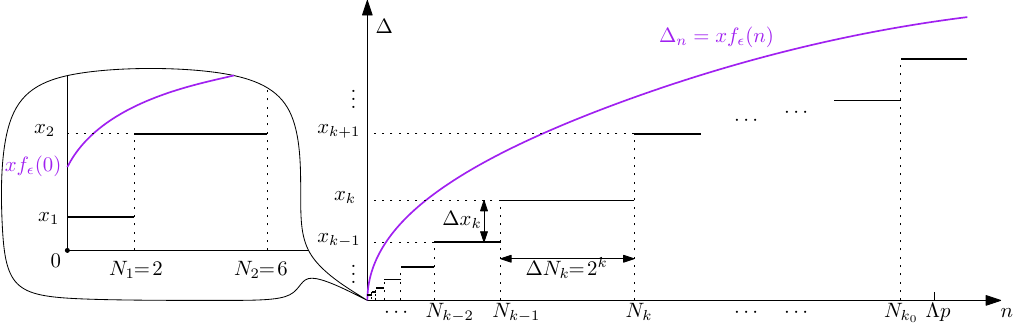}
\caption{The segments $I_k = \Set{(n,x_k)}{n\in (N_{k-1},N_k]}$ and the curve $\Delta_n=\barrier(n)$.}
\label{fig:staircase}
\end{figure}

\begin{proof}[Proof of Lemma~\ref{lem:one jump}]
Let $\Delta_n = |X_n-\mu n| \vee |Y_n-\mu n|$. Recall that $\tauxy = \inf \Set{n\ge 0}{\Delta_n > \barrier(n)}$ where $\barrier[](n) = \mb({ (n+2)(\log(n+2))^{1+\epsilon} }^{3/4}$, and we want to prove that
\begin{equation*}
\lim_{x,m\to\infty} \limsupp \Py(\tauxy<T_m) =0 \,.
\end{equation*}

The idea is to apply Lemma~\ref{lem:cst barrier} to an exponentially increasing sequence of time intervals and to concatenate the results using the Markov property. Consider the sequences $(N_k)_{k\ge 0}$ and $(x_k)_{k\ge 0}$ defined by $N_0=x_0=0$,
\begin{equation*}
\Delta N_k := N_k - N_{k-1} = 2^k    \qtq{and}
\Delta x_k := x_k-x_{k-1} 
= \frac{x}3 \mb({ \Delta N_k \m({ \log \Delta N_k }^{1+\epsilon} }^{3/4}.
\end{equation*}
Then we have $N_k = 2^{k+1}-2$ and 
\begin{align*}
x_k = \frac{x}3 \sum_{i=1}^k 2^{\frac34 i} \cdot (i \log 2)^{\frac34(1+\epsilon)}
\le \frac{x}3 \cdot \frac{2^{\frac34 (k+1)}}{2^{3/4}-1} (k \log 2)^{\frac34 (1+\epsilon)}
\le x \mb({ 2^k (\log 2^k)^{1+\epsilon} }^{3/4}\,.
\end{align*}
In other words, $x_k \le \barrier(N_{k-1})$. 

\newcommand*{\kex}{K^\epsilon_{x,m}}%
Consider the sequence of horizontal segments $I_k = \Set{(n,x_k)}{n\in (N_{k-1}, N_k]}$ depicted in Figure~\ref{fig:staircase}. Thanks to the previous inequality, all of these segments are below the curve $\Delta_n = \barrier(n)$. Let $\kex$ be the index $k$ where $\Delta_n$ goes above $I_k$ for the first time up to $T_m$, that is\
\begin{equation*}
\kex = \inf \Set{k\ge 1}{\exists n \in (N_{k-1},N_k] \text{ s.t.\ } \Delta_n>x_k \text{ and } n<T_m} \,.
\end{equation*}
Then we have $\{\tauxy < T_m\}\subset \{\kex < \infty\}$. 
Remark that $\Delta_{N_{k-1}}\le x_{k-1}$ and $\Delta_{n+N_{k-1}}>x_k$ imply that $\tilde \Delta_n := |X_{n+N_{k-1}} - X_{N_{k-1}} -\mu n| \vee |Y_{n+N_{k-1}} - Y_{N_{k-1}} -\mu n| > \Delta x_k$ for any $n\ge 0$.
Therefore by Markov property of $\law\py \nseq{X_n,Y}$,
\begin{align*}
\Py(\kex=k) &\le 
\Ey\m[{ \Prob\py[P_{N_{k-1}}] \mb({ \exists n \in (0,\Delta N_k] \text{ s.t.\ } \Delta_n >\Delta x_k \text{ and } n<T_m } \id_{\{\Delta_{N_{k-1}} \le x_{k-1} \}} }   \\ &\le
\sup_{p'\ge p-x_{k-1}} \Prob\py[p'](\tauxx[\Delta x_k] \le \Delta N_k, \tauxx[\Delta x_k] < T_m) \,.
\end{align*}

Let $k_0$ be the largest $k$ such that $N_k \le \Lambda p$, where $\Lambda \ge 1$ is some cut-off value that will be sent to infinity after $p$, $x$ and $m$. 
For any fixed $x$, $m$ and in the limit $p\to\infty$, we have $\Delta x_{k-1} \le \theta p$ and $p-x_{k-1} \ge p- \barrier(\Lambda p) >p_\theta/(1-\theta)$ for all $k\le k_0$.
Therefore we can apply Lemma~\ref{lem:cst barrier} to bound the above supremum, which gives
\begin{align*}
\Py(\kex \le k_0) & \la \sum_{k=1}^{k_0} \m({
\frac1{ (\log (x/3) + \log(\Delta N_k))^{1+\epsilon/2}} + \frac{\Delta N_k}p m^{-1/3} }
\\ & = \sum_{k=1}^{k_0} \frac1{ (\log(x/3)+k \log 2)^{1+\epsilon/2} } + \frac{N_{k_0}}p m^{-1/3}
\la \frac1{ (\log x)^{\epsilon/2} } + \Lambda m^{-1/3} \,.
\end{align*}
On the other hand, $k_0<\kex <\infty$ implies that $T_m > N_{k_0}$. Therefore by Lemma~\ref{lem:hit 0},
\begin{equation*}
\Py(k_0 < \kex < \infty) \le \Py(T_m > N_{k_0}) \le \Py(T_0 > N_{k_0}) \le \left(\frac{N_{k_0}}{p}\right)^{-\gamma_0} \,.
\end{equation*}
It is easy to see that $2N_{k_0}+2 = N_{k_0+1} \ge \Lambda p$, which yields $(N_{k_0}/p)^{-\gamma_0}\la \Lambda^{-\gamma_0}.$
We conclude that for every fixed $\Lambda>0$, and uniformly for $x>0$ and $m\ge 1$,
\begin{equation*}
\limsupp \Py(\tauxy < T_m) \le 
\limsupp \Py(\kex < \infty) \la (\log x)^{-\epsilon/2} + \Lambda m^{-1/3} + \Lambda^{-\gamma_0} \,.
\end{equation*}
Taking the limit $m,x\to\infty$ and then $\Lambda \to\infty$ finishes the proof. 
\end{proof}

\paragraph{Acknowledgements.}
We would like to thank M.~Bousquet-M\'elou, J.~Bouttier, N.~Curien, B.~Eynard, K.~Izyurov and A.~Kupiainen for enlightening discussions and useful suggestions. We are grateful to  M.~Albenque, L.~M\'enard and G.~Schaeffer for sharing their work on a similar model while it was still in progress. We thank B.~Duplantier, E.~Gwynne and S.~Sheffield for explaining to us an interpretation of the law of $\zeta$ in Theorem~\ref{thm:scaling limit}. We also thank the anonymous referees for their numerous corrections and remarks.
L.~Chen thanks the  hospitality of Universit\'e Paris–Sud,  where much of this work was conducted during his doctoral study. He also acknowledges the support from the Agence National de la Recherche via Grant ANR-14-CE25-0014 (ANR GRAAL) and ANR-12-JS02-0001 (ANR CARTAPLUS). J.~Turunen acknowledges the hospitality of Institut Henri Poincar\'e during the trimester ``Combinatorics and interactions'', where part of this work was conducted.
Both authors have been supported by the Academy of Finland via the Centre of Excellence in Analysis and Dynamics Research (project No.~271983), and by the ERC Advanced Grant 741487 (QFPROBA).

\bibliographystyle{abbrv}
\bibliography{database-linxiao_JT_3}

\begin{thebibliography}{10}

\bibitem{CAS1}
Mathematica notebook accompanying this paper.
\newblock Available at
  \url{https://www.dropbox.com/s/ofqk77f85vvatgz/Ising-IPT_CAS.nb?dl=0}, and in
  PDF at
  \url{https://www.dropbox.com/s/depynsozur9op0g/Ising-IPT_CAS.pdf?dl=0}.

\bibitem{AMS18}
M.~Albenque, L.~M\'enard, and G.~Schaeffer.
\newblock Local convergence of large random triangulations coupled with an
  {I}sing model.
\newblock {\em Preprint}, 2018.
\newblock arXiv:1812.03140.

\bibitem{ADJ97}
J.~Ambj{\o}rn, B.~Durhuus, and T.~Jonsson.
\newblock {\em Quantum geometry: a statistical field theory approach}.
\newblock Cambridge Monographs on Mathematical Physics. Cambridge University
  Press, Cambridge, 1997.

\bibitem{AG19}
M.~Ang and E.~Gwynne.
\newblock Liouville quantum gravity surfaces with boundary as matings of trees.
\newblock {\em Preprint}, 2019.
\newblock arXiv:1903.09120.

\bibitem{Ang05}
O.~Angel.
\newblock Scaling of percolation on infinite planar maps, {I}.
\newblock {\em Preprint}, 2005.
\newblock arXiv:math/0501006.

\bibitem{AC13}
O.~Angel and N.~Curien.
\newblock Percolations on random maps {I}: {H}alf-plane models.
\newblock {\em Ann. Inst. Henri Poincar\'e Probab. Stat.}, 51(2):405--431,
  2015.
\newblock arXiv:1301.5311.

\bibitem{AS03}
O.~Angel and O.~Schramm.
\newblock Uniform infinite planar triangulations.
\newblock {\em Comm. Math. Phys.}, 241(2-3):191--213, 2003.
\newblock arXiv:math/0207153.

\bibitem{BDC-duality}
V.~Beffara and H.~Duminil-Copin.
\newblock Critical point and duality in planar lattice models.
\newblock In {\em Probability and statistical physics in {S}t. {P}etersburg},
  volume~91 of {\em Proc. Sympos. Pure Math.}, pages 51--98. Amer. Math. Soc.,
  Providence, RI, 2016.
\newblock Lecture notes of the St Petersburg summer School 2012.

\bibitem{Ber16}
N.~Berestycki.
\newblock Introduction to the {G}aussian {F}ree {F}ield and {L}iouville
  {Q}uantum {G}ravity, 2016.
\newblock Lecture notes, available at
  \url{http://www.statslab.cam.ac.uk/~beresty/Articles/oxford4.pdf}.

\bibitem{BBM11}
O.~Bernardi and M.~Bousquet-M{\'e}lou.
\newblock Counting colored planar maps: algebraicity results.
\newblock {\em J. Combin. Theory Ser. B}, 101(5):315--377, 2011.
\newblock arXiv:0909.1695.

\bibitem{Billingsley}
P.~Billingsley.
\newblock {\em Convergence of probability measures}.
\newblock Wiley Series in Probability and Statistics: Probability and
  Statistics. John Wiley \& Sons, Inc., New York, second edition, 1999.
\newblock A Wiley-Interscience Publication.

\bibitem{Borovkov2}
A.~A. Borovkov and K.~A. Borovkov.
\newblock {\em Asymptotic analysis of random walks: {H}eavy-tailed
  distributions}, volume 118 of {\em Encyclopedia of Mathematics and its
  Applications}.
\newblock Cambridge University Press, Cambridge, 2008.
\newblock Translated from the Russian by O. B. Borovkova.

\bibitem{BouKaz87}
D.~V. Boulatov and V.~A. Kazakov.
\newblock The {I}sing model on a random planar lattice: the structure of the
  phase transition and the exact critical exponents.
\newblock {\em Phys. Lett. B}, 186(3-4):379--384, 1987.

\bibitem{BM11uncoloured}
M.~Bousquet-M\'{e}lou.
\newblock Counting planar maps, coloured or uncoloured.
\newblock In {\em Surveys in combinatorics 2011}, volume 392 of {\em London
  Math. Soc. Lecture Note Ser.}, pages 1--49. Cambridge Univ. Press, Cambridge,
  2011.
\newblock hal.archives-ouvertes.fr/hal-00653963/.

\bibitem{BMJ06}
M.~Bousquet-M\'{e}lou and A.~Jehanne.
\newblock Polynomial equations with one catalytic variable, algebraic series
  and map enumeration.
\newblock {\em J. Combin. Theory Ser. B}, 96(5):623--672, 2006.
\newblock arXiv:math/0504018.

\bibitem{BMS02}
M.~Bousquet-M{\'e}lou and G.~Schaeffer.
\newblock The degree distribution in bipartite planar maps: applications to the
  {I}sing model.
\newblock {\em Preprint}, 2002.
\newblock arXiv:0211070.

\bibitem{CDHKS14}
D.~Chelkak, H.~Duminil-Copin, C.~Hongler, A.~Kemppainen, and S.~Smirnov.
\newblock Convergence of {I}sing interfaces to {S}chramm's {SLE} curves.
\newblock {\em C. R. Math. Acad. Sci. Paris}, 352(2):157--161, 2014.
\newblock arXiv:1312.0533.

\bibitem{CurKPZ}
N.~Curien.
\newblock A glimpse of the conformal structure of random planar maps.
\newblock {\em Comm. Math. Phys.}, 333(3):1417--1463, 2015.
\newblock arXiv:1308.1807.

\bibitem{CurPeccot}
N.~Curien.
\newblock Peeling random planar maps, 2017.
\newblock Lecture notes of Cours Peccot at Coll{\`e}ge de France, available at
  \url{https://www.math.u-psud.fr/~curien/cours/peccot.pdf}.

\bibitem{CLGpeeling}
N.~Curien and J.-F. Le~Gall.
\newblock Scaling limits for the peeling process on random maps.
\newblock {\em Ann. Inst. Henri Poincar\'e Probab. Stat.}, 53(1):322--357,
  2017.
\newblock arXiv:1412.5509.

\bibitem{DKRV16}
F.~David, A.~Kupiainen, R.~Rhodes, and V.~Vargas.
\newblock Liouville quantum gravity on the {R}iemann sphere.
\newblock {\em Comm. Math. Phys.}, 342(3):869--907, 2016.
\newblock arXiv:1410.7318.

\bibitem{DMS14}
B.~Duplantier, J.~Miller, and S.~Sheffield.
\newblock Liouville quantum gravity as a mating of trees.
\newblock {\em Preprint}, 2014.
\newblock arXiv:1409.7055.

\bibitem{EynardBook}
B.~Eynard.
\newblock {\em Counting surfaces}, volume~70 of {\em Progress in Mathematical
  Physics}.
\newblock Birkh\"auser/Springer, [Cham], 2016.
\newblock CRM Aisenstadt chair lectures.

\bibitem{EynBon99}
B.~Eynard and G.~Bonnet.
\newblock The {P}otts-{$q$} random matrix model: loop equations, critical
  exponents, and rational case.
\newblock {\em Phys. Lett. B}, 463(2-4):273--279, 1999.
\newblock arXiv:hep-th/9906130.

\bibitem{EynOra05}
B.~Eynard and N.~Orantin.
\newblock Mixed correlation functions in the 2-matrix model, and the {B}ethe
  ansatz.
\newblock {\em J. High Energy Phys.}, (8):028, 36, 2005.
\newblock arXiv:hep-th/0504029.

\bibitem{FS09}
P.~Flajolet and R.~Sedgewick.
\newblock {\em Analytic combinatorics}.
\newblock Cambridge University Press, Cambridge, 2009.

\bibitem{GJ83}
I.~P. Goulden and D.~M. Jackson.
\newblock {\em Combinatorial enumeration}.
\newblock A Wiley-Interscience Publication. John Wiley \& Sons Inc., New York,
  1983.
\newblock With a foreword by Gian-Carlo Rota, Wiley-Interscience Series in
  Discrete Mathematics.

\bibitem{GM18}
E.~Gwynne and J.~Miller.
\newblock Convergence of percolation on uniform quadrangulations with boundary
  to {SLE}$_6$ on $\sqrt{8/3}$-{L}iouville quantum gravity.
\newblock {\em Preprint}, 2018.
\newblock arXiv:1701.05175.

\bibitem{JS03}
J.~Jacod and A.~N. Shiryaev.
\newblock {\em Limit theorems for stochastic processes}, volume 288 of {\em
  Grundlehren der Mathematischen Wissenschaften [Fundamental Principles of
  Mathematical Sciences]}.
\newblock Springer-Verlag, Berlin, second edition, 2003.

\bibitem{Kaz86}
V.~A. Kazakov.
\newblock Ising model on a dynamical planar random lattice: exact solution.
\newblock {\em Phys. Lett. A}, 119(3):140--144, 1986.

\bibitem{KPZ88}
V.~G. Knizhnik, A.~M. Polyakov, and A.~B. Zamolodchikov.
\newblock Fractal structure of {$2$}{D}-quantum gravity.
\newblock {\em Modern Phys. Lett. A}, 3(8):819--826, 1988.

\bibitem{ImplicitBook}
S.~G. Krantz and H.~R. Parks.
\newblock {\em The implicit function theorem: history, theory, and
  applications}.
\newblock Modern Birkh\"auser Classics. Birkh\"auser/Springer, New York, 2013.
\newblock Reprint of the 2003 edition.

\bibitem{LG11}
J.-F. Le~Gall.
\newblock Uniqueness and universality of the {B}rownian map.
\newblock {\em Ann. Probab.}, 41(4):2880--2960, 2013.
\newblock arXiv:1105.4842.

\bibitem{Mie11}
G.~Miermont.
\newblock The {B}rownian map is the scaling limit of uniform random plane
  quadrangulations.
\newblock {\em Acta Math.}, 210(2):319--401, 2013.
\newblock arXiv:1104.1606.

\bibitem{StFlour14}
G.~Miermont.
\newblock Aspects of random maps, 2014.
\newblock Lecture notes of the 2014 Saint-Flour Probability Summer School,
  available at
  \url{http://perso.ens-lyon.fr/gregory.miermont/coursSaint-Flour.pdf}.

\bibitem{Nelson95}
R.~Nelson.
\newblock {\em Probability, stochastic processes, and queueing theory: The
  mathematics of computer performance modeling}.
\newblock Springer-Verlag, New York, 1995.

\bibitem{Sch00}
H.-P. Scheffler.
\newblock A law of the iterated logarithm for heavy-tailed random vectors.
\newblock {\em Probab. Theory Related Fields}, 116(2):257--271, 2000.

\bibitem{SWP08}
J.~R. Sendra, F.~Winkler, and S.~P\'{e}rez-D\'{\i}az.
\newblock {\em Rational algebraic curves: a computer algebra approach},
  volume~22 of {\em Algorithms and Computation in Mathematics}.
\newblock Springer, Berlin, 2008.

\bibitem{Smi01}
S.~Smirnov.
\newblock Critical percolation in the plane: conformal invariance, {C}ardy's
  formula, scaling limits.
\newblock {\em C. R. Acad. Sci. Paris S\'{e}r. I Math.}, 333(3):239--244, 2001.
\newblock arXiv:0909.4499.

\bibitem{Tuttecp3}
W.~T. Tutte.
\newblock A census of planar triangulations.
\newblock {\em Canad. J. Math.}, 14:21--38, 1962.

\bibitem{Tut82b}
W.~T. Tutte.
\newblock Chromatic solutions. {II}.
\newblock {\em Canad. J. Math.}, 34(4):952--960, 1982.

\bibitem{Tut95}
W.~T. Tutte.
\newblock Chromatic sums revisited.
\newblock {\em Aequationes Math.}, 50(1-2):95--134, 1995.

\end{thebibliography}

\end{document}